\theoremstyle{plain}
\newtheorem{thm}{Theorem}[section]
\newtheorem{lemma}[thm]{Lemma}
\newtheorem{cor}[thm]{Corollary}
\newtheorem{prop}[thm]{Proposition}
\newtheorem{claim}{}[thm] 
\newenvironment{subproof}{\begin{proof}[Subproof.]}{\end{proof}}
\theoremstyle{definition}
\newtheorem*{definition}{Definition}
\Crefname{thm}{Theorem}{Theorems}
\DeclareMathOperator{\cl}{cl}
\DeclareMathOperator{\fcl}{fcl}
\DeclareMathOperator{\si}{si}
\DeclareMathOperator{\co}{co}
\newcommand{\del}{\backslash}
\newcommand{\ba}{\backslash}
\newcommand{\cocl}{\cl^*}
\newcommand{\dY}{$\Delta$\nobreakdash-$Y$}
\newcommand{\Yd}{$Y$\nobreakdash-$\Delta$}
\begin{document}
\title{Excluded minors are almost fragile}

\author[N.\ Brettell]{Nick Brettell}
\address{Department of Mathematics and Computer Science, Eindhoven University of Technology,
The Netherlands}
\email{nbrettell@gmail.com}
\author[B.\ Clark]{Ben Clark}
\address{Department of Mathematics, Louisiana State University, Baton Rouge, Louisiana, USA}
\email{bclark@lsu.edu}
\author[J.\ Oxley]{James Oxley}
\address{Department of Mathematics, Louisiana State University, Baton Rouge, Louisiana, USA}
\email{oxley@math.lsu.edu}
\author[C.\ Semple]{Charles Semple}
\address{Department of Mathematics and Statistics, University of Canterbury, New Zealand}
\email{charles.semple@canterbury.ac.nz}
\author[G.\ Whittle]{Geoff Whittle}
\address{School of Mathematics and Statistics, Victoria University of Wellington, New Zealand}
\email{geoff.whittle@vuw.ac.nz}

\thanks{The first, fourth, and fifth authors 
were supported by the New Zealand Marsden Fund.}

\date{\today}

\begin{abstract}
  Let $M$ be an excluded minor for the class of $\mathbb{P}$-representable matroids for some partial field $\mathbb P$, and let $N$ be a $3$-connected strong $\mathbb{P}$-stabilizer that is non-binary.
  We prove that either $M$ is bounded relative to $N$, or, up to replacing $M$ by a \dY-equivalent excluded minor, we can choose a pair of elements $\{a,b\}$ such that either $M\del \{a,b\}$ is $N$-fragile, or $M^* \del \{a,b\}$ is $N^*$-fragile.
\end{abstract}

\maketitle

\section{Introduction}

One of the longstanding goals of matroid theory is to find excluded-minor characterisations
of classes of representable matroids.
Results to date include Tutte's excluded-minor characterisation of binary and regular matroids \cite{tutte1958homotopy}; Bixby's and, independently, Seymour's excluded-minor characterisation of ternary matroids \cite{bixby1979reid,seymour1979matroid}; Geelen, Gerards and Kapoor's excluded-minor characterisation of GF$(4)$-representable matroids \cite{Geelen2000excluded}; and Hall, Mayhew and van Zwam's excluded-minor characterisation of the near-regular matroids, that is, the matroids representable over all fields with at least three elements \cite{hall2011excluded}. 


The immediate problem that looms large is that of finding the excluded minors for
the class of GF$(5)$-representable matroids. While this problem is beyond the range of
current techniques, a road map for an attack is outlined in \cite{pendavingh2010confinement}. In essence,
this road map reduces the problem to a finite sequence of problems of a type that we now describe.
First note that regular matroids and many other naturally arising classes of representable matroids such as near-regular, dyadic and $\sqrt[6]{1}$-matroids \cite{whittle1997matroids} can be described as classes of matroids representable over an algebraic structure called a {\em partial field}. 
We wish to find the excluded-minor characterisation for the class of $\mathbb P$-representable matroids for some
fixed partial field $\mathbb P$. We have a 3-connected matroid $N$ with the property that
every $\mathbb P$-representation of $N$ extends {\em uniquely} to a $\mathbb P$-representation
of any  3-connected $\mathbb P$-representable matroid having $N$ as a minor. Such a matroid 
$N$ is
called a {\em strong stabilizer} for the class of $\mathbb P$-representable matroids. 
With these ingredients,
the goal is to bound the size of an excluded minor for the class of $\mathbb P$-representable
matroids having 
the strong stabilizer $N$ as a minor.
This situation is a more general version of the one that arises in the proof of the excluded-minor characterisation of GF$(4)$-representable matroids~\cite{Geelen2000excluded}. There, the partial field is GF$(4)$
and the strong stabilizer is $U_{2,4}$.
(See also the introduction to \cite{bww1} for more detail on 
this strategy.)

Ideally, we would develop techniques that would reduce problems of the above type to routine
computation. But an annoying barrier arises. Let $N$ be a matroid. A matroid $M$ is
$N$-{\em fragile} if, for all elements $e$ of $M$, at most one of $M\backslash e$ or $M/e$
has an $N$-minor. It seems that, for a strong stabilizer $N$ for a partial field $\mathbb P$,
to bound the size of an excluded minor for $\mathbb P$-representable matroids
that contains $N$ as a minor,
we need to have some insight into the structure of $\mathbb P$-representable
$N$-fragile  matroids. The goal of this paper is to demonstrate
that this is,
in essence, the fundamental problem. We prove 
that if $M$ is an excluded minor for the class of $\mathbb P$-representable matroids
having an $N$-minor, then
either the size or rank of $M$ is bounded relative to $N$, or, up to replacing $M$ by a \dY-equivalent excluded minor, $M$ (or its dual) has a pair of elements $\{a,b\}$ such that $M \ba a,b$ is an $N$-fragile (or $N^*$-fragile) matroid. More specifically, we prove the following:

\begin{thm}
\label{one}
Let $\mathbb{P}$ be a partial field, let $M$ be an excluded minor for the class of $\mathbb{P}$-representable matroids, and let $N$ be a non-binary strong stabilizer for the class of $\mathbb{P}$-representable matroids, where $M$ has an $N$-minor.
For some matroid $M_1$ that is \dY-equivalent to $M$, and some $(M',N')$ in $\{(M_1,N),(M_1^{*}, N^{*})\}$, the matroid $M'$ is an excluded minor having an $N'$-minor such that at least one of the following holds:
\begin{itemize}
  \item[(i)] $|E(M')|\leq |E(N')|+9$;
  \item[(ii)] $r(M')\leq r(N')+7$; or 
  \item[(iii)] there is a pair $\{a,b\} \subseteq E(M)$ such that $M'\del a,b$ is a $3$-connected $N'$-fragile matroid with an $N'$-minor.
\end{itemize}
\end{thm}

\noindent We defer the definition of the \dY-equivalence to the next section.

\Cref{one} tells us that an
excluded minor for $\mathbb P$-representable matroids 
will either have bounded size or will be very close to an $N$-fragile matroid.
Current techniques for bounding the size of an excluded minor in the latter case rely on 
obtaining explicit information about the structure of $N$-fragile matroids and this
needs to be done on a case-by-case basis. Even for quite simple matroids this can be
a difficult problem. Here is an example. Recall the non-Fano matroid $F_7^-$. 
The barrier to finding the excluded minors for the
class of dyadic matroids is that we do not understand the structure of 
dyadic $F_7^-$-fragile matroids and such an understanding seems some way off.

On the other hand, $U_{2,5}$ and $U_{3,5}$ are strong stabilizers for representability over two interesting partial fields and we do know the structure of $U_{2,5}$- and $U_{3,5}$-fragile matroids within these classes \cite{clark2015structure}.
The first is the partial field~$\mathbb H_5$ which was introduced by Pendavingh and van Zwam \cite{pendavingh2010confinement}. The class of matroids 
representable over this field is the class obtained by taking the 3-connected matroids that
have exactly six inequivalent representations over GF$(5)$ and closing the class
under minors. This class forms the bottom layer of Pendavingh and van Zwam's hierarchy of 
GF$(5)$-representable
matroids. Finding excluded minors for this class would be a key first step towards finding
the excluded minors for matroids representable over  GF$(5)$. 

The other partial field is the \textit{$2$-regular} or \textit{$2$-uniform} partial field, denoted $\mathbb U_2$. This is a member of a family of partial
fields. The matroids representable over $\mathbb U_0$ and $\mathbb U_1$ are the
regular and near-regular matroids respectively. Regular matroids are the matroids representable
over all fields, and near-regular matroids are the matroids representable over all fields
with at least three elements. Let $\mathcal M_4$ denote the matroids representable over
all fields of size at least four. It would certainly be interesting to have a characterisation
of the class $\mathcal M_4$.
The class of $\mathbb U_2$-representable matroids is contained in $\mathcal M_4$,
and it is known \cite{Semple} that this class is a 
proper subclass of $\mathcal M_4$.
Nonetheless, knowing the excluded minors for $\mathbb U_2$ would be a key step towards
characterising the class $\mathcal M_4$.
The interesting matroids to uncover are the excluded
minors for $\mathbb U_2$ that belong to $\mathcal M_4$. Attention could then
be focussed on members of $\mathcal M_4$ having these matroids as minors. It is possible that
these will form highly structured classes of bounded branch width.

With the results of this paper, 
and the characterisation of the $\mathbb U_2$- and 
$\mathbb H_5$-representable $U_{2,5}$- and $U_{3,5}$-fragile matroids, 
there is real hope that obtaining the full list of excluded minors for 
these classes is an achievable goal. Beyond these classes all bets are off. Experience with
graph minors tells us that we must expect to hit a wall quite soon --- consider, for example,
the excluded minors for the class of toroidal graphs or the class of
\dY-reducible graphs \cite{yu2006more}. We know from \cite{mayhew2008matroids} that there are at least
564 excluded minors for GF$(5)$-representable matroids. It is possible that obtaining the
full list will be forever beyond our reach. But the quest is surely a worthy one.
  
\section{Preliminaries and the main theorems}

In this section we gather preliminaries 
that are used throughout the paper. We will then be able to state the main results: \cref{mainthm1,mainthm2}.
In particular, \cref{mainthm2} implies \cref{one}.
Most of the relevant results and terminology on matroid connectivity can either be found in Oxley \cite{oxley2011matroid} or in the recent literature on removing elements relative to a fixed basis \cite{oxley2008maintaining,whittle2013fixed,brettell2014splitter}. The results and terminology on matroid representation theory can be found in \cite{pendavingh2010lifts,pendavingh2010confinement,mayhew2010stability}. 

We write ``by orthogonality'' to refer to the property that a circuit and a cocircuit cannot meet in one element.
In the context of partitions of the form $(X,\{e\},Y)$,  we will also write ``by orthogonality'' to refer to an application of the next lemma.

\begin{lemma}
\label{orthogpartition}
Let $e$ be an element of a matroid $M$, and let $(X,\{e\},Y)$ be a partition of $E(M)$. Then $e\in \cl(X)$ if and only if $e\notin \cl^{*}(Y)$. 
\end{lemma}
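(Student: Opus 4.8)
The plan is to push everything through to the rank function using the standard duality formula $r^{*}(S)=|S|+r(E(M)\setminus S)-r(E(M))$. First I would use that $(X,\{e\},Y)$ is a partition of $E(M)$, so that $E(M)\setminus(Y\cup\{e\})=X$ and $E(M)\setminus Y=X\cup\{e\}$. Substituting into the corank formula gives $r^{*}(Y\cup\{e\})=|Y|+1+r(X)-r(E(M))$ and $r^{*}(Y)=|Y|+r(X\cup\{e\})-r(E(M))$.

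Next I would recall that $e\in\cl^{*}(Y)$ if and only if $r^{*}(Y\cup\{e\})=r^{*}(Y)$. Comparing the two expressions above, this equality holds if and only if $r(X\cup\{e\})=r(X)+1$. Since adjoining a single element to $X$ increases its rank by either $0$ or $1$, the condition $r(X\cup\{e\})=r(X)+1$ is exactly the statement that $e\notin\cl(X)$. Hence $e\in\cl^{*}(Y)$ if and only if $e\notin\cl(X)$, and negating both sides yields the claimed equivalence $e\in\cl(X)$ if and only if $e\notin\cl^{*}(Y)$.

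I do not expect any genuine obstacle here: this is the routine fact relating the closure and coclosure operators across a partition of the ground set (it appears, for instance, in Oxley \cite{oxley2011matroid}). The only place to take care is in checking that the three parts of the partition exhaust $E(M)$, which is precisely what makes the complements in the corank formula collapse to $X$ and $X\cup\{e\}$; without that hypothesis the computation would carry additional terms and the clean equivalence would fail.
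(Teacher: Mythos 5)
Your proof is correct. The paper states this lemma without proof, treating it as a standard folklore fact (it is essentially Oxley's observation that for a partition the guts and coguts of the two sides are complementary in this sense), so there is no authorial argument to compare against. Your computation via the duality formula $r^{*}(S)=|S|+r(E(M)\setminus S)-r(E(M))$, applied to $Y$ and $Y\cup\{e\}$ and then read off against the characterization $e\in\cl^{*}(Y)\iff r^{*}(Y\cup\{e\})=r^{*}(Y)$, is exactly the canonical derivation, and you correctly flag that the hypothesis that $(X,\{e\},Y)$ exhausts $E(M)$ is what makes the complements collapse to $X$ and $X\cup\{e\}$.
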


\subsection*{Connectivity}

The following results are well known. 

\begin{lemma}
\label{longline3conn}
Let $M$ be a $3$-connected matroid. If $X$ is a rank-$2$ subset of $E(M)$ and $|X|\geq 4$, then $M\del x$ is $3$-connected for all $x\in X$. 
\end{lemma}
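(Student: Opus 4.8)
The plan is to argue by contradiction, so suppose $x\in X$ but $M\del x$ is not $3$-connected. First I would record some easy reductions. Since $M$ is $3$-connected with $|E(M)|\ge |X|\ge 4$, it is both simple and cosimple; in particular $X$ is a line with at least four points, so any two elements of $X$ span $\cl(X)$ and any three elements of $X$ form a triangle. As $X\setminus x$ still has rank $2$ we get $x\in\cl(X\setminus x)$, and since $x$ lies in a triangle it is not a coloop, so $r(M\del x)=r(M)$. Because $M\del x$ is simple and $M$ has no cocircuit of size at most two, $M\del x$ has no $1$-separation with a singleton side; hence the failure of $3$-connectivity yields a partition $(A,B)$ of $E(M\del x)$ with $|A|,|B|\ge 2$ and $\lambda_{M\del x}(A)\le 1$.

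Now comes the main point. We have $X\setminus x\subseteq A\cup B$ and $|X\setminus x|\ge 3$, so by pigeonhole one of the two sides --- say $A$ --- contains at least two elements of $X\setminus x$. As $M$ is simple these two elements span $\cl(X)$, and therefore $x\in\cl(X\setminus x)\subseteq\cl(A)$. Hence $r_M(A\cup x)=r_M(A)$, and since passing from $M$ to $M\del x$ alters none of the relevant ranks,
\[
\lambda_M(A\cup x)=r_M(A\cup x)+r_M(B)-r(M)=r_{M\del x}(A)+r_{M\del x}(B)-r(M\del x)=\lambda_{M\del x}(A)\le 1.
\]
Since $|A\cup x|\ge 3$ and $|B|\ge 2$, the partition $(A\cup x,B)$ is a $k$-separation of $M$ for some $k\le 2$, contradicting the $3$-connectivity of $M$. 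This contradiction proves that $M\del x$ is $3$-connected.

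The argument is short, and the only step that requires genuine care is the bookkeeping around degenerate separations of $M\del x$ --- in particular ruling out a $1$-separation with a singleton side. If $(A,B)$ were such a separation with $A=\{a\}$, then $a$ would be a loop or coloop of $M\del x$; it is not a loop since $M\del x$ is simple, and if it were a coloop then $r_M(E(M)\setminus\{a,x\})<r(M)$, so $\{a,x\}$ would contain a cocircuit of $M$ of size at most two, contradicting the cosimplicity of $M$. Everything else is a routine submodularity computation, so I expect no real obstacle beyond setting up the case analysis cleanly and keeping track of which side of the separation spans $x$.
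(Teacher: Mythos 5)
Your proof is correct. The paper does not spell out an argument for this lemma---it simply remarks that it is ``an elementary consequence of orthogonality''---and your rank/connectivity-function computation is the standard way to make that precise: deleting $x$ keeps all the relevant ranks, pigeonhole forces one side of any alleged $2$-separation to contain two points of $X$ and hence to span $x$, and then $\lambda_M(A\cup x)=\lambda_{M\del x}(A)\le 1$ contradicts $3$-connectivity of $M$. (The ``orthogonality'' phrasing the paper alludes to is equivalent: once $x\in\cl(A)$ fails for both sides, $x$ lies in a cocircuit $C^*\subseteq B\cup\{x\}$, which meets a triangle $\{x,a_1,a_2\}$ with $a_1,a_2\in A$ in exactly the element $x$; your bookkeeping around singleton sides and the coloop case is also correct and is exactly the degenerate case one must dispose of.)
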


\begin{lemma}[Bixby's Lemma~\cite{bixby1982simple}]
\label{bixby}
Let $M$ be a $3$-connected matroid, and let $e\in E(M)$. Then $\si(M/e)$ or $\co(M\del e)$ is $3$-connected.
\end{lemma}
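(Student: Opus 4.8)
The plan is a proof by contradiction using submodularity of the connectivity function. Suppose $M$ is $3$-connected but neither $\si(M/e)$ nor $\co(M\del e)$ is $3$-connected. The statement is immediate when $|E(M)|\le 3$, so assume $|E(M)|\ge 4$; then $M$ has no loops, coloops, parallel pairs or series pairs, both $M/e$ and $M\del e$ are connected, $M/e$ has no loops, $M\del e$ has no coloops, and $\si(M/e)$ and $\co(M\del e)$ are connected matroids (the handful of cases where one of these has at most three elements are checked directly).

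The first real step is to translate each failure of $3$-connectivity into an exact $3$-separating partition of $M$. Since $\si(M/e)$ is connected but not $3$-connected it has a $2$-separation, and pulling this partition back across the parallel classes of $M/e$ produces a partition $(X,Y)$ of $E(M)\del e$ with $r_{M/e}(X)\ge 2$, $r_{M/e}(Y)\ge 2$, and $r_{M/e}(X)+r_{M/e}(Y)=r(M/e)+1$. Writing $r_{M/e}(Z)=r_M(Z\cup e)-1$ gives $r_M(X\cup e)+r_M(Y\cup e)=r(M)+2$. Now $\lambda_M(X\cup e)=r_M(X\cup e)+r_M(Y)-r(M)$, so if $e\notin\cl_M(Y)$ then $\lambda_M(X\cup e)=1$, and since $|X\cup e|\ge 3$ and $|Y|\ge 2$ this is a $2$-separation of $M$, a contradiction. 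Hence $e\in\cl_M(Y)$, and symmetrically $e\in\cl_M(X)$; consequently $\lambda_M(X)=\lambda_M(X\cup e)=2$ with $|X|,|Y|\ge 2$, and also $r_M(X),r_M(Y)\ge 3$. Applying exactly this argument to $M^{*}$ in place of $M$ (using $(M\del e)^{*}=M^{*}/e$ and $\co(M\del e)^{*}=\si(M^{*}/e)^{*}$) yields a partition $(U,V)$ of $E(M)\del e$ with $\lambda_M(U)=2$, $|U|,|V|\ge 2$, and $e$ in the coclosure of each of $U$ and $V$, equivalently $e\notin\cl_M(U)\cup\cl_M(V)$.

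The heart of the proof is then to uncross $(X,Y)$ against $(U,V)$. Both are exact $3$-separating partitions of $M$, and submodularity of $\lambda_M$ gives $\lambda_M(X\cap U)+\lambda_M(X\cup U)\le\lambda_M(X)+\lambda_M(U)=4$, with analogous inequalities for the other three intersections $X\cap V$, $Y\cap U$, $Y\cap V$. The goal is to show that one of these four sets — after adjoining $e$ to it, or not, as the closure data dictates — has connectivity at most $1$ in $M$ while both it and its complement have at least two elements, contradicting $3$-connectivity of $M$. The point is that the conditions $e\in\cl_M(X)\cap\cl_M(Y)$ and $e\notin\cl_M(U)\cup\cl_M(V)$ control the rank bookkeeping: we may freely add $e$ to a contraction-side piece without raising its rank while keeping it out of the deletion-side pieces, and this incompatibility forces the two "$\lambda=2$" contributions not to coexist across the same cut.

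The main obstacle is precisely this last step: it is not a one-line application of submodularity, because a priori the quadrant sets $X\cap U,\ X\cap V,\ Y\cap U,\ Y\cap V$ may be empty or singletons, and then the naive uncrossing produces a separation with a side that is too small. The resolution is a case analysis on the sizes of the four quadrants, in each case using the closure and coclosure positions of $e$ together with the rank bounds $r_M(X),r_M(Y)\ge 3$ and their duals to either place $e$ on the side that makes the separation exact, or to absorb a singleton quadrant into an adjacent one, thereby exhibiting a genuine $1$- or $2$-separation of $M$. Once that contradiction is obtained, at least one of $\si(M/e)$ and $\co(M\del e)$ must be $3$-connected, which completes the proof.
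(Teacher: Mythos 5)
The paper does not supply a proof of this lemma --- it simply cites Bixby (1982) and Oxley's Lemma 8.7.3 --- so your proposal has to be judged on its own. Your first two paragraphs are correct and cleanly argued: from a $2$-separation of $\si(M/e)$ you derive a partition $(X,Y)$ of $E(M)-e$ with $\lambda_M(X)=\lambda_M(X\cup e)=2$, $e\in\cl_M(X)\cap\cl_M(Y)$, and $r_M(X),r_M(Y)\ge 3$; dually you get $(U,V)$ with $\lambda_M(U)=\lambda_M(U\cup e)=2$, $e\notin\cl_M(U)\cup\cl_M(V)$, and $r^*_M(U),r^*_M(V)\ge 3$. That is exactly the right configuration to set up.

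The gap is the last paragraph, which you yourself flag as ``the main obstacle'': you describe a ``case analysis on the sizes of the four quadrants'' but never supply the mechanism that lets any case deliver a contradiction, and the naive bound $\lambda_M(X\cap U)+\lambda_M(X\cup U)\le 4$ that you do write down only produces $3$-separating sets, not $2$-separations. What is missing is the place where the incompatible positions of $e$ are actually cashed in. For the quadrant $X\cap U$, one has $e\notin\cl_M(X\cap U)$ (since $X\cap U\subseteq U$) while $e\in\cl_M\bigl(E(M)-e-(X\cap U)\bigr)$ (since $Y$ lies inside that set), and therefore $\lambda_M\bigl((X\cap U)\cup e\bigr)=\lambda_M(X\cap U)+1$. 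Uncrossing $X\cup e$ against $U\cup e$ gives $\lambda_M\bigl((X\cap U)\cup e\bigr)+\lambda_M(Y\cap V)\le\lambda_M(X\cup e)+\lambda_M(U\cup e)=4$, so $\lambda_M(X\cap U)+\lambda_M(Y\cap V)\le 3$, and by symmetry $\lambda_M(X\cap V)+\lambda_M(Y\cap U)\le 3$. Only now is a size dichotomy available: $3$-connectivity forces at least one quadrant on each diagonal to have at most one element, and whichever pair of small quadrants you are handed forces one of $|X|,|Y|,|U|,|V|\le 2$, contradicting the rank bounds $r_M(X),r_M(Y)\ge 3$ and $r^*_M(U),r^*_M(V)\ge 3$. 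Your phrase about ``absorbing a singleton quadrant into an adjacent one'' suggests you had not isolated this sharpened inequality; with it, the case split is a two-line check and there is nothing to absorb.
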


The next three results state elementary properties of $3$-separations that we shall use frequently. We use the notation $e\in \cl^{(*)}(X)$ to mean $e\in \cl(X)$ or $e\in \cl^{*}(X)$. 

\begin{lemma}
\label{calc1}
 Let $X$ be an exactly $3$-separating set in a $3$-connected matroid, and suppose that $e\in E(M)-X$. Then $X\cup e$ is $3$-separating if and only if $e\in \cl^{(*)}(X)$.
\end{lemma}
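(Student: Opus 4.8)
The plan is a short direct computation with the connectivity function $\lambda_M(Z)=r(Z)+r(E(M)-Z)-r(E(M))$. Put $Y=E(M)-(X\cup e)$, so that $E(M)-X=Y\cup\{e\}$, and since $X$ is exactly $3$-separating we have $\lambda_M(X)=r(X)+r(Y\cup e)-r(E(M))=2$. First I would record the identity
$$\lambda_M(X\cup e)-\lambda_M(X)=\bigl(r(X\cup e)-r(X)\bigr)-\bigl(r(Y\cup e)-r(Y)\bigr),$$
which is immediate from the definition of $\lambda_M$ after cancelling $r(E(M))$ and using $E(M)-(X\cup e)=Y$ and $E(M)-X=Y\cup e$.

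Each bracketed term on the right lies in $\{0,1\}$, and by the definition of closure the first term is $0$ if and only if $e\in\cl(X)$, while the second is $0$ if and only if $e\in\cl(Y)$. Writing $\delta_X=r(X\cup e)-r(X)$ and $\delta_Y=r(Y\cup e)-r(Y)$, we thus get $\lambda_M(X\cup e)=2+\delta_X-\delta_Y$. Hence $\lambda_M(X\cup e)\leq 2$ if and only if $\delta_X\leq\delta_Y$, that is, if and only if it is \emph{not} the case that both $e\notin\cl(X)$ and $e\in\cl(Y)$; equivalently, $X\cup e$ is $3$-separating if and only if $e\in\cl(X)$ or $e\notin\cl(Y)$.

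To finish, I would translate the condition $e\notin\cl(Y)$ by the orthogonality lemma above: applied to the partition $(Y,\{e\},X)$ of $E(M)$ it gives $e\in\cl(Y)$ if and only if $e\notin\cl^{*}(X)$, hence $e\notin\cl(Y)$ if and only if $e\in\cl^{*}(X)$. Combining with the previous paragraph, $X\cup e$ is $3$-separating if and only if $e\in\cl(X)$ or $e\in\cl^{*}(X)$, i.e.\ $e\in\cl^{(*)}(X)$, as required.

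I do not expect a substantive obstacle: the whole argument is bookkeeping with the rank function together with one appeal to orthogonality. The only point needing a little care is the degenerate case $E(M)=X\cup\{e\}$ (so $Y=\emptyset$), which is handled directly since in a $3$-connected matroid on at least four elements $e$ is neither a loop nor a coloop, so $e\in\cl(X)\cap\cl^{*}(X)$ automatically; note that $3$-connectedness is otherwise not really used in the computation.
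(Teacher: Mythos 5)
Your proof is correct. The paper states Lemma~\ref{calc1} without proof (it is a standard fact from the $3$-separation literature), so there is no in-paper argument to compare against, but your computation with the connectivity function is exactly the usual derivation: expand $\lambda_M(X\cup e)-\lambda_M(X)$, reduce to the two $\{0,1\}$-valued rank increments $\delta_X,\delta_Y$, and translate $e\notin\cl(Y)$ into $e\in\cl^*(X)$ via the orthogonality lemma. One small remark: the degenerate case $Y=\emptyset$ you flag at the end cannot actually occur, since $\lambda_M(X)=2$ forces $r(Y\cup e)\geq 2$ and hence $Y\neq\emptyset$; so that paragraph is harmless but unnecessary. You are also right that $3$-connectedness plays no role in the argument — the identity and the orthogonality lemma hold in any matroid — it is included in the hypotheses only because that is the ambient setting in which the lemma is applied.
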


\begin{lemma}
\label{calc2}
Let $(X,Y)$ be an exactly $3$-separating partition of a $3$-connected matroid $M$. Suppose $|X|\geq 3$ and $x\in X$. Then
\begin{itemize}
 \item[(i)] $x\in \cl^{(*)}(X-x)$; and
 \item[(ii)] $(X-x,Y\cup x)$ is exactly $3$-separating if and only if $x$ is in exactly one of $\cl(X-x)\cap \cl(Y)$ and $\cl^{*}(X-x)\cap \cl^{*}(Y)$.
\end{itemize}
\end{lemma}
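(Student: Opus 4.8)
The plan is to reduce both parts to arithmetic with the connectivity function, using the identity $\lambda(A)=r(A)+r^{*}(A)-|A|$ (immediate from $r^{*}(A)=|A|+r(E(M)-A)-r(M)$), together with two elementary closure facts applied to the partition $(X-x,\{x\},Y)$ of $E(M)$: first, $x\in\cl(X-x)$ if and only if $x\notin\cl^{*}(Y)$; and second, applying that fact in $M^{*}$, $x\in\cl^{*}(X-x)$ if and only if $x\notin\cl(Y)$. The starting observation is that $r(X)-r(X-x)\in\{0,1\}$, equalling $0$ exactly when $x\in\cl(X-x)$, with the dual statement for $r^{*}$. Comparing $\lambda(X-x)$ with $\lambda(X)=2$ via the identity above then gives three cases: $\lambda(X-x)=1$ if $x$ lies in neither $\cl(X-x)$ nor $\cl^{*}(X-x)$; $\lambda(X-x)=2$ if $x$ lies in exactly one of them; and $\lambda(X-x)=3$ if $x$ lies in both. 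Since $|X|\geq 3$, both $X-x$ and $Y\cup x$ have at least two elements.

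For part (i), the first case would make $(X-x,Y\cup x)$ a $2$-separation of $M$, contradicting $3$-connectivity; hence $x\in\cl^{(*)}(X-x)$. For part (ii), part (i) rules out $\lambda(X-x)=1$, and $\lambda(X-x)=3$ makes $(X-x,Y\cup x)$ only $4$-separating, so $(X-x,Y\cup x)$ is exactly $3$-separating if and only if $x$ lies in exactly one of $\cl(X-x)$ and $\cl^{*}(X-x)$. It remains to rewrite this in the asserted form using the two closure facts: ``$x\in\cl(X-x)$ and $x\notin\cl^{*}(X-x)$'' is equivalent to $x\in\cl(X-x)\cap\cl(Y)$, ``$x\notin\cl(X-x)$ and $x\in\cl^{*}(X-x)$'' is equivalent to $x\in\cl^{*}(X-x)\cap\cl^{*}(Y)$, and $x$ cannot lie in both of these intersections, since $x\in\cl(X-x)$ forces $x\notin\cl^{*}(Y)$. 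This gives the stated equivalence.

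I do not expect a real obstacle here: the statement is a routine $3$-separation computation of the kind used throughout this area. The only points needing a little care are confirming that each side of the candidate separation has size at least $2$ — which is precisely where the hypothesis $|X|\geq 3$ is used, and what licenses the appeals to $3$-connectivity in both parts — and bookkeeping the four closure memberships correctly during the translation step in part (ii).
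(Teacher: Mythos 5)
Your proof is correct. Since the paper states Lemma~\ref{calc2} without proof (it is the standard connectivity-function computation, closely related to Lemma~8.2.4 in Oxley), there is no internal proof to compare against; your argument — expressing $\lambda(X-x)$ in terms of $\lambda(X)$ via $\lambda(A)=r(A)+r^*(A)-|A|$, casing on whether $x$ lies in $\cl(X-x)$ and/or $\cl^*(X-x)$, and then translating to the $\cl(Y)$/$\cl^*(Y)$ form using the orthogonality lemma for the partition $(X-x,\{x\},Y)$ in $M$ and in $M^*$ — is the standard one, and the bookkeeping (in particular, checking $|X-x|\geq 2$ and $|Y\cup x|\geq 2$ before invoking $3$-connectivity) is all in order.
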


\begin{lemma}[{\cite[Lemma 2.11]{brettell2014splitter}}]
\label{gutspluscoguts1}
Let $(X,Y)$ be a $3$-separation of a $3$-connected matroid $M$. If $X\cap \cl(Y)\neq \emptyset$ and $X\cap \cl^{*}(Y)\neq \emptyset$, then $|X\cap \cl(Y)|=1$ and $|X\cap \cl^{*}(Y)|=1$.
\end{lemma}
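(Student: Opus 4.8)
The plan is to establish that $|X\cap\cl(Y)|=1$; the claim $|X\cap\cl^{*}(Y)|=1$ then follows immediately by applying this to $M^{*}$, since $(X,Y)$ is a $3$-separation of the $3$-connected matroid $M^{*}$ and $\cl^{*}_{M}=\cl_{M^{*}}$. As $X\cap\cl(Y)$ is nonempty by hypothesis, it suffices to prove $|X\cap\cl(Y)|\leq 1$.

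I would first record two observations. No element of $X$ lies in both $\cl(Y)$ and $\cl^{*}(Y)$: if $e$ were such an element, then $e\in\cl(Y)$ gives $r(Y\cup e)=r(Y)$, while orthogonality applied to the partition $(X-e,\{e\},Y)$ together with $e\in\cl^{*}(Y)$ gives $e\notin\cl(X-e)$, so $r(X-e)=r(X)-1$; then $r(X-e)+r(Y\cup e)-r(M)=1$, and since $|X-e|\geq2$ and $|Y\cup e|\geq2$ this is a $2$-separation of $M$, a contradiction. The same use of orthogonality shows that every $b\in X\cap\cl^{*}(Y)$ satisfies $r(X-b)=r(X)-1$.

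Now suppose, for a contradiction, that $a,a'$ are distinct elements of $X\cap\cl(Y)$, and fix $b\in X\cap\cl^{*}(Y)$; by the first observation, $a,a',b$ are distinct. The key step is to pass to the minor $N:=M/a\del b$. Since $M$ is $3$-connected and $|E(M)|\geq6$, it has no loops, coloops, or parallel pairs; in particular $b$ is not a coloop of $M/a$, so $r(N)=r(M)-1$. A routine rank computation---using $r(Y\cup a)=r(Y)$ (from $a\in\cl(Y)$) and $r(X-b)=r(X)-1$---gives $r_{N}(Y)=r(Y)-1$ and $r_{N}(X-\{a,b\})=r(X)-2$, and since the complement of $X-\{a,b\}$ in $E(N)$ is exactly $Y$ (as $a,b\in X$), it follows that $r_{N}(X-\{a,b\})+r_{N}(Y)-r(N)=0$. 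As $a'\in X-\{a,b\}$ and $Y$ are both nonempty, $N$ is the direct sum $\bigl(N\,|\,(X-\{a,b\})\bigr)\oplus(N\,|\,Y)$. On the other hand, $a,a'\in\cl(Y)$ implies $a'\in\cl_{M/a}(Y)$ and hence $a'\in\cl_{N}(Y)$; but in that direct sum the only members of $X-\{a,b\}$ lying in $\cl_{N}(Y)$ are the loops of $N\,|\,(X-\{a,b\})$. So $a'$ is a loop of $N$, forcing $a'\in\cl_{M}(\{a\})$; thus $a'$ is parallel to $a$ in $M$, contradicting $3$-connectivity. Hence $|X\cap\cl(Y)|\leq1$, as required.

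The main obstacle is keeping the rank bookkeeping around $N=M/a\del b$ straight: one must verify that $b$ stays a non-coloop (and $a$ a non-loop) so that deletion and contraction affect ranks as expected, and one must translate the identity $r_{N}(X-\{a,b\})+r_{N}(Y)=r(N)$ correctly into the direct-sum decomposition before reading off the closure of a component. Everything else is routine orthogonality and connectivity-function manipulation of the sort used repeatedly in this section.
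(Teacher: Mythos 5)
Your proof is correct. Since the paper cites this lemma from Brettell and Semple rather than proving it, there is no internal proof to compare against, but every step of your argument checks out. In particular, observation~1 correctly combines the paper's orthogonality lemma with the connectivity-function computation to rule out guts-and-coguts overlap; observation~2 is the same orthogonality argument; and the passage to $N = M/a\backslash b$ is sound (the minor point that $b$ remains a non-coloop of $M/a$ is automatic, since contraction of an element $a\neq b$ cannot turn $b$ into a coloop: $r_{M/a}(E(M/a)-b) = r_M(E-b)-1$, so this drops below $r(M/a)=r(M)-1$ only if $b$ was already a coloop of $M$). The rank bookkeeping giving $\lambda_N(X-\{a,b\})=0$ is correct, the direct-sum decomposition is the right way to read that off, and the conclusion that $a'$ must be a loop of $N|(X-\{a,b\})$ indeed forces $\{a,a'\}$ to be a parallel pair of $M$, contradicting $3$-connectivity.

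One stylistic remark: the route through the minor $M/a\backslash b$ and a direct-sum detection is a somewhat less common way to package this kind of guts/coguts lemma than working entirely with $\lambda_M$ on subsets of $E(M)$ (for example, by showing that $(X-\{a,b\},\, Y\cup\{a,b\})$ would have connectivity at most $1$ after moving in one more guts element). Your version is arguably cleaner in that the contradiction surfaces as a parallel pair rather than as a low-order separation whose side sizes one must then track, but it uses exactly the same two rank facts, $r(Y\cup a)=r(Y)$ and $r(X-b)=r(X)-1$, so the underlying content is the same. Either way, the argument is complete and correct.
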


Let $M$ be a matroid.
A $3$-separation $(X,Y)$ of $M$ is a \textit{vertical $3$-separation} if $\min\{r(X),r(Y)\}\geq 3$. We say that a partition $(X,\{z\},Y)$ is a \textit{vertical $3$-separation} of $M$ when both $(X\cup \{z\},Y)$ and $(X,Y\cup \{z\})$ are vertical $3$-separations and $z\in \cl(X)\cap \cl(Y)$. We will write $(X,z,Y)$ for $(X,\{z\},Y)$.
If $(X,z,Y)$ is a vertical $3$-separation of $M$, then we say that $(X,z,Y)$ is a \emph{cyclic $3$-separation} of $M^*$.

A \textit{path of $3$-separations} of $M$ is a partition $(P_1,\ldots, P_n)$ of $E(M)$ such that $(P_1\cup \cdots \cup P_i, P_{i+1}\cup \cdots \cup P_n)$ is a $3$-separation of $M$ for each $i\in \{1,\ldots, n-1\}$.
In particular, a vertical $3$-separation $(X,z,Y)$ is a path of $3$-separations.


\begin{lemma}[{\cite[Lemma 3.5]{stabilizers}}]
\label{existsv3sep}
Let $M$ be a $3$-connected matroid, and $e\in E(M)$.
The matroid $M$ has a vertical $3$-separation $(X,e,Y)$ if and only if $\si(M/e)$ is not $3$-connected.
\end{lemma}

Let $k$ be a positive integer, and let $(P,Q)$ be a $k$-separation. We call the set $\cl(P)\cap \cl(Q)$ the \textit{guts} of $(P,Q)$, and $\cl^{*}(P)\cap \cl^{*}(Q)$ the \textit{coguts} of $(P,Q)$.
We also say that an element $z\in \cl(P) \cap \cl(Q)$ is a \textit{guts element}, and $z\in \cl^*(P) \cap \cl^*(Q)$ is a \textit{coguts element}.

We write ``by uncrossing'' to refer to an application of the next result.

\begin{lemma}
Let $M$ be a $3$-connected matroid, and let $X$ and $Y$ be $3$-separating subsets of $E(M)$. Then the following hold.
\begin{itemize}
 \item[(i)] If $|X\cap Y|\geq 2$, then $X\cup Y$ is $3$-separating.
 \item[(ii)] If $|E(M)-(X\cup Y)|\geq 2$, then $X\cap Y$ is $3$-separating.
\end{itemize}
\end{lemma}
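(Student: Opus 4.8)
The plan is to prove both parts by the standard uncrossing argument, using submodularity of the connectivity function $\lambda$, where $\lambda(Z) = r(Z) + r(E(M)-Z) - r(M)$ for $Z \subseteq E(M)$, so that $Z$ is $3$-separating exactly when $\lambda(Z) \le 2$. I would first record the two facts that do all the work: (a) $\lambda$ is submodular, that is, $\lambda(X) + \lambda(Y) \ge \lambda(X \cup Y) + \lambda(X \cap Y)$ for all $X, Y \subseteq E(M)$, which one obtains by adding the submodularity inequality for the rank function applied to the pair $X, Y$ to the submodularity inequality applied to the pair $E(M)-X, E(M)-Y$ and cancelling $2r(M)$; and (b) since $M$ is $3$-connected, $\lambda(Z) \ge 2$ for every $Z$ with $\min\{|Z|, |E(M)-Z|\} \ge 2$, as otherwise $(Z, E(M)-Z)$ would be a $2$-separation. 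As $X$ and $Y$ are $3$-separating, (a) gives $\lambda(X \cup Y) + \lambda(X \cap Y) \le \lambda(X) + \lambda(Y) \le 4$.

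For (i), I would assume $|X \cap Y| \ge 2$ and split on the size of $E(M) - (X \cap Y)$. If $|E(M) - (X \cap Y)| \ge 2$, then (b) gives $\lambda(X \cap Y) \ge 2$, so the bound above forces $\lambda(X \cup Y) \le 2$ and $X \cup Y$ is $3$-separating. If instead $|E(M) - (X \cap Y)| \le 1$, then $|E(M) - (X \cup Y)| \le 1$ as well, whence $\lambda(X \cup Y) \le 1$ directly from the definition and $X \cup Y$ is again $3$-separating. Part (ii) is the dual bookkeeping: assuming $|E(M) - (X \cup Y)| \ge 2$, either $|X \cup Y| \ge 2$, in which case (b) gives $\lambda(X \cup Y) \ge 2$ and hence $\lambda(X \cap Y) \le 2$, or $|X \cup Y| \le 1$, in which case $X \cap Y$ has at most one element and is trivially $3$-separating.

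The hard part --- such as it is --- is only the bookkeeping of these degenerate cases, where a set or its complement has fewer than two elements and the uniform bound $\lambda \ge 2$ coming from $3$-connectivity fails; there one simply falls back on the trivial estimate that any set whose complement has at most one element has connectivity at most $1$. Beyond that the argument is entirely formal and uses nothing about $M$ except $3$-connectivity and the submodularity of the rank function.
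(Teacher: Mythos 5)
Your proof is correct and is the standard uncrossing argument via submodularity of the connectivity function; the paper states this lemma without proof, as it is folklore, and what you have written is precisely the argument one would supply. The only slightly nonstandard touch is your explicit handling of the degenerate cases where $X\cap Y$ or $X\cup Y$ fails to have a complement of size at least two, and that bookkeeping is also correct: once the set or its complement has at most one element, $\lambda\le 1$ trivially.
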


\subsection*{Series classes}
We will use the following two results on series classes. We omit the easy proof of the first lemma.

\begin{lemma}
\label{seriesindependent}
Let $M$ be a matroid such that $\co(M)$ is $3$-connected. If $S$ and $S'$ are distinct series classes of $M$, then either $S\cup S'$ is independent, or $\co(M)\cong U_{1,3}$. 
\end{lemma}


When $S$ is a series class of size two, we say $S$ is a \emph{series pair}.

\begin{lemma}
 \label{seriesnotbasisstrong}
Let $M$ be a $3$-connected matroid, and let $u\in E(M)$ be an element such that $\co(M\del u)$ is $3$-connected and $\co(M\del u)\ncong U_{1,3}$. Let $S$ be a non-trivial series class of $M\del u$. If there is some element $s\in S$ such that $\si(M/s)$ is not $3$-connected, then 
\begin{itemize}
 \item[(i)] $|S|=2$;
 \item[(ii)] $M\del u$ has exactly two distinct non-trivial series classes; and
 \item[(iii)] $\si(M/s')$ is $3$-connected, where $S=\{s,s'\}$.
\end{itemize}
\end{lemma}

\begin{proof}
Suppose that there is some element $s\in S$ such that $\si(M/s)$ is not $3$-connected. 
Observe that $r_{M^*}(S \cup u) = 2$.  It follows that if $|S| \ge 3$, then $M^*\ba s$, and hence $M/s$, is $3$-connected for all $s \in S$ by \cref{longline3conn}; 
a contradiction.
So $|S|=2$, and (i) holds.
Henceforth, we let $S=\{s,s'\}$.


We now consider (ii) and (iii).
By \cref{existsv3sep}, $M$ has a vertical $3$-separation $(A,s,B)$.
Without loss of generality, we may assume that $A$ is coclosed, and that $u \in A$.
Then $(A-u,B)$ is a $2$-separation of $M/s\del u$, and the matroid $M/s\del u$ is $3$-connected up to series classes because $\co(M\del u)$ is $3$-connected. Hence $A-u$ or $B$ is contained in a series class of $M\del u$.
Since $S\cup u$ is a triad containing $s$, and $(A,s,B)$ is a vertical $3$-separation,
it follows from orthogonality (see \cref{orthogpartition}) that $S\cup u$ is not contained in $A\cup s$ or $B \cup s$.
So $s' \in B$.
If $B$ is contained in a series class of $M \ba u$, then $s$ is also in this series class; a contradiction.  So
$A-u$ is contained in a series class, distinct from $S$.
As $A$ is coclosed in $M$, we have that $A-u$ is a series class in $M \ba u$.
Since $s\in \cl(A)$, there is a circuit $C$ of $M$ such that $s\in C\subseteq A\cup s$.
Moreover, $u\in C$ by orthogonality with $C$ and the triad $S\cup u$.

Next we claim that $A-u$ and $S$ are the only series classes of $M\del u$. Suppose there is some series pair $S'$ of $M\del u$ disjoint from $S\cup (A-u)$. Then $S'\cup u$ is a triad of $M$ that meets the circuit $C$ in the single element $u$; a contradiction to orthogonality. Thus $S$ and $A-u$ are the only series classes of $M\del u$, so (ii) holds. 

Finally, 
suppose that $\si(M/s')$ is not $3$-connected. Then, by \cref{existsv3sep}, $M$ has a vertical $3$-separation $(A',s',B')$.
We may assume, without loss of generality, that $A'$ is coclosed and that $u\in A'$.
By the same argument as used earlier, $A'-u$ is a series class of $M\del u$.
By (ii), $A'-u=A-u$, and thus $(A',s',B')=(A,s',(B-s')\cup s)$. Then $s'\in \cl(A)$, so there is some circuit $C'$ of $M$ such that $s'\in C'\subseteq A\cup s'$, and $u\in C'$ by orthogonality. But then we have distinct circuits $C\subseteq A\cup s$ and $C'\subseteq A\cup s'$ such that $u\in C\cap C'$. By circuit elimination, there is a circuit $C''$ of $M$ such that $C''\subseteq (A-u) \cup S$. Thus, by \cref{seriesindependent}, $\co(M\del u)\cong U_{1,3}$, a contradiction.
Therefore (iii) holds.   
\end{proof}

\subsection*{Fans}
A subset $F$ of the ground set of a matroid, with $|F| \ge 3$, is a \emph{fan} if there is an ordering $(f_1, f_2, \dotsc, f_k)$ of the elements of $F$ such that
\begin{itemize}
  \item[(a)] $\{f_1,f_2,f_3\}$ is either a triangle or a triad, and\label{fani}
  \item[(b)] for all $i \in \{1,2,\dotsc,k-3\}$, if $\{f_i, f_{i+1}, f_{i+2}\}$ is a triangle, then $\{f_{i+1}, f_{i+2}, f_{i+3}\}$ is a triad, while if $\{f_i, f_{i+1}, f_{i+2}\}$ is a triad, then $\{f_{i+1}, f_{i+2}, f_{i+3}\}$ is a triangle.\label{fanii}
\end{itemize}
When there is no ambiguity, we also say that the ordering $(f_1,f_2,\dotsc,f_k)$ is a fan.
If $F$ has a fan ordering $(f_1, f_2, \dotsc, f_k)$ where $k \geq 4$, then $f_1$ and $f_k$ are the \emph{ends} of $F$, and $f_2, f_3, \dotsc, f_{k-1}$ are the \emph{internal elements} of $F$.

Let $F$ be a fan with ordering $(f_1, f_2, \dotsc, f_k)$ where $k \geq 4$,
and let $i \in \{1,2,\dotsc,k\}$ if $k \geq 5$, or $i \in \{1,4\}$ if $k=4$.
An element $f_i$ is a \emph{spoke element} of $F$ if $\{f_1, f_2, f_3\}$ is a triangle and $i$ is odd, or if $\{f_1, f_2, f_3\}$ is a triad and $i$ is even; otherwise $f_i$ is a \emph{rim element}.

We say that a fan $F$ is \emph{maximal} if there is no fan that properly contains $F$.

We employ the following results when we encounter fans.

\begin{lemma}[{\cite[Lemma 2.12]{brettell2014splitter}}]
\label{fanends}
Let $M$ be a $3$-connected matroid with $|E(M)| \ge 7$.  
Suppose that $M$ has a fan $F$ of at least $4$ elements, and let $f$ be an end of $F$.
\begin{itemize}
 \item[(i)] If $f$ is a spoke element, then $\co(M\del f)$ is $3$-connected and $\si(M/f)$ is not $3$-connected.
 \item[(ii)] If $f$ is a rim element, then $\si(M/f)$ is $3$-connected and $\co(M\del f)$ is not $3$-connected.
\end{itemize}
\end{lemma}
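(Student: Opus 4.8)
The plan is to reduce both parts to a single statement about $\si(M/f)$ and then quote Bixby's Lemma. Duality halves the work at once: triangles and triads of $M$ are interchanged in $M^{*}$, so $F=(x_{1},\dots,x_{n})$ is also a fan of $M^{*}$, but with the roles of spoke ends and rim ends exchanged. Since $\co(M\del f)$ is $3$-connected if and only if $\si(M^{*}/f)$ is, and $\si(M/f)$ is $3$-connected if and only if $\co(M^{*}\del f)$ is, part (ii) for $M$ is precisely part (i) for $M^{*}$, so it suffices to prove (i). Assume then that $f=x_{1}$ is a spoke end. Since $F$ has at least four elements, the triple $T=\{x_{1},x_{2},x_{3}\}$ is a triangle of $M$ and $T^{*}=\{x_{2},x_{3},x_{4}\}$ is a triad.

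The crux is to show that $\si(M/f)$ is not $3$-connected. In $M/f$ the triangle $T$ collapses $\{x_{2},x_{3}\}$ into a parallel pair, while $T^{*}$, which does not contain $f$, remains a cocircuit of $M/f$ (the cocircuits of $M/f$ are exactly the cocircuits of $M$ not containing $f$). I would first check that $x_{4}$ lies in a trivial parallel class of $M/f$: if instead $\{f,x_{4},y\}$ were a triangle of $M$ for some $y$, then orthogonality with $T^{*}$ would force $y\in\{x_{2},x_{3}\}$, and circuit elimination between $\{f,x_{4},y\}$ and $T$ would produce a circuit inside the triad $T^{*}$, forcing $T^{*}$ to be a triangle and hence, as $|E(M)|\geq 5$, contradicting $3$-connectivity. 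Consequently, forming $\si(M/f)$ so as to retain $x_{2}$ and $x_{4}$, the triad $T^{*}$ restricts to a cocircuit of $\si(M/f)$ contained in $\{x_{2},x_{4}\}$, that is, a cocircuit of size at most two; this gives a series pair or a coloop and so destroys $3$-connectivity. The hypothesis $|E(M)|\geq 7$ is needed precisely here, to rule out the small-rank exceptions; when $r(M)=3$ one argues directly, using that the complement of $T^{*}$ is a long line and invoking Lemma~\ref{longline3conn} and its dual.

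With the crux in hand, Bixby's Lemma (Lemma~\ref{bixby}) says that one of $\si(M/f)$ and $\co(M\del f)$ is $3$-connected; since the former is not, the latter is, and both conclusions of (i) follow. I expect the main obstacle to be the connectivity bookkeeping in the middle step: verifying that the size-at-most-two cocircuit genuinely survives into $\si(M/f)$ (that $x_{2}$ and $x_{4}$ are not deleted during simplification) and disposing of the degenerate low-rank configurations that $|E(M)|\geq 7$ is designed to exclude. The duality reduction and the appeal to Bixby's Lemma are routine scaffolding by comparison.
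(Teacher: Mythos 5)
Your duality reduction, the orthogonality and circuit-elimination argument showing that $x_4$ has a trivial parallel class in $M/f$, and the identification of a cocircuit of $\si(M/f)$ inside $\{x_2,x_4\}$ are all correct, and the appeal to Bixby's Lemma to convert the non-$3$-connectivity of $\si(M/f)$ into the $3$-connectivity of $\co(M\del f)$ is exactly the right move. The genuine gap is the step from ``$\si(M/f)$ has a cocircuit of size at most two'' to ``$\si(M/f)$ is not $3$-connected''. That inference is valid only when $|E(\si(M/f))|\geq 4$; it fails for $U_{2,3}$, which is $3$-connected under the Tutte definition yet all of whose cocircuits have two elements. You acknowledge the ``small-rank exceptions'' but assert that $|E(M)|\geq 7$ disposes of them, and that the residual $r(M)=3$ case ``argues directly'' via Lemma~\ref{longline3conn}. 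That is not enough: Lemma~\ref{longline3conn} gives you that $M\del f$ is $3$-connected, i.e.\ the $\co(M\del f)$ half of the conclusion, but it does nothing toward showing that $\si(M/f)$ fails to be $3$-connected.

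Concretely, the $r(M)=3$ case is a real obstruction, not a bookkeeping nuisance. If $r(M)=3$ and $f$ is the spoke end of a $4$-fan with triad $T^*=\{x_2,x_3,x_4\}$, then $E(M)-T^*$ is a line through $f$, and by the very orthogonality argument you gave, $\{f,x_2,x_3\}$ and $\{f,x_4\}$ are the only other lines through $f$; hence $M/f$ has exactly three parallel classes and $\si(M/f)\cong U_{2,3}$, which is $3$-connected. A $7$-element rank-$3$ matroid realizing this does exist (take a $4$-point line $L$ through $f$, set $T^*=E(M)-L$, and let $\{f,x_2,x_3\}$ be the only other nontrivial line; one checks directly that this is $3$-connected), so $|E(M)|\geq 7$ alone does not rule the case out. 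Either the statement as quoted here is missing a hypothesis present in Brettell and Semple's Lemma~2.12 (for instance a rank and corank lower bound, or a longer fan), or the conclusion about $\si(M/f)$ has to be weakened when $r(M)=3$; in any event the low-rank case requires a genuine separate argument, and the appeal to Lemma~\ref{longline3conn} and its dual that you sketch does not supply one.
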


\begin{lemma}[{\cite[Lemma 3.3]{brettell2014splitter}}]
\label{f2f3}
Let $M$ be a matroid with distinct elements $f_1,f_2,f_3,f_4$. If the only triangle containing $f_3$ is  $\{f_1,f_2,f_3\}$ and the only triad containing $f_2$ is $\{f_2,f_3,f_4\}$, then $\si(M/f_3)\cong \co(M\del f_2)$.
\end{lemma}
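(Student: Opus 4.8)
The plan is to reduce both $\si(M/f_3)$ and $\co(M\del f_2)$ to the single matroid $N:=M\del f_2/f_3$, which equals $M/f_3\del f_2$ since deletion and contraction commute, and then to show that $N$ is both simple and cosimple, so that $\si(N)=N=\co(N)$. (As is implicit in results of this kind, we take $M$ to have no loops and no coloops, as holds when $M$ is $3$-connected; without some such hypothesis $\si$ and $\co$ need not behave dually on the minors involved.)

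First I would use the triangle. Since $\{f_1,f_2,f_3\}$ is a circuit of $M$ and $f_3$, lying in a triangle, is not a loop, the set $\{f_1,f_2\}$ is a circuit of $M/f_3$; that is, $f_1$ and $f_2$ are parallel in $M/f_3$. Deleting one member of a parallel pair does not alter the simplification, so $\si(M/f_3)\cong\si\bigl((M/f_3)\del f_2\bigr)=\si(N)$. Dually, since $\{f_2,f_3,f_4\}$ is a cocircuit of $M$ and $f_2$, lying in a triad, is not a coloop, the set $\{f_3,f_4\}$ is a cocircuit of $M\del f_2$, so $f_3$ and $f_4$ are in series there; contracting one member of a series pair does not alter the cosimplification, so $\co(M\del f_2)\cong\co\bigl((M\del f_2)/f_3\bigr)=\co(N)$. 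It remains to prove $\si(N)\cong\co(N)$.

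For this I would verify that $N$ is simple and cosimple. If $\{e,e'\}$ is a parallel pair of $N=M/f_3\del f_2$, then $\{e,e'\}$ is a $2$-element circuit of $M/f_3$ not containing $f_2$, so $\{e,e'\}$ or $\{e,e',f_3\}$ is a circuit of $M$; in the second case $\{e,e',f_3\}$ is a triangle through $f_3$, hence equals $\{f_1,f_2,f_3\}$, which is impossible as $f_2\notin\{e,e'\}$. So every parallel pair of $N$ is a parallel pair of $M$, and similarly every loop of $N$ is a loop of $M$ or an element of $M$ parallel to $f_3$; as $M$ is simple, $N$ has neither, so $N$ is simple. The dual argument, using that $\{f_2,f_3,f_4\}$ is the only triad of $M$ through $f_2$, shows that $N$ is cosimple. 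Hence $\si(N)=N=\co(N)$, and chaining these isomorphisms gives $\si(M/f_3)\cong\co(M\del f_2)$.

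The step that requires genuine care is the bookkeeping just above: one must rule out any spurious loop, coloop, parallel pair, or series pair being introduced on passing to $N$. This is exactly where the two uniqueness hypotheses do their work --- deleting $f_2$ consumes the unique triangle through $f_3$, so contracting $f_3$ afterwards creates no new parallel pair, and dually contracting $f_3$ consumes the unique triad through $f_2$, so deleting $f_2$ creates no new series pair. Apart from this, the argument is routine manipulation of the definitions of simplification and cosimplification together with the fact that deletion and contraction commute.
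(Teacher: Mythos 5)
The paper cites this as \cite[Lemma 3.3]{brettell2014splitter} and gives no proof of its own, so there is nothing internal to compare against; I evaluate the argument on its own terms. Your proof is correct. Reducing both sides to the common minor $N = M\del f_2/f_3$, using the triangle to delete $f_2$ without changing the simplification and the triad to contract $f_3$ without changing the cosimplification, and then invoking the uniqueness hypotheses to verify $N$ is simple and cosimple, is the natural argument for lemmas of this shape. You were also right to flag, and right that you need, the implicit hypothesis that $M$ is loopless and coloopless (as it is in this paper, where $M$ is $3$-connected with $|E(M)|\geq 9$ wherever the lemma is invoked): if $M$ had a loop $\ell$ disjoint from $\{f_1,\dots,f_4\}$, then $\si(M/f_3)$ discards $\ell$ while $\co(M\del f_2)$ retains it, so the stated isomorphism genuinely fails for arbitrary $M$. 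The only cosmetic remark is that the sentence ``every parallel pair of $N$ is a parallel pair of $M$'' is really the contrapositive step of the nonexistence argument you then run, but that is clear from context.
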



\subsection*{Retaining an $N$-minor}


Let $M$ and $N$ be matroids, and let $x$ be an element of $M$.
If $M\del x$ has an $N$-minor, then $x$ is $N$-\textit{deletable}.
If $M/x$ has an $N$-minor, then $x$ is $N$-\textit{contractible}.
If neither $M\del x$ nor $M/x$ has an $N$-minor, then $x$ is $N$-\textit{essential}.
If $x$ is both $N$-deletable and $N$-contractible, then we say that $x$ is \textit{$N$-flexible}.
A matroid $M$ is \textit{$N$-fragile} if $M$ has an $N$-minor, and no element of $M$ is $N$-flexible
(note that some authors refer to this as ``strictly $N$-fragile'').

For $X \subseteq E(M)$, we will also say that $X$ is \emph{$N$-deletable} (or \emph{$N$-contractible}) when $M \del X$ (or $M / X$, respectively) has an $N$-minor.

The next two 
results give some 
conditions for when we can keep an $N$-minor when dealing with $2$-separations.

\begin{lemma}[{\cite[Lemma 4.3]{brettell2014splitter}}]
\label{sicominor}
Let $N$ be a $3$-connected matroid such that $|E(N)|\geq 4$. If $M$ has an $N$-minor, then $\si(M)$ has an $N$-minor. 
\end{lemma}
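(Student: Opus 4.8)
The plan is to exploit the fact that a $3$-connected matroid on at least four elements is simple, so $N$ has no loops and no parallel pairs; the heart of the argument is then that the deletions performed in passing from $M$ to $\si(M)$ can all be absorbed into the deleted part of an expression of $N$ as a minor of $M$.

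First I would fix a minor representation $N = M/C\setminus D$ with $C$ independent in $M$, $C\cap D=\emptyset$, and $C\cup D = E(M)-E(N)$ (the standard normal form for a minor; only independence of $C$ is needed). Recall that $\si(M) = M\setminus L$, where $L$ consists of the loops of $M$ together with, for each parallel class, all but one chosen representative --- and we are free to choose these representatives. The goal is to choose them so that $L\subseteq D$. Once that is done, $N = M/C\setminus D = \si(M)/C\setminus(D\setminus L)$, since $C$ is disjoint from $L$ and deletions and contractions of disjoint sets commute, which gives the result.

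To arrange $L\subseteq D$: every loop $\ell$ of $M$ lies outside the independent set $C$, so $\ell$ is a loop of $M/C$, and hence would be a loop of $N$ unless $\ell\in D$; as $N$ is loopless, $\ell\in D$. Now let $K$ be a non-trivial parallel class of $M$. Then (i) $|K\cap C|\le 1$, since $C$ is independent and the elements of $K$ are pairwise parallel; (ii) if $e\in K\cap C$, then every other $f\in K$ satisfies $f\in\cl_M(\{e\})\subseteq\cl_M(C)$, so $f$ is a loop of $M/C$ and is forced into $D$ (in particular $K\cap E(N)=\emptyset$ in this case); and (iii) $|K\cap E(N)|\le 1$, because if $e,f$ were two elements of $K\cap E(N)$ then $\{e,f\}$ is a circuit of $M$ which, by submodularity, has rank at most $1$ in $M/C$ and contains no loop of $M/C$ (a loop of $M/C$ outside $D$ would be a loop of $N$), hence remains a circuit of $M/C\setminus D=N$, contradicting simplicity of $N$. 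Finally, choose the representative of $K$ to be its element of $E(N)$ if one exists, otherwise its element of $C$ if one exists, and otherwise arbitrary; by (i)--(iii) all the non-representatives of $K$ then lie in $D$. Combining the loop case with this, we get $L\subseteq D$, as required.

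I expect no serious obstacle here; the only point needing genuine care is (iii), namely confirming that a parallel pair of $M$ contained in $E(N)$ survives as a parallel pair (a two-element circuit) of $N$ rather than degenerating into a loop after the contraction of $C$ --- this is precisely where the hypothesis that $N$ is simple (hence where $|E(N)|\ge 4$) is used.
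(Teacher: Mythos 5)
The paper does not prove this lemma; it cites it directly from \cite[Lemma 4.3]{brettell2014splitter}, so there is no in-paper argument to compare against. Your proof is correct and self-contained, and it is the standard way to establish facts of this kind: write $N = M/C\setminus D$ with $C$ independent, then choose the simplification so that the deleted set $L$ lands inside $D$.

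Two small remarks on the write-up. First, in step (iii) the appeal to ``submodularity'' is a slight mislabel --- the relevant fact is simply that contraction does not increase the rank of a set, $r_{M/C}(\{e,f\})\le r_M(\{e,f\})=1$; once you have also ruled out loops, $\{e,f\}$ is a parallel pair of $M/C$ disjoint from $D$, hence a parallel pair of $N$, contradicting simplicity. Second, it is worth stating explicitly (as your case analysis implicitly does) that the three cases in the choice of representative of a parallel class $K$ are mutually exclusive and exhaustive precisely because (ii) forces $K\cap E(N)=\emptyset$ whenever $K\cap C\neq\emptyset$; with that observation the conclusion $L\subseteq D$ is immediate, and the commutation $\si(M)/C\setminus(D\setminus L)=M/C\setminus D=N$ (using $L\cap C=\emptyset$ and $L\subseteq D$) completes the proof. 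No gap.
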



\begin{lemma}[{\cite[Lemma 2.7]{oxley2008maintaining}}]
\label{cplminorlemma}
Let $(X,Y)$ be a $2$-separation of a connected matroid $M$ and let $N$ be a $3$-connected minor of $M$. Then $\{X,Y\}$ has a member $S$ such that $|S \cap E(N)|\leq 1$. Moreover, when $s\in S$,
\begin{itemize}
 \item[(i)] if $M/s$ is connected, then $M/s$ has an $N$-minor; and
 \item[(ii)] if $M\del s$ is connected, then $M\del s$ has an $N$-minor.
\end{itemize}
\end{lemma}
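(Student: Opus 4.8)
The plan is to prove the two assertions separately: the existence of $S$ from the behaviour of the connectivity function under minors, and the ``moreover'' statement from the structure theory of $2$-sums.

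For the first assertion, write $N=M/C\backslash D$ for disjoint sets $C,D\subseteq E(M)$ and consider the partition $(X\cap E(N),\,Y\cap E(N))$ of $E(N)$. A routine induction on $|C|+|D|$ (handling loops and coloops of intermediate minors, and using duality for contraction) shows that deleting or contracting an element outside a fixed set cannot increase the local connectivity of that set, so $\lambda_N(X\cap E(N))\leq\lambda_M(X)=1$. If both $X\cap E(N)$ and $Y\cap E(N)$ had at least two elements, then this partition would be a $2$-separation of $N$, forcing in particular $|E(N)|\geq 4$ and contradicting the $3$-connectivity of $N$. Hence one of $X\cap E(N)$, $Y\cap E(N)$ has at most one element, and we take $S$ to be the corresponding member of $\{X,Y\}$.

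For the ``moreover'' statement, use the $2$-separation to write $M=M_1\oplus_2 M_2$, where $E(M_1)=S\cup\{p\}$, $E(M_2)=(E(M)-S)\cup\{p\}$, and $p$ is the basepoint; since $|S|\geq 2$ we have $|E(M_1)|\geq 3$. The hypothesis $|E(N)\cap S|\leq 1$, together with the fact that a $3$-connected matroid is not a $2$-sum of two matroids each having at least three elements, localises $N$ on the $M_2$-side: performing on $M_1$ the contractions and deletions of $E(M_1)-\{p\}$ needed to produce $N$ collapses $M_1$ to a matroid on $\{p\}$, or on $\{p\}$ together with the single element of $E(N)\cap S$, and one checks from the circuit description of the $2$-sum that $N$ is then a minor of $M_2\backslash p$, of $M_2/p$, or of the matroid obtained from $M_2$ by relabelling $p$ as the element of $E(N)\cap S$. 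Now fix $s\in S$ and suppose $M/s$ is connected. Because $|E(M_1)-\{s\}|\geq 2$, if $p$ were a loop or coloop of $M_1/s$ then $M/s$ would split off a loop or a coloop, contradicting connectedness; hence $M/s=(M_1/s)\oplus_2 M_2$, and since a $2$-sum is connected precisely when both parts are, $M_1/s$ is connected. A connected matroid has every two of its elements in a common circuit, so by contracting a circuit of $M_1/s$ through $p$ down to a pair and then deleting or contracting the rest of $E(M_1/s)-\{p\}$ we realise as a minor of $M/s$ each of $M_2\backslash p$ and $M_2/p$; if $E(N)\cap S=\{z\}$ with $z\neq s$, contracting a circuit of $M_1/s$ through both $z$ and $p$ down to $\{z,p\}$ realises the relabelled $M_2$ as well; and if $s$ is itself the element of $E(N)\cap S$, then relabelling $p$ to a surviving element of $M_1/s$ does the same. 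In every case the above description of $N$ yields that $M/s$ has an $N$-minor, proving~(i). Statement~(ii) follows by applying~(i) to the dual, since $(Y,X)$ is a $2$-separation of the connected matroid $M^*$, $N^*$ is a $3$-connected minor of $M^*$ with $|E(N^*)\cap S|\leq 1$, and $M^*/s$ is connected whenever $M\backslash s$ is.

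I expect the main obstacle to be the bookkeeping around the basepoint: confirming that connectedness of $M/s$ (respectively $M\backslash s$) forces the $2$-sum structure to persist in the expected way, and that the $U_{1,2}$-reattachment genuinely recovers a copy of $N$ in each of the three cases $E(N)\cap S=\varnothing$, $E(N)\cap S=\{z\}$ with $z\neq s$, and $E(N)\cap S=\{s\}$. The connectivity monotonicity and the rest are standard manipulations.
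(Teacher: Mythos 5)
The paper states this lemma as a direct citation to Oxley--Semple--Whittle \cite{oxley2008maintaining} and does not reproduce a proof, so there is no in-paper argument to compare against. Your reconstruction is correct and follows what is essentially the standard route. The existence of $S$ is the usual consequence of the monotonicity of the connectivity function under minors (equivalently, the contrapositive of \cite[Proposition 8.3.5]{oxley2011matroid}, which this paper itself invokes a few lines after the lemma). The ``moreover'' part correctly exploits the $2$-sum decomposition: your observation that connectedness of $M/s$ rules out $p$ becoming a loop or coloop of $M_1/s$ (since $|E(M_1)-\{s,p\}|\geq 1$ would then make $M/s$ a direct sum of two nonempty parts) is the right reason the $2$-sum structure persists, and collapsing a circuit of the connected matroid $M_1/s$ through $p$ (and through $z$ when $E(N)\cap S=\{z\}$ with $z\neq s$) to a $U_{1,2}$ on $\{p,q\}$ correctly realises $M_2\backslash p$, $M_2/p$, and the relabelled $M_2$ as minors of $M/s$.

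One small imprecision worth flagging: your intermediate claim that $N$ is always a minor of $M_2\backslash p$, $M_2/p$, or $M_2$ with $p$ relabelled as the element of $E(N)\cap S$ can fail in the degenerate subcase where the collapse of $M_1$ to a matroid on $\{p\}$ or $\{p,z\}$ leaves $p$ a loop or coloop; but in that event $M/C_1\backslash D_1$ is a direct sum in which the surviving element of $E(N)\cap S$ is a loop or coloop, forcing that element to be a loop or coloop of $N$ and hence $|E(N)|=1$. That degenerate case is trivially fine, since any connected matroid with at least two elements has both $U_{0,1}$ and $U_{1,1}$ as minors. The duality reduction for~(ii) is standard and correct.
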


Let $(X,z,Y)$ be a vertical $3$-separation of a matroid $M$.
Then $(X,Y)$ is a $2$-separation of $M/z$ such that $|X|\geq 3$ and $|Y|\geq 3$.
Let $N$ be a $3$-connected matroid with $|E(N)| \ge 3$.
If $M/z$ has an $N$-minor, then it follows from \cref{cplminorlemma} 
that either $|X \cap E(N)|\leq 1$ or $|Y \cap E(N)|\leq 1$.
When $|X \cap E(N)|\leq 1$, we refer to $X$ as the \textit{non-$N$-side} and $Y$ as the \textit{$N$-side} of $(X,z,Y)$.


The following result is 
a routine upgrade of \cite[Lemma~4.5]{brettell2014splitter} that also covers the case when the $N$-side of the vertical $3$-separation is not closed. 

\begin{lemma}
\label{CPL2}
Let $N$ be a $3$-connected minor of a $3$-connected matroid $M$. Let $(X,z,Y)$ be a vertical $3$-separation of $M$ such that $M/z$ has an $N$-minor, where $|X\cap E(N)|\leq 1$. Then, every element of $X$ is either $N$-deletable or $N$-contractible in $M/z$.  In particular, letting $Y' = \cl_M(Y)-z$,
\begin{itemize}
 \item[(i)] every element of $X-Y'$ is $N$-contractible in $M/z$, and
 \item[(ii)]at most one element of $X$ is not $N$-deletable; moreover, if such an element $x$ exists, then $x\in \cl^{*}_M(Y')-Y'$ and $z\in \cl_M(X-(Y'\cup x))$.
\end{itemize}
\end{lemma}

\begin{proof}
  It is immediate from the proof of \cite[Lemma 4.5]{brettell2014splitter} that the lemma holds when $Y \cup z$ is closed; in particular, (i) holds.
  We may therefore assume that $Y\cup z$ is not closed. Let $s\in X\cap \cl_M(Y)$. We first show that $s$ is $N$-deletable. Since $(X,Y)$ is a $2$-separation of the connected matroid $M/z$ and $s\in \cl_{M/z}(X)\cap \cl_{M/z}(Y)$, it follows that $M/z\del s$ is connected. Then, by \cref{cplminorlemma}, $M/z\del s$ has an $N$-minor, so $s$ is $N$-deletable. 
  Thus any element of $X$ that is not $N$-deletable belongs to $X-Y'$. 

\begin{claim}
\label{bixbysetup}
The partition $(X-s, z, Y\cup s)$ is a vertical $3$-separation of $M$.
\end{claim}

\begin{subproof}
By \cref{calc2}(i), $s\in \cl_M^{(*)}(X-s)$.
Since $s\in \cl_M(Y)$ it follows from orthogonality that $s\notin \cl_M^{*}(X-s)$.
Therefore $s\in \cl_M(X-s)$ and, by \cref{calc2}(ii), $(X-s, Y\cup \{s,z\})$ is exactly $3$-separating.
By a similar argument, $((X-s)\cup z, Y\cup s)$ is exactly $3$-separating.
As $s\in \cl_M(X-s)$, we see that $r(X-s)=r(X)\geq 3$.
Hence the partition $(X-s,z, Y\cup s)$ is a vertical $3$-separation of $M$.
\end{subproof}

By repeatedly applying \ref{bixbysetup}, we see that $(X-Y', z, Y')$ is a vertical $3$-separation of $M$ with $|(X-Y') \cap E(N)| \le 1$. 
As $Y' \cup z$ is closed, (ii) holds.
The fact that each element of $X$ is either $N$-deletable or $N$-contractible now follows from \cref{gutspluscoguts1}.
\end{proof}



The next result is a consequence of \cref{CPL2} and Bixby's Lemma.

\begin{lemma}
\label{ZspanningB}
Let $N$ be a $3$-connected minor of a $3$-connected matroid~$M$. 
Let $(X,z,Y)$ be a vertical $3$-separation of $M$ such that $M/z$ has an $N$-minor, $|X\cap E(N)|\leq 1$, 
and $Y \cup z$ is closed.
Then there is at most one element of $X$ that is not $N$-flexible. 
Moreover, if  $s\in X$ is not $N$-flexible, 
then $s$ is $N$-contractible 
and $\si(M/s)$ is $3$-connected.
\end{lemma}

\subsection*{Representation Theory}
\label{stablesetup}

A \textit{partial field} is a pair $(R, G)$, where $R$ is a commutative ring with unity, and $G$ is a subgroup of the group of units of $R$ such that $-1 \in G$. If $\mathbb{P}=(R,G)$ is a partial field, then we write $p\in \mathbb{P}$ whenever $p\in G\cup \{0\}$.

Let $\mathbb{P}$ be a partial field, and let $A$ be an $X\times Y$ matrix with entries from $\mathbb{P}$. Then $A$ is a $\mathbb{P}$-\textit{matrix} if every subdeterminant of $A$ is contained in $\mathbb{P}$. If $X'\subseteq X$ and $Y'\subseteq Y$, then we write $A[X',Y']$ to denote the submatrix of $A$ induced by $X'$ and $Y'$.
When $X$ and $Y$ are disjoint, if $Z\subseteq X\cup Y$, then we denote by $A[Z]$ the submatrix induced by $X\cap Z$ and $Y\cap Z$, and we denote by $A-Z$ the submatrix induced by $X-Z$ and $Y-Z$.
 
\begin{thm}[{\cite[Theorem 2.8]{pendavingh2010confinement}}]
\label{pmatroid}
Let $\mathbb{P}$ be a partial field, and let $A$ be an $X\times Y$ $\mathbb{P}$-matrix, where $X$ and $Y$ are disjoint. Let
\begin{equation*}
\mathcal{B}=\{X\}\cup \{X\triangle Z : |X\cap Z|=|Y\cap Z|, \det(A[Z])\neq 0\}. 
\end{equation*}
 Then $\mathcal{B}$ is the set of bases of a matroid on $X\cup Y$.
\end{thm}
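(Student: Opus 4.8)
The plan is to show that $\mathcal{B}$ satisfies the basis exchange axiom; here, as usual, the index sets $X$ and $Y$ are disjoint. First observe that $X\in\mathcal{B}$, so $\mathcal{B}\neq\emptyset$, and that, since $|X\triangle Z|=|X|-|X\cap Z|+|Y\cap Z|$ for $Z\subseteq X\cup Y$, we have $B\in\mathcal{B}$ exactly when $|B|=|X|$ and $\det(A[X\triangle B])\neq 0$ (with the convention $\det(A[\emptyset])=1$); in particular every member of $\mathcal{B}$ has cardinality $|X|$. So it remains to prove that whenever $B_1,B_2\in\mathcal{B}$ and $x\in B_1-B_2$, there is some $y\in B_2-B_1$ with $(B_1-x)\cup y\in\mathcal{B}$. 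The whole argument rests on one device, pivoting: for a nonzero entry $a_{xy}$ of $A$, let $A^{xy}$ be the $((X-x)\cup y)\times((Y-y)\cup x)$ matrix obtained by the usual pivot on $a_{xy}$. The key fact is the Schur-complement determinant identity, valid over any commutative ring: for every $Z'$ with $|Z'\cap((X-x)\cup y)|=|Z'\cap((Y-y)\cup x)|$,
\[ \det(A^{xy}[Z']) = \pm\, a_{xy}^{-1}\,\det(A[Z'\triangle\{x,y\}]). \]

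Two consequences follow at once. First, since $a_{xy}^{-1}\in G$, since $\mathbb{P}=G\cup\{0\}$ is closed under multiplication, and since $\det(A[Z'\triangle\{x,y\}])$ is a subdeterminant of $A$ and hence lies in $\mathbb{P}$, every subdeterminant of $A^{xy}$ lies in $\mathbb{P}$; thus $A^{xy}$ is again a $\mathbb{P}$-matrix. This is precisely where the partial-field hypotheses are used: closure of $G\cup\{0\}$ under products is available, while nothing is assumed about sums, so the additions that arise in forming $A^{xy}$ are justified only a posteriori, by the assumption that every subdeterminant of $A$ lies in $\mathbb{P}$. Second, writing $X^{xy}=(X-x)\cup y$ for the new reference basis (the row index set), we have $X^{xy}\triangle Z'=X\triangle(Z'\triangle\{x,y\})$, and $\det(A^{xy}[Z'])\neq 0$ if and only if $\det(A[Z'\triangle\{x,y\}])\neq 0$; hence the candidate family built from $A^{xy}$ with reference basis $X^{xy}$ coincides, set for set, with $\mathcal{B}$. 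In short, a pivot changes neither the $\mathbb{P}$-matrix property nor the candidate family.

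Next I would reduce to the case $B_1=X$. Suppose $B_0\in\mathcal{B}$ with $B_0\neq X$, and set $Z=X\triangle B_0$, $Z_1=Z\cap X=X-B_0$, and $Z_2=Z\cap Y=B_0-X$, so the square matrix $A[Z_1,Z_2]$ is nonsingular. Pick any $x_0\in Z_1$; since a nonsingular matrix has no zero row, there is $y_0\in Z_2$ with $a_{x_0 y_0}\neq 0$. Pivoting on $a_{x_0 y_0}$ replaces $A$ by a $\mathbb{P}$-matrix with candidate family still $\mathcal{B}$ (so still containing $B_0$) and reference basis $X^{x_0 y_0}$ satisfying $|X^{x_0 y_0}-B_0|=|X-B_0|-1$, because $y_0\in B_0$ and $x_0\notin X^{x_0 y_0}$. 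Inducting on $|X-B_0|$, we obtain from $A$, by a sequence of pivots, a $\mathbb{P}$-matrix $A_0$ with candidate family $\mathcal{B}$ and reference basis $B_0$. Taking $B_0=B_1$ and relabelling the index sets, we may henceforth assume $B_1=X$.

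With $B_1=X$ the exchange is immediate. If $B_2=X$ there is nothing to prove, so write $B_2=X\triangle Z$ with $Z_1=Z\cap X=X-B_2\neq\emptyset$ and $Z_2=Z\cap Y=B_2-X$; then $A[Z_1,Z_2]$ is nonsingular. Since $x\in B_1-B_2=X-B_2=Z_1$, row $x$ of $A[Z_1,Z_2]$ is nonzero, so there is some $y\in Z_2=B_2-B_1$ with $a_{xy}=\det(A[\{x,y\}])\neq 0$, and hence $(B_1-x)\cup y=X\triangle\{x,y\}\in\mathcal{B}$. This is the required exchange, so $\mathcal{B}$ is the set of bases of a matroid on $X\cup Y$. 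I expect the only real work to lie in the pivot identity displayed above: pinning down the sign conventions and verifying that each subdeterminant of $A^{xy}$ equals $\pm a_{xy}^{-1}$ times a subdeterminant of $A$ (equivalently, that the cardinality constraint on $Z'$ transfers to $Z'\triangle\{x,y\}$). Once that is in hand, the preservation statements, the reduction to a reference basis, and the final one-line exchange are all routine bookkeeping.
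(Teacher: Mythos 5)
Your proof is correct, and it is essentially the standard argument for this result. The paper does not supply its own proof of Theorem~\ref{pmatroid} --- it cites \cite[Theorem~2.8]{pendavingh2010confinement} --- and the route you take (reduce to a reference basis by pivoting, using that pivoting preserves both the $\mathbb{P}$-matrix property and the candidate family $\mathcal{B}$ via the Schur-complement identity $\det(A^{xy}[Z'])=\pm A_{xy}^{-1}\det(A[Z'\triangle\{x,y\}])$, then read off the exchange from the nonzero row of a nonsingular submatrix) is exactly the approach used in that source. You correctly isolate the one place where the partial-field hypothesis is load-bearing: nonzero entries of a $\mathbb{P}$-matrix lie in $G$ and are therefore units, which is what makes the pivot $A^{xy}$ well-defined and keeps every subdeterminant in $\mathbb{P}=G\cup\{0\}$; over an arbitrary commutative ring, a nonzero non-unit entry would block the pivot and the conclusion can genuinely fail. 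The remaining work you defer --- verifying the pivot determinant identity case by case according to which of $x,y$ lie in $Z'$, including that the exponent on $A_{xy}$ is $-1$ in every case and that the cardinality constraint transfers --- is a routine but nontrivial computation, and your summary of it is accurate.
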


We say that the matroid in Theorem \ref{pmatroid} is $\mathbb{P}$-\textit{representable}, and that $A$ is a $\mathbb{P}$-\textit{representation} of $M$. We write $M=M[I|A]$ if $A$ is a $\mathbb{P}$-matrix, and $M$ is the matroid whose bases are described in Theorem \ref{pmatroid}. 

Let $A$ be an $X\times Y$ $\mathbb{P}$-matrix, with $X \cap Y = \emptyset$, and let $x\in X$ and $y\in Y$ such that $A_{xy}\neq 0$. Then we define $A^{xy}$ to be the $(X\triangle \{x,y\})\times (Y\triangle \{x,y\})$ $\mathbb{P}$-matrix given by 
\begin{displaymath}
  (A^{xy})_{uv} =
\begin{cases}
    A_{xy}^{-1} \quad & \textrm{if } uv = yx\\
    A_{xy}^{-1} A_{xv} & \textrm{if } u = y, v\neq x\\
    -A_{xy}^{-1} A_{uy} & \textrm{if } v = x, u \neq y\\
    A_{uv} - A_{xy}^{-1} A_{uy} A_{xv} & \textrm{otherwise.}
\end{cases}
\end{displaymath}
We say that $A^{xy}$ is obtained from $A$ by \textit{pivoting} on $xy$.
Note that $A^{xy}$ is a $\mathbb{P}$-matrix, by \cite[Proposition~3.3]{semple1996partial}.

Two $\mathbb{P}$-matrices are \textit{scaling equivalent} if one can be obtained from the other by repeatedly scaling rows and columns by non-zero elements of $\mathbb{P}$. Two $\mathbb{P}$-matrices are \textit{geometrically equivalent} if one can be obtained from the other by a sequence of the following operations: scaling rows and columns by non-zero entries of $\mathbb{P}$, permuting rows, permuting columns, and pivoting.
 

Let $\mathbb{P}$ be a partial field, and let $M$ and $N$ be 
matroids such that $N$ is a 
minor of $M$. Suppose that the ground set of $N$ is $X'\cup Y'$, where $X'$ is a basis of $N$. We say that $M$ is $\mathbb{P}$-\textit{stabilized by $N$} if, whenever $A_1$ and $A_2$ are $X\times Y$ $\mathbb{P}$-matrices, with $X'\subseteq X$ and $Y'\subseteq Y$, such that
\begin{enumerate}
 \item[(i)] $M=M[I|A_1]=M[I|A_2]$,
 \item[(ii)] $A_1[X',Y']$ is scaling equivalent to $A_2[X',Y']$, and
 \item[(iii)] $N=M[I|A_1[X',Y']]=M[I|A_2[X',Y']],$ 
\end{enumerate}
then $A_1$ is scaling equivalent to $A_2$.
If $M$ is $\mathbb{P}$-stabilized by $N$, and every $\mathbb{P}$-representation of $N$ extends to a $\mathbb{P}$-representation of $M$, then we say $M$ is \emph{strongly $\mathbb{P}$-stabilized} by $N$.

Let $\mathcal{M}$ be a class of matroids. We say that $N$ is a \textit{$\mathbb{P}$-stabilizer for $\mathcal{M}$} if, for every $3$-connected $\mathbb{P}$-representable matroid $M\in \mathcal{M}$ with an $N$-minor, $M$ is $\mathbb{P}$-stabilized by $N$.
We say that $N$ is a \textit{strong $\mathbb{P}$-stabilizer for $\mathcal{M}$} if, for every $3$-connected $\mathbb{P}$-representable matroid $M\in \mathcal{M}$ with an $N$-minor, $M$ is strongly $\mathbb{P}$-stabilized by $N$.
Usually, we will be interested in the class of $\mathbb{P}$-representable matroids for some partial field~$\mathbb{P}$.
In this case, when it is clear from context, we will simply say ``$N$ is a strong $\mathbb{P}$-stabilizer''. 

\subsection*{Certifying non-representability}
Let $M$ be a matroid with a minor $N$.
If $M$ has a pair of elements $\{a,b\}$ such that $M\ba a,b$ is $3$-connected and has an $N$-minor, then we say $\{a,b\}$ is a \emph{deletion pair with respect to $N$}.
If $M$ has a pair of elements $\{a,b\}$ that are $N$-deletable, $M \del a,b$ is connected, and $\co(M \del a)$, $\co(M \del b)$, and $\co(M \del a,b)$ are $3$-connected, then we say $\{a,b\}$ is a \emph{weak deletion pair with respect to $N$}.

When $B$ is a basis for a matroid $M$, we write $B^*$ to denote $E(M)-B$.
When $Z \subseteq E(M)$, we write $M_B[Z]$ to denote the minor $M/(B-Z)\del (B^{*}-Z)$. 

Throughout the rest of this section, we assume that $\mathbb{P}$ is a partial field. 

\begin{thm}[{\cite[Theorem 5.5]{mayhew2010stability}}]
\label{companion}
Let $M$ and $N$ be $3$-connected matroids. 
Suppose $M$ has an $N$-minor,
$N$ is a $\mathbb{P}$-representable matroid that is a strong $\mathbb{P}$-stabilizer,
and $\{a,b\} \subseteq E(M)$ is a weak deletion pair with respect to $N$ such that
$M\del a$ and $M\del b$ are $\mathbb{P}$-representable. 
%
Let $D$ be an $X_N\times Y_N$ $\mathbb{P}$-matrix such that $N=M[I | D]$. Choose $B, E_N\subseteq E(M)-\{a,b\}$ such that $B$ is a basis of $M\del \{a,b\}$, $X_N\subseteq B$, and $M_B[E_N]=N$.
Then there exists a $B\times B^*$ matrix~$A$ with entries in $\mathbb{P}$ such that
\begin{itemize}
\item[(i)] $A-a$ and $A-b$ are $\mathbb{P}$-matrices,
\item[(ii)] $M[I|A-a]=M\del a$ and $M[I|A-b]=M\del b$, and
\item[(iii)] $A[E_N]$ is scaling equivalent to $D$.
\end{itemize}
Moreover, the matrix~$A$ is unique up to row and column scaling.
\end{thm}

Usually, we will apply \cref{companion} to a matroid $M$ that is not $\mathbb{P}$-representable.
We call the matrix~$A$ a ``companion matrix'' for $M$.
%

\begin{definition}
Let $M$ be a matroid 
and let $E(M)=X \cup Y$ where $X$ and $Y$ are disjoint.
Let $A$ be an $X \times Y$
matrix with entries in $\mathbb{P}$ such that, for some distinct $a, b \in Y$, both $A-a$ and $A-b$ are $\mathbb{P}$-matrices, $M \del a=M[I|A-a]$, and $M \del b=M[I|A-b]$. 
Then $A$ is an $X \times Y$ \emph{companion $\mathbb{P}$-matrix} for $M$.
\end{definition}

Let $M$ be an excluded minor for the class of $\mathbb{P}$-representable matroids. 
Then it is easily seen that $M$ is $3$-connected.
By \cref{companion}, given a $3$-connected matroid $N$ that is a minor of $M$ and a strong $\mathbb{P}$-stabilizer, a $\mathbb{P}$-representation for $N$, and a weak deletion pair, there is a $B \times B^*$ companion $\mathbb{P}$-matrix~$A$ for $M$, where $B$ is an appropriately chosen basis of $M$. 

A companion matrix for an excluded minor contains a certificate of non-representability over $\mathbb{P}$.

\begin{definition}
Let $B$ be a basis of $M$, and let $A$ be a $B\times B^*$ matrix with entries in $\mathbb{P}$. A subset $Z$ of $E(M)$ \textit{incriminates} the pair $(M, A)$ if $A[Z]$ is square and one of the following holds: 
\begin{enumerate}
 \item[(i)] $\det(A[Z])\notin \mathbb{P}$,
 \item[(ii)] $\det(A[Z])=0$ but $B\triangle Z$ is a basis of $M$, or
 \item[(iii)] $\det(A[Z])\neq 0$ but $B\triangle Z$ is dependent in $M$.
\end{enumerate}
\end{definition}

The next result follows immediately from the definition.

\begin{lemma}
  Let $M$ be a matroid, let $A$ be an $X\times Y$ matrix with entries in $\mathbb{P}$, where $X$ and $Y$ are disjoint, and $X\cup Y=E(M)$. Exactly one of the following statements is true:
\begin{itemize}
 \item[(i)] $A$ is a $\mathbb{P}$-matrix and $M=M[I | A]$, or
 \item[(ii)] there is some $Z\subseteq X\cup Y$ that incriminates $(M, A)$.
\end{itemize}
\end{lemma}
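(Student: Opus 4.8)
The plan is to prove that the two alternatives are mutually exclusive and exhaustive, which amounts to unwinding the definition of incrimination against Theorem~\ref{pmatroid}. I would use throughout the standing assumptions of this subsection, namely that $X$ is a basis of the matroid $M$ and $E=E(M)$; write $r=|X|=r(M)$. The one piece of bookkeeping I would set up first is the bijection $Z\mapsto X\triangle Z$ between the subsets $Z\subseteq E$ for which $A[Z]$ is square (that is, $|X\cap Z|=|Y\cap Z|$) and the $r$-element subsets $B'\subseteq E$, with inverse $B'\mapsto X\triangle B'$; under this correspondence $A[Z]=A[X\triangle B']$ is precisely the square submatrix whose determinant, by Theorem~\ref{pmatroid}, is nonzero exactly when $B'=X\triangle Z$ is a basis of $M[I|A]$. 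I would also note that every square submatrix of $A$ has the form $A[R\cup C]$ with $R\subseteq X$ and $C\subseteq Y$, so the subdeterminants of $A$ are exactly the numbers $\det(A[Z])$ for square $A[Z]$.

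For mutual exclusivity, I would assume (i) and deduce that no $Z$ incriminates $(M,A)$. Since $A$ is a $\mathbb{P}$-matrix, $\det(A[Z])\in\mathbb{P}$ for every square $A[Z]$, so clause~(i) of the definition of incrimination never applies. For such a $Z$, set $B'=X\triangle Z$; as $M=M[I|A]$, Theorem~\ref{pmatroid} gives that $B'$ is a basis of $M$ if and only if $\det(A[Z])\neq 0$, and since $|B'|=r=r(M)$ this is the same as $B'$ being independent in $M$. Hence clauses~(ii) and~(iii) cannot apply either, so alternative (ii) of the lemma fails.

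For exhaustiveness, I would assume (ii) fails, so that no $Z$ incriminates $(M,A)$. Negating clause~(i) of incrimination over all square submatrices, and combining with the hypothesis that the entries of $A$ lie in $\mathbb{P}$, shows that $A$ is a $\mathbb{P}$-matrix; then $M[I|A]$ is a matroid on $E$ by Theorem~\ref{pmatroid}, with $X$ a basis, hence of rank $r$. For any $r$-element set $B'\subseteq E$, put $Z=X\triangle B'$; failure of clause~(ii) gives $\det(A[Z])=0\Rightarrow B'$ is not a basis of $M$, and failure of clause~(iii) gives $\det(A[Z])\neq 0\Rightarrow B'$ is a basis of $M$, so $B'$ is a basis of $M$ if and only if $\det(A[Z])\neq 0$, which by Theorem~\ref{pmatroid} holds if and only if $B'$ is a basis of $M[I|A]$. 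Thus $M$ and $M[I|A]$ have the same collection of bases and so coincide, which is alternative (i).

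There is no substantial obstacle here: the result genuinely follows from the definitions, and the only things demanding any care are the bijective correspondence between square submatrices of $A$ and $r$-element candidate bases described above, and the identification of ``$(M,A)$ is not incriminated via clause~(i)'' with ``$A$ is a $\mathbb{P}$-matrix''. Both are routine consequences of the definitions.
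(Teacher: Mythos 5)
Your proof is correct and is essentially the argument the paper has in mind: the paper gives no proof at all, stating only that the result ``follows immediately from the definition,'' and your write-up is exactly the expected unwinding of the definitions of incrimination and of $M[I|A]$ via Theorem~\ref{pmatroid}, through the bijection $Z\mapsto X\triangle Z$ between square submatrices and candidate bases. Your reading of the hypothesis as ``entries of $A$ lie in $\mathbb{P}$'' (rather than ``$A$ is a $\mathbb{P}$-matrix,'' as the lemma statement literally but surely inadvertently says) is the one that makes the lemma non-vacuous and matches the definition of incrimination, and you handle both directions cleanly.
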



The next theorem shows that there is some companion matrix $A'$ for $M$ that has a $4$-element incriminating set. 

\begin{thm}[{\cite[Theorem 5.8]{mayhew2010stability}}]
  \label{incrim4}
  Let $M$ be a matroid, let $A$ be an $X \times Y$ companion $\mathbb{P}$-matrix for $M$, let $a,b \in Y$, and suppose that
  $Z\subseteq X\cup Y$ incriminates $(M, A)$. Then there is some $X'\times Y'$ matrix $A'$, and $x,y\in X'$, such that
  \begin{itemize}
    \item[(i)] $a,b\in Y'$,
    \item[(ii)]  $A-a$ is geometrically equivalent to $A'-a$,
    \item[(iii)] $A-b$ is geometrically equivalent to $A'-b$, and
    \item[(iv)] $\{x,y,a,b\}$ incriminates $(M,A')$.
  \end{itemize}
\end{thm}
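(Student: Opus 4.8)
The plan is to argue with a minimal counterexample to the desired conclusion. Call a matrix $A'$ over $\mathbb{P}$ on ground set $X'\cup Y'=E$ \emph{admissible} if $A'-u$ is geometrically equivalent to $A-u$ and $A'-v$ is geometrically equivalent to $A-v$. Among all admissible $A'$ together with all sets $Z'$ incriminating $(M,A')$, choose one for which $|Z'|$ is as small as possible; this is legitimate since $(A,Z)$ itself is such a pair, so the set of values $|Z'|$ is a nonempty set of positive even integers. I would then show $|Z'|=4$, that $u,v\in Y'$, and that $Z'\cap X'=\{a,b\}$, which is the conclusion. First I would reduce to the case $u,v\in Y'$: because $A'-u$ and $A'-v$ are $\mathbb{P}$-matrices representing $M\del u$ and $M\del v$ while $M$ has no coloops, a rank count forces $u$ and $v$ to lie on the same side of the partition, and the case occurring in the applications — where $A$ is a companion matrix as in Theorem~\ref{companion} — is the one in which they are columns. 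Once $u,v\in Y'$, the set $X'$ is a basis of $M$, and every set incriminating $(M,A')$ contains both $u$ and $v$: if $Z'$ avoids $u$ then $A'[Z']$ is a submatrix of the $\mathbb{P}$-matrix $A'-u$, so $\det(A'[Z'])\in\mathbb{P}$, and since $X'\triangle Z'\subseteq E-u$ and $r(M)=r(M\del u)$, the set $X'\triangle Z'$ is a basis of $M$ if and only if it is a basis of $M[I\mid A'-u]$, so none of (i)--(iii) can hold. In particular $|Z'|\ge 4$, and if $|Z'|=4$ then $Z'\cap Y'=\{u,v\}$ and $Z'\cap X'=\{a,b\}$, as required.

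So suppose for contradiction that $|Z'|\ge 6$. If there is a nonzero entry $A'_{xy}$ with $x\in Z'\cap X'$ and $y\in(Z'\cap Y')\setminus\{u,v\}$, I would pivot on $xy$. Since $x,y\notin\{u,v\}$ we have $(A')^{xy}-u=(A'-u)^{xy}$ and $(A')^{xy}-v=(A'-v)^{xy}$, so $(A')^{xy}$ is again admissible; and from the pivot identity $\det((A')^{xy}[Z'\setminus\{x,y\}])=\pm(A'_{xy})^{-1}\det(A'[Z'])$ together with $(X'\triangle\{x,y\})\triangle(Z'\setminus\{x,y\})=X'\triangle Z'$ one checks that the value $\det((A')^{xy}[Z'\setminus\{x,y\}])$ lies outside $\mathbb{P}$ exactly when $\det(A'[Z'])$ does, is zero exactly when $\det(A'[Z'])$ is, and the "suspicious set" $X'\triangle Z'$ (read against the new basis $X'\triangle\{x,y\}$) is unchanged; hence $Z'\setminus\{x,y\}$ incriminates $(M,(A')^{xy})$, contradicting minimality. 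Therefore no such entry exists, so the block $A'[Z'\cap X',(Z'\cap Y')\setminus\{u,v\}]$ is zero. As this block has at least one column, $A'[Z']$ has a zero column, so $\det(A'[Z'])=0$; thus $Z'$ incriminates through (ii), and $B^{*}:=X'\triangle Z'$ is a basis of $M$ with $u,v\in B^{*}$.

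It remains to exclude this zero-block configuration. Fix $x_0\in Z'\cap X'$ and a column $y_0\in(Z'\cap Y')\setminus\{u,v\}$; since $A'$ is unchanged, $Z'\setminus\{x_0,y_0\}$ is admissible, with $X'\triangle(Z'\setminus\{x_0,y_0\})=(B^{*}\setminus y_0)\cup x_0$. When $|Z'\cap Y'|\ge 4$ the matrix $A'[Z'\setminus\{x_0,y_0\}]$ still has a zero column, so $Z'\setminus\{x_0,y_0\}$ incriminates $(M,A')$ exactly when $(B^{*}\setminus y_0)\cup x_0$ is a basis of $M$, i.e. when $x_0\in C^{*}_M(y_0,B^{*})$; minimality forces this to fail for every $x_0\in Z'\cap X'$, so $C^{*}_M(y_0,B^{*})$ is disjoint from $Z'\cap X'$. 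As $C^{*}_M(y_0,B^{*})\setminus\{y_0\}\subseteq E\setminus B^{*}=(X'\cap Z')\cup(Y'\setminus Z')$, this gives $C^{*}_M(y_0,B^{*})\subseteq Y'$; but $X'$ is a basis of $M$, so $Y'=E\setminus X'$ is a basis of $M^{*}$ and contains no cocircuit of $M$ — a contradiction. This disposes of $|Z'\cap Y'|\ge 4$, leaving only $|Z'|=6$. There, deleting the unique non-$\{u,v\}$ column of $Z'$ leaves a $2\times 2$ matrix whose determinant need not vanish, and I would finish by showing, via the $2\times 2$ minors of $A'[Z'\cap X',\{u,v\}]$ together with the facts that $A'-u,A'-v$ are genuine $\mathbb{P}$-matrices while $M$ is $3$-connected, that at least one of the three four-element sets $(Z'\cap X'\setminus\{x_i\})\cup\{u,v\}$ incriminates $(M,A')$, again contradicting minimality. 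I expect this last step — excluding a stall at $|Z'|=6$ — to be the main obstacle, since it is precisely where the full strength of the hypotheses on $A-u$, $A-v$ and on the $3$-connectivity of $M$ has to be used.
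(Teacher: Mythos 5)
This theorem is not proved in the paper at hand; it is cited verbatim from Mayhew, Whittle, and Van Zwam, so there is no in-paper proof to compare against. Judging your proposal on its own merits: the minimal-counterexample strategy is plausible, and several pieces are correct. Once $u,v\in Y'$, you rightly observe that any incriminating set must contain both $u$ and $v$ (a set avoiding $u$ would be certified by the $\mathbb{P}$-matrix $A'-u$), and the pivot on $A'_{xy}$ with $x,y\in Z'\setminus\{u,v\}$ does commute with deleting $u$ and $v$ and carries $Z'$ to $Z'\setminus\{x,y\}$ since $(X'\triangle\{x,y\})\triangle(Z'\setminus\{x,y\})=X'\triangle Z'$. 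The fundamental-cocircuit argument eliminating $|Z'|\geq 8$ when the block $A'[Z'\cap X',(Z'\cap Y')\setminus\{u,v\}]$ vanishes is also sound, given that $X'$ is a basis.

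There are, however, two genuine gaps. First, the reduction to $u,v\in Y'$ is asserted rather than argued: you claim a ``rank count'' puts $u$ and $v$ on the same side and then invoke ``the case occurring in the applications'' to place them among the columns. The theorem asserts the existence of an $A'$ with $u,v\in Y'$ in general; this needs an argument --- for instance, that $u\in X'$ would force $r(M\del u)<r(M)$, making $u$ a coloop and contradicting $3$-connectivity --- and if $u$ or $v$ did begin as a row you would need to pivot them out while checking that the incriminating structure and the $\mathbb{P}$-matrix hypotheses on $A'-u$ and $A'-v$ survive. Second, and more seriously, you leave the $|Z'|=6$ case open and say so yourself. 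At $|Z'|=6$ the zero-block trick no longer yields a zero column after removing $\{x_0,y_0\}$, so the shortcut collapses, and the step you sketch --- working from the $2\times 2$ minors of $A'[Z'\cap X',\{u,v\}]$, the fact that $A'-u$ and $A'-v$ are genuine $\mathbb{P}$-matrices, and $3$-connectivity of $M$ --- is precisely where the theorem's real content lives. As written, the proposal is an outline with a hole at its crux, not a proof.
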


Let $N$ be a $3$-connected non-binary matroid. 
A matroid~$M$ with an $N$-minor is \textit{$N$-stable} if, whenever $(X,Y)$ is a $2$-separation of $M$ 
where $X$ is the non-$N$-side,
then the matroid $M_X$, corresponding to $X$ in the $1$- or $2$-sum decomposition of $M$ induced by $(X,Y)$, is binary. 

The following result is proved by Hall, Mayhew, and van Zwam \cite[Propositions~3.1 and 3.2]{hall2011excluded}.

\begin{lemma}
\label{stableisstable}
Let $N$ be a $3$-connected strong $\mathbb{P}$-stabilizer that is non-binary, and let $M$ be a $\mathbb{P}$-representable matroid that has an $N$-minor.
If $M$ is $N$-stable, then $M$ is strongly $\mathbb{P}$-stabilized by $N$.  
\end{lemma}

We next consider how a matroid can lose the property of being $N$-stable after a single-element extension.
We say that a matroid $M$ is \textit{$3$-connected up to series pairs} if $\co(M)$ is $3$-connected and every non-trivial series class of $M$ is a series pair.

\begin{lemma}
\label{plusu24}
  Let $N$ be a $3$-connected non-binary matroid.
Let $M$ be a matroid with an element~$e$ such that $M \del e$ has an $N$-minor, where $e$ is not a coloop.
  Suppose that $M\del e$ is $3$-connected up to series pairs, and that $M$ is not $N$-stable.
  Then $M$ has a $2$-separation $(S \cup e, Q)$ where $S \cup e$ is a triangle and a triad, for some series pair~$S$ of $M \del e$.
\end{lemma}
\begin{proof}
  Suppose $M \del e$ is $3$-connected up to series pairs and $M$ is not $N$-stable.
  If $M$ is $3$-connected up to series pairs, then it is trivially $N$-stable; a contradiction.
  So there is some $2$-separation $(A,B)$ of $M$ with $e \in A$, say.
  Since $(A-e,B)$ is $2$-separating in $M \ba e$, and $M \ba e$ is $3$-connected up to series pairs, we deduce that $|A| \le 3$.
  Let $M = Q_A \oplus_2 Q_B$ be the $2$-sum decomposition corresponding to the $2$-separation $(A,B)$ of $M$.
  Since $M \ba e$ has an $N$-minor and $|E(N)| \ge 4$, we have $|A \cap E(N)| \le 1$.
  As $M$ is not $N$-stable, $Q_A$ has a $U_{2,4}$-minor.
  But $|A| \le 3$, so $Q_A \cong U_{2,4}$. 
  The result follows.
\end{proof}

\begin{definition}
  Let $M$ be a matroid with a $2$-separation $(P,Q)$ where $P$ is a triangle and a triad.
  Then $P$ is an \emph{unstable triple} of $M$.
\end{definition}

  Let $N$ be a $3$-connected non-binary matroid, and let $M$ be a connected matroid with $e \in E(M)$ such that $M \del e$ has an $N$-minor.
  By \cref{plusu24}, if $M\del e$ is $3$-connected up to series pairs, but $M$ is not $N$-stable, then $M$ has an unstable triple, which contains $e$.

  Observe also that if $P$ is an unstable triple, then $P-p$ is a series pair in $M \ba p$, for each $p \in P$.

  \smallskip

The next lemma gives sufficient conditions for showing a certain minor of $M$ is not $\mathbb{P}$-representable.
It can be proved by a straightforward modification of a result of Mayhew, Whittle, and van Zwam \cite[Theorem 5.12]{mayhew2010stability}. The conditions (iv) and (v) 
are changed from ``$M_B[Z_1]$ and $M_B[Z_2]$ are $3$-connected up to series-parallel classes'' to ``$M_B[Z_1]$ and $M_B[Z_2]$ are $N$-stable'', using \cref{stableisstable}.

\begin{lemma}
\label{notrepcert} 
Let $M$ be a matroid, let $A$ be a $B \times B^*$ matrix with entries in $\mathbb{P}$, where $\{x,y,a,b\}$ incriminates $(M,A)$ for $x,y \in B$ and $a,b \in B^*$.
Let $N$ be a non-binary strong stabilizer for the class of $\mathbb{P}$-representable matroids.
Suppose that $C\subseteq E(M)$ is such that $M_B[C]$ is $N$-fragile. If there exist subsets $Z,Z_1,Z_2 \subseteq E(M)$ such that
\begin{itemize}
 \item[(a)] $a\in Z_1-Z_2$ and $b\in Z_2-Z_1$, 
 \item[(b)] $C\cup \{x,y\}\subseteq Z\subseteq Z_1\cap Z_2$,
 \item[(c)] $M_B[Z]$ is connected,
 \item[(d)] $M_B[Z_1]$ is $N$-stable,
 \item[(e)] $M_B[Z_2]$ is $N$-stable, and
 \item[(f)] $\{x,y,a,b\}$ incriminates $(M_B[Z_1\cup Z_2], A[Z_1\cup Z_2])$,
\end{itemize}
then $M_B[Z_1\cup Z_2]$ is not strongly $\mathbb{P}$-stabilized by $N$.
\end{lemma}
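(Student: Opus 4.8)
The plan is to mimic the proof of \cite[Theorem 5.12]{mayhew2010stability}, making the substitution that the relevant small pieces are $N$-stable rather than being $3$-connected up to series-parallel classes. Suppose for contradiction that $M_B[Z_1\cup Z_2]$ \emph{is} strongly $\mathbb{P}$-stabilized by $N$; abbreviate $M' := M_B[Z_1\cup Z_2]$ and $A' := A[Z_1\cup Z_2]$. Since $M_B[Z_1]$ and $M_B[Z_2]$ are $N$-stable $\mathbb{P}$-representable matroids with an $N$-minor (the $N$-minor being furnished by $M_B[C]$, which is a minor of each since $C\subseteq Z\subseteq Z_1\cap Z_2$), Lemma \ref{stableisstable} tells us that each of $M_B[Z_1]$ and $M_B[Z_2]$ is strongly $\mathbb{P}$-stabilized by $N$. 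In particular each of $M_B[Z_1]$ and $M_B[Z_2]$ has a $\mathbb{P}$-representation extending the prescribed representation of $N$ on $C$, and hence agreeing (up to row and column scaling) with $A[Z_1]$ and $A[Z_2]$ respectively once we normalise using the $N$-minor inside $M_B[Z]$.

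First I would fix the normalisation: since $M_B[Z]$ is connected and contains $M_B[C]$ as a minor, I use the representation $D$ of $N$ (equivalently $M_B[C]$) to pin down the scaling of every row and column meeting $Z$, so that $A[Z]$, $A[Z_1]$ and $A[Z_2]$ are all simultaneously in a fixed scaling class. Then, because $M_B[Z_1]$ is strongly stabilized by $N$ and $Z \supseteq C\cup\{x,y\}$ is connected, the submatrix $A[Z_1]$ must be a $\mathbb{P}$-matrix representing $M_B[Z_1]$; likewise $A[Z_2]$ is a $\mathbb{P}$-matrix representing $M_B[Z_2]$. Next, the hypothesis that $M'$ is strongly stabilized by $N$ forces $A'=A[Z_1\cup Z_2]$, after a suitable global scaling, to be a $\mathbb{P}$-matrix with $M'=M[I\,|\,A']$: indeed the representation of $M'$ extending $D$ restricts on the connected set $Z$ to the same normalised matrix $A[Z]$, and strong stabilization propagates this agreement from $Z$ outwards across $Z_1$ and across $Z_2$, and the union $Z_1\cup Z_2$ is covered once we know agreement on $Z_1$ and on $Z_2$ (here conditions (i), $a\in Z_1-Z_2$ and $b\in Z_2-Z_1$, guarantee that the extension problem genuinely decomposes into the $Z_1$ and $Z_2$ pieces, so there is no further freedom). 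But then $A'$ being a $\mathbb{P}$-matrix with $M'=M[I\,|\,A']$ directly contradicts condition (vi): if $\{a,b,x,y\}$ incriminates $(M',A')$ then $A'[\{a,b,x,y\}]$ has a determinant either outside $\mathbb{P}$, or zero while $B\triangle\{a,b,x,y\}$ is a basis, or nonzero while that set is dependent --- each of which is impossible for a genuine $\mathbb{P}$-representation by the lemma preceding Theorem \ref{incrim4}. This contradiction proves the claim.

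The main obstacle, and the place where I would need to be careful rather than routine, is the propagation-and-gluing step: verifying that strong $\mathbb{P}$-stabilization really does let one glue the normalised representation on $Z_1$ to the one on $Z_2$ along $Z$ to recover $A'$ on all of $Z_1\cup Z_2$, with no ambiguity. This is exactly the content that makes \cite[Theorem 5.12]{mayhew2010stability} work, and the only genuine change here is replacing ``$3$-connected up to series-parallel classes'' by ``$N$-stable'' as the hypothesis under which the pieces $M_B[Z_1]$, $M_B[Z_2]$ are known to be strongly stabilized; that replacement is legitimate precisely because of Lemma \ref{stableisstable}, which was not available (in this form) at the time of \cite{mayhew2010stability}. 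Everything else --- the normalisation via $M_B[C]$, the use of connectivity of $M_B[Z]$, and the final appeal to the incrimination dichotomy --- transfers verbatim from the proof of \cite[Theorem 5.12]{mayhew2010stability}.
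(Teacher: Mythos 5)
Your approach is exactly what the paper prescribes: it does not give a standalone proof but states that the result follows from the proof of \cite[Theorem~5.12]{mayhew2010stability}, with the ``$3$-connected up to series-parallel classes'' hypotheses on $M_B[Z_1]$ and $M_B[Z_2]$ replaced by ``$N$-stable,'' which still yields strong $\mathbb{P}$-stabilization of these pieces via Lemma~\ref{stableisstable}. Your sketch (normalize via the $N$-minor inside the connected $M_B[Z]$, propagate uniqueness of the representation over $Z_1$ and $Z_2$, glue, and contradict the incriminating set) is the same argument, and you correctly identify Lemma~\ref{stableisstable} as the only new ingredient.
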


The following special case of \cref{notrepcert} is sufficient for our needs. 
\begin{lemma}
\label{notrepcert2} 
Let $M$ be a matroid, let $E = E(M)$, let $A$ be a $B \times B^*$ matrix with entries in $\mathbb{P}$, where $\{x,y,a,b\}$ incriminates $(M,A)$ for $x,y \in B$ and $a,b \in B^*$.
Let $N$ be a non-binary strong stabilizer for the class of $\mathbb{P}$-representable matroids.
If there exists $u \in E-\{x,y,a,b\}$ such that
\begin{itemize}
  \item[(a)] $M_B[E-\{a,b,u\}]$ is connected and has an $N$-minor, and
  \item[(b)] $M_B[E-\{b,u\}]$ and $M_B[E-\{a,u\}]$ are $N$-stable, 
\end{itemize}
then $M_B[E-u]$ is not strongly $\mathbb{P}$-stabilized by $N$.
\end{lemma}

We write ``by an allowable pivot'' to refer to an application of either of the next two results.

\begin{lemma}[{\cite[Lemma 5.10]{mayhew2010stability}}]
  Let $A$ be a $B\times B^{*}$ companion $\mathbb{P}$-matrix for $M$. Suppose that $\{x,y,a,b\}$ incriminates $(M,A)$, where $\{x,y\}\subseteq B$ and $\{a,b\}\subseteq B^{*}$. If $p\in \{x,y\}$, $q\in B^{*}-\{a,b\}$, and $A_{pq}\neq 0$, then $\{x,y,a,b\}\triangle \{p,q\}$ incriminates $(M,A^{pq})$.
\end{lemma}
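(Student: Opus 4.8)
The plan is to follow the single $2\times2$ submatrix that certifies the incrimination as it is transformed by the pivot, using nothing more than the Schur-complement identity for pivoting together with the group structure of the partial field. Assume without loss of generality that $p=x$ (the case $p=y$ is identical after relabelling). Write $B'=B\triangle\{x,q\}$ and $(B')^{*}=B^{*}\triangle\{x,q\}$ for the row and column sets of $A^{xq}$, and set $Z=\{x,y,a,b\}$ and $S=\{x,y,a,b\}\triangle\{x,q\}=\{y,q,a,b\}$, so that $S\triangle\{x,q\}=Z$. The first, purely bookkeeping, step is to observe that $y\in B-x\subseteq B'$, that $q\in B'$, and that $a,b\in B^{*}-q\subseteq (B')^{*}$; hence $S$ meets $B'$ in exactly $\{y,q\}$ and meets $(B')^{*}$ in exactly $\{a,b\}$, so $A^{xq}[S]$ is a $2\times2$ submatrix and in particular is square --- the first requirement in the definition of an incriminating set. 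I would also record the elementary identity $B'\triangle S=(B\triangle\{x,q\})\triangle(Z\triangle\{x,q\})=B\triangle Z$, which says that the subset tested for (in)dependence in clauses (ii) and (iii) of the definition is literally the same for $(M,A^{xq})$ with $S$ as it is for $(M,A)$ with $Z$.

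Next I would invoke the pivot--determinant identity: whenever $A^{xq}[W]$ is square, $\det(A^{xq}[W])=\varepsilon A_{xq}^{-1}\det(A[W\triangle\{x,q\}])$ for some $\varepsilon\in\{1,-1\}$. This is just a Schur-complement computation --- the relevant block of $A^{xq}$ is, up to a row/column permutation supplying the sign, the Schur complement of the pivot entry $A_{xq}$ inside $A[W\triangle\{x,q\}]$ --- together with the easy parity check that $A^{xq}[W]$ is square exactly when $A[W\triangle\{x,q\}]$ is. Taking $W=S$ gives $\det(A^{xq}[S])=\varepsilon A_{xq}^{-1}\det(A[Z])$. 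Since $q\notin\{a,b\}$, the entry $A_{xq}$ is a nonzero $1\times1$ subdeterminant of the $\mathbb{P}$-matrix $A-a$, so $A_{xq}$ lies in the unit group $G$ of $\mathbb{P}$ and hence so does $\lambda:=\varepsilon A_{xq}^{-1}$. Because $G$ is a group, multiplication by $\lambda$ maps $\mathbb{P}$ into $\mathbb{P}$, maps $R\setminus\mathbb{P}$ into $R\setminus\mathbb{P}$, and annihilates no nonzero element of $R$; these are precisely the three facts needed to carry each of the three incrimination conditions across the equation $\det(A^{xq}[S])=\lambda\det(A[Z])$. (One should also note in passing that $A^{xq}$ has all entries in $\mathbb{P}$, so that ``$\{y,q,a,b\}$ incriminates $(M,A^{xq})$'' is even meaningful: this holds because $A^{xq}-a=(A-a)^{xq}$ and $A^{xq}-b=(A-b)^{xq}$ are $\mathbb{P}$-matrices, pivoting preserving the $\mathbb{P}$-matrix property, and every entry of $A^{xq}$ lies outside at least one of the columns $a,b$.)

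The proof then finishes with the three-way split on how $\{a,b,x,y\}$ incriminates $(M,A)$. If $\det(A[Z])\notin\mathbb{P}$ then $\det(A^{xq}[S])=\lambda\det(A[Z])\notin\mathbb{P}$, so $S$ incriminates via clause (i). If $\det(A[Z])=0$ while $B\triangle Z$ is a basis of $M$, then $\det(A^{xq}[S])=0$ and $B'\triangle S=B\triangle Z$ is a basis, so $S$ incriminates via clause (ii). If $\det(A[Z])\neq0$ while $B\triangle Z$ is dependent, then $\det(A^{xq}[S])=\lambda\det(A[Z])\neq0$ (a unit times a nonzero element of $R$) and $B'\triangle S=B\triangle Z$ is dependent, so $S$ incriminates via clause (iii). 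In every case $\{x,y,a,b\}\triangle\{p,q\}=S$ incriminates $(M,A^{xq})$, as required.

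I do not expect a genuine obstacle here. The only points needing care are the parity bookkeeping for the pivot--determinant identity --- which sets are square, and the fact that it is always the \emph{first} power of $A_{xq}^{-1}$ that appears, because the identity is a Schur complement --- and the small observation that the partial-field subtlety (non-closure of $\mathbb{P}$ under addition) never intervenes, since the argument only ever multiplies by a unit and never adds.
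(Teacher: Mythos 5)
The paper cites this as \cite[Lemma 5.10]{mayhew2010stability} and does not reprove it, so there is no in-paper argument to compare against; I am judging your proof on its own merits. It is correct, and it is the natural argument. You correctly note that $A^{pq}$ is still a companion matrix (since $A^{pq}-a=(A-a)^{pq}$ and $A^{pq}-b=(A-b)^{pq}$ are $\mathbb{P}$-matrices and every entry of $A^{pq}$ avoids at least one of columns $a,b$); you verify that $S=\{x,y,a,b\}\triangle\{p,q\}$ gives a square submatrix of $A^{pq}$ and that $B'\triangle S=B\triangle Z$, so the basis/dependence tests in clauses (ii) and (iii) refer to the same subset of $E(M)$; and the determinant identity $\det(A^{pq}[S])=\pm A_{pq}^{-1}\det(A[Z])$ (a direct expansion of the $2\times 2$ minor, or the Schur-complement fact you invoke) together with $A_{pq}\in G$ (a nonzero entry of the $\mathbb{P}$-matrix $A-a$, since $q\ne a$) lets each of the three incrimination conditions pass across the pivot. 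I see no gaps.
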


\begin{lemma}[{\cite[Lemma 5.11]{mayhew2010stability}}]
  Let $A$ be a $B\times B^{*}$ companion $\mathbb{P}$-matrix for $M$. Suppose that $\{x,y,a,b\}$ incriminates $(M,A)$, where $\{x,y\}\subseteq B$ and $\{a,b\}\subseteq B^{*}$. If $p\in B-\{x,y\}$, $q\in B^{*}-\{a,b\}$, $A_{pq}\neq 0$, and either $A_{pa}=A_{pb}=0$ or $A_{xq}=A_{yq}=0$, then $\{x,y,a,b\}$ incriminates $(M,A^{pq})$. 
\end{lemma}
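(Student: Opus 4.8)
The plan is to separate the two ingredients of incrimination. On the determinant side, note that since $\{x,y\}\subseteq B-p$ and $\{a,b\}\subseteq B^{*}-q$, the elements $x,y,a,b$ still index rows and columns of $A^{pq}$, and by the pivot formula $(A^{pq})_{uv}=A_{uv}-A_{pq}^{-1}A_{uq}A_{pv}$ for $u\in\{x,y\}$ and $v\in\{a,b\}$. In the first case the factor $A_{pv}$ is zero for $v\in\{a,b\}$, and in the second case the factor $A_{uq}$ is zero for $u\in\{x,y\}$; either way the correction term vanishes and $A^{pq}[\{a,b,x,y\}]=A[\{a,b,x,y\}]$, so $\det(A^{pq}[\{a,b,x,y\}])=\det(A[\{a,b,x,y\}])$. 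In particular this determinant lies outside $\mathbb{P}$ exactly when $\det(A[\{a,b,x,y\}])$ does, and is zero exactly when $\det(A[\{a,b,x,y\}])$ is. (It is also routine that $A^{pq}$ still has all entries in $\mathbb{P}$, since $A^{pq}-a$ and $A^{pq}-b$ are pivots of the $\mathbb{P}$-matrices $A-a$ and $A-b$.) Consequently clause~(i) in the definition of incrimination transfers at once, and clauses~(ii) and~(iii) will transfer as soon as we know that $B\triangle\{a,b,x,y,p,q\}$ is a basis of $M$ if and only if $B\triangle\{a,b,x,y\}$ is: indeed, the distinguished basis attached to $A^{pq}$ is $B':=B\triangle\{p,q\}$, and since $x,y\in B'$ and $a,b\notin B'$ we have $B'\triangle\{a,b,x,y\}=B\triangle\{a,b,x,y,p,q\}$.

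For that equivalence I would descend to the genuine representations hidden in the companion matrix. Since $M$ is $3$-connected with enough elements, it has no series or parallel pairs, so $\{a,b\}$ is coindependent and $B$ is a basis of $M$; hence $B\triangle\{a,b,x,y\}$ and $B\triangle\{a,b,x,y,p,q\}$ both have size $r(M)$. Now $A-a$ and $A-b$ are $\mathbb{P}$-matrices with $M[I|A-a]=M\del a$ and $M[I|A-b]=M\del b$, so reading off the column of an element $c$ (using $A-b$ if $c\neq b$, or $A-a$ if $c\neq a$) yields $c\in\cl_M(B-r)\iff A_{rc}=0$ for each $r\in B$. Thus $A_{pq}\neq 0$ gives $q\notin\cl_M(B-p)$; in the first case $A_{pa}=A_{pb}=0$ gives $a,b\in\cl_M(B-p)$; and in the second case $A_{xq}=A_{yq}=0$ gives $q\in\cl_M(B-x)\cap\cl_M(B-y)$, which equals $\cl_M(B-\{x,y\})$ by a short exchange argument valid for any basis $B$. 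Writing $B_0=B-\{x,y,p\}$, so that $B\triangle\{a,b,x,y\}=B_0\cup\{p,a,b\}=:T$ and $B\triangle\{a,b,x,y,p,q\}=B_0\cup\{a,b,q\}=:S$, I then compare ranks. In the first case $\cl_M(B_0\cup\{a,b\})\subseteq\cl_M(B-p)$, which contains neither $p$ (as $B$ is a basis) nor $q$, so $r_M(T)=r_M(B_0\cup\{a,b\})+1=r_M(S)$. In the second case $q\in\cl_M(B-\{x,y\})-\cl_M(B_0)=\cl_M(B_0\cup p)-\cl_M(B_0)$, so by exchange $p\in\cl_M(B_0\cup q)$ and hence $\cl_M(B_0\cup p)=\cl_M(B_0\cup q)$, giving $\cl_M(T)=\cl_M(S)$ and again $r_M(T)=r_M(S)$. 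Since $|T|=|S|=r(M)$, the set $T$ is a basis of $M$ if and only if $S$ is; together with the first paragraph, $\{a,b,x,y\}$ incriminates $(M,A^{pq})$ via the same clause by which it incriminates $(M,A)$.

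The main obstacle is precisely this matroid-theoretic equivalence. The determinant computation is a one-line check, but the companion matrix $A$ deliberately carries no reliable information about bases of $M$ that contain both $a$ and $b$, so one cannot simply read off whether $T$ and $S$ are bases; one must pass to $M\del a$ and $M\del b$, where $A-a$ and $A-b$ are honest $\mathbb{P}$-representations, extract the closure-membership facts above, and then close up to obtain the rank comparison. Everything else---the pivot formula, the size bookkeeping, and the elementary exchange identities---is routine.
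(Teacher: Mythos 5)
This lemma is cited in the paper from Mayhew, Whittle, and Van~Zwam, so the paper itself contains no proof against which to compare. Judged on its own, your argument is correct and follows the natural route: the pivot formula gives $(A^{pq})_{uv}=A_{uv}-A_{pq}^{-1}A_{uq}A_{pv}$ for $u\in\{x,y\}$, $v\in\{a,b\}$, and under either hypothesis one of the two factors in the correction term vanishes, so the $2\times 2$ block $A[\{a,b,x,y\}]$ is literally unchanged; thus clause~(i) of incrimination transfers immediately and clauses~(ii) and~(iii) reduce to showing $T:=B\triangle\{a,b,x,y\}$ and $S:=B\triangle\{a,b,x,y,p,q\}$ are simultaneously bases. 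Your derivation of that equivalence is also sound: the closure facts $a,b\in\cl_M(B-p)$ (resp.\ $q\in\cl_M(B-x)\cap\cl_M(B-y)=\cl_M(B-\{x,y\})$) are legitimately extracted from the $\mathbb{P}$-representations $A-a$ and $A-b$ of $M\del a$ and $M\del b$, since $\cl_{M\del a}(X)=\cl_M(X)\cap(E-a)$; the step $\cl_M(B-x)\cap\cl_M(B-y)=\cl_M(B-\{x,y\})$ holds by submodularity applied to the two hyperplanes $\cl(B-x)$, $\cl(B-y)$; and the subsequent rank comparison $r_M(T)=r_M(S)$ in each case is correct. You might state more carefully that $B$ is a basis of $M$ itself (not just of $M\del a,b$) because $\{a,b\}$ is coindependent, which in the paper's setup follows from $M$ being a $3$-connected excluded minor with $|E(M)|\geq 6$; but this is a minor polish rather than a gap.
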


The elements of a set $\{x,y,a,b\}$ that incriminates $(M,A)$ label a $2\times 2$ submatrix $A[\{x,y,a,b\}]$ of $A$. We will refer to the next result by saying ``the bad submatrix has no zero entries.''

\begin{lemma}
 \label{nobadsubdetzeros}
 Let $A$ be a $B\times B^{*}$ companion $\mathbb{P}$-matrix for $M$.
Suppose that $\{x,y,a,b\}$ incriminates $(M,A)$, where $\{x,y\}\subseteq B$ and $\{a,b\}\subseteq B^{*}$.
Then $A_{ij}\not=0$ for $i\in \{x,y\}$ and $j\in\{a,b\}$. 
\end{lemma}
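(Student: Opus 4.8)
I plan to argue by contradiction, so suppose some entry of $A[\{x,y\},\{a,b\}]$ is zero. Since $A-a$ and $A-b$ play symmetric roles in the definition of a companion matrix, and $\{a,b,x,y\}$ is fixed under interchanging $x$ with $y$ and under interchanging $a$ with $b$, I may assume $A_{xa}=0$.

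The key point is that, although $A$ need not be a $\mathbb{P}$-representation of $M$, it does correctly represent $M\del a$ and $M\del b$: by Theorem \ref{companion}, $A-a$ and $A-b$ are $\mathbb{P}$-matrices with $M[I\,|\,A-a]=M\del a$ and $M[I\,|\,A-b]=M\del b$, and every entry of $A$ lies in $\mathbb{P}$. Consequently every subdeterminant of $A$ whose column set omits $a$, or omits $b$, lies in $\mathbb{P}$ and faithfully records the corresponding basis-exchange in $M$, since the relevant exchange set lies in $E-a$ or in $E-b$. In particular the zero pattern of columns $a$ and $b$ is forced by $M$: $A_{pe}=0$ if and only if $p\notin C_M(e,B)$, for $p\in B$ and $e\in\{a,b\}$. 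The one subdeterminant indexed by our chosen sets about which $A$ may be mistaken is thus $\det A[\{x,y\},\{a,b\}]=A_{xa}A_{yb}-A_{xb}A_{ya}$.

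Now I would rule out each incrimination condition. Since $A_{xa}=0$, this subdeterminant equals $-A_{xb}A_{ya}$, which lies in $\mathbb{P}$ because $A_{xb},A_{ya}\in\mathbb{P}$ and $\mathbb{P}$ is closed under products and negation; this kills condition (i). For (ii) and (iii) I would pass to $M'=M/(B-\{x,y\})$, a rank-$2$ matroid in which $\{x,y\}$ is a basis. Translating via the key point: for $e\in\{a,b\}$, whether $\{x,e\}$ and $\{y,e\}$ are bases of $M'$ is recorded by $A_{ye}$ and $A_{xe}$ respectively (these correspond to the exchange sets $B\triangle\{y,e\}$ and $B\triangle\{x,e\}$, which lie in $E-a$ or $E-b$), and $B\triangle\{a,b,x,y\}$ is a basis of $M$ if and only if $\{a,b\}$ is independent in $M'$. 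A short case check on whether $A_{xb}$ and $A_{ya}$ vanish — using $A_{xa}=0$ throughout, which forces $a$ to be a loop of $M'$ or parallel to $y$ — shows that $\{a,b\}$ is independent in $M'$ exactly when $-A_{xb}A_{ya}\neq 0$, i.e. exactly when $\det A[\{x,y\},\{a,b\}]\neq 0$. This contradicts both (ii) and (iii). As all three conditions fail, no entry of $A[\{x,y\},\{a,b\}]$ can be zero.

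I expect the only real work to be the bookkeeping behind the ``key point'': verifying precisely that single-column deletions of the companion matrix represent $M\del a$ and $M\del b$ with the right identification of bases, so that the zero pattern of columns $a$ and $b$, and the status of the one- and two-element exchanges involved, are all faithful to $M$, leaving $\det A[\{x,y\},\{a,b\}]$ as the sole possible defect of $A$ among the relevant quantities. Granting that, the three-way case analysis is routine.
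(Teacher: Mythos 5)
Your proof is correct and is essentially the paper's argument: assume one entry of the bad block vanishes, note the determinant then collapses to a monomial in $\mathbb{P}$ (killing condition (i)), and compare whether that monomial vanishes with whether $B\triangle\{a,b,x,y\}$ is actually a basis of $M$, using that $A-a$ and $A-b$ faithfully represent $M\del a$ and $M\del b$. The only cosmetic difference is that you organize the case check for (ii)/(iii) by passing to the rank-$2$ contraction $M/(B-\{x,y\})$ and reading off loop/parallel relations among $a$ and $b$, whereas the paper runs the basis-exchange axiom directly in $M$.
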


\begin{proof}
  Towards a contradiction, suppose that $A_{ij}=0$ for some $i\in \{x,y\}$ and $j\in\{a,b\}$.
  We may assume without loss of generality that $A_{xb}=0$. Then $\det(A[\{x,y,a,b\}])\in \mathbb{P}$. Since $\{x,y,a,b\}$ incriminates the pair $(M,A)$, it follows that either
\begin{enumerate}
 \item[(i)] $\det(A[\{x,y,a,b\}])=0$ but $B\triangle \{x,y,a,b\}$ is a basis of $M$, or 
 \item[(ii)] $\det(A[\{x,y,a,b\}])\neq 0$ but $B\triangle \{x,y,a,b\}$ is dependent in $M$.
\end{enumerate}

Assume that (i) holds.
As 
$\det(A[\{x,y,a,b\}])=A_{xa}\cdot A_{yb}=0$ and non-zero elements of $\mathbb{P}$ are units, it follows that $A_{xa}=0$ or $A_{yb}=0$.
Suppose that $A_{xa}=0$.
Let $B'=B\triangle \{x,y,a,b\}$.
Now $B$ and $B'$ are bases of $M$ and $x\in B-B'$, so, by basis exchange, there is some $z\in B'-B=\{a,b\}$ such that $(B-x)\cup z$ is a basis of $M$.
This is a contradiction because $M\del b=M[I|A-b]$, $M\del a=M[I|A-a]$ and $A_{xa}=A_{xb}=0$, so both $(B-x)\cup a$ and $(B-x)\cup b$ are dependent in $M$.
Thus $A_{xa}\neq 0$.
Similarly, since $a\in B'-B$, it follows that $(B'-a)\cup x$ or $(B'-a)\cup y$ is a basis of $M\del a=M[I|A-a]$.
Thus $A_{yb}\neq 0$. 
We deduce that (i) does not hold.

Therefore (ii) holds. 
Since $\det(A[\{x,y,a,b\}])=A_{xa}\cdot A_{yb}\neq 0$, it follows that $A_{xa}\neq 0$ and $A_{yb}\neq 0$. Now $M\del b=M[I|A-b]$ and $A_{xa}\neq 0$, so $(B-x)\cup a$ is a basis of $M$. Similarly, $M\del a=M[I|A-a]$ and $A_{yb}\neq 0$, so $(B-y)\cup b$ is also a basis of $M$. Let $B_1= (B-x)\cup a$ and $B_2=(B-y)\cup b$. Then $x\in B_2-B_1$, so, by basis exchange, there is some $z\in B_1-B_2$ such that $(B_2-x)\cup z$ is a basis of $M$.
But $B_1-B_2=\{a,y\}$, so either $(B-x)\cup b$ or $B'$ is a basis.
In the former case, since $A_{xb}=0$, it follows that $(B-x)\cup b$ is dependent in $M\del a=M[I|A-a]$ and hence in $M$.
Since $B'$ is dependent by assumption, we obtain a contradiction, thus completing the proof.
\end{proof}

\subsection*{Robust and strong elements}

When working with a matroid $M$ and a $\mathbb{P}$-representation $A$ of $M$, there is a natural basis $B$ of $M$ that labels the rows of $A$.
We will frequently look to remove elements ``relative to $B$''; that is, in such a way that we obtain a $\mathbb{P}$-representation of the minor of $M$ by removing rows and columns of $A$, without pivoting.  This leads to the following definitions.

Let $M$ be a $3$-connected matroid, let $B$ be a basis of $M$, and let $N$ be a $3$-connected minor of $M$.
Recall that we write $B^*$ to denote $E(M)-B$.
An element $e \in E(M)$ is \textit{$(N,B)$-robust} if either
\begin{enumerate}
 \item[(i)] $e\in B$ and $M/e$ has an $N$-minor, or
 \item[(ii)] $e\in B^*$ and $M\del e$ has an $N$-minor.
\end{enumerate}

Note that an $N$-flexible element of $M$ is clearly $(N,B)$-robust for any basis $B$ of $M$. 

An element $e \in E(M)$ is \textit{$(N,B)$-strong} if either
\begin{enumerate}
 \item[(i)] $e\in B$, and $\si(M/e)$ is $3$-connected and has an $N$-minor; or
 \item[(ii)] $e\in B^*$, and $\co(M\del e)$ is $3$-connected and has an $N$-minor.
\end{enumerate}

\subsection*{Delta-wye exchange}

Let $M$ be a matroid with a triangle $T=\{a,b,c\}$.
Consider a copy of $M(K_4)$ having $T$ as a triangle with $\{a',b',c'\}$ as the complementary triad labelled such that $\{a,b',c'\}$, $\{a',b,c'\}$ and $\{a',b',c\}$ are triangles.
Let $P_{T}(M,M(K_4))$ denote the generalised parallel connection of $M$ with this copy of $M(K_4)$ along the triangle $T$.
Let $M'$ be the matroid $P_{T}(M,M(K_4))\backslash T$ where the elements $a'$, $b'$ and $c'$ are relabelled as $a$, $b$ and $c$ respectively.
The matroid $M'$ is said to be obtained from $M$ by a \emph{\dY\ exchange} on the triangle~$T$, and is denoted $\Delta_T(M)$.
Dually, $M''$ is obtained from $M$ by a \emph{\Yd\ exchange} on the triad $T^*=\{a,b,c\}$ if $(M'')^*$ is obtained from $M^*$ by a \dY\ exchange on $T^*$.  The matroid $M''$ is denoted $\nabla_{T^*}(M)$.

We say that a matroid $M_1$ is \emph{\dY-equivalent} to a matroid $M_0$ if $M_1$ can be obtained from $M_0$ by a sequence of \dY\ and \Yd\ exchanges.

Oxley, Semple, and Vertigan proved that excluded minors for the class of $\mathbb{P}$-representable matroids are closed under \dY\ exchange.
\begin{prop}[{\cite[Theorem~1.1]{oxley2000generalized}}]
  \label{osvdelta}
  Let $\mathbb{P}$ be a partial field, and let $M$ be an excluded minor for the class of $\mathbb{P}$-representable matroids.
  If $M'$ is \dY-equivalent to $M$, then $M'$ is an excluded minor for the class of $\mathbb{P}$-representable matroids.
\end{prop}

\subsection*{Detachable pairs}

Let $M$ be a $3$-connected matroid, and let $N$ be a $3$-connected minor of $M$.
A pair $\{a,b\} \subseteq E(M)$ is {\em $N$-detachable} if either $M\del a,b$ or $M/a,b$ is $3$-connected and has an $N$-minor. 
A $4$-element subset of $E(M)$ is a \emph{quad} if it is a circuit and a cocircuit of $M$.
When $P \subseteq E(M)$ is an exactly $3$-separating set of $M$ with
a partition $\{L_1,\dotsc,L_t\}$ for $t\geq 3$ such that
\begin{itemize}
    \item[(a)] $|L_i|=2$ for each $i\in\{1,\dotsc,t\}$, 
    \item[(b)] $L_i\cup L_j$ is a quad for all distinct $i,j\in\{1,\dotsc,t\}$, and
    \item[(c)] $L_i$ is not contained in a triangle or a triad, for each $i\in\{1,\dotsc,t\}$,
\end{itemize}
then $P$ is a \emph{spike-like $3$-separator} of $M$.

Brettell, Whittle, and Williams \cite{bww1,bww2,bww3} proved that either $M$ has a spike-like $3$-separator, or, after performing at most one \dY\ or \Yd\ exchange on $M$, we obtain a matroid with a detachable pair.
More specifically: 

\begin{thm}[{\cite[Theorem~1.1]{bww1}}]
  \label{detachthm}
  Let $M$ be a $3$-connected matroid, 
  and let $N$ be a $3$-connected minor of $M$ such that $|E(N)| \ge 4$ and $|E(M)|-|E(N)| \ge 10$.
  Then either
  \begin{itemize}
    \item[(i)] $M$ has an $N$-detachable pair,
    \item[(ii)] there is a matroid $M'$ obtained by performing a single \dY\ or \Yd\ exchange on $M$ such that $M'$ has 
      an $N$-detachable pair, or
    \item[(iii)] there is a spike-like $3$-separator~$P$ of $M$ such that at most one element of $E(M)-E(N)$ is not in $P$.
  \end{itemize}
\end{thm}

We note that our definition of a spike-like $3$-separator is more restrictive than that which appears in \cite{bww1}, where condition (c) did not appear.  However, if $M$ has a spike-like $3$-separator for which (c) does not hold, then either (i) or (ii) of \cref{detachthm} holds by \cite[Theorem~3.2]{bww1}.

Now let $\mathbb{P}$ be a partial field, let $N$ be a $3$-connected strong $\mathbb{P}$-stabilizer for the class of $\mathbb{P}$-representable matroids, and let $M$ be an excluded minor for the class of $\mathbb{P}$-representable matroids.
Then $M$ is $3$-connected.
The results in this paper rely on the existence of a pair of elements $\{a,b\}$ such that $M\del a,b$ is $3$-connected with an $N$-minor.
By \cref{detachthm}, we can guarantee such a pair exists, up to dualising and performing at most one \dY\ or \Yd\ exchange, unless $M$ has a spike-like $3$-separator.
We address the possibility of $M$ having a spike-like $3$-separator in \cref{secdetachable}.

\subsection*{The main theorems}

Let $M$ be an excluded minor for the class of $\mathbb{P}$-representable matroids for some partial field $\mathbb P$, and let $N$ be a $3$-connected strong $\mathbb{P}$-stabilizer.
Let $\{a,b\}$ be a pair of elements of $M$ such that $M\del a,b$ is $3$-connected with an $N$-minor.
Our first theorem describes,
in the case that $M \ba a,b$ is not $N$-fragile and $|E(M)| > |E(N)| + 9$,
the local structure of $M\del a,b$ for any such deletion pair $\{a,b\}$.

\begin{thm}
  \label{mainthm1}
  Let $M$ be an excluded minor for the class of $\mathbb{P}$-representable matroids, and let $N$ be a non-binary $3$-connected strong $\mathbb{P}$-stabilizer for the class of $\mathbb{P}$-representable matroids. 
  Suppose $M$ has a pair of elements $\{a,b\}$ such that $M\ba a,b$ is $3$-connected with an $N$-minor.
  Then either
  \begin{itemize}
    \item[(i)] $|E(M)|\leq |E(N)|+9$, or 
    \item[(ii)] $M$ has a $B\times B^{*}$ companion $\mathbb{P}$-matrix $A$ for which $\{x,y,a,b\}$ incriminates $(M,A)$, where $\{x,y\}\subseteq B$ and $\{a,b\}\subseteq B^{*}$, and either
      \begin{itemize} 
        \item[(a)] $M\del a,b$ is $N$-fragile, and $M\del a,b$ has at most one $(N,B)$-robust element $u$ outside of $\{x,y\}$, where if such an element $u$ exists, then $u\in B^{*}-\{a,b\}$ is an $(N,B)$-strong element of $M\del a,b$, and $\{u,x,y\}$ is a coclosed triad of $M\del a,b$, or
        \item[(b)] $M\del a,b$ is not $N$-fragile, but there is an element $u \in B^*-\{a,b\}$ that is $(N,B)$-strong in $M \ba a,b$; either
          \begin{itemize}
            \item[(I)] the $N$-flexible, and $(N,B)$-robust, elements of $M\del a,b$ are contained in $\{u,x,y\}$, or
            \item[(II)] the $N$-flexible, and $(N,B)$-robust, elements of $M\del a,b$ are contained in $\{u,x,y,z\}$, where $z \in B$, and $(z,u,x,y)$ is a maximal fan of $M \del a,b$, or
            \item[(III)]the $N$-flexible, and $(N,B)$-robust, elements of $M\del a,b$ are contained in $\{u,x,y,z,w\}$, where $z \in B$, $w \in B^*$, and $(w,z,x,u,y)$ is a maximal fan of $M \del a,b$;
          \end{itemize}
          the unique triad in $M \ba a,b$ containing $u$ is $\{u,x,y\}$; and $M$ has a cocircuit $\{x,y,u,a,b\}$ and a triangle $\{d,x,y\}$ for some $d\in \{a,b\}$.
      \end{itemize}
  \end{itemize}
\end{thm}

If $M$ is sufficiently larger than $N$,
then up to performing at most one \dY\ exchange, we can eliminate case (ii)(b) of \cref{mainthm1} by choosing a different 
deletion pair.
(Recall that excluded minors for the class of $\mathbb{P}$-representable matroids are closed under \dY\ exchange by \cref{osvdelta}.)
This is the second main theorem of this paper, \cref{mainthm2}.
This theorem implies \cref{one}, but \cref{mainthm2} provides additional information on the existence of $(N_0,B)$-robust elements in $M_0\ba a,b$, and the local structure of $M_0 \ba a,b$ when an $(N_0,B)$-robust element exists.

\begin{thm}
\label{mainthm2}
Let $M$ be an excluded minor for the class of $\mathbb{P}$-representable matroids, and let $N$ be a non-binary $3$-connected strong $\mathbb{P}$-stabilizer, where $M$ has an $N$-minor.
For some $M_1$ that is \dY-equivalent to $M$, and some $(M_0,N_0)$ in $\{(M_1,N),(M_1^{*}, N^{*})\}$, the matroid $M_0$ is an excluded minor with an $N_0$-minor, and at least one of the following holds:
\begin{itemize}
  \item[(i)] $|E(M_0)|\leq |E(N_0)|+9$;
  \item[(ii)] $r(M_0)\leq r(N_0)+7$; or 
  \item[(iii)] there is a pair $\{a,b\} \subseteq E(M)$ such that $M_0\del a,b$ is $3$-connected with an $N_0$-minor, and $M_0\del a,b$ is $N_0$-fragile.  Moreover,
    there is some basis $B$ for $M_0$ and a $B\times B^{*}$ companion $\mathbb{P}$-matrix $A$ for which $\{x,y,a,b\}$ incriminates $(M,A)$, where $\{x,y\}\subseteq B$, $\{a,b\}\subseteq B^{*}$, and 
    both of the following hold:
    \begin{itemize}
      \item[(a)] $M_0\del a,b$ has at most one $(N_0,B)$-robust element outside of $\{x,y\}$, and
      \item[(b)] if $u$ is an $(N_0,B)$-robust element of $M_0\ba a,b$, then $u\in B^{*}-\{a,b\}$, the element $u$ is $(N_0,B)$-strong in $M_0\del a,b$, and $\{u,x,y\}$ is a triad of $M_0\del a,b$. 
    \end{itemize}
\end{itemize}
\end{thm}

The remainder of the paper is structured as follows.
In \cref{strongelements}, we bound the number of $(N,B)$-strong elements in an excluded minor $M$ with a $3$-connected strong stabilizer $N$ and a basis $B$.
In \cref{gadget}, we bound $|E(M)|$ relative to $|E(N)|$ in the case where 
the $(N,B)$-strong elements are contained in
a $4$- or $5$-element set with particular properties, which we call a ``confining set''.
In \cref{robustelements} we show that elements 
that are $(N,B)$-robust but not $(N,B)$-strong give rise to a structured collection of $3$-separations, called a ``path of $3$-separations''. 
In \cref{robustpath}, we use the structure given by the path of $3$-separations to bound the number of $(N,B)$-robust elements and prove \cref{mainthm1}.
In \cref{secdetachable}, we 
show that $|E(M)|$ is bounded relative to $|E(N)|$ in the case where the existence of an $N$-detachable pair cannot be guaranteed.
Finally, in \cref{mainthmsec}, we prove \cref{mainthm2}.


\section{Strong elements}
\label{strongelements}

Let $\mathbb{P}$ be a partial field, and
let $N$ be a $3$-connected strong $\mathbb{P}$-stabilizer for the class of $\mathbb{P}$-representable matroids such that $N$ is non-binary; so, in particular, $|E(N)| \ge 4$.
Suppose $M$ is an excluded minor for the class of $\mathbb{P}$-representable matroids, and $M$ has a pair of elements $\{a,b\}$ such that $M\del a,b$ is $3$-connected with an $N$-minor.
Let $A$ be a $B\times B^{*}$ companion $\mathbb{P}$-matrix of $M$
such that $\{x,y,a,b\}$ incriminates $(M,A)$, where $\{x,y\}\subseteq B$ and $\{a,b\}\subseteq B^{*}$.
Let $M' = M \del a,b$.
We work under these assumptions for the entirety of the section.

Recall that an element $e \in E(M')$ is \textit{$(N,B)$-strong} if either
\begin{enumerate}
 \item[(i)] $e\in B$, and $\si(M'/e)$ is $3$-connected and has an $N$-minor; or
 \item[(ii)] $e\in B^*$, and $\co(M'\del e)$ is $3$-connected and has an $N$-minor.
\end{enumerate}

In this section, we bound the number of $(N,B)$-strong elements of $M'$. 
The main result is that $M'$ has at most two $(N,B)$-strong elements outside of $\{x,y\}$, and any such elements are in $B^{*}$. 




\begin{lemma}
\label{nostrongbasis}
If $u$ is an $(N,B)$-strong element of $M'$ such that $u\notin \{x,y\}$, then $u\notin B$. 
\end{lemma}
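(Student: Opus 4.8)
The plan is to argue by contradiction, supposing that $u \in B$ is an $(N,B)$-strong element of $M' = M\del a,b$ with $u \notin \{x,y\}$. Since $u \in B$ and $u$ is $(N,B)$-strong, $\si(M'/u)$ is $3$-connected and has an $N$-minor. I want to use the companion matrix $A$ and the incriminating set $\{a,b,x,y\}$ to build a certificate that $M$ is $\mathbb{P}$-representable, contradicting that $M$ is an excluded minor. The key observation is that $u$ lies in the row set $B$ but outside $\{x,y\}$, so pivoting on an entry in row $u$ should preserve the incriminating set by the allowable-pivot lemmas (\cite[Lemma 5.11]{mayhew2010stability}, quoted before Lemma~\ref{nobadsubdetzeros}).

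First I would record that, because $\{a,b,x,y\}$ incriminates $(M,A)$, the bad submatrix $A[\{a,b,x,y\}]$ has no zero entries (Lemma~\ref{nobadsubdetzeros}); in particular $A_{xa},A_{xb},A_{ya},A_{yb}$ are all nonzero. Next, since $u$ is $(N,B)$-strong and contracting $u$ keeps an $N$-minor with $\si(M'/u)$ $3$-connected, I would look at the structure of $M'/u$ relative to the row of $A$ indexed by $u$: the support of that row tells us with which elements of $B^*$ the element $u$ is "connected." The goal is to find an element $q \in B^*-\{a,b\}$ with $A_{uq}\neq 0$, so that we can pivot on $uq$; by the allowable-pivot lemma this pivot keeps $\{a,b,x,y\}$ incriminating the resulting pair $(M,A^{uq})$, and it moves $u$ out of the basis. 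Iterating, we can push $u$ out of the row set entirely while maintaining the incriminating structure, reducing to the situation where the strong element is a column element — but then the companion matrix together with $N$-fragility / stabilizer machinery (Lemma~\ref{notrepcert}) would let us certify representability of $M$, a contradiction.

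The main obstacle I expect is controlling the interaction between the $N$-minor and the pivots: we need to ensure that after pivoting on $uq$ the matrix is still a companion matrix for $M$ (so that $A[E_N]$ remains scaling-equivalent to the fixed representation $D$ of $N$, and $A-a$, $A-b$ remain $\mathbb{P}$-matrices representing $M\del a$, $M\del b$), and that the displacement of $u$ does not accidentally destroy the hypothesis $u\notin\{x,y\}$ or create a zero where the allowable-pivot lemma needs a nonzero. Concretely, the delicate point is verifying the hypothesis "either $A_{pa}=A_{pb}=0$ or $A_{xq}=A_{yq}=0$" of \cite[Lemma 5.11]{mayhew2010stability} for the chosen pivot entry $uq$; this is where orthogonality between the triad/cocircuit structure around $u$ (coming from $\si(M'/u)$ being $3$-connected, via Lemmas~\ref{existsv3sep} and \ref{CPL2}) and the support of row $u$ must be combined carefully. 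If that pivot is not directly available, the fallback is to first normalise $A$ by row and column scalings and a preliminary pivot (as in Theorem~\ref{incrim4}) so that a suitable nonzero entry in row $u$ and a column outside $\{a,b\}$ appears, and then argue as above.
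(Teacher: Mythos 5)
Your approach is not the one the paper takes, and more importantly it does not actually close the argument. The fundamental problem is the endgame: you propose to pivot $u$ out of $B$, and then claim that "the companion matrix together with $N$-fragility / stabilizer machinery (Lemma~\ref{notrepcert}) would let us certify representability of $M$, a contradiction." But having a strong element in $B^{*}-\{a,b\}$ is \emph{not} a contradiction — that is precisely the situation allowed by the lemma (and in fact expected; see, e.g., Lemma~\ref{unstablepair}, which analyses an $(N,B)$-strong $u\in B^{*}-\{a,b\}$ and derives structure rather than a contradiction). So even if you could always find an allowable pivot $uq$, you would only have relocated $u$ to $B^{*}$, which proves nothing. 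In addition, you correctly flag but do not resolve the hypothesis "either $A_{ua}=A_{ub}=0$ or $A_{xq}=A_{yq}=0$" needed for the allowable-pivot lemma; there is no reason such a $q$ exists in general, and no normalisation in the spirit of Theorem~\ref{incrim4} is going to manufacture one for you while preserving the other constraints.

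The actual proof is direct and requires no pivoting. Suppose $u\in B$ is $(N,B)$-strong with $u\notin\{x,y\}$. Take $Z=E(M)-\{a,b,u\}$, $Z_1=E(M)-\{b,u\}$, $Z_2=E(M)-\{a,u\}$, $C=E(N)$; since $u\in B$ we have $M_B[Z_1\cup Z_2]=M/u$. Lemma~\ref{notrepcert} then says $M/u$ is \emph{not} strongly $\mathbb{P}$-stabilized by $N$. On the other hand, $\si(M\del a,b/u)$ is $3$-connected because $u$ is $(N,B)$-strong, so $M/u$ is $3$-connected up to parallel classes, hence $N$-stable. Since $M/u$ is a proper minor of the excluded minor $M$, it is $\mathbb{P}$-representable, so Lemma~\ref{stableisstable} says $M/u$ \emph{is} strongly $\mathbb{P}$-stabilized by $N$ — a contradiction. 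The asymmetry between $B$ and $B^{*}$ is the whole point of the lemma: when $u\in B$, the relevant minor is $M/u$ and parallel-class trouble doesn't break $N$-stability, whereas when $u\in B^{*}$ the relevant minor is $M\del u$ and a series pair can be made unstable by $a$ or $b$ (which is exactly what Lemma~\ref{unstablepair} exploits in the opposite direction).
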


\begin{proof}
Suppose that $u$ is an $(N,B)$-strong element of $M'$ such that $u \in B-\{x,y\}$.
Since $u$ is $(N,B)$-strong, $M'/u$ is $3$-connected up to parallel classes.
Moreover, as $M \del a$, $M \del b$ and $M$ are $3$-connected, it follows that $M\del a / u$, $M \del b/u$, and $M/u$ are $3$-connected up to parallel classes, and hence are $N$-stable.
As $M\del a / u$ and $M \del b/u$ are $N$-stable, and $M'/u$ is connected, \cref{notrepcert2} implies that $M/u$ is not strongly $\mathbb{P}$-stabilized by $N$.
But, as $M/u$ is $N$-stable, this contradicts \cref{stableisstable}.
\end{proof}

A subset $G$ of $E(M)$ is a \textit{segment} if every $3$-element subset of $G$ is a triangle. A \textit{cosegment} is a segment of $M^{*}$. 


\begin{lemma}
 \label{nostronglongline}
 Suppose $u$ is an $(N,B)$-strong element of $M'$ such that $u\notin \{x,y\}$.
 If $u$ is in a cosegment~$G$ of $M'$ such that $|G|\geq 4$, then $|G|=4$ and $G\cap B=\{x,y\}$.   
\end{lemma}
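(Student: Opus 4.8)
The plan is to exploit the tension between a long cosegment containing $u$ and the fact that $u$ is $(N,B)$-strong, meaning $\co(M'\del u)$ is $3$-connected with an $N$-minor. First I would recall from Lemma \ref{nostrongbasis} that $u \notin B$, so $u \in B^{*} - \{a,b\}$; this tells us immediately that $C$ is not contained in $B^{*}$. Next, since $C$ is a cosegment of $M'$ with $|C|\geq 4$, its dual incarnation is a rank-$2$ set of $M'^{*}$ of size at least $4$, and by the dual of Lemma \ref{longline3conn} the matroid $M'\del u$ has $C - u$ as (part of) a single series class; more precisely $C-u$ is a series class of $M'\del u$ and the elements of $C - u$ are pairwise in series in $M'\del u$. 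The hypothesis that $\co(M'\del u)$ is $3$-connected forces control on how big a series class can be: a series class of size $\geq 2$ that persists after coextension-type arguments is constrained, and here I would show $|C - u| \leq 3$, hence $|C| \leq 4$, giving $|C| = 4$.

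The second half, that $B \cap C = \{x,y\}$, is where the incriminating set enters. Write $C = \{u, c_1, c_2, c_3\}$. We know $u \in B^{*}$. I would argue that at most two (in fact exactly two) elements of $C$ lie in $B$ by a rank count: $C$ is a cosegment, so $r^{*}(C) = 2$ in $M'$, equivalently $r_{M'}(E(M') - C) = r(M') - (|C| - 2) = r(M') - 2$; since $B$ is a basis, $|B \cap C| = |C| - r_{M'}(E(M') - (B\setminus C)) \cdots$ — more cleanly, a cosegment of size $4$ has corank $2$, so it contains exactly $2$ coindependent... here I'd use that a basis must meet the complement of the cosegment in exactly $r(M') - 2$ elements, forcing $|B \cap C| = 2$. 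So $C$ contains exactly two elements of $B$ and two of $B^{*}$, one of the latter being $u$. It remains to identify the two $B$-elements of $C$ as $x$ and $y$. For this I would invoke Lemma \ref{notrepcert} with an appropriate choice of $Z, Z_1, Z_2$: if the two $B$-elements of $C$ were some pair $\{v, w\}$ other than $\{x,y\}$, then after suitable allowable pivots into $C$ (legal because $C$ is a cosegment, so the relevant submatrix entries are nonzero) one can move the incriminating set onto $\{a,b,v,w\}$ or otherwise produce a contradiction with $M'\del u$ being $N$-fragile near $u$, exactly as in the proof of Lemma \ref{nostrongbasis}. Concretely: $M/u$-type analysis shows $\co(M'\del u) \cong \si(\cdot)$ is strongly $\mathbb{P}$-stabilized by $N$ via Lemma \ref{stableisstable}, but the long cosegment together with a misplaced basis intersection lets us build an incriminating configuration inside a $3$-connected-up-to-series minor, contradicting the stabilizer property.

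The main obstacle I anticipate is the second part — pinning down $B \cap C = \{x,y\}$ rather than just $|B \cap C| = 2$. The rank count is routine, but showing the two basis elements must be exactly the incriminating pair $x,y$ requires carefully tracking the incriminating set under pivots within $C$ and verifying the hypotheses (i)--(vi) of Lemma \ref{notrepcert}, in particular that the relevant restrictions $M_B[Z_1], M_B[Z_2]$ are $N$-stable. I would expect to handle this by first using allowable pivots (Lemmas 2.22, 2.23 in the numbering of the excerpt) to normalise the companion matrix so that $x,y$ sit inside $C$, then deriving a contradiction with $3$-connectivity of $\co(M'\del u)$ or with $N$-fragility, mirroring the structure of the proof of Lemma \ref{nostrongbasis} but with the extra bookkeeping that the size-$4$ cosegment provides the nonzero entries needed for the pivots.
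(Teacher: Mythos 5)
Your plan has a genuine gap in both halves, and it misses the slick reapplication of Lemma~\ref{nostrongbasis} that makes the paper's proof short. For the claim $|C|=4$: you assert that $3$-connectivity of $\co(M'\del u)$ bounds the size of the series class $C-u$, but this is false --- $\co(\cdot)$ collapses series classes of any size, so a corank-$2$ cosegment of size $10$ would be perfectly compatible with $\co(M'\del u)$ being $3$-connected. The bound $|C|\leq 4$ does not come from connectivity at all; it comes from a counting argument after you have shown $C\cap B\subseteq\{x,y\}$ (combined with $|C\cap B^{*}|\leq r^{*}(C)=2$ because $B^{*}$ is coindependent). For the claim $B\cap C=\{x,y\}$: your rank count only gives $|C\cap B^{*}|\leq 2$, hence $|C\cap B|\geq |C|-2$; it does not give $|C\cap B|=2$ (nothing forces $B^{*}$ to meet $C$ in two elements rather than one, since $B^{*}$ could span $\cl_{M'^{*}}(C)$ using an element outside $C$). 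And the route you sketch through Lemma~\ref{notrepcert} and allowable pivots is much heavier machinery than needed and is not worked out.

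The argument that actually closes both gaps at once is to apply Lemma~\ref{nostrongbasis} again, but to the candidate basis elements of $C$ rather than only to $u$. Since $u$ is $(N,B)$-strong, $M'\del u$ has an $N$-minor, and $C-u$ is a series class of $M'\del u$ (here the size hypothesis $|C|\geq 4$ gives $|C-u|\geq 3$), so every element of $C-u$ is $N$-contractible in $M'$. Now if some $c\in C$ were in $B-\{x,y\}$, then $c$ would be $N$-contractible, and $M'/c$ is $3$-connected by the dual of Lemma~\ref{longline3conn} (because $C$ is a corank-$2$ set of size $\geq 4$ containing $c$); so $c$ would be an $(N,B)$-strong element of $M'$ lying in $B-\{x,y\}$, contradicting Lemma~\ref{nostrongbasis}. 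Hence $C\cap B\subseteq\{x,y\}$, and then $|C| = |C\cap B| + |C\cap B^{*}| \leq 2+2 = 4$, which forces $|C|=4$ and $C\cap B=\{x,y\}$ simultaneously. This avoids the incriminating-set machinery entirely.
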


\begin{proof}
  Let $G$ be a cosegment of $M'$ with $|G| \ge 4$.
  Since $G$ is a corank-$2$ set, $|G\cap B^{*}|\leq 2$.
  Hence $|G\cap B|\geq |G|-2$.
  Since $u$ is $(N,B)$-strong, $u\in B^{*}$ by \cref{nostrongbasis}.
  So $M'\del u$ has an $N$-minor, and hence the elements of the series class $G-u$ of $M'\del u$ are $N$-contractible.
  Suppose that there is some $c\in G$ that is in $B-\{x,y\}$.
  Then 
  $M'/c$ is $3$-connected by the dual of \cref{longline3conn}, so $c$ is an $(N,B)$-strong element, contradicting \cref{nostrongbasis}.
We deduce that $|G|=4$ and that $G\cap B=\{x,y\}$.
\end{proof}

%
The following lemma applies to an $(N,B)$-strong element~$u$ for which $M' \ba u$ is not only $3$-connected up to series classes, but also
$3$-connected up to series pairs.

\begin{lemma}
\label{unstablepair}
Suppose $u\in B^{*}-\{a,b\}$ is an $(N,B)$-strong element of $M'$ such that $M'\del u$ is $3$-connected up to series pairs.
Then at least one of $M\del a,u$ or $M\del b,u$ is not $N$-stable.
\end{lemma}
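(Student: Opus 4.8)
The plan is to argue by contradiction, following the template of Lemma~\ref{nostrongbasis}. Suppose both $M\del a,u$ and $M\del b,u$ are $N$-stable. I would first apply Lemma~\ref{notrepcert} with $C=E_N$, $Z=E(M)-\{a,b,u\}$, $Z_1=E(M)-\{b,u\}$ and $Z_2=E(M)-\{a,u\}$, noting that $M_B[E_N]=N$ is strictly $N$-fragile (no matroid has a proper minor isomorphic to itself), and that $E_N$ may be chosen to avoid $u$ since $u$ is $N$-deletable in $M'$. Checking the hypotheses: (i) is clear; (ii) holds as $E_N\subseteq E(M)-\{a,b\}$, $x,y\in B$ and $u\notin\{x,y\}$, so $C\cup\{x,y\}\subseteq Z=Z_1\cap Z_2$; (iii) holds since $M_B[Z]=M'\del u$ is connected, because $\co(M'\del u)$ is $3$-connected (on at least $|E(N)|\geq 4$ elements); (iv) and (v) are the supposition, as $M_B[Z_1]=M\del b,u$ and $M_B[Z_2]=M\del a,u$; and (vi) holds because $Z_1\cup Z_2=E(M)-u$ gives $M_B[Z_1\cup Z_2]=M\del u$ and $A[Z_1\cup Z_2]=A-u$, and as $u\notin B\cup\{a,b,x,y\}$ deleting the column $u$ changes neither $A[\{a,b,x,y\}]$ nor whether $B\triangle\{a,b,x,y\}$ is (in)dependent, so $\{a,b,x,y\}$ incriminates $(M\del u,A-u)$ because it incriminates $(M,A)$. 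Lemma~\ref{notrepcert} then gives that $M\del u$ is not strongly $\mathbb{P}$-stabilized by $N$.

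To obtain a contradiction it suffices to show that $M\del u$ \emph{is} strongly $\mathbb{P}$-stabilized by $N$. Since $M\del u$ is $\mathbb{P}$-representable (a proper minor of the excluded minor $M$) and has an $N$-minor (because $M'\del u$ does), Lemma~\ref{stableisstable} reduces this to showing that $M\del u$ is $N$-stable. Establishing the $N$-stability of $M\del u$ is the heart of the argument and the step I expect to be the main obstacle; it is exactly here that the hypothesis ``$M'\del u$ is $3$-connected up to series pairs'', rather than merely ``$\co(M'\del u)$ is $3$-connected'', is needed.

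For this step, let $(P,Q)$ be a $2$-separation of $M\del u$ with $|P\cap E_N|\leq 1$; I must show the part $(M\del u)_P$ of the induced $1$- or $2$-sum decomposition is binary. I would split on the positions of $a$ and $b$. If some element of $\{a,b\}$, say $b$, lies in $Q$, then $(P,Q-b)$ is a $2$-separation of $M\del b,u$ (the bound $|E(N)|\geq 4$ together with $|P\cap E_N|\leq 1$ excludes $|Q-b|\leq 1$), so $(M\del b,u)_P$ is binary by the assumed $N$-stability of $M\del b,u$, and since $b$ lies on the $Q$-side this part coincides with $(M\del u)_P$ up to its attachment element; the case $a\in Q$ is symmetric. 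So suppose $a,b\in P$. Then $Q\subseteq E(M'\del u)$, and using $|E(N)|\geq 4$ and $|P\cap E_N|\leq 1$ once more, $(P-\{a,b\},Q)$ is a $2$-separation of $M'\del u$; as $M'\del u$ is $3$-connected up to series pairs, one of the two sides is a series pair, and it cannot be $Q$ (else $|Q|=2$ and $|E(N)|\leq 3$). Hence $|P-\{a,b\}|=2$, so $(M\del u)_P$ is a five-element matroid, and a small cocircuit of $M'\del u$ inside $P-\{a,b\}$ lifts to a cocircuit of $M\del u$ inside $P$. One then rules out $(M\del u)_P$ being non-binary: if $(M\del u)_P$ had a $U_{2,4}$-minor obtained by deleting $a$ or $b$, deleting that element from $M\del a,u$ or $M\del b,u$ would contradict its $N$-stability, while the remaining ways a five-element matroid can be non-binary (a $U_{2,4}$-minor via a single deletion of an element of $P-\{a,b\}$, or via a single contraction) are incompatible with the cocircuit structure on $P$ forced by the series pair of $M'\del u$ --- in particular a five-element matroid carrying a series pair is binary. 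This last elimination of sub-cases, and the bookkeeping matching the $1$-/$2$-sum parts of $M\del u$ with those of $M\del a,u$, $M\del b,u$ and $M'\del u$ under single-element (un)deletion, is where I expect most of the work to lie. Once $M\del u$ is shown to be $N$-stable, Lemma~\ref{stableisstable} contradicts the conclusion of the first paragraph, completing the proof.
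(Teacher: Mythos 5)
Your overall strategy matches the paper's exactly: assume both $M\del a,u$ and $M\del b,u$ are $N$-stable, apply Lemma~\ref{notrepcert} with the same choice of $Z$, $Z_1$, $Z_2$, $C$ to conclude $M\del u$ is not strongly $\mathbb{P}$-stabilized by $N$, then derive a contradiction via Lemma~\ref{stableisstable} by showing $M\del u$ \emph{is} $N$-stable. The paper's version of the ``$M\del u$ is $N$-stable'' step is very terse --- it records the possible shapes of $2$-separations of $M\del a,u$ and $M\del b,u$ (each is a one-element extension of $M'\del u$, which is $3$-connected up to series pairs) and then says ``hence'' --- whereas you carry out the case analysis on $2$-separations $(P,Q)$ of $M\del u$ directly. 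These amount to the same computation, so I would not call this a genuinely different route.

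There is, however, one step in your Case~3 ($a,b\in P$) that as stated is simply false: ``a five-element matroid carrying a series pair is binary.'' A series extension of $U_{2,4}$ is a five-element matroid with a series pair that contracts to $U_{2,4}$, so it is non-binary. What actually closes this case is the combination of the cocircuit forced by the series pair of $M'\del u$ together with the $N$-stability of both $M\del a,u$ and $M\del b,u$ applied to the induced $2$-separations $(P-a,Q)$ and $(P-b,Q)$; from these you get that both $(M\del u)_P\del a$ and $(M\del u)_P\del b$ are binary, and then you still need to rule out a $U_{2,4}$-minor of $(M\del u)_P$ obtained by a single contraction. You gesture at exactly this bookkeeping, but the concluding sentence substitutes an incorrect general principle for the needed argument. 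So: right lemmas, right architecture, same route as the paper --- but the final elimination in the $a,b\in P$ case needs to be redone without appealing to that false claim.
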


\begin{proof}
  Towards a contradiction, suppose that both $M\del a,u$ and $M\del b,u$ are $N$-stable.
  Then, as $M \del a,b,u$ is connected,
  \cref{notrepcert2} implies that $M\del u$ is not strongly $\mathbb{P}$-stabilized by $N$.

  We claim that $M \del u$ is $N$-stable.
  Suppose that $M\del a,u$ is not $3$-connected up to series pairs.
  Then, as $M \del a,b,u$ is $3$-connected up to series pairs, and $M \del a$ is $3$-connected, $b$ is in a parallel pair of $\co(M \del a,u)$, which does not exist in $M\del a$.
  Hence, there is a triangle $S \cup b$ of $M$, where $S$ is a series pair of $M \del a,u$.
  Now $S \cup b$ is $2$-separating in $M \del a,u$.
  Since $M \del a,u$ is $N$-stable, the $S \cup b$ component in the $2$-sum decomposition of $M \del a,u$ does not have a $U_{2,4}$-minor.
  It follows that $b$ is in the guts of a $2$-separation $(S,T)$ where $S$ is a series pair of $M \del a,u$.
  We deduce that either $M\del a,u$ is $3$-connected up to series pairs, or $b$ is in the guts of some $2$-separation $(S,T)$ of $M\del a,u$ where $S$ is a series pair of $M\del a,u$.
  By symmetry, either $M\del b,u$ is $3$-connected up to series pairs, or $a$ is in the guts of some $2$-separation $(S',T')$ of $M\del b,u$ where $S'$, say, is a series pair of $M\del b,u$.
  It now follows, by \cref{plusu24}, that $M\del u$ is $N$-stable.

  By \cref{stableisstable}, $M\del u$ is strongly $\mathbb{P}$-stabilized by $N$; a contradiction.
\end{proof}



Let $M_1$ be a minor of $M$ where, for some $e \in E(M_1)$, the matroid $M_1 \del e$ has an $N$-minor and is $3$-connected up to series classes, but $M_1$ is not $N$-stable.
Recall that, by \cref{plusu24},
the matroid $M_1$ has an unstable triple $S \cup e$, where $S$ is a series pair of $M_1 \del e$.

If $M'$ has an $(N,B)$-strong element $u\in B^*-\{a,b\}$ where $M'\del u$ is $3$-connected up to series pairs, then it follows from \cref{unstablepair} that, 
up to swapping $a$ and $b$, the matroid $M \del a,u$ has an unstable triple containing $b$.

We now show that 
  the intersection of an unstable triple with $B$ is a non-empty subset of $\{x,y\}$.

\begin{lemma}
\label{unstablemeetsxy}
Suppose $u \in B^*-\{a,b\}$ is an $(N,B)$-strong element of $M'$ such that $M' \del u$ is $3$-connected up to series pairs.
Then $M'\ba u$ has a series pair~$S$ such that $\emptyset \subsetneqq S\cap B\subseteq \{x,y\}$.  Moreover,
$S \cup b$ is an unstable triple of $M \del a,u$, up to swapping $a$ and $b$.
\end{lemma}

\begin{proof}
  By \cref{unstablepair}, either $M\ba a,u$ or $M \ba b,u$ is not $N$-stable.
  Without loss of generality, we may assume that $M\ba b,u$ is not $N$-stable.
  Then, by \cref{plusu24}, there is a pair~$S$ such that $S \cup a$ is an unstable triple in $M\del b,u$.
  Let $S=\{s_1,s_2\}$.
  Note that, since $S$ is a series pair of $M'\ba u$, both $s_1$ and $s_2$ are $N$-contractible in $M'$.
  We also note that $S\cap B$ is non-empty because, in $M' \ba u$, the pair~$S$ is codependent and $B^*-\{a,b,u\}$ is a cobasis.

  Towards a contradiction,
  suppose that $s_1\in B-\{x,y\}$. Then $s_1$ is not $(N,B)$-strong by \cref{nostrongbasis}, so $\si(M'/s_1)$ is not $3$-connected.
  Hence $\si(M'/s_2)$ is $3$-connected by \cref{seriesnotbasisstrong}, so it follows from \cref{nostrongbasis} that either $s_2\in \{x,y\}$ or $s_2\in B^{*}-\{a,b\}$. 

\begin{claim}
\label{unstablemeetsxyclaim}
Up to an allowable pivot, we can assume that $s_2\in \{x,y\}$. 
\end{claim}

\begin{subproof}
  Observe that since $S$ is a series pair of $M' \ba u$ but $M'$ is $3$-connected, $S \cup u$ is a triad in $M'$.
  Suppose that $s_2\in B^{*}-\{a,b\}$.
  Then $A_{s_1s_2}\neq 0$ because $\{s_1,s_2,u\}$ is a triad of $M'$.
  If $A_{xs_2}=A_{ys_2}=0$, then a pivot on $A_{s_1s_2}$ is allowable, and $s_2$ is an $(N,B\triangle \{s_1, s_2\})$-strong element with $s_2\in (B\triangle \{s_1, s_2\})-\{x,y\}$, which contradicts \cref{nostrongbasis}.
  Thus we shall assume that $A_{xs_2}\neq 0$.
  Then a pivot on $A_{xs_2}$ is an allowable pivot, and $s_2$ takes the place of $x$ as a member of the set $\{s_2,y,a,b\}$ that incriminates $(M,A^{xs_2})$. 
\end{subproof}

By \ref{unstablemeetsxyclaim} we may assume that $s_2=x$. Since $\{a,s_1, s_2\}$ is an unstable triple of $M \del b,u$, it follows that $a\in \cl_{M}(\{s_1,s_2\})$ where $\{s_1,s_2\}\subseteq B$. Hence $A_{ja}\neq 0$ if and only if $j\in \{s_1,s_2\}$. But then $A_{ya}=0$, contradicting that the bad submatrix has no zero entries. This contradiction arose from the assumption that some member of $S\cap B$ was outside of $\{x,y\}$. Therefore $S\cap B\subseteq \{x,y\}$. 
\end{proof}



\begin{lemma}
\label{noweirdtriads}
Let $u$ and $v$ be distinct $(N,B)$-strong elements of $M'$ outside of $\{x,y\}$ such that both $M'\del u$ and $M'\del v$ are $3$-connected up to series pairs.
Then at least one of $M\del a,u$ or $M\del a,v$ is $N$-stable. 
\end{lemma}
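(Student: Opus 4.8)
The plan is to extract combinatorial structure from the failure of $N$-stability of $M\del a,u$ and, supposing the conclusion fails, of $M\del a,v$, and then to contradict the companion-matrix setup, ultimately via Lemma~\ref{nobadsubdetzeros} or Lemma~\ref{nostronglongline}. By Lemma~\ref{nostrongbasis}, $u$ and $v$ lie in $B^{*}-\{a,b\}$. Since $M'\del u$ is $3$-connected up to series pairs and has an $N$-minor while $M\del a,u$ is not $N$-stable, Lemma~\ref{plusu24} (applied with $Q=M\del a,u$ and $e=b$) gives $M\del a,u=R_u\oplus_2 U_{2,4}$, where the $U_{2,4}$-component has ground set $S_u\cup\{b\}$ together with the $2$-sum basepoint $p_u$ (and $|S_u\cup\{b\}\cup\{p_u\}|=4$ forces this description), with $S_u$ a non-trivial series class of $M'\del u$. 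Deleting $p_u$, the set $S_u\cup\{b\}$ is a triangle of $M\del a,u$, hence of $M$; and $S_u\cup\{u\}$ is a triad of $M'$, since $S_u$ is a series pair of $M'\del u$ while a $3$-connected matroid with at least four elements has no cocircuit of size at most two. By Lemma~\ref{unstablemeetsxy}, $S_u\cap B\subseteq\{x,y\}$, and since every cocircuit meets $B$ this forces $S_u\cap\{x,y\}\neq\emptyset$; write $S_u=\{w_u,t_u\}$ with $w_u\in\{x,y\}$. Now suppose, for a contradiction, that $M\del a,v$ is also not $N$-stable; the same argument produces a non-trivial series class $S_v=\{w_v,t_v\}$ of $M'\del v$ with $w_v\in\{x,y\}$, such that $S_v\cup\{b\}$ is a triangle of $M$ and $S_v\cup\{v\}$ is a triad of $M'$.

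From here I would argue by cases on the positions of $t_u,t_v$ and on whether $w_u=w_v$. When $t_u\in B^{*}-\{a,b,u\}$, the triad $S_u\cup\{u\}$ (the fundamental cocircuit of $w_u$) gives $A_{t_u w_u}\neq0$, so, exactly as in the proof of Lemma~\ref{unstablemeetsxy}: either a pivot on $A_{t_u w_u}$ is allowable and makes $t_u$ an $(N,B\triangle\{t_u,w_u\})$-strong element of $M'$ lying in $B\triangle\{t_u,w_u\}$ outside $\{x,y\}$, contradicting Lemma~\ref{nostrongbasis}; or a pivot on $A_{w_u t_u}$ is allowable and moves $t_u$ into the incriminating set in place of $w_u$, after which we relabel so that $S_u$ again meets the (new) pair $\{x,y\}$ inside the basis, and similarly for $t_v$. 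After these normalisations the surviving possibility is $S_u=S_v=\{x,y\}$ (after renaming $x,y$), so $\{x,y,u\}$ and $\{x,y,v\}$ are triads of $M'$ sharing both $x$ and $y$; cocircuit elimination in $M'$ (again using that $M'$ has no small cocircuits) then makes $\{x,y,u,v\}$ a cosegment of $M'$ of size four containing the $(N,B)$-strong element $u$, whence Lemma~\ref{nostronglongline} gives $B\cap\{x,y,u,v\}=\{x,y\}$. Finally I would derive a contradiction from the matrix: the triangle $\{b,x,y\}$ of $M$, being a circuit of $M\del a=M[I\,|\,A-a]$, forces the column of $b$ to be supported exactly on the rows $x$ and $y$ with both entries nonzero; extending the triads $\{x,y,u\}$ and $\{x,y,v\}$ of $M'$ to cocircuits of $M\del a$ (and of $M\del b$), each of which is contained in $\{x,y,u\}\cup\{b\}$ (resp.\ $\{a\}$) etc., and writing out the corresponding row-space conditions shows that the $\{x,y\}$-projections of almost every column are collinear; the fully supported column of $b$ then forces the line coming from the $\{x,y,u\}$-cocircuit and the line coming from the $\{x,y,v\}$-cocircuit to coincide, which is incompatible with the column of $u$ being the unique exception to the first and the column of $v$ being the unique exception to the second. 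The mixed cases ($w_u\neq w_v$, or a triad failing to extend minimally) run on the same scheme, sometimes instead producing a forced zero entry in $A[\{a,b,x,y\}]$, contradicting Lemma~\ref{nobadsubdetzeros}.

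The main obstacle is precisely this case analysis. One must treat separately whether $S_u$ and $S_v$ meet $\{x,y\}$ in the same element or not, and whether each of $S_u\cup\{u\}$ and $S_v\cup\{v\}$ is itself a cocircuit of $M\del a$ and of $M\del b$ or only extends to a strictly larger one, and in each configuration track the supports of the columns of $a,b,u,v$ on the rows $x,y$ precisely enough to pin down the contradiction. The subsidiary technical point is to check that every normalising pivot used in the reduction is allowable (via the allowable-pivot lemmas) and preserves the hypotheses: the incriminating set $\{a,b,x,y\}$, and the property that $M'\del u$ and $M'\del v$ remain $3$-connected up to series pairs.
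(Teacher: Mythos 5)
The paper's proof is a short, purely combinatorial case analysis on $|S_u\cap S_v|$: if the unstable series pairs are disjoint one gets a closure contradiction involving $b$; if they overlap in one element then circuit elimination from the triangles $S_u\cup b$ and $S_v\cup b$ makes $S_u\cup S_v$ a triangle, so $\{u,v\}\cup S_u\cup S_v$ is a $5$-element fan with rim ends $u,v$, and Lemma~\ref{fanends}(ii) contradicts $v$ being $(N,B)$-strong; and if $S_u=S_v$ then $\{u,v\}\cup S_u$ is a $4$-element cosegment, so $M'\del u$ has the $3$-element series class $\{v\}\cup S_u$, contradicting the hypothesis that $M'\del u$ is $3$-connected up to series \emph{pairs}. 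Your proposal sets up the same combinatorial data (the triangles $S_u\cup b$, $S_v\cup b$ and the triads $S_u\cup u$, $S_v\cup v$, together with Lemma~\ref{unstablemeetsxy}), but then takes a very different and, as written, incomplete route.

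There are two genuine gaps. First, your normalization step is unsound. Pivoting on $A_{w_u t_u}$ changes which elements lie in $B$ and which label the incriminating pair, but it does not change the underlying sets $S_u$ and $S_v$; if, say, $S_u=\{x,t_u\}$ and $S_v=\{y,t_v\}$ with $t_u\neq t_v$ in $B^{*}$, no sequence of allowable pivots will make $S_u=S_v$. So the assertion that ``after these normalisations the surviving possibility is $S_u=S_v=\{x,y\}$'' is simply false, and the cases $S_u\cap S_v=\emptyset$ and $|S_u\cap S_v|=1$ are left entirely unaddressed (the parenthetical ``the mixed cases run on the same scheme'' does not supply the missing fan argument that makes the $|S_u\cap S_v|=1$ case work). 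Second, in the case you do reach ($S_u=S_v$), you construct the $4$-element cosegment $\{x,y,u,v\}$ and then launch into a lengthy companion-matrix argument, missing that the contradiction is already in hand: a $4$-element cosegment containing $u$ means $M'\del u$ has a $3$-element series class, contradicting the hypothesis that $M'\del u$ is $3$-connected up to series pairs. The matrix argument you sketch in its place (collinearity of $\{x,y\}$-projections of columns) is not worked out and is unnecessary. In short, the overall plan overreaches by invoking the matrix machinery where the paper uses only circuits, cocircuits, and fans, and the reduction to a single normalized case does not go through.
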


\begin{proof}
Suppose that both $M\del a,u$ and $M\del a,v$ are not $N$-stable.
By \cref{plusu24}, there is a series pair~$S_u$ of $M'\del u$ such that $S_u \cup b$ is an unstable triple of $M \del a,u$, and there is a series pair~$S_v$ of $M'\del v$ such that $S_v \cup b$ is an unstable triple of $M \del a,v$.

First, suppose that $S_u\cap S_v=\emptyset$.
If $u \notin S_v$, then $S_v\subseteq E(M \ba a)-(S_u \cup u)$, so $b\in \cl_{M \del a}(S_v) \subseteq \cl_{M \del a}(E(M \ba a)-(S_u \cup u))$,
implying $b \notin \cl^*_{M \del a}(S_u \cup u)$.
But $S_u \cup b$ is an unstable triple of $M\del a,u$, so $b \in \cocl_{M \del a,u}(S_u)=\cl^*_{M \del a}(S_u \cup u)$; a contradiction.
We deduce that $u \in S_v$ and, by symmetry, $v \in S_u$.
Now, as $S_u \cup u$ and $S_v \cup v$ are triads of $M'$, the set $S_u \cup S_v$ is a $4$-element cosegment of $M'$ that contains $\{u,v\}$.
This contradicts that 
$S_u$ is a series pair of $M' \del u$.

Next, suppose that $|S_u\cap S_v|=1$.
Then, as 
$S_u\cup b$ and $S_v\cup b$ are triangles of $M \ba a$ and $M$, 
it follows that $S_u\cup S_v$ is a triangle of $M'$, and so $\{u,v\}\cup S_u\cup S_v$ is a $5$-element fan of $M'$ with rim ends $u$ and $v$.
But then $\co(M'\del v)$ is not $3$-connected by \cref{fanends}; a contradiction. 
Therefore $S_u=S_v$.
But now $\{u,v\}\cup S_u$ is a $4$-element cosegment of $M'$, contradicting that 
$S_u$ is a series pair of $M'\del u$.
We deduce that either $M\del a,u$ or $M\del a,v$ is $N$-stable.
\end{proof}



\begin{lemma}
\label{cosegstrongbound}
If $M'$ has a $4$-element cosegment $G$ such that $G\cap B=\{x,y\}$, then $M'$ has no $(N,B)$-strong elements outside of $G$. 
\end{lemma}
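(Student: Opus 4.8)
The plan is to argue by contradiction: suppose $u$ is an $(N,B)$-strong element of $M'$ with $u\notin C$, where $C=\{c_1,c_2,x,y\}$ is a $4$-element cosegment with $C\cap B=\{x,y\}$. The idea is to run the machinery of the preceding lemmas on $u$ to produce an unstable series pair, and then show that this unstable series pair is forced to interact with $C$ in a way that violates either orthogonality or one of the structural constraints already established. First I would record the easy structural facts about $C$: since $C$ is a corank-$2$ set with $C\cap B=\{x,y\}$, the two elements $c_1,c_2$ lie in $B^{*}$, and $C\ba c$ is a triad of $M'$ for each $c\in C$; in particular $\{c_1,c_2,x\}$ and $\{c_1,c_2,y\}$ are triads. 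Also $C$ is a series class (or is contained in one) in $M'\ba c_i$, so $M'\ba c_1$ and $M'\ba c_2$ are not $3$-connected but have the remaining three elements of $C$ as a series class.

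Next I would analyse $u$. By Lemma~\ref{nostrongbasis}, $u\in B^{*}-\{a,b\}$, so $M'\ba u$ has an $N$-minor and $\co(M'\ba u)$ is $3$-connected. There are two cases according to whether $M'\ba u$ is $3$-connected (no non-trivial series classes) or is $3$-connected up to series pairs with a non-trivial series class present. In the first case, $M'\ba u$ is already $3$-connected, and I would derive a contradiction by showing that $C$ remains a $4$-element cosegment of $M'\ba u$, hence $M'\ba u$ has a series class of size $\geq 3$ (namely $C$), contradicting $3$-connectivity --- unless $u$ meets every circuit through $C$, which orthogonality with the triads $\{c_1,c_2,x\}$, $\{c_1,c_2,y\}$ rules out since $u\notin C$. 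In the second case, $M'\ba u$ is $3$-connected up to series pairs, so Lemma~\ref{unstablepair} and the remark following Lemma~\ref{plusu24} give that $M\ba u,a$ or $M\ba u,b$ has an unstable series pair $S$; by Lemma~\ref{unstablemeetsxy}, $S\cap B\subseteq\{x,y\}$.

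Now I would push on the unstable series pair $S=\{s_1,s_2\}$. Since $S$ is a series pair of $M'\ba u$ and $S\cap B\subseteq\{x,y\}$, at least one of $s_1,s_2$ lies in $\{x,y\}$, say $s_2=x$ (the other possibility, $s_2=y$, is symmetric, and $S$ cannot be contained in $B^{*}$ since it is codependent). The other element $s_1$ lies in $B^{*}-\{a,b\}$. By Lemma~\ref{plusu24}, $S\cup p$ is a triangle of $M'$ for the appropriate $p\in\{a,b\}$, and $\{u\}\cup S$ is contained in a triad of $M'$. I would then confront this with the cosegment $C$: the triad $\{c_1,c_2,x\}$ contains $x=s_2$, so by orthogonality the triangle $S\cup p=\{s_1,x,p\}$ must meet $\{c_1,c_2,x\}$ in a second element, forcing $s_1\in\{c_1,c_2\}$ or $p\in\{c_1,c_2\}$. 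Since $p\in\{a,b\}$ and $\{a,b\}\cap C=\emptyset$, we get $s_1\in\{c_1,c_2\}$, say $s_1=c_1$. But then $\{u,s_1,s_2\}=\{u,c_1,x\}$ is a triad of $M'$, and $\{c_1,c_2,x\}$ is also a triad of $M'$ containing $c_1$ and $x$; uncrossing these two triads (cocircuit elimination) forces a triad or corank-$2$ set containing $u$ together with part of $C$, which, combined with $C$ being a cosegment, builds a fan or larger cosegment on $\{u\}\cup C$. This contradicts $M'\ba u$ being $3$-connected up to series pairs (it would have a series class of size $\geq 3$), exactly as in the argument of Lemma~\ref{noweirdtriads}. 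The main obstacle I anticipate is the careful bookkeeping in this last step --- verifying which uncrossing/cocircuit-elimination gives a cocircuit avoiding $a$ and $b$ so that it is genuinely a cocircuit of $M'$, and checking that the resulting configuration on $\{u\}\cup C$ really does obstruct $3$-connectivity up to series pairs rather than merely up to series classes; handling the sub-case $s_2=y$ and the earlier case where $M'\ba u$ is exactly $3$-connected will require the same orthogonality argument run against the triads $\{c_1,c_2,x\}$ and $\{c_1,c_2,y\}$ simultaneously.
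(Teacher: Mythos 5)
The approach of invoking Lemma~\ref{nostrongbasis}, Lemma~\ref{unstablepair}, and Lemma~\ref{unstablemeetsxy} to obtain an unstable series pair is the right starting point, but the rest of the proposal has several genuine gaps, and it misses the much shorter corank-counting argument the paper actually uses.

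First, the ``Case~1'' reasoning is wrong: a $4$-element cosegment is a corank-$2$ set, not a series class, so ``$C$ remains a $4$-element cosegment of $M'\ba u$'' does not produce a series class of size $\geq 3$. In fact this case split is unnecessary: a $3$-connected matroid is vacuously $3$-connected up to series pairs, so you land in your Case~2 regardless, and the only thing you need to rule out is a series class of $M'\ba u$ of size $\geq 3$, which would put $u$ in a cosegment of size $\geq 4$; Lemma~\ref{nostronglongline} then forces that cosegment to contain $\{x,y\}$, and combining it with $C$ yields a set of corank at most~$2$ containing at least three elements of $B^{*}$, which is impossible since $B^{*}$ is coindependent.

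Second, the orthogonality step does not go through as stated: $\{c_1,c_2,x\}$ is a triad of $M' = M\ba a,b$, not of $M$, while the triangle produced by Lemma~\ref{plusu24} lives in $M\ba u,a$ or $M\ba u,b$ (hence in $M$) and contains an element of $\{a,b\}$; you cannot directly apply orthogonality to a cocircuit of one matroid and a circuit of another, and once you lift the triad to a cocircuit of $M$ it may absorb $a$ or $b$, in which case the intersection with your triangle has two elements and there is no violation. Third, you never handle the case $S = \{x,y\}$; Lemma~\ref{unstablemeetsxy} only says $S\cap B\subseteq\{x,y\}$, and $S = \{x,y\}$ is the case that actually survives your orthogonality argument. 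Fourth, the concluding step --- uncrossing two triads to build ``a fan or larger cosegment on $\{u\}\cup C$'' and then contradicting $3$-connectivity up to series pairs --- is left unverified, and indeed it is the hard part. The paper avoids all of this with a two-line corank count: since $S'$ meets $\{x,y\}$, the set $C\cup S'\cup u$ has corank at most $3$, and since $S'\cap B\subseteq\{x,y\}$ any element of $S'$ outside $\{x,y\}$ is in $B^{*}$ and distinct from $c_1,c_2,u$, forcing four coindependent elements inside a corank-$3$ set --- impossible --- hence $S' = \{x,y\}$; but then $C\cup u$ has corank $2$ while $(C\cup u)\cap B^{*} = \{c_1,c_2,u\}$ has three coindependent elements, a contradiction. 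You should replace the orthogonality/uncrossing machinery with this counting argument.
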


\begin{proof}
Towards a contradiction, suppose that $M'$ has an $(N,B)$-strong element $v$ outside of $G$.
By \cref{nostrongbasis}, $v \in B^*$.
By \cref{nostronglongline}, if $v$ is in a $4$-element cosegment~$G'$ of $M'$, then $G \cup G'$ is a cosegment consisting of more than four elements; a contradiction.
So $M'\del v$ is $3$-connected up to series pairs.
Hence $M'\del v$ has a series pair~$S$ such that 
$\emptyset \subsetneqq S \cap B \subseteq \{x,y\}$, by \cref{unstablemeetsxy}.
Now, in $M'$, the triad $S \cup v$ meets the cosegment $G$, so $r^*_{M'}(G \cup S \cup v) \le 3$.
It follows that $|(G \cup S \cup v) \cap B^*| =3$.
Thus $S \cup v$ intersects $G$ in two elements, implying $r^*(G \cup S \cup v)=2$; a contradiction.
\end{proof}

These results are enough to bound the number of $(N,B)$-strong elements outside of $\{x,y\}$. The bound on the number of $(N,B)$-strong elements is a key ingredient in many subsequent arguments.

\begin{prop}
\label{atmost2outxy}
$M'$ has at most two $(N,B)$-strong elements outside of $\{x,y\}$. 
\end{prop}

\begin{proof}
  Let $u$ be an $(N,B)$-strong element of $M'$ outside of $\{x,y\}$.  By \cref{nostrongbasis}, $u \in B^*$.
  Suppose that $M'\del u$ has a series class 
  of size at least three.
  Then $M'$ has a $4$-element cosegment $G$ such that $\{u,x,y\}\subseteq G$ and $G\cap B=\{x,y\}$, by \cref{nostronglongline}.
  Thus, by \cref{cosegstrongbound}, $M'$ has at most two $(N,B)$-strong elements outside of $\{x,y\}$.

We may now assume that $M'\del u$ is $3$-connected up to series pairs for each $(N,B)$-strong element~$u$ of $M'$ outside of $\{x,y\}$.
Suppose there exist distinct $(N,B)$-strong elements $u,v_1,v_2 \in B^*$ such that $M'\del u$, $M' \del v_1$, and $M \del v_2$ are $3$-connected up to series pairs.
By \cref{unstablepair}, we may assume without loss of generality that $M\ba b,u$ is not $N$-stable.  Now, by \cref{noweirdtriads}, both $M\ba b,v_1$ and $M\ba b,v_2$ are $N$-stable.
By two further applications of \cref{unstablepair}, both $M\ba a,v_1$ and $M\ba a,v_2$ are not $N$-stable.
But this contradicts \cref{noweirdtriads}.
\end{proof}

\section{Confining sets}
\label{gadget}

In this section, we work under the following setup.
Let $\mathbb{P}$ be a partial field, and
let $M$ and $N$ be matroids, where $N$ is a non-binary $3$-connected strong $\mathbb{P}$-stabilizer for the class of $\mathbb{P}$-representable matroids, and $M$ is an excluded minor for the class of $\mathbb{P}$-representable matroids with a pair of elements $\{a,b\}$ such that $M\del a,b$ is $3$-connected with an $N$-minor.
Let $M' = M \del a,b$.

We say that a subset $G$ of $E(M')$ is a \textit{confining set} if $G\cap B_1=\{x_1,y_1\}$ for some basis $B_1$ of $M'$, and either
\begin{itemize}
  \item[(a)] $G$ is a $4$-element cosegment, or
  \item[(b)] $G$ is the union of two triads $T$ and $T'$ with $|T \cap T'|=1$, where 
    $G\cap B_1^{*}$ has at least one $(N,B_1)$-strong element,
\end{itemize}
where $x_1$ and $y_1$ are elements of $B_1$ such that $\{x_1,y_1,a,b\}$ incriminates $(M,A_1)$ for some $B_1\times B_1^{*}$ companion $\mathbb{P}$-matrix~$A_1$ of $M$.
In this case, we also say $G$ is a \emph{confining set relative to $B_1$}.
Note that a confining set satisfying (b) has corank $3$ in $M'$.
Every confining set $G$ relative to a basis $B_1$ has the property that $G \cap B_1^*$ cospans $G$, since $|G \cap B_1^*| = |G|-2 = r^*_{M'}(G)$.

We first show that $M'$ either has a confining set, or at most one $(N,B)$-strong element outside of $\{x,y\}$ for some basis $B$ of $M$ such that $\{x,y,a,b\}$ incriminates $(M,A)$ where $A$ is a $B\times B^{*}$ companion $\mathbb{P}$-matrix of $M$ with $\{x,y\}\subseteq B$ and $\{a,b\}\subseteq B^{*}$.
We then prove the main result of this section: if $M'$ has a confining set, then $|E(M)|$ is bounded relative to $|E(N)|$.

\begin{prop}
  \label{nogadgetsetup}
  Suppose $M'$ does not have a confining set. Then there is some basis~$B_0$ of $M'$, and $B_0\times B_0^{*}$ companion $\mathbb{P}$-matrix $A_0$ of $M$ such that $\{x_0,y_0,a,b\}$ incriminates $(M,A_0)$, for some $\{x_0,y_0\}\subseteq B_0$, and either
  \begin{itemize}
    \item [(i)] $M'$ has exactly one $(N,B_0)$-strong element $u$ outside of $\{x_0,y_0\}$, and $\{u,x_0,y_0\}$ is a triad of $M'$; or
    \item [(ii)] $M'$ has no $(N,B_0)$-strong elements outside of $\{x_0,y_0\}$ for every choice of basis $B_0$ 
      with a $B_0\times B_0^{*}$ companion $\mathbb{P}$-matrix $A_0$ of $M$ such that $\{x_0,y_0,a,b\}$ incriminates $(M,A_0)$, for some $\{x_0,y_0\}\subseteq B_0$.
  \end{itemize}
\end{prop}

\begin{proof}
  We first prove the following claim.

\begin{claim}
  \label{spstep}
  Let $B_1$ be a basis of $M'$, and let $A_1$ be a $B_1\times B_1^{*}$ companion $\mathbb{P}$-matrix of $M$ such that $\{x_1,y_1,a,b\}$ incriminates $(M,A_1)$, for some $\{x_1,y_1\}\subseteq B_1$.
  If $u$ is an $(N,B_1)$-strong element of $M'$ outside of $\{x_1,y_1\}$, then $M'\ba u$ is $3$-connected up to series pairs.
\end{claim}
\begin{subproof}
By \cref{nostrongbasis}, $u \in B_1^*$.  If $u$ is in a cosegment~$G$ consisting of at least four elements, then, by \cref{nostronglongline}, $G$ is a confining set of $M'$; a contradiction.
So we may assume that $M'\del u$ is $3$-connected up to series pairs for each $(N,B_1)$-strong element~$u$ of $M'$ outside of $\{x_1,y_1\}$. 
\end{subproof}

If, for every choice of basis $B_1$, with corresponding incriminating set $\{x_1,y_1,a,b\}$, the matroid $M'$ has no $(N,B_1)$-strong elements outside of $\{x_1,y_1\}$, then clearly the proposition holds.
So let $B_1$ be a basis of $M'$ such that $u$ is an $(N,B_1)$-strong element of $M'$ outside of $\{x_1,y_1\}$.

\begin{claim}
  \label{cl1}
Either the proposition holds, or there is a $B_2 \times B_2^*$ companion $\mathbb{P}$-matrix $A_2$ such that $\{x_2,y_2,a,b\}$ incriminates $(M,A_2)$ for some $\{x_2,y_2\} \subseteq B_2$, and $M'$ has exactly two $(N,B_2)$-strong elements outside of $\{x_2,y_2\}$.
\end{claim}
\begin{subproof}
  By \cref{atmost2outxy}, $M'$ has at most two $(N,B_1)$-strong elements outside of $\{x_1,y_1\}$. 
  Thus if $M'$ has two $(N,B_1)$-strong elements outside of $\{x_1,y_1\}$, then \cref{cl1} holds with $B_2 = B_1$.
So suppose that $u$ is the only $(N,B_1)$-strong element of $M'$ outside of $\{x_1,y_1\}$.
Then $M'\ba u$ is $3$-connected up to series pairs, by \cref{spstep}.
By \cref{unstablepair} we may assume, up to swapping $a$ and $b$, that $M \del a,u$ is not $N$-stable, $S_u$ is a series pair of $M' \del u$, and $S_u \cup b$ is an unstable triple of $M\del a,u$.
Since $M'$ is $3$-connected, $S_u \cup u$ is a triad of $M'$.
Thus, if $S_u=\{x_1,y_1\}$, then $\{u,x_1,y_1\}$ is a triad, so the proposition holds in this case.
Assume that $S_u\neq \{x_1,y_1\}$.
Then it follows from \cref{unstablemeetsxy} that, without loss of generality, $S_u=\{x_1,s\}$ for some $s\in B_1^{*}-\{a,b,u\}$.
Now $b$ is spanned by $S_u$ in $M$, and $A_{yb}\neq 0$ because the bad submatrix has no zero entries, so it follows that $A_{ys}\neq 0$.
Hence a pivot on $A_{ys}$ is allowable.
So $\{x_1,s,a,b\}$ incriminates $(M,A_1^{ys})$.
Let $B_2=B_1\triangle \{y_1,s\}$.
If $y_1$ is not $(N,B_2)$-strong, then the proposition holds, since $\{u,x_1,s\}$ is a triad.
Otherwise, $u$ and $y_1$ are distinct $(N,B_2)$-strong elements outside of $\{x_1,s\}$, satisfying \cref{cl1}.
\end{subproof}

By \cref{cl1}, we may now assume that $B_2$ is a basis for $M'$, the matrix $A_2$ is a $B_2 \times B_2^*$ companion $\mathbb{P}$-matrix where $\{x_2,y_2,a,b\}$ incriminates $(M,A_2)$ for some $\{x_2,y_2\} \subseteq B_2$, and $M'$ has exactly two $(N,B_2)$-strong elements, $u$ and $v$, in $B_2^*$.
By \cref{spstep}, $M'\ba u$ and $M'\ba v$ are $3$-connected up to series pairs.
We may assume, up to swapping $a$ and $b$, that $M\del a,u$ and $M\del b,v$ are not $N$-stable, but that $M\del b,u$ and $M\del a,v$ are $N$-stable, by \cref{noweirdtriads,unstablepair}.
Let $S_u$ be a series pair of $M' \del u$, and let $S_v$ be a series pair of $M' \del v$, where $S_u \cup b$ is an unstable triple of $M\del a,u$, and $S_v \cup a$ is an unstable triple of $M\del b,v$.
Next, we show that $S_u \cup u = S_v \cup v$.
In fact, we prove a more general claim that we can apply even after an allowable pivot.

\begin{claim}
  \label{foregoing}
  Let $B_3$ be a basis of $M'$ such that $\{x_3,y_3,a,b\}$ incriminates $(M,A_3)$, for some $B_3\times B_3^{*}$ companion $\mathbb{P}$-matrix $A_3$ of $M$ and $\{x_3,y_3\}\subseteq B_3$.
  Suppose $M'$ has exactly two $(N,B_3)$-strong elements $u,v \in B_3^*$,
  where $M \ba a,u$ and $M \ba b,v$ are not $N$-stable.
  Let $S_u$ and $S_v$ be pairs
  such that $S_u \cup b$ and $S_v \cup a$ are unstable triples of $M \ba a,u$ and $M \ba b,v$, respectively.
  Then $S_u\cup u = S_v\cup v$.
\end{claim}

\begin{subproof}
Suppose that the triads $S_u\cup u$ and $S_v\cup v$ of $M'$ are disjoint.
Then, by \cref{unstablemeetsxy}, we may assume that $S_u=\{s,x_3\}$ and $S_v=\{t,y_3\}$ for some $s,t\in B_3^{*}-\{a,b,u,v\}$. Then $A_{y_3s}=0$ because $S_v\cup v$ is a triad of $M'$. But then $s$ is spanned by $B_3-y_3$.  Since $S_u \cup b$ is a triangle, it follows that $b$ is spanned by $B_3-y_3$. Then $A_{y_3b}=0$; a contradiction because the bad submatrix has no zero entries.

Let $G = S_u\cup S_v\cup \{u,v\}$.
Suppose that $|(S_u\cup u)\cap(S_v\cup v)|=1$.
Then $G$ has corank $3$ in $M'$, so $|G \cap B_3^*| \le 3$. It now follows from \cref{unstablemeetsxy} that $G\cap B_3=\{x_3,y_3\}$, so $G$ is a confining set of $M'$; a contradiction.
If $|(S_u\cup u)\cap(S_v\cup v)|=2$, then $G$ is a $4$-element corank-$2$ subset of $M'$, and it follows from \cref{nostronglongline} that $G$ is a confining set; a contradiction.
So $S_u\cup u = S_v\cup v$, completing the proof of \cref{foregoing}.
\end{subproof}

By \cref{foregoing} we have that $S_u\cup u = S_v\cup v$.
Then, by \cref{unstablemeetsxy}, we may assume that $S_u = \{v,x_2\}$ and $S_v = \{u,x_2\}$.
Since $b$ is spanned by $S_u = \{v,x_2\}$, and $A_{y_2b}\neq 0$ because the bad submatrix has no zero entries, it follows that $A_{y_2v}\neq 0$.
Hence a pivot on $A_{y_2v}$ is allowable. 
Now $\{x_0,y_0,a,b\}$ incriminates $(M,A^{y_2v})$, where $x_0=x_2$ and $y_0=v$.
Let $B_0=B_2\triangle \{y_2,v\}$.
Then $u$ is an $(N,B_0)$-strong element outside of $\{x_0,y_0\}$, and $\{u,x_0,y_0\}$ is a triad.
If $y_2$ is not $(N,B_0)$-strong, then the proposition holds.
So suppose that $y_2$ is an $(N,B_0)$-strong element of $M'$.
By \cref{spstep}, $M'\ba y_2$ is $3$-connected up to series pairs.
By \cref{unstablemeetsxy}, $M'\ba y_2$ has a series pair $S_{y_2}$, and $S_{y_2} \cup y_2$ is a triad in $M'$.
By \cref{foregoing}, $S_u \cup u = S_{y_2} \cup y_2$.
But $y_2 \notin S_u \cup u$, so this is contradictory.
\end{proof}

Later, we refer to a basis $B_0$ satisfying \cref{nogadgetsetup} as a \emph{strengthened basis}.

In the remainder of this section we show that if $M'=M \ba a,b$ has a confining set, then $|E(M)|\leq |E(N)| +9$.
Let $B$ be a basis of $M$, and let $A$ be a $B\times B^{*}$ companion $\mathbb{P}$-matrix of $M$ such that $\{x,y,a,b\}$ incriminates $(M,A)$, where $\{x,y\}\subseteq B$ and $\{a,b\}\subseteq B^{*}$.

We begin with the following constraint on the strong elements of $M'$.

\begin{lemma}
\label{nostrongoutsidegadget}
If $M'$ has a confining set $G$ relative to the basis $B$, then $M'$ has no $(N,B)$-strong elements outside of $G$.   
\end{lemma}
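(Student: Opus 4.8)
The plan is to argue by contradiction, combining the case analysis we have already built up for $(N,B)$-strong elements with the structural rigidity of a confining set. Suppose $M'$ has a confining set $G = T \cup T'$ with $G \cap B = \{x,y\}$, and suppose for contradiction that $v$ is an $(N,B)$-strong element of $M'$ with $v \notin G$. There are two cases according to $|T \cap T'|$.

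First I would dispose of the case $|T \cap T'| = 2$, which is the easy one: here $G$ is a $4$-element cosegment of $M'$ with $G \cap B = \{x,y\}$, so Lemma~\ref{cosegstrongbound} immediately gives that $M'$ has no $(N,B)$-strong elements outside $G$, contradicting the existence of $v$. The substance of the proof is therefore the case $|T \cap T'| = 1$, where $G$ has corank $3$ in $M'$ and, by condition (iv) in the definition of a confining set, $G \cap B^{*}$ contains an $(N,B)$-strong element. Since $v \notin G$ and $v$ is $(N,B)$-strong, Lemma~\ref{nostronglongline} tells us that $M' \del v$ is $3$-connected up to series pairs, so by Lemma~\ref{unstablepair} the matroid $M' \del v$ has an unstable series pair $S_v$, and by Lemma~\ref{unstablemeetsxy} we have $S_v \cap B \subseteq \{x,y\}$. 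The key point will be to locate $S_v$ relative to $G$: since $S_v \cup v$ is a triad of $M' \del v$ hence a triad of $M'$, orthogonality with the circuits/cocircuits inside the corank-$3$ set $G$ constrains how $S_v$ can sit. I would show $S_v \subseteq \cl^{*}(G)$ is forced — either $S_v$ meets $G$ in enough elements, or $S_v \cup v$ together with $G$ spans a set of too-small corank.

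The crucial computation: condition (iii) says $G \subseteq \cl^{*}(G \cap B^{*})$, and $|G \cap B^{*}| = 3$ (since $G$ has corank $3$ and $G \cap B = \{x,y\}$ with $G = T \cup T'$, $|T|=|T'|=3$, $|T\cap T'|=1$, so $|G|=5$). Using that $\{s_1,s_2\}=S_v$ consists of $N$-contractible elements with $S_v \cap B \subseteq \{x,y\}$, I would push through the same kind of pivoting argument used in Lemma~\ref{nogadgetsetup}: if $S_v$ meets $B^{*}$ outside $\{a,b\}$ we perform an allowable pivot to move that element into $B$, relabelling so that $S_v = \{x, s\}$ or similar; then the spanning relation forced by the unstable series pair (namely $p \in \cl_{M \del q}(S_v)$ for $\{p,q\} = \{a,b\}$) produces a zero in the bad submatrix $A[\{a,b,x,y\}]$, contradicting Lemma~\ref{nobadsubdetzeros}. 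If instead $S_v = \{x,y\}$, then $G \cup \{v\}$ together with $S_v \cup v$ would have corank at most $3+1 - (\text{overlap})$; in fact $v \in \cl^{*}(S_v \cup \{x,y\}) \subseteq \cl^{*}(G)$, so $v$ is in the coclosure of $G$, whence $v \in \cl^{*}(G \cap B^{*})$ by condition (iii) applied together with $v \in B^{*}$. Then $(G \cap B^{*}) \cup \{v\}$ is a set of $4$ coindependent... no — it would be a corank-$3$ cocircuit-family forcing $\{a,b,x,y\}$-type contradictions; more directly, $G \cup \{v\}$ has corank $3$ with $(G \cup \{v\}) \cap B^{*}$ of size $4$, which is impossible.

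The main obstacle I anticipate is the bookkeeping in the corank-$3$ case: unlike Lemma~\ref{cosegstrongbound}, where a clean corank count on a cosegment finishes things, here $G$ is not a cosegment and one must track several overlapping triads ($T$, $T'$, $S_v \cup v$, and the triad containing the guaranteed $(N,B)$-strong element of $G \cap B^{*}$) and apply orthogonality and allowable pivots carefully without disturbing the incriminating set $\{a,b,x,y\}$ or the property $G \cap B = \{x,y\}$. I expect the proof to mirror the final paragraph of the proof of Lemma~\ref{nogadgetsetup} quite closely, reusing Lemmas~\ref{nostronglongline}, \ref{unstablepair}, \ref{unstablemeetsxy}, \ref{noweirdtriads}, and \ref{nobadsubdetzeros}, with the corank/coclosure bound $(G \cap B^{*}) \cup \{v\}$ being coindependent-of-size-too-large providing the terminal contradiction.
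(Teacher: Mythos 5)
Your overall plan tracks the paper's proof closely: dispose of the $|T\cap T'|=2$ case with Lemma~\ref{cosegstrongbound}, then in the corank-$3$ case use the unstable-series-pair machinery and a corank count (too many coindependent elements of $B^{*}$ inside a set of small corank) to reach a contradiction. The terminal contradiction you correctly identify --- a corank-$3$ or corank-$4$ set containing $4$ or $5$ elements of $B^{*}$ --- is exactly what the paper uses. But there are two substantive issues.

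First, there is a genuine gap in the step ``Lemma~\ref{nostronglongline} tells us that $M'\del v$ is $3$-connected up to series pairs.'' That lemma does not say this. It says that \emph{if} $v$ lies in a cosegment $C$ with $|C|\geq 4$, then $|C|=4$ and $B\cap C=\{x,y\}$ --- that is, it constrains the shape of the obstruction, it does not rule it out. When $v$ does lie in such a $4$-element cosegment $C=\{v,c,x,y\}$, the class $C-v$ has size $3$ and $M'\del v$ is not $3$-connected up to series pairs, so you cannot invoke Lemma~\ref{unstablepair} at all. The paper devotes an entire paragraph to eliminating this possibility: one checks that if $c\notin G\cap B^{*}$ then $C$ is itself a confining $4$-element cosegment whose existence contradicts condition (iv) of $G$ via Lemma~\ref{cosegstrongbound}, while if $c\in G\cap B^{*}$ then $v\in\cl^{*}(\{c,x,y\})\subseteq\cl^{*}(G)$, making $G\cup\{v\}$ a corank-$3$ set containing $4$ coindependent elements of $B^{*}$. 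Without this paragraph your argument simply does not get off the ground in that sub-case.

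Second, for the sub-case $S_v=\{x,s\}$ with $s\in B^{*}$ you propose a pivoting argument in the style of Lemma~\ref{unstablemeetsxy}, whereas the paper uses a uniform corank count: whether the third element $z$ of the triad $S_v\cup v$ lies in $G$ or outside $G$, the set $G\cup(S_v\cup v)$ has corank at most $3$ or at most $4$ respectively, yet contains $\{t,u,v,w\}$ or $\{t,u,v,w,z\}$, a too-large coindependent subset of $B^{*}$. The pivoting route may be salvageable, but you have not verified that an allowable pivot moving $s$ into $B$ actually exists (you would need a row $p\in B-\{x,y\}$ with $A_{ps}\neq 0$ and $A_{pa}=A_{pb}=0$, which is not obviously available), nor have you confirmed that the incriminating set and the hypotheses of Lemma~\ref{unstablemeetsxy} are preserved. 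The paper's corank count avoids all of this bookkeeping and is the cleaner argument.
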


\begin{proof}
  Suppose $M'$ has a confining set~$G$ relative to the basis $B$.
If $G$ is a $4$-element cosegment, then it follows from \cref{cosegstrongbound} that $M'$ has no $(N,B)$-strong elements outside of $G$. 

Assume now that $G=\{u,v,w,x,y\}$ has corank $3$ in $M'$. 
By the definition of a confining set, $\{u,v,w\}\subseteq B^{*}$, and $\{u,v,w\}$ contains an $(N,B)$-strong element.
Suppose $t$ is an $(N,B)$-strong element outside of $G$. Then $t\in B^{*}$ by \cref{nostrongbasis}.
Now $M' \ba t$ is $3$-connected up to series classes;
we next show that $M'\del t$ is in fact $3$-connected up to series pairs.

Suppose that $M'$ has a cosegment~$G'$ containing $t$ with $|G'|\geq 4$. Then $G'=\{s,t,x,y\}$ for some $s\in B^{*}$ by \cref{nostronglongline}. If $s\notin \{u,v,w\}$, then, as there is an $(N,B)$-strong element in $\{u,v,w\}$, there is some $(N,B)$-strong element of $M'$ outside of the $4$-element cosegment~$G'$, contradicting \cref{cosegstrongbound}.
Thus we may assume, without loss of generality, that $G'=\{u,t,x,y\}$. Then $G\cup G'$ has corank $3$; a contradiction, because $G\cup G'$ has a four-element subset $\{t,u,v,w\}$ contained in $B^{*}$. Therefore $M'\del t$ is $3$-connected up to series pairs.

By \cref{unstablemeetsxy}, $M'\del t$ has a series pair $S_t$ that meets $\{x,y\}$.
Then $T^*=S_t \cup t$ is a triad of $M'$; without loss of generality, we may assume that $S_t = \{x,z\}$ for some $z \in E(M')-\{x,t\}$.
If $z\in G$, then $G\cup T^*$ has corank at most $3$ but contains a $4$-element subset $\{t,u,v,w\}$ of $B^{*}$; a contradiction. Thus $z\notin G$, so $z\in B^{*}$ by \cref{unstablemeetsxy}. Then $G\cup T^*$ has corank $4$ but contains a $5$-element subset $\{t,u,v,w,z\}$ of $B^{*}$; a contradiction.  
\end{proof}

The following results consider allowable pivots when $M'$ has a confining set.
The routine proof of the first lemma is omitted.  The second is a straightforward consequence of the first, using the fact that $\{x,y\} \subseteq \cocl_{M'}(G\cap B^*)$.

\begin{lemma}
\label{rowsinagadget}
Let $G$ be a confining set relative to the basis $B$, and let $p\in B$.
Then $A_{pq}= 0$ for all $q\in (B^{*}-\{a,b\})-G$ if and only if $p\in \cl^{*}_{M'}(G\cap B^{*})$. 
\end{lemma}

\begin{lemma}
\label{gadgetpivots}
Let $G$ be a confining set relative to the basis $B$. Then $A_{xq}=0$ and $A_{yq}=0$ for all $q\in (B^{*}-\{a,b\})-G$.
\end{lemma}


\begin{lemma}
\label{gadgetpivots2}
Let $G$ be a confining set relative to the basis $B$.
If $A_{pq}\neq 0$ for some $p\in B-\{x,y\}$ and $q\in (B^{*}-\{a,b\})-G$, then a pivot on $A_{pq}$ is allowable. Moreover, $G$ is a confining set relative to the basis $B\triangle \{p,q\}$.
\end{lemma}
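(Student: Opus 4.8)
Lemma~\ref{gadgetpivots2}: if $A_{wz}\neq 0$ for some $w\in B-\{x,y\}$ and $z\in B^{*}-G$, then a pivot on $A_{wz}$ is allowable, and $G$ remains a confining set relative to $B\triangle\{w,z\}$.

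The plan is to verify the two hypotheses of the ``allowable pivot'' lemma \cite[Lemma 5.11]{mayhew2010stability}, namely that $p=w\in B-\{x,y\}$, $q=z\in B^{*}-\{a,b\}$, $A_{wz}\neq 0$, and that \emph{either} $A_{wa}=A_{wb}=0$ \emph{or} $A_{xz}=A_{yz}=0$. The first three conditions are immediate from the hypotheses once we observe $z\in B^{*}-G$ forces $z\notin\{a,b\}$ (since $\{a,b\}\subseteq G$? — actually one must check: $a,b\in B^{*}$, and $G\cap B^{*}$ consists of the ``strong'' coelements; I would note that $z\in B^*-G$ certainly gives $z\neq a,b$ because incrimination places $a,b$ in the cosegment structure — more safely, $z\notin G$ and $G$ contains the relevant coelements, but in fact we only need $z\neq a,b$; if $z\in\{a,b\}$ then $z\in G$ by the setup, contradicting $z\notin G$). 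The second disjunct $A_{xz}=A_{yz}=0$ is exactly the content of Lemma~\ref{gadgetpivots}, which holds for all $z\in B^{*}-G$. Hence the pivot is allowable, and $\{a,b,x,y\}$ continues to incriminate $(M,A^{wz})$ with the \emph{same} distinguished elements $x,y$ (since $w\notin\{x,y\}$ and $z\notin\{a,b\}$, the incriminating set is unchanged).

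It remains to check that $G$ is a confining set relative to $B'=B\triangle\{w,z\}$. The key point is that pivoting on an entry $A_{wz}$ with $w\notin G$ and $z\notin G$ does not change the submatrix $A[G]$, nor which pairs inside $G$ form triads of $M'$, nor the corank of subsets of $G$ in $M'=M\ba a,b$ — the underlying matroid $M'$ is fixed; only the representing matrix and the reference basis change. Concretely: $G\cap B'=G\cap B=\{x,y\}$ since $w,z\notin G$; the triads $T,T'$ with $G=T\cup T'$ and $|T\cap T'|\in\{1,2\}$ are matroid-intrinsic, hence unchanged; condition (iii), $G\subseteq\cl^{*}(G\cap B^{*})$, and condition (iv) refer to the cocircuit structure of $M'$ and (for (iv)) to $(N,B)$-strong elements. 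For (iv) I would observe that $(N,B')$-strongness of an element $g\in G\cap B'^{*}=G\cap B^{*}$ is the same as $(N,B)$-strongness: a coelement $g$ is $(N,B)$-strong iff $\co(M'\ba g)$ is $3$-connected with an $N$-minor, a condition depending only on $M'$, not on $B$; since $G\cap B^*$ is unchanged, (iv) persists verbatim. So all four defining conditions of a confining set transfer.

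The main obstacle — really the only thing requiring care rather than bookkeeping — is making sure the ``distinguished'' data $(x,y,a,b)$ and the incriminating set survive the pivot unchanged, so that ``confining set relative to $B'$'' is even the right notion to check; this is where one leans on $w\notin\{x,y\}$ and $z\notin\{a,b\}$ together with the allowable-pivot lemma's guarantee that $\{a,b,x,y\}$ still incriminates $(M,A^{wz})$. Once that is in place, everything else is the remark that confining-set status is a property of the fixed matroid $M'$ together with the partition of its ground set into $B'$ and $B'^{*}$, and the only change $B\mapsto B'$ makes is to move $w,z$ between the two sides, neither of which lies in $G$. I would write this up in a few lines: first dispatch allowability via Lemma~\ref{gadgetpivots}, then verify (i)–(iv) for $B'$ one at a time, citing that $w,z\notin G$ throughout.
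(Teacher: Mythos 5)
Your overall approach matches the paper's: allowability follows from Lemma~\ref{gadgetpivots} via the allowable-pivot criterion (with the second disjunct $A_{xz}=A_{yz}=0$), and the confining-set property transfers because $w,z\notin G$, so $G\cap B=G\cap(B\triangle\{w,z\})$ and $G\cap B^{*}=G\cap(B^{*}\triangle\{w,z\})$. Your verification of conditions (i)--(iv) of the confining-set definition, and in particular the observation that $(N,B)$-strongness is basis-independent for a coelement of $G$ (since it amounts to $\co(M'\backslash g)$ being $3$-connected with an $N$-minor), is a correct elaboration of what the paper disposes of in one sentence.

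However, the justification you settle on for $z\neq a,b$ is false. You assert ``if $z\in\{a,b\}$ then $z\in G$ by the setup,'' but $G$ is by definition a subset of $E(M')=E(M)-\{a,b\}$, so $a,b\notin G$ always; the hypothesis $z\in B^{*}-G$ does not by itself exclude $z\in\{a,b\}$. (Your opening hedge ``since $\{a,b\}\subseteq G$?'' flagged the right worry; the containment in fact points the opposite way.) You noticed a genuine imprecision --- the paper's own proof is equally silent on this point --- but your fix is wrong. Two correct fixes: if $z\in\{a,b\}$ then Lemma~\ref{gadgetpivots} would give $A_{xz}=A_{yz}=0$, contradicting Lemma~\ref{nobadsubdetzeros}; or, closer to the paper's intent, Lemma~\ref{gadgetpivots} and Lemma~\ref{rowsinagadget} are statements about coclosure in $M'$ and are implicitly quantified over $z\in (B^{*}\cap E(M'))-G=B^{*}-\{a,b\}-G$, since the columns labelled $a$ and $b$ in the companion matrix do not belong to $M'$.
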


\begin{proof}
  Suppose that $A_{pq}\neq 0$ for some $p\in B-\{x,y\}$ and $q\in (B^{*}-\{a,b\})-G$. Then the pivot on $A_{pq}$ is allowable by \cref{gadgetpivots}. Since $G\cap B=G\cap (B\triangle\{p,q\})$ and $G\cap B^{*}=G\cap (B^{*}\triangle\{p,q\})$, $G$ is a confining set relative to $B \triangle\{p,q\}$.
\end{proof}

Due to the existence of these allowable pivots when $M'$ has a confining set, the following restrictions are imposed on elements of $M'$.

\begin{lemma}
\label{noextrastrong}
Let $G$ be a confining set relative to the basis $B$.
For every $z\in E(M')$,
\begin{itemize}
 \item[(i)] if $z$ is $N$-contractible and $\si(M'/z)$ is $3$-connected, then $z \in G$; and
 \item[(ii)] if $z$ is $N$-deletable and $\co(M'\del z)$ is $3$-connected, then $z\in 
   \cl^{*}_{M'}(G)$.
\end{itemize}
\end{lemma}

\begin{proof}
  Suppose there is an element $z \in E(M')-G$ that is $N$-contractible, and $\si(M'/z)$ is $3$-connected.
  Since $z\notin G$ it follows from \cref{nostrongoutsidegadget} that $z\in B^{*}-G$.
  Then $A_{xz}=A_{yz}=0$ by \cref{gadgetpivots}, so there is some $p\in B-\{x,y\}$ such that $A_{pz}\neq 0$ because $M'$ has no loops.
  Let $B'=B\triangle\{p,z\}$.
  Now, a pivot on $A_{pz}$ is allowable by \cref{gadgetpivots2}.
  So $M'$ has an $(N,B')$-strong element $z$ in $B'-\{x,y\}$; a contradiction of \cref{nostrongbasis}. This proves (i).

  Now suppose there is an element $z \in E(M')-\cocl(G)$ 
  that is $N$-deletable, and $\co(M'\ba z)$ is $3$-connected.
  Then $z \notin G$, so $z\in B-\{x,y\}$ by \cref{nostrongoutsidegadget}.
  It follows from \cref{rowsinagadget} that there is some $q\in (B^{*}-\{a,b\})-G$ such that $A_{zq}\neq 0$.
  Let $B'=B\triangle\{z,q\}$.
  Now, a pivot on $A_{zq}$ is allowable by \cref{gadgetpivots2}, and $G$ is a confining set relative to $B'$.
  But $z$ is an $(N,B')$-strong element outside of $G$; a contradiction of \cref{nostrongoutsidegadget}. This proves (ii).
\end{proof}

When $C$ and $D$ are disjoint subsets of $E(M')$ such that $M'/C \ba D \cong N$, we say $(C,D)$ is an \emph{$N$-labelling of $M'$}. 
For the remainder of the section, suppose $M'$ has a confining set $G$, and let $(C,D)$ be an $N$-labelling of $M'$. 
Recall that if $G$ has corank three, then there is a $(N,B)$-strong element $u \in G \cap B^*$.
In this case, we choose an $N$-labelling $(C,D)$ such that $u \in D$.
Having fixed $(C,D)$, our goal is to bound the size of $C \cup D$, and thus bound $|E(M)|-|E(N)|$.

We write $r^*(X)$ instead of $r^*_{M'}(X)$, and $\cocl(X)$ instead of $\cocl_{M'}(X)$, for the remainder of the section.

\begin{lemma}
\label{cospanningG}
Suppose that the confining set $G$ has corank $3$ in $M'$.
If $z',z''\in C\cup D$ are in $\cl^{*}(G)-G$, then for every partition $(X,Y)$ of $G\cup \{z',z''\}$, either $r^{*}(X) \ge 3$ or $r^{*}(Y) \ge 3$.
\end{lemma}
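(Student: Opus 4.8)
The plan is to prove Lemma \ref{cospanningG} by contradiction, exploiting the fact that $G$ is a corank-$3$ confining set together with the structural constraints that confining sets impose. Suppose, for a contradiction, that there is a partition $(X,Y)$ of $G\cup\{z',z''\}$ with $r^*(X)\leq 2$ and $r^*(Y)\leq 2$. Since $G$ has corank $3$, the set $G\cup\{z',z''\}\subseteq\cl^*(G)$ also has corank $3$ in $M'$; but $r^*(X)+r^*(Y)\geq r^*(G\cup\{z',z''\})=3$ forces, say, $r^*(X)=1$ and $r^*(Y)=2$ (or vice versa), since two corank-$2$ sets would have to overlap in a cocircuit by submodularity of $r^*$ and we are taking a partition. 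A corank-$1$ set in $M'$ is either empty or a single coloop; since $M'=M\del a,b$ is $3$-connected and has at least two elements, it has no coloops, so $X=\emptyset$. Then $Y=G\cup\{z',z''\}$ has corank $2$, which is absurd because $|G\cap B^*|\geq 3$ (as $G=T\cup T'$ with $|T\cap T'|=1$, so $G\cap B^*$ contains three elements) and a corank-$2$ set can contain at most two elements of $B^*$.

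Actually the above is slightly too quick, so let me be more careful about the split of coranks. The key inequality is the submodularity/partition bound $r^*(X)+r^*(Y)\geq r^*(X\cup Y)=r^*(G\cup\{z',z''\})$, and since $z',z''\in\cl^*(G)-G$ we have $r^*(G\cup\{z',z''\})=r^*(G)=3$. Hence $\{r^*(X),r^*(Y)\}$ is, up to order, either $\{1,2\}$ or $\{2,2\}$ (the case $\{0,3\}$ is excluded since $r^*(Y)\leq 2$, and $\{0,2\}$ would give $r^*(G\cup\{z',z''\})\leq 2$). If $\{r^*(X),r^*(Y)\}=\{2,2\}$, then using submodularity in the other direction, $r^*(X\cap Y)\leq r^*(X)+r^*(Y)-r^*(X\cup Y)=4-3=1$, but $X\cap Y=\emptyset$ so this is consistent; the real obstruction is again elementwise: $X$ and $Y$ partition a $5$- or $6$-element set with at least three elements lying in $B^*$, and a corank-$2$ set of a matroid with no parallel... no, rather, a corank-$2$ subset can contain at most two elements of a fixed cobasis $B^*$. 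By pigeonhole, one of $X,Y$ contains at least two of the three elements of $G\cap B^*$; combined with $z'$ or $z''$ — here I need to locate $z',z''$ relative to $B$ and $B^*$.

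This is where the main work lies, and I expect it to be the main obstacle: one must pin down where $z'$ and $z''$ sit. Since $z',z''\in C\cup D$ are splitter-sequence elements outside $G$, and by Lemma \ref{nostrongoutsidegadget} neither is $(N,B)$-strong, the relevant dichotomy from Lemma \ref{noextrastrong} applies: a $z\in E(M')-G$ that is $N$-deletable with $\co(M'\del z)$ $3$-connected must lie in $\cl^*(G\cap B^*)$, and a $z$ that is $N$-contractible cannot have $\si(M'/z)$ $3$-connected. Using that $C\cup D$ elements preserve $3$-connectivity of the appropriate reductions (condition (ii) of the splitter sequence), one deduces whether each $z_j$ is contracted or deleted in the sequence and hence whether it lies in $B$ or $B^*$ after suitable allowable pivots; since $z',z''\in\cl^*(G)$ and confining sets are preserved under the pivots of Lemma \ref{gadgetpivots2}, we may assume $z',z''\in B^*$. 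Then $G\cup\{z',z''\}$ contains five elements of $B^*$, namely $(G\cap B^*)\cup\{z',z''\}$, which must be distributed between $X$ and $Y$; by pigeonhole one of them, say $X$, contains at least three elements of $B^*$, contradicting $r^*(X)\leq 2$. Thus no such partition exists.

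To summarise the order of steps: first, record that $r^*(G\cup\{z',z''\})=3$ because $z',z''\in\cl^*(G)-G$ and $r^*(G)=3$; second, invoke Lemmas \ref{nostrongoutsidegadget}, \ref{noextrastrong}, and \ref{gadgetpivots2} to arrange, after an allowable pivot that preserves $G$, that $z'$ and $z''$ both lie in $B^*$, so that $G\cup\{z',z''\}$ contains at least five elements of the cobasis $B^*$; third, apply the submodular partition bound $r^*(X)+r^*(Y)\geq 3$ to see each of $r^*(X),r^*(Y)$ is at most $2$ and at least $1$, hence each block spans a corank-$\leq 2$ subset and can contain at most two elements of $B^*$; fourth, conclude by pigeonhole that the five cobasis elements cannot be split into two blocks of at most two, the desired contradiction. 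The one subtlety to watch is making sure the pivot placing $z',z''$ in $B^*$ does not disturb the hypothesis that $z',z''\in\cl^*(G)$ or the status of $G$ as a confining set — but this is exactly what Lemma \ref{gadgetpivots2} guarantees, since such pivots fix $G\cap B$ and $G\cap B^*$.
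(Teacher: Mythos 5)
Your argument breaks at the critical step where you claim to arrange $z',z''\in B^*$ by allowable pivots that preserve the confining set $G$. This is impossible. Since $G$ has corank $3$ and $G\cap B^*$ consists of three elements (as $G=T\cup T'$ with $|T\cap T'|=1$ forces $|G|=5$ and $G\cap B=\{x,y\}$), the set $G\cap B^*$ is independent in $(M')^*$ and contained in $\cl^*(G)$ of corank $3$, hence is a cobasis of $\cl^*(G)$. Every element of $\cl^*(G)$ outside $G\cap B^*$ therefore lies in $B$; in particular $z',z''\in B-\{x,y\}$, and this cannot be undone by a pivot fixing $G\cap B$ and $G\cap B^*$. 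Indeed Lemma \ref{rowsinagadget} says precisely that $A_{z'w}=0$ for all $w\in B^*-G$ whenever $z'\in\cl^*(G\cap B^*)$, so the nonzero entry needed to pivot $z'$ into $B^*$ does not exist. (A sanity check: if five cobasis elements lay in $G\cup\{z',z''\}$, that set would have corank at least $5$, flatly contradicting $r^*(G\cup\{z',z''\})=3$ before the partition is even introduced; an argument that succeeds that cheaply should be distrusted.) With $z',z''\in B$, the set $G\cup\{z',z''\}$ contains only three cobasis elements, and $|X\cap B^*|+|Y\cap B^*|=3\leq 4$ is perfectly compatible with $r^*(X),r^*(Y)\leq 2$. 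The pigeonhole argument thus yields nothing, and the lemma does not follow.

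The idea your proposal is missing is the one the paper actually uses. Since $|G\cup\{z',z''\}|=7$, one side, say $X$, has at least four elements; with $r^*(X)\leq 2$ it is then a $4$-element cosegment and contains some $z\in\{z',z''\}$. By the dual of Lemma \ref{longline3conn}, $M'/z$ is $3$-connected, so Lemma \ref{noextrastrong}(ii) forces $z$ to be not $N$-contractible, hence $z\in D$. The heart of the proof is to show that $z$ nonetheless \emph{is} $N$-contractible, producing the contradiction: if both $z',z''\in X$, then $z'$ lies in a nontrivial series class of $M'\del z''$ (deleting one point of a $4$-element cosegment makes the rest a triad), so $z'$ is $N$-contractible; if $z'\in X$ and $z''\in Y$, so $X$ and $\cl^*(Y)$ are both $4$-element cosegments, then deleting the $(N,B)$-strong element $u\in G-\{x,y\}$ guaranteed by the confining-set definition leaves one of $z',z''$ in a nontrivial series class. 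The mechanism is series classes created by deleting an element of a cosegment, together with Lemma \ref{noextrastrong}(ii), not a count of cobasis elements across the partition.
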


\begin{proof}
  Suppose that $(X,Y)$ is a partition of $G\cup \{z',z''\}$ such that $\max\{r^{*}(X),r^{*}(Y)\}\leq 2$. We claim that either $z'$ or $z''$ is an element that contradicts \cref{noextrastrong}(i). Since $|G\cup \{z',z''\}|=7$, we may assume that $|X|\geq 4$. Then $X$ is a cosegment with at least four elements that contains at least one element $z\in \{z',z''\}$, so $\si(M/z)$ is $3$-connected by the dual of \cref{longline3conn}. Hence $z$ is not $N$-contractible by \cref{noextrastrong}(i), so $z\in D$. 

First suppose that $z',z''\in X$. Then $z',z''\in D$, but $z'$ is in a series class $X\cup z'$ of $M'\del z''$, so $z'$ is $N$-contractible in $M'\del z''$ and hence in $M'$; a contradiction of \cref{noextrastrong}(i).

We may now assume that $z'\in X$ and $z''\in Y$, so $X$ and $\cl^{*}(Y)$ are both $4$-element cosegments. Hence both $\si(M'/z')$ and $\si(M'/z'')$ are $3$-connected by the dual of \cref{longline3conn}. By the definition of a confining set, there is some element $u\in G-\{x,y\}$ that is $(N,B)$-strong in $M'$, and $u$ belongs to either $X$ or $Y$. Hence, in $M'\del u$, either $z'$ or $z''$ is in a non-trivial series class, so at least one of $z'$ and $z''$ is $N$-contractible in $M'$; a contradiction of \cref{noextrastrong}(i).
\end{proof}

\begin{lemma}
\label{atmostonedelcospan}
There are at most two elements of $D$ that belong to $\cl^{*}(G)-G$. 
\end{lemma}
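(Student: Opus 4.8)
The plan is to argue by contradiction: suppose $D$ contains three distinct elements $z_1, z_2, z_3$ lying in $\cl^{*}(G) - G$. Since $G$ is a confining set, it has corank $2$ or $3$ in $M'$, so $\cl^{*}(G)$ has corank exactly $r^{*}(G) \leq 3$. Adjoining $z_1, z_2, z_3$ to $G$ therefore produces a set $H = G \cup \{z_1, z_2, z_3\}$ with $r^{*}(H) \leq 3$; in the cosegment case $r^{*}(H) = 2$, and in the corank-$3$ case $r^{*}(H) = 3$. The key point is that in a set of bounded corank, deleting elements creates long series classes, and a long series class makes the cocontracted remnants $N$-contractible with $\si$ of the contraction $3$-connected (via the dual of Lemma \ref{longline3conn}), which is exactly what Lemma \ref{noextrastrong}(ii) forbids.

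First I would dispose of the $4$-element cosegment case (corank $2$): here $H$ is itself a cosegment of $M'$ with $|H| = 8 \geq 4$. Since $z_3 \in D$, the matroid $M' \del z_3$ has an $N$-minor, and $H - z_3$ is a single series class of $M' \del z_3$ containing $z_1$; hence $z_1$ is $N$-contractible in $M' \del z_3$, and so in $M'$. Moreover $|H - z_3| \geq 4$, so $\si(M'/z_1)$ is $3$-connected by the dual of Lemma \ref{longline3conn}. This contradicts Lemma \ref{noextrastrong}(ii). So in the cosegment case even two elements of $D$ in $\cl^{*}(G)-G$ would be too many, and the lemma is clear there (mirroring the first paragraph of Lemma \ref{nostrongoutsidegadget}).

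The substantive case is when $G$ has corank $3$. Here I would exploit Lemma \ref{cospanningG} together with the order structure of the splitter sequence. The set $H = G \cup \{z_1, z_2, z_3\}$ has $|H| = 8$ and $r^{*}(H) = 3$, so $H$ is (up to the coguts elements $x,y$ lying in $\cl^*(G\cap B^*)$) close to a corank-$3$ configuration. I would look at the coindependent/cospanning structure: among $\{z_1,z_2,z_3\}$, consider the element $z_k$ appearing latest in the splitter-sequence ordering $(z_1,\dots,z_n)$, say it is $z_3$. Then $M' \del z_3$ still has an $N$-minor, and in $M' \del z_3$ the set $G \cup \{z_1, z_2\}$ is a corank-$(\le 2)$ set containing a series class that includes at least one of $z_1, z_2$; but actually the cleanest route is: applying Lemma \ref{cospanningG} to the pair $z_1, z_2$ already yields a contradiction once I produce a partition $(X,Y)$ of $G \cup \{z_1,z_2\}$ with $\max\{r^*(X), r^*(Y)\} \leq 2$. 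To build such a partition I use that $r^*(G \cup \{z_1,z_2\}) \leq r^*(G) = 3$, together with the two triads $T, T'$ from the definition of a confining set: one can split the $7$-element set $G \cup \{z_1,z_2\}$ along the triad structure so that each side has corank at most $2$ — for instance $X = T \cup \{z_1\}$ (corank $\le 2$ if $z_1 \in \cl^*(T)$) versus its complement, adjusting which of $z_1, z_2$ goes on which side and using orthogonality with the triads. This is exactly the hypothesis of Lemma \ref{cospanningG}, which then forbids $z_1, z_2 \in C \cup D$ both lying in $\cl^*(G) - G$ — contradiction.

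The main obstacle I anticipate is the bookkeeping in the corank-$3$ case: I need to guarantee that when three extra elements are cospanned by $G$, I can always extract a \emph{balanced} bipartition of some six- or seven-element subset with both parts of corank $\leq 2$, so that Lemma \ref{cospanningG} applies. This requires a short case analysis on how $z_1, z_2, z_3$ distribute among $\cl^*(T)$ and $\cl^*(T')$ (where $G = T \cup T'$, $|T \cap T'| = 1$), using orthogonality between each $z_i$ and the triads $T, T'$ of $M'$, and possibly the fact that $G \subseteq \cl^*(G \cap B^*)$ so $x, y$ never obstruct the corank count. Once the right bipartition is identified, the contradiction is immediate from Lemma \ref{cospanningG} (or, in the degenerate subcases where a side becomes a $\geq 4$-element cosegment, directly from Lemma \ref{noextrastrong}(ii) as in the cosegment case above). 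I would also double-check the boundary subcase where two of the $z_i$ coincide in their "side," which is handled verbatim by the first paragraph of the proof of Lemma \ref{cospanningG}.
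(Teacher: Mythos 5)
Your handling of the $4$-element cosegment case is correct and matches the paper's (although the paper only needs two of the three elements for that contradiction, the argument is essentially the same).

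The corank-$3$ case, however, has a genuine gap. Your plan is to produce, from scratch, a partition $(X,Y)$ of $G\cup\{z_1,z_2\}$ with $\max\{r^*(X),r^*(Y)\}\le 2$, and then invoke Lemma~\ref{cospanningG} to contradict its existence. But you have no construction that guarantees such a partition exists: an arbitrary element of $\cl^*(G)-G$ need not lie in $\cl^*(T)$ or $\cl^*(T')$, so the tentative split ``$T\cup\{z_1\}$ versus the rest'' can easily have a side of corank $3$, and no amount of ``case analysis on how the $z_i$ distribute among $\cl^*(T)$ and $\cl^*(T')$'' will repair this, because the $z_i$ typically distribute among neither. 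The abandoned first idea (looking at $M'\setminus z_3$) is also a dead end as stated: $G\cup\{z_1,z_2\}$ becomes a corank-$2$ set there, not a series class, so you don't immediately get a contractible element.

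The missing idea, which is what the paper does, is that the forbidden partition should not be constructed by hand but should come from a vertical $3$-separation through the last element $z$. Concretely: (1) first prove that $z$ is $N$-contractible, either because $\{z,z',z''\}$ is a triad of $M'$ (so $z$ is in a series pair of $M'\setminus z'$, and $z'\in D$), or else $\{z,z',z''\}$ is a cobasis of $\cl^*(G)$, and since $M'\setminus\{z,z',z''\}$ has an $N$-minor, contracting the rest of $\cl^*(G)$ and deleting any cobasis of it still keeps an $N$-minor, so $z$ is $N$-contractible; (2) Lemma~\ref{noextrastrong}(ii) then forces $\si(M'/z)$ to be non-$3$-connected, giving a vertical $3$-separation $(X,z,Y)$; (3) restricting $(X,Y)$ to $G\cup\{z',z''\}$ gives a partition of that $7$-element set, and Lemma~\ref{cospanningG} now forces one side, say $X$, to cospan $\cl^*(G)$; (4) then $z\in\cl^*(X)$ (as $z\in\cl^*(G)$) while $z\in\cl(Y)$ by the definition of a vertical $3$-separation, contradicting orthogonality. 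In short, you correctly identified Lemma~\ref{cospanningG} as the tool, but you are missing the route to the partition it forbids: it must arise from the $N$-contractibility of $z$ via Lemma~\ref{noextrastrong}(ii), not from a direct decomposition of $G\cup\{z_1,z_2\}$.
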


\begin{proof}
Suppose that there are distinct elements $z,z',z''\in (\cl^{*}(G)-G)\cap D$.
Then $z,z',z''\in B-\{x,y\}$, since $G \cap B^*$ is a basis for $\cocl(G)$.

If $G$ is a $4$-element cosegment of $M'$, then $\cl^*(G)$ is a cosegment containing $z$ and $z'$.
Since $z'$ is $N$-deletable, $z$ is in a non-trivial series class of $M'\del z'$, and $|E(N)| \ge 4$, the element~$z$ is $N$-contractible in $M'$.
By the dual of \cref{longline3conn}, $M'/z$ is $3$-connected, so $z'$ is an $(N,B)$-strong element of $B-\{x,y\}$; a contradiction of \cref{nostrongbasis}. 

Now we may assume that $\cl^{*}(G)$ has corank $3$ in $M'$.
We first show that $z$ is $N$-contractible in $M'$.
If $\{z,z',z''\}$ is a triad of $M'$, then $z$ is $N$-contractible since it is in a series pair of $M'\del z'$, and $z'$ is $N$-deletable in $M'$.
So suppose $\{z,z',z''\}$ is coindependent in $M'$.
Then $\{z,z',z''\}$ is a cobasis for $\cl^{*}(G)$.
As $M'\del z',z''$ has an $N$-minor, and $G \cup z$ is contained in a series class in this matroid, it follows that $M' \del \{z',z''\} / z$ has an $N$-minor.
In particular, $z$ is $N$-contractible in $M'$.

Now $z$ is an $N$-contractible element of $M'$, so it follows from \cref{noextrastrong}(i) that $\si(M'/z)$ is not $3$-connected. Hence there is a vertical $3$-separation $(X,z,Y)$ of $M'$ for some $X$ and $Y$. But then either $X$ or $Y$ cospans $\cl^{*}(G)$ by \cref{cospanningG}. Assume $X$ cospans $\cl^{*}(G)$. Then $z\in \cl^{*}(X)$, and by the definition of a vertical $3$-separation, $z\in \cl(Y)$; a contradiction to orthogonality. 
\end{proof}

\begin{lemma}
\label{concosp}
 If $c \in E(M')$ is $N$-flexible, then $c\in \cl^{*}(G)$.
\end{lemma}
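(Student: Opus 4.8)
The plan is to prove the contrapositive: if $c \in E(M') - \cl^*(G)$, then $c$ is not $N$-flexible. Recall that $G$ is a confining set, so by Lemma \ref{nostrongoutsidegadget} there are no $(N,B)$-strong elements of $M'$ outside of $G$, and in particular $c$ is not $(N,B)$-strong. I would first use Lemma \ref{nostrongbasis} and the structure of confining sets to locate $c$ relative to $B$. If $c$ were $N$-flexible, then $c$ is both $N$-deletable and $N$-contractible; I want to show each of these possibilities, combined with $c \notin \cl^*(G)$, produces a contradiction with Lemma \ref{noextrastrong}.

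The key steps, in order: First, suppose $c$ is $N$-flexible and $c \notin \cl^*(G)$. Since $c$ is $N$-contractible but not $(N,B)$-strong, Lemma \ref{noextrastrong}(ii) forces $\si(M'/c)$ to be \emph{not} $3$-connected; combined with $c$ being $N$-deletable, I similarly want $\co(M'\del c)$ to be non-$3$-connected, or else Lemma \ref{noextrastrong}(i) applies (using $c \notin \cl^*(G) = \cl^*(G \cap B^*)$) to give a contradiction. So both $\si(M'/c)$ and $\co(M'\del c)$ are non-$3$-connected. Second, I would invoke Bixby's Lemma (Lemma \ref{bixby}): for any $e \in E(M')$, one of $\si(M'/e)$ or $\co(M'\del e)$ is $3$-connected. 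Applying this with $e = c$ immediately contradicts the conclusion of the previous step. Thus one of $\si(M'/c)$ or $\co(M'\del c)$ must be $3$-connected, and in either case the relevant part of Lemma \ref{noextrastrong} (part (ii) if $\si(M'/c)$ is $3$-connected and $c$ is $N$-contractible; part (i) if $\co(M'\del c)$ is $3$-connected, $c$ is $N$-deletable, and $c \notin \cl^*(G \cap B^*)$) yields a contradiction.

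One subtlety to handle carefully is the hypothesis $z \notin \cl^*(G \cap B^*)$ in Lemma \ref{noextrastrong}(i): I need $c \notin \cl^*(G)$ to imply $c \notin \cl^*(G \cap B^*)$. Since $G \cap B^*$ is a cobasis of $\cl^*(G)$ (every element of $G \cap B$, i.e. $\{x,y\}$, lies in $\cl^*(G \cap B^*)$ by condition (iii) in the definition of a confining set), we have $\cl^*(G) = \cl^*(G \cap B^*)$, so this is immediate. The only mild obstacle is organising the case split cleanly — whether $\si(M'/c)$ is $3$-connected or $\co(M'\del c)$ is — and making sure that in each case we have the matching robustness property ($N$-contractible, respectively $N$-deletable) available from the assumed $N$-flexibility of $c$. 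This is routine, so the proof is short.

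\begin{proof}
Suppose for a contradiction that $c$ is $N$-flexible but $c\notin \cl^{*}(G)$. Since $G\cap B^{*}$ is a cobasis for $\cl^{*}(G)$, we have $\cl^{*}(G)=\cl^{*}(G\cap B^{*})$, so $c\notin \cl^{*}(G\cap B^{*})$. By Bixby's Lemma (Lemma \ref{bixby}), either $\si(M'/c)$ or $\co(M'\del c)$ is $3$-connected. If $\si(M'/c)$ is $3$-connected, then, since $c$ is $N$-contractible, $c$ contradicts Lemma \ref{noextrastrong} (ii). If $\co(M'\del c)$ is $3$-connected, then, since $c$ is $N$-deletable and $c\notin \cl^{*}(G\cap B^{*})$, the element $c$ contradicts Lemma \ref{noextrastrong} (i). In either case we have a contradiction, so $c\in \cl^{*}(G)$.
\end{proof}
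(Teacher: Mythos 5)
Your proof is correct and follows essentially the same route as the paper: both argue by contrapositive using Bixby's Lemma together with Lemma \ref{noextrastrong}. The only difference is cosmetic — the paper applies Lemma \ref{noextrastrong}(ii) first to conclude $\si(M'/c)$ is not $3$-connected, then Bixby, then Lemma \ref{noextrastrong}(i), whereas you invoke Bixby first and split into cases — and your explicit observation that $\cl^{*}(G)=\cl^{*}(G\cap B^{*})$ is a helpful clarification that the paper leaves implicit.
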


\begin{proof}
  Suppose that $c$ is $N$-flexible.
  By Bixby's Lemma, either $\si(M'/c)$ or $\co(M'\del c)$ is $3$-connected.
  \Cref{noextrastrong} then implies that $c \in \cocl(G)$, as required.
\end{proof}

\begin{lemma}
 \label{DinGplus}
 If $z \in E(M')$ is $N$-deletable, then $z \in \cocl(G)$.
\end{lemma}

\begin{proof}
Let $z\in E(M') - \cl^{*}(G)$, and suppose that $z$ is $N$-deletable.
It then follows from \cref{noextrastrong}(ii) that $\co(M'\del z)$ is not $3$-connected. Thus, by the dual of \cref{ZspanningB}, there is a cyclic $3$-separation $(X,z,Y)$ of $M'$ such that at most one element of $X$ is not $N$-flexible. We claim that $X \subseteq \cl^{*}(G)$. The claim follows immediately from \cref{concosp} unless $s\in X$ is the single element of $X$ that is not $N$-flexible. By the dual of \cref{ZspanningB}, the element $s$ is $N$-deletable and $\co(M'\del s)$ is $3$-connected, so, by \cref{noextrastrong}(ii), $s\in \cl^{*}(G)$. Thus $X \subseteq \cl^{*}(G)$, as claimed.

Since $(X,z,Y)$ is a cyclic $3$-separation, 
$r^{*}(X)\geq 3 \ge r^*(G)$.
Thus $\cl^{*}(X)=\cl^{*}(G)$. But $z\in \cl^{*}(X)$ because $(X,z,Y)$ is a cyclic $3$-separation in $M'$, so $z\in \cl^{*}(G)$; a contradiction. 
\end{proof}

\begin{lemma}
  \label{Gcoseg}
  Suppose that the confining set $G$ is a cosegment.
  Then $|(\cl^{*}(G)-G) \cap (C\cup D)| \le 1$.
  In particular, no elements of $\cocl(G)-G$ are $N$-contractible.
\end{lemma}

\begin{proof}
  As $\cocl(G)$ has corank two, $M'/p$ is $3$-connected for any $p \in \cocl(G)$, by \cref{longline3conn}.
  Thus, for any $p \in \cocl(G)-G$, \cref{noextrastrong}(i) implies that $p$ is not $N$-contractible.
  Let $p$ and $q$ be distinct elements in $C \cup D$ such that $p,q \in \cocl(G)-G$.
  Then $p,q \in D$, but $p$ is in a series class in $M' \ba q$, so $p$ is $N$-contractible; a contradiction.
\end{proof}

\begin{lemma}
  \label{bashingremix}
  Suppose that the confining set $G$ has corank three, and there is an element $p \in \cocl(G)-G$ that is $N$-contractible. Then either
  \begin{itemize}
    \item[(i)] $\cocl(G)-G = \{p\}$, or
    \item[(ii)] $|E(M)| \le |E(N)| + 9$.
  \end{itemize}
\end{lemma}

\begin{proof}
  Suppose that (i) does not hold.
  Then there are distinct elements $p$ and $q$ in $\cocl(G)-G$, where $p$ is $N$-contractible.
  By \cref{noextrastrong}(i), $\si(M'/p)$ is not $3$-connected.
  Let $(U,p,V)$ be a vertical $3$-separation of $M'$ such that $|U \cap E(N)| \le 1$ and $V \cup p$ is closed.  
  If $U$ (or $V$) cospans $\cocl(G)$, then $U$ (or $V$, respectively) also cospans $p$, as $p \in \cocl(G)$.  But this contradicts that $p \in \cl(U) \cap \cl(V)$.
  Thus $r^*(\cocl(G) \cap U) \le 2$ and $r^*(\cocl(G) \cap V) \le 2$.
  Recall that $G$ is the union of triads $T_1^*$ and $T_2^*$.
  It follows that $\cocl(G)-p$ is the union of two cosegments $G_1=\cocl(T_1^*)$ and $G_2=\cocl(T_2^*)$.
  Without loss of generality, we assume that $q \in G_1$, so $|G_1| \ge 4$.
  By the dual of \cref{longline3conn}, $M'/q$ is $3$-connected, so $q$ is not $N$-contractible, by \cref{noextrastrong}(i).  

  If $|G_2| \ge 4$, then, by \cref{nostronglongline}, $|G_2| = 4$ and $G_2 \cap B = \{x,y\}$.
  So $G_2$ is also a confining set.  But then $q \notin \cocl(G_2)$, contradicting \cref{DinGplus}.
  So we may assume that $|G_2| = 3$.

  If $G_1 - q$ contains an element that is $N$-deletable, then it follows that $q$ is $N$-contractible; a contradiction.  
  So no element in $G_1-q$ is $N$-deletable;
  in particular, the $(N,B)$-strong element~$u \in G \cap B^*$ is not in $G_1$, and no elements in $G_1$ are $N$-flexible.
  Moreover, if $q \in U$, then $q$ is $N$-contractible by \cref{ZspanningB}; a contradiction.
  Letting $G_1 \cap G_2 = \{v\}$, we may now assume that $G_1-v \subseteq V$, and $G_2-v \subseteq U$.

  By \cref{ZspanningB}, each $y \in U$ is either $N$-flexible, or $y$ is $N$-contractible and $\si(M'/y)$ is $3$-connected.  In the former case, $y \in \cocl(G)$ by \cref{concosp}; in the latter, $y \in G$ by \cref{noextrastrong}(i).
  So $U \subseteq \cocl(G)$.  Since $G_1-v \subseteq V$, and $|U| \ge 3$, it now follows that $U=G_2$, where $G_2$ is the triad containing $\{u,v\}$.  Let $G_2 = \{u,v,w\}$.
  Note that $v$ is not $N$-deletable, since $v \in G_1-q$.
  It follows, by \cref{CPL2}, that $p \in \cl(U-v)$, so $\{u,w,p\}$ is a triangle of $M'$. 
  This triangle is coindependent, since $M'$ is $3$-connected, so it cospans $\cocl(G)$.
  Moreover, the only $N$-flexible elements of $M'$ are $\{u,w,p\}$.

  We now bound the elements of $C \cup D$ outside of $\cocl(G)$.
  By \cref{DinGplus}, every element of $C \cup D$ that is not in $\cocl(G)$ is in $C$.
  Let $z \in C - \cocl(G)$.
  Then, by \cref{noextrastrong}(i), $\si(M'/z)$ is not $3$-connected, so there is a vertical $3$-separation $(X,z,Y)$ such that $|X \cap E(N)| \le 1$ and $Y \cup z$ is closed.  Thus,
  by \cref{ZspanningB},
  at most one element of $X$ is not $N$-flexible, and if there is such an element~$s$, then $s$ is $N$-contractible and $\si(M'/s)$ is $3$-connected.
  If $X = \{u,w,p\}$, then $z \in \cl(X)-X$, but as $\{u,w,p\}$ cospans $G$, we then have $|\cocl(X)-X| > 1$, which contradicts \cref{gutspluscoguts1}.
  So $X$ contains an element $s$, where $s$ is $N$-contractible and $\si(M'/s)$ is $3$-connected, so $s \in G$ by \cref{noextrastrong}(i).
  Note that $q$ is in the coclosure of the coindependent triangle $\{u,w,p\}$, so $\{u,w,p,q\}$ is $3$-separating.
  By uncrossing $\{u,w,p,q\}$ and $X$, we observe that the set $P=\{u,w,p,q,s\}$ is also $3$-separating.  Moreover, since $z \in \cl(X)$, we have $z \in \cl(P)$.
  Let $Q=E(M')-(P \cup z)$.
  We may assume that $|Q| \ge 3$, otherwise the lemma holds trivially.  So $P \cup z$ is exactly $3$-separating.
  As $v \in \cocl(P \cup z)$, we have $v \notin \cl(Q-v)$, so $r(Q) \ge 3$.  Thus,
  $(P,z,Q)$ is a vertical $3$-separation, where $|P \cap E(N)| \le 1$.
  Since $q \in \cocl(P-q)$, we have $q \notin \cl(Q)$.
  By \cref{CPL2}(i), it follows that $q$ is $N$-contractible; a contradiction.

  We deduce that $C-\cocl(G) = \emptyset$.
  So $C \cup D \subseteq \cocl(G)$.  As $|(C \cup D) \cap (\cocl(G)-G)| =2$,
  we have $|C \cup D| \le 7$. Thus, $|E(M)| - |E(N)| = |C \cup D| + |\{a,b\}| \le 9$, as required.
\end{proof}

By \cref{DinGplus}, $D - \cocl(G) = \emptyset$.
We now focus on bounding $|C - \cocl(G)|$.

\begin{lemma}
  \label{uncrossingwin}
  Suppose that there exist distinct $p_1,p_2 \in E(M')-\cocl(G)$ such that $M'/p_i$ has an $N$-minor for $i \in \{1,2\}$.
  Let $(X_1,p_1,Y_1)$ and $(X_2,p_2,Y_2)$ be vertical $3$-separations of $M'$. 
  Then $|X_1 \cap X_2| \le 1$ or $|Y_1 \cap Y_2| \le 1$.
\end{lemma}
\begin{proof}
  Towards a contradiction, suppose that $|X_1 \cap X_2| \ge 2$ and $|Y_1 \cap Y_2| \ge 2$.
  By uncrossing, the sets $X_1\cup X_2$, $X_1\cup X_2\cup p_1$, $X_1\cup X_2\cup p_2$, and $X_1\cup X_2\cup \{p_1,p_2\}$ are all $3$-separating.
  Since $|Y_1 \cap Y_2| \ge 2$, the sets
  $X_1\cup X_2$, $X_1\cup X_2\cup p_1$, $X_1\cup X_2\cup p_2$, and $X_1\cup X_2\cup \{p_1,p_2\}$ are sides of exact $3$-separations of $M'$ and $p_1,p_2$ are guts elements.
  In particular, $(X_1\cup X_2\cup p_2, p_1, Y_1\cap Y_2)$ is a vertical $3$-separation of $M'$ unless $r(Y_1\cap Y_2)\leq 2$.
  But if $r(Y_1\cap Y_2)\leq 2$, then $(Y_1\cap Y_2)\cup \{p_1,p_2\}$ is a segment of $M'$ with at least four elements, so $p_1$ belongs to a non-trivial parallel class of $M'/p_2$.
  Then $p_1$ is $N$-deletable in $M'/p_2$ and hence in $M'$, so $p_1\in \cl^{*}(G)$ by \cref{concosp}; a contradiction.
  Thus $(X_1\cup X_2\cup p_2, p_1, Y_1\cap Y_2)$ is a vertical $3$-separation of $M'$, and either $|(X_1\cup X_2\cup p_2)\cap E(N)|\leq 1$ or $|(Y_1\cap Y_2)\cap E(N)|\leq 1$.

  If $|(X_1\cup X_2\cup p_2)\cap E(N)|\leq 1$, then there is an element $p_2$ in the non-$N$-side of $(X_1\cup X_2\cup p_2, p_1, Y_1\cap Y_2)$.
  Since $p_2 \in \cl(Y_1 \cap Y_2)$, it follows from \cref{CPL2}(ii) that $p_2$ is $N$-deletable.
  Hence $p_2\in \cl^{*}(G)$ by \cref{DinGplus}; a contradiction.
  So $|(Y_1\cap Y_2)\cap E(N)|\leq 1$.
  But $(X_1\cup X_2, p_1, (Y_1\cap Y_2) \cup p_2)$ is also a vertical $3$-separation of $M'$.
  Moreover, as $|E(N)| \ge 4$, we have $|(X_1\cup X_2\cup p_2)\cap E(N)|\ge 3$, so $|(X_1\cup X_2)\cap E(N)|\ge 2$
  and hence, by \cref{cplminorlemma}, $|(Y_1 \cap Y_2) \cup p_2| \le 1$.
  Again, it follows that $p_2 \in \cocl(G)$; a contradiction.
\end{proof}

\begin{lemma}
  \label{maybe}
  Suppose $\cocl(G)$ has at most six $N$-contractible elements. Then $|C-\cocl(G)| \le 2$.
\end{lemma}
\begin{proof}
  Suppose that $|C-\cocl(G)| \ge 3$.
  Let $p_1,p_2,p_3$ be distinct elements in $C-\cocl(G)$.
  It follows from \cref{noextrastrong}(i) that $\si(M'/p_i)$ is not $3$-connected, so there is a vertical $3$-separation $(X_i,p_i,Y_i)$ of $M'$, for each $i\in \{1,2,3\}$, where $|X_i\cap E(N)|\leq 1$ and $Y_i \cup p_i$ is closed.
  Then, by \cref{ZspanningB}, each element $x \in X_i$ is either $N$-flexible, or $x$ is $N$-contractible and $\si(M'/x)$ is $3$-connected.
  By \cref{concosp}, in the former case, and \cref{noextrastrong}(i), in the latter, $X_i \subseteq \cocl(G)$.
  Note that $|X_i| \ge 3$, for each $i$, and if $|X_i|=3$, then $X_i$ is a triad.

  Let $H$ be the set of $N$-contractible elements of $\cocl(G)$.
  Since, for $i \in \{1,2,3\}$,
  each element in $X_i$ is $N$-contractible, $X_i \subseteq H$, where $|H| \le 6$.
  We claim that $|X_i \cap X_j| \ge 2$ for some distinct $i,j \in \{1,2,3\}$.
  If, for some $\{i,j,k\} = \{1,2,3\}$, the sets $X_i$ and $X_j$ are disjoint, then $X_i \cup X_j = H$, so $X_k$ intersects $X_i$ or $X_j$ in two elements, as claimed.
  Similarly, if $|X_i| \ge 4$, then either $|X_i \cap X_j| \ge 2$, or $X_i \cup X_j = H$, in which case $X_k$ intersects $X_i$ or $X_j$ in two elements.
  So we may assume that $|X_i| = 3$ for each $i \in \{1,2,3\}$, and
  the pairwise intersection between any two of the three sets has size one.
  Let $X_2 = \{x_1,x_2,x_3\}$ where $X_1 \cap X_2 = \{x_1\}$ and $X_2 \cap X_3 = \{x_3\}$.
  Now $X_2 \cup p_2$ contains a circuit, since $(X_2,p_2,Y_2)$ is a vertical $3$-separation.  By orthogonality, this circuit does not meet the triad $X_3$, nor the triad $X_1$, so $X_2 \cup p_2$ contains a circuit of size at most two; a contradiction.
  This proves the claim.
  Without loss of generality, we may now assume that $|X_1\cap X_2|\geq 2$.

  If $|E(M')-\cocl(G)| \ge 4$, then $|Y_1 \cap Y_2| \ge |E(M')-(\cocl(G) \cup \{p_1,p_2\})| \ge 2$, which contradicts \cref{uncrossingwin}.
  So $|E(M')-\cocl(G)| \le 3$, in which case $E(M')-\cocl(G) = \{p_1,p_2,p_3\}$.
  Now, as $p_1 \notin \cocl(G)$, we have $p_1 \in \cl(\{p_2,p_3\})$, by orthogonality.
  Since $M'$ is $3$-connected, $\{p_1,p_2,p_3\}$ is a triangle.
  Then $\{p_2,p_3\}$ is a parallel pair in $M'/p_1$, so the element $p_2$ is $N$-deletable.
  As $\si(M/p_2)$ is not $3$-connected, $\co(M \ba p_2)$ is $3$-connected by Bixby's Lemma.
  But $p_2 \notin \cocl(G)$; contradicting \cref{noextrastrong}(ii).
  We deduce that $|C-\cocl(G)| \le 2$, thus completing the lemma.
\end{proof}

We handle one more special case.
\begin{lemma}
  \label{onemorecase}
  Suppose that the confining set $G$ has corank three, $\cocl(G)-G = \{q\}$ for some $q \in C \cup D$, and $|C-\cocl(G)| =2$. 
  Then $|E(M)|\leq |E(N)|+9$.
\end{lemma}
\begin{proof}
  Let $p_1$ and $p_2$ be distinct elements in $C-\cocl(G)$.
  By \cref{noextrastrong}(i), $\si(M'/p_i)$ is not $3$-connected, so there is a vertical $3$-separation $(X_i,p_i,Y_i)$ of $M'$ for $i \in \{1,2\}$, where $|X_i \cap E(N)| \le 1$ and $Y_i \cup p_i$ is closed.
  By \cref{ZspanningB}, each element $x \in X_i$ is either $N$-flexible, or $x$ is $N$-contractible and $\si(M'/x)$ is $3$-connected.
  By \cref{concosp}, in the former case, and \cref{noextrastrong}(i), in the latter, $X_i \subseteq \cocl(G)$.
  Note that $|X_i| \ge 3$, for each $i$, and if $|X_i|=3$, then $X_i$ is a triad.
  Recall also that $G$ is the union of two triads $T_1^*$ and $T_2^*$.

  Suppose $q$ together with one of the triads, $T_1^*$ say, forms a cosegment.
  Then $\si(M/q)$ is $3$-connected, by the dual of \cref{longline3conn}, so $q$ is not $N$-contractible, by \cref{noextrastrong}(i).
  Now $X_1 \cup X_2 \subseteq G$, so it follows that $\{X_1,X_2\} = \{T_1^*,T_2^*\}$.
  But then $p_1 \in \cl(T_1^*)$, up to swapping the labels on $p_1$ and $p_2$, so, by orthogonality with $T_2^*$, we deduce that $\{p_1,s_1,t_1\}$ is a triangle, where $T_1^*-T_2^* = \{s_1,t_1\}$.
  Let $T_1^* \cap T_2^* = \{v\}$.  Now, as $T_1^* \cup q$ is a cosegment, $\{t_1,v,q\}$ is a triad that intersects the triangle $\{p_1,s_1,t_1\}$ in a single element; a contradiction.

  Now suppose $q \notin \cocl(T_1^*) \cup \cocl(T_2^*)$.
  We claim that $|X_1 \cap X_2| \ge 2$.
  Suppose not.  Then $|X_i| = 3$, for some $i\in \{1,2\}$, so we may assume $X_1$, say, is a triad.
  Either $X_1 \in \{T_1^*, T_2^*\}$, or $q \in X_1$ and $X_1$ intersects $T_1^*$ and $T_2^*$ in one element each.
  Let $T_1^* = \{v,s_1,t_1\}$ and $T_2^* = \{v,s_2,t_2\}$. 

  If $X_1 = T_1^*$, then, as $p_1 \in \cl(X_1)$, by orthogonality with $T_2^*$ we have that $\{p_1,s_1,t_1\}$ is a triangle.
  But as $\{s_2,t_2,q\}$ cospans $\cocl(G)$, the element $t_1$ is in a cocircuit contained in $\{s_2,t_2,q,t_1\}$, which contradicts orthogonality with the triangle $\{p_1,s_1,t_1\}$.

  On the other hand, if $X_1$ is a triad that meets both $\{s_1,t_1\}$ and $\{s_2,t_2\}$, and $q \in X_1$, then, $p_1$ is in a circuit contained in $X_1 \cup p_1$. But $X_1 \cup p_1$ meets $T_1^*$ and $T_2^*$ in a single element each, so, by orthogonality, $p_1$ is in a parallel pair; a contradiction.
  So $|X_1 \cap X_2| \ge 2$ as claimed.
  Note that the lemma holds trivially if $|E(M)| \le 11$, since $|E(N)| \ge 4$.
  So we may assume that $|E(M')| \ge 10$, in which case $|Y_1 \cap Y_2| \ge 2$.
  But this contradicts \cref{uncrossingwin}.
\end{proof}

Finally, we are in a position to prove the main result of this section.

\begin{prop}
\label{gadgetfinish}
Suppose that $M'$ has a confining set. Then $|E(M)|\leq |E(N)|+9$.
\end{prop}

\begin{proof}
  First, suppose that $G$ is a cosegment.
  Then, by \cref{Gcoseg}, $\cocl(G)-G$ has at most one element of $C \cup D$, and $\cocl(G)$ consists of at most four $N$-contractible elements (those elements in $G$).
  Therefore, by \cref{maybe}, $|C-\cocl(G)| \le 2$.
  As $D \subseteq \cocl(G)$, by \cref{DinGplus}, we have
  \begin{align*}
    |E(M)| - |E(N)| &\le |\cocl(G) \cap (C \cup D)| + |C-\cocl(G)| + |\{a,b\}| \\
    &\le 5 + 2 + 2 = 9.
  \end{align*}

  Now suppose that $G$ has corank three.
  Consider first the case where $|\cocl(G)-G| \ge 2$.
  If $\cocl(G)-G$ contains an element that is $N$-contractible, then, by \cref{bashingremix}, $|E(M)| \le |E(N)| + 9$, as required.
  So we may assume that no elements in $\cocl(G)-G$ are $N$-contractible.
  In particular, $\cocl(G)$ contains at most five $N$-contractible elements. 
  Now, by \cref{maybe}, $|C-\cocl(G)| \le 2$.   
  Suppose there is an element $q \in D \cap (\cocl(G)-G)$, and let $p$ be an element in $\cocl(G)-G$, with $q \neq p$.
  Recall that $(C,D)$ was chosen such that $u \in D$, and
  note that $r^*_{M'\ba u}(\cocl(G)-u)=2$.
  Thus $p$ is in a series class in $M'\ba u \ba q$, so $p$ is $N$-contractible; a contradiction.
  It now follows that $|\cocl(G) \cap (C \cup D)| \le 5$, and hence $|E(M)| - |E(N)| \le 5 + 2 + |\{a,b\}| = 9$, as required.

  Now consider the case where $|\cocl(G)-G| \le 1$.
  Since $|\cocl(G)| \le 6$, \cref{maybe} implies that $|C-\cocl(G)| \le 2$.
  If $(\cocl(G)-G) \cap (C \cup D) = \emptyset$, then $|E(M)| - |E(N)| \le 5 + 2 + |\{a,b\}| = 9$ as required.
  So suppose that $\cocl(G)-G = \{q\}$ for some $q \in C \cup D$.
  We may also assume that $|C-\cocl(G)| \ge 2$, otherwise the result holds trivially.
  Now, by \cref{onemorecase}, $|E(M)|\leq |E(N)|+9$, as required.
\end{proof}

\section{Robust elements}
\label{robustelements}

Let $M$ be a $3$-connected matroid, let $N$ be a $3$-connected minor of $M$ such that 
$|E(N)|\geq 4$,
and let $B$ be a basis of $M$. 
In this section, we consider the structure of $M$ that arises from elements that are $(N,B)$-robust but not $(N,B)$-strong.
Recall that a \textit{path of $3$-separations} of $M$ is a partition $(P_1,P_2,\ldots, P_n)$ of $E(M)$ such that $(P_1\cup \cdots \cup P_i, P_{i+1}\cup \cdots \cup P_n)$ is a $3$-separation of $M$ for each $i\in \{1,2,\ldots, n-1\}$.
The main result of this section shows that the presence of an element that is $(N,B)$-robust but not $(N,B)$-strong gives rise to a particular path of $3$-separations. 

Let $(X,z,Y)$ be a vertical $3$-separation of $M$. We say that $X$ is \textit{$z$-closed} if $X=\cl^{*}(X)$ and $X=\cl(X)-z$.
We use $z$-closure to ensure that the $(N,B)$-strong elements of $M$ are contained in the non-$N$-side of a vertical $3$-separation of $M$. A set is \textit{fully closed} if it is both closed and coclosed. Given a subset $A$ of $E(M)$, we use $\fcl_{M}(A)$ to denote the smallest fully closed set that contains $A$. Thus, the set $X$ is $z$-closed if $\fcl_{M/z}(X)=X$. 

Dually, given a cyclic $3$-separation $(X,z,Y)$, we say $X$ is \emph{$z$-coclosed} if $X$ is $z$-closed in $M^*$.

\begin{lemma}
\label{existszclosed}
If $z\in B$ and $z$ is $(N,B)$-robust but not $(N,B)$-strong, then there is some vertical $3$-separation $(X,z,Y)$ of $M$ such that $X$ is $z$-closed and $|X\cap E(N)|\leq 1$.
\end{lemma}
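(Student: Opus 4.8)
The plan is to produce the required separation by first finding \emph{some} vertical $3$-separation with $z$ in its guts and the $N$-minor on the correct side, and then enlarging that side as much as possible; the enlargement will automatically be $z$-closed.

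First, the setup. Since $z\in B$ and $z$ is $(N,B)$-robust, $M'/z$ has an $N$-minor, and hence so does $\si(M'/z)$ by Lemma~\ref{sicominor}. As $z$ is not $(N,B)$-strong, $\si(M'/z)$ is therefore not $3$-connected, so Lemma~\ref{existsv3sep} gives a vertical $3$-separation $(X,z,Y)$ of $M'$. Then $(X,Y)$ is a non-minimal $2$-separation of the connected matroid $M'/z$, which has an $N$-minor, so by \cite[Proposition~8.3.5]{oxley2011matroid} one side meets $E(N)$ in at most one element; after relabelling we may assume $|Y\cap E(N)|\leq 1$. Now choose such a triple $(X,z,Y)$, among all vertical $3$-separations of $M'$ with $|Y\cap E(N)|\leq 1$, with $|Y|$ as large as possible. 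Since $z\in\cl_{M'}(Y)$, it then suffices to prove $\fcl_{M'/z}(Y)=Y$.

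Suppose instead that some $e\in X$ lies in $\cl_{M'/z}(Y)$ or $\cl^{*}_{M'/z}(Y)$; I would show that $(X-e,z,Y\cup e)$ is again a vertical $3$-separation of $M'$ with $|(Y\cup e)\cap E(N)|\leq 1$, contradicting the choice of $(X,z,Y)$. The $N$-side condition is immediate if $e\notin E(N)$, and if $e\in E(N)$ then $(X-e,Y\cup e)$ is a $2$-separation of $M'/z$ in which $X-e$ still contains at least two elements of $E(N)$ (using $|E(N)|\geq 5$, which holds since $N$ is $3$-connected, has at least four elements, and is not a whirl), so \cite[Proposition~8.3.5]{oxley2011matroid} forces $|(Y\cup e)\cap E(N)|\leq 1$. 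For the separation, first take the \emph{guts} case $e\in\cl_{M'/z}(Y)$. Here $e\in\cl_{M'}(Y\cup z)=\cl_{M'}(Y)$, since $z\in\cl_{M'}(Y)$, so $e$ lies in the guts of the $3$-separation $(X,Y\cup z)$ of $M'$. By Lemma~\ref{calc2}(i), $e\in\cl^{(*)}_{M'}(X-e)$, and orthogonality (applied to the partition $(Y\cup z,\{e\},X-e)$, using $e\in\cl_{M'}(Y\cup z)$) rules out $e\in\cl^{*}_{M'}(X-e)$; so $e\in\cl_{M'}(X-e)$, whence $r_{M'}(X-e)=r_{M'}(X)\geq 3$ and $z\in\cl_{M'}(X)=\cl_{M'}(X-e)$. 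Since $Y\cup e$ is $3$-separating (Lemma~\ref{calc1}, as $e\in\cl_{M'}(Y)$) and $z\in\cl_{M'}(Y)\subseteq\cl_{M'}(Y\cup e)$, it follows that $(X-e,z,Y\cup e)$ is a vertical $3$-separation of $M'$, the desired contradiction.

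The \emph{coguts} case $e\in\cl^{*}_{M'/z}(Y)=\cl^{*}_{M'}(Y)$ goes along dual lines: Lemma~\ref{calc1} gives that $Y\cup e$ is $3$-separating, and Lemma~\ref{calc2}(i) together with orthogonality places $e$ in $\cl^{*}_{M'}((X-e)\cup z)$ with $e\notin\cl_{M'}((X-e)\cup z)$, so $r_{M'}((X-e)\cup z)=r_{M'}(X)-1$ and $z\in\cl_{M'}(X-e)$. If $r_{M'}(X)\geq 4$ this finishes the case exactly as above. The one genuinely delicate point --- and the step I expect to be the main obstacle --- is ruling out $r_{M'}(X)=3$: in that case $(X-e)\cup z$ is a rank-$2$ subset of $M'$, so $X-e$ is a non-trivial parallel class of $M'/z$ containing at least two elements of $E(N)$, contradicting the fact that $M'/z$ has an $N$-minor with $N$ simple (every $3$-connected matroid on at least four elements being simple). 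A little care is needed to ensure the relevant copy of $N$ sits inside $M'/z$ rather than only inside $M'\del z$; this is guaranteed because $z\in B$, so $z$ is contracted, not deleted, in a $B$-compatible realisation of the $N$-minor. Once the claim is proved, the chosen $(X,z,Y)$ is the required vertical $3$-separation, the remaining verifications (exactness of each $3$-separation produced, and $z$ lying in the closure of both sides) being routine applications of Lemmas~\ref{calc1} and~\ref{calc2}.
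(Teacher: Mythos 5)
Your proof is correct and follows essentially the same route as the paper: grow the non-$N$-side $Y$ of the vertical $3$-separation to its full closure in $M'/z$, checking that the condition $|Y\cap E(N)|\leq 1$ is preserved at each enlargement. The difference is chiefly presentational: the paper orders the elements of $\fcl_{M'/z}(Y)-Y$ explicitly and adds them one at a time, while you fix a separation with $|Y|$ maximum and derive a contradiction. The one place your argument takes a genuinely different tack is the final rank check that $r_{M'}(X)\geq 3$: the paper observes directly that $X_m$ cannot have rank $1$ in $M'/z$ because $Y_m$ is closed and hence cannot span, whereas you dig into the $N$-minor and use that $N$ is simple to exclude a rank-$1$ $X-e$. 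Both work; the paper's observation is a little slicker because it never needs to invoke $E(N)$ at this step. Two small remarks: your appeal to $|E(N)|\geq 5$ is overkill — the bound $|E(N)|\geq 4$ already gives $|X-e\cap E(N)|\geq 2$, which is all Proposition 8.3.5 needs, and is what the paper uses; and the phrase about a ``$B$-compatible realisation'' of the $N$-minor is nonstandard and unnecessary — $M'/z$ having an $N$-minor is exactly the definition of $z$ being $(N,B)$-robust when $z\in B$.
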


\begin{proof}
  By \cref{existsv3sep}, $M$ has a vertical $3$-separation $(X,z,Y)$, and we may assume that $|X\cap E(N)|\leq 1$, by \cref{cplminorlemma}.
The elements of $\fcl_{M/z}(X)-X$ can be ordered $(x_1,\ldots,x_m)$ such that $X\cup \{x_1,\ldots, x_i\}$ is $2$-separating in $M/z$ for all $i\in \{1,\ldots,m\}$.
Let $X_i=X\cup \{x_1,\ldots, x_i\}$ and $Y_i=Y-\{x_1,\ldots, x_i\}$ for each $i\in \{1,2,\ldots, m\}$.
We also let $(X_0,Y_0)=(X,Y)$. Suppose that $|X_j\cap E(N)|\geq 2$ for some $j\in \{1,2,\ldots, m\}$. We shall assume that $j$ is the smallest index such that $|X_j\cap E(N)|\geq 2$. Then 
$|X_{j-1}\cap E(N)|\leq 1$, so $|Y_{j-1}\cap E(N)|\geq 3$ because $|E(N)|\geq 4$.
Hence $|Y_j\cap E(N)|\geq 2$. But then $(X_j,Y_j)$ is a $2$-separation of $M/z$ such that $|Y_j\cap E(N)|\geq 2$ and $|X_j\cap E(N)|\geq 2$; a contradiction. Hence $|X_i\cap E(N)|\leq 1$ for all $i\in \{1,\ldots,m\}$.
Thus, for each $i$, the partition $(X_i,Y_i)$ is a $2$-separation in $M/z$ such that $Y_i$ is the $N$-side.  It follows that $|Y_i|\geq 3$ for all $i$. In particular, $(X_m,Y_m)$ is a $2$-separation of $M/z$ such that $X_m$ is fully closed. Since $M$ is $3$-connected, $z\in \cl_{M}(X_m)\cap \cl_{M}(Y_m)$. Finally, $Y_m$ is not a parallel class of $M/z$ because $X_m$ is fully closed, so $r_{M}(Y_m)\geq 3$. Thus $(X_m,z,Y_m)$ is a $z$-closed vertical $3$-separation of $M$, as desired.
\end{proof}

Suppose that $F$ is a $4$-element fan of $M$ with ordering $(f_1,f_2,f_3,f_4)$ where $\{f_1,f_2,f_3\}$ is a triangle. 
We say that $(f_1,f_2,f_3,f_4)$ is a \textit{type-I fan relative to $B$} if $F \cap B = \{f_1,f_3\}$,
and $(f_1,f_2,f_3,f_4)$ is a \textit{type-II fan relative to $B$} if $F\cap B = \{f_1,f_3,f_4\}$.
When there is no ambiguity, we also say, in these cases, that $F$ is a type-I or type-II fan relative to $B$.

We need the following, which is one of the main results of \cite{brettell2014splitter}. 

\begin{lemma}[{\cite[Lemma 4.8]{brettell2014splitter}}]
\label{BS48}
Suppose that $z\in B$ is an element that is $(N,B)$-robust but not $(N,B)$-strong, and let $(X,z,Y)$ be a vertical $3$-separation of $M$ such that $|X\cap E(N)|\leq 1$. Then one of the following holds:

\begin{itemize}
\item[(i)] there are distinct $(N,B)$-strong elements $s_1,s_2\in X$; or
\item[(ii)] there are distinct $(N,B)$-strong elements $s_1\in X$ and $s_2\in \cl^{*}(X)\cap B$; or
\item[(iii)] there are distinct $(N,B)$-strong elements $s_1\in X$ and $s_2,s_3\in \cl(X)\cap B^*$; or
\item[(iv)] $M$ has a type-I or type-II fan relative to $B$ contained in $X\cup z$.
\end{itemize}
\end{lemma}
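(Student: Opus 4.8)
The plan is to deduce Lemma~\ref{BS48} from its basis-relative counterpart Lemma~\ref{BS34}, upgrading the conclusion ``removable with respect to $B$'' to ``$(N,B)$-strong''. These two notions differ only in the requirement of retaining an $N$-minor: if $e\in B$ and $\si(M/e)$ is $3$-connected, then $e$ is $(N,B)$-strong exactly when $\si(M/e)$ has an $N$-minor, which by Lemma~\ref{sicominor} is equivalent to $M/e$ having an $N$-minor, i.e. to $e$ being $N$-contractible; dually for $e\in E(M)-B$ with $\co(M\del e)$ $3$-connected. So the job is to argue that the removable elements handed to us by Lemma~\ref{BS34} are $N$-contractible (when in $B$) or $N$-deletable (when not in $B$), and the device for that is Lemma~\ref{CPL2}.

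First I would check that Lemma~\ref{BS34} applies: since $b\in B$ is $(N,B)$-robust, $M/b$ has an $N$-minor, so $\si(M/b)$ has an $N$-minor by Lemma~\ref{sicominor}; as $b$ is not $(N,B)$-strong, $\si(M/b)$ is not $3$-connected. Before invoking Lemma~\ref{BS34} I would normalise the separation. Exactly as in the proof of Lemma~\ref{CPL2}, each element of $X\cap\cl_M(Y)$ is $N$-deletable and can be pushed to the $Y$-side one at a time, so $(X-\cl_M(Y),\,b,\,\cl_M(Y)-b)$ is again a vertical $3$-separation of $M$, now with $(\cl_M(Y)-b)\cup b=\cl_M(Y)$ closed, and still $|(X-\cl_M(Y))\cap E(N)|\le 1$. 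Since this only shrinks $X$, any fan or any strong elements we subsequently locate inside the smaller $X\cup b$ still lie in the original $X\cup b$; so we may assume outright that $Y\cup b$ is closed.

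Now apply Lemma~\ref{BS34} to $(X,b,Y)$ and $B$. If outcome (iv) holds, $M$ has a type~I or type~II fan relative to $B$ inside $X\cup b$, which is precisely Lemma~\ref{BS48}(iv). Otherwise one of (i)--(iii) of Lemma~\ref{BS34} holds and we invoke Lemma~\ref{CPL2}(i): every element of $X$ is $N$-contractible, and all but at most one element $w$ of $X$ is $N$-deletable, where $w\in\cl^{*}(Y)$ and $b\in\cl(X-w)$ when $w$ exists. For the distinguished element $s_1\in X$ of each outcome: if $s_1\in B$ then $\si(M/s_1)$ is $3$-connected (from removability, or from the dual of Lemma~\ref{longline3conn} via the cosegment of outcome (ii)) and $s_1$ is $N$-contractible, so $s_1$ is $(N,B)$-strong; if $s_1\in E(M)-B$ then $\co(M\del s_1)$ is $3$-connected and $s_1$ is $N$-deletable provided $s_1\ne w$, again making it $(N,B)$-strong. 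The companion elements are handled the same way: in outcome (ii), $s_2\in\cl^{*}(X)\cap B$ lies in the $4$-element cosegment $X'\cup s_2$ with $s_1$, so $\si(M/s_2)$ is $3$-connected by the dual of Lemma~\ref{longline3conn}, and $N$-contractibility of the cosegment elements comes from Lemma~\ref{CPL2}(i); in outcome (iii), the removable elements $s_2,s_3\in\cl(X)\cap(E(M)-B)$ are guts elements of $(X,Y\cup b)$, and one obtains their $N$-deletability again from Lemma~\ref{CPL2}(i).

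The main obstacle is the bookkeeping around the single exceptional element $w$. If one of the elements promised by Lemma~\ref{BS34} is $w$ itself---so it is $3$-connected after the appropriate removal but possibly not $N$-deletable---I must either rule this out using the extra structure carried by Lemma~\ref{BS34} (the cosegment adjacency in (ii), or the pair of guts elements in (iii)), or produce a replacement: since $w\in\cl^{*}(Y)$ and $b\in\cl(X-w)$, one can push $w$ onto the $Y$-side to get the vertical $3$-separation $(X-w,\,b,\,Y\cup w)$ and rerun the argument, which works whenever $r(X-w)\ge 3$. The residual low-rank case $r(X)=3$, where this push fails, must be treated by hand: there $X\cup b$ has rank $3$ and bounded corank, so it is a small explicit configuration, and the analysis should terminate in outcome (iv). Pinning down this case analysis---while tracking, for each outcome of Lemma~\ref{BS34}, whether each $s_i$ lies in $B$---is the delicate part; the rest is a direct transcription of Lemma~\ref{BS34} through Lemmas~\ref{CPL2} and~\ref{sicominor}.
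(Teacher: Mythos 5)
The paper does not prove Lemma~\ref{BS48}; it is cited from Brettell and Semple \cite{brettell2014splitter}, so there is no in-paper proof to compare against. Judged on its own, your plan---derive Lemma~\ref{BS48} from Lemma~\ref{BS34} via Lemma~\ref{CPL2}(i), upgrading ``removable with respect to $B$'' to ``$(N,B)$-strong''---is a sensible shape, and the normalisation making $Y\cup b$ closed is handled correctly (each push keeps $r(X)\geq 3$ since the pushed element is a guts element). But the argument does not close, and the difficulty you flag yourself is genuine rather than routine bookkeeping.

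Lemma~\ref{CPL2}(i) permits one exceptional element $w\in X\cap\cl^{*}(Y)$ that is $N$-contractible but possibly not $N$-deletable; if $w\notin B$ and $w$ is one of the elements $s_i$ handed over by Lemma~\ref{BS34}, nothing makes $s_i$ $(N,B)$-strong, and you have not produced a replacement. Your fix---push $w$ into $Y$ and rerun---is blocked precisely when $r(X)=3$: once $Y\cup b$ is closed, $w\in\cl^{*}(Y)$ forces $w\notin\cl(X-w)$ by orthogonality, so $r(X-w)=r(X)-1=2$ and the pushed partition is no longer vertical. You then assert that $X\cup b$ has ``bounded corank'' and is a ``small explicit configuration'' that should yield outcome (iv), but $r(X)=3$ bounds rank, not cardinality---indeed $r^{*}(X)=\lambda(X)+|X|-r(X)=|X|-1$---so $X$ can be arbitrarily large, and the claim that a type-I or type-II fan must appear is unsupported. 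There is a second gap: in outcomes (ii) and (iii) of Lemma~\ref{BS34} the auxiliary elements $s_2,s_3$ are only required to lie in $\cl^{*}(X)\cap B$ or $\cl(X)\cap(E(M)-B)$, not in $X$ itself, so Lemma~\ref{CPL2}(i), which concerns only elements of $X$, does not directly deliver their $N$-contractibility or $N$-deletability as you claim; some additional argument is needed for elements in $Y$ that happen to lie in the guts or coguts. The ``delicate part'' you defer is in fact where the content of the lemma lives.
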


The next lemma is a consequence of \cref{BS48,CPL2}.

\begin{lemma}
 \label{BS}
 Let $z\in B$ be an element that is $(N,B)$-robust but not $(N,B)$-strong, and let $(X,z,Y)$ be a vertical $3$-separation of $M$ such that $X$ is $z$-closed and $|X\cap E(N)|\leq 1$. If there is at most one $(N,B)$-strong element of $M$ contained in $X$, then there is a type-I or type-II fan $(\alpha,\beta,\gamma,\delta)$ relative to $B$ that is contained in $X\cup z$ where $\beta,\gamma,\delta$ are $N$-contractible, and $\alpha,\beta,\gamma$ are $N$-deletable.
\end{lemma}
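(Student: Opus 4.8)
The plan is to apply Lemma~\ref{BS48} to the vertical $3$-separation $(X,z,Y)$, with the non-$N$-side $Y$ playing the role of ``$X$'' in that lemma, and to use the $z$-closedness of $Y$ to eliminate all but its fan outcome. Since $z\in B$ is $(N,B)$-robust, $M'/z$ has an $N$-minor, and since $z$ is not $(N,B)$-strong, $\si(M'/z)$ is not $3$-connected; so Lemma~\ref{BS48} applies. In its outcome (ii) the second $(N,B)$-strong element lies in $\cl^{*}(Y)\cap B$, and in outcome (iii) the two extra $(N,B)$-strong elements lie in $\cl(Y)\cap(E(M')-B)$. But $Y$ is $z$-closed, so $\cl^{*}(Y)=Y$ and $\cl(Y)=Y\cup\{z\}$ with $z\in B$; hence $\cl^{*}(Y)\cap B\subseteq Y$ and $\cl(Y)\cap(E(M')-B)\subseteq Y$. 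Thus in each of outcomes (i)--(iii) the set $Y$ would contain two distinct $(N,B)$-strong elements of $M'$, contrary to hypothesis. Therefore outcome (iv) holds: there is a type-I or type-II fan $F$ relative to $B$ contained in $Y\cup z$.

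Write $F$ with ordering $(f_1,f_2,f_3,f_4)$ so that $\{f_1,f_2,f_3\}$ is the triangle and $\{f_2,f_3,f_4\}$ the triad; then $f_1$ is the spoke end, $f_4$ the rim end, $\{f_2,f_3\}$ is the pair of elements common to the triangle and triad, and $B\cap F$ is $\{f_1,f_3\}$ or $\{f_1,f_3,f_4\}$. Every fan ordering of $F$ has $\{f_1,f_4\}$ in its first and last positions and $\{f_2,f_3\}$ in the middle two, so to produce the required $(\alpha,\beta,\gamma,\delta)$ it suffices to show that $f_2$ and $f_3$ are each both $N$-deletable and $N$-contractible, and that either $f_4$ is $N$-contractible with $f_1,f_2,f_3$ all $N$-deletable (then take $(\alpha,\beta,\gamma,\delta)=(f_1,f_2,f_3,f_4)$), or $f_1$ is $N$-contractible with $f_2,f_3,f_4$ all $N$-deletable (then take $(\alpha,\beta,\gamma,\delta)=(f_4,f_2,f_3,f_1)$).

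To establish these facts I would work from the vertical $3$-separation via Lemma~\ref{CPL2}, applied to $(Y,z,X)$ and, if $X\cup\{z\}$ is not closed, first replaced by the closed form $(Y-\cl_{M'}(X),z,\cl_{M'}(X)-z)$ as in the proof of Lemma~\ref{CPL2}(ii). Orthogonality between the triad $\{f_2,f_3,f_4\}$ and any circuit witnessing membership in $\cl_{M'}(X)$ shows $f_2,f_3,f_4\notin\cl_{M'}(X)$, so these elements sit on the non-$N$-side and are $N$-contractible. Lemma~\ref{CPL2}(i) then leaves at most one element of the non-$N$-side that is not $N$-deletable, and, if it exists, it lies on the non-$N$-side and in the coclosure of the $N$-side; orthogonality between the triangle $\{f_1,f_2,f_3\}$ and a cocircuit witnessing that coclosure membership is used to show this exceptional element is not $f_2$ or $f_3$, while if $f_1$ lies on the $N$-side it is automatically $N$-deletable. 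Combining these with Lemma~\ref{fanends}(i),(ii) --- $\co(M'\del f_1)$ and $\si(M'/f_4)$ are $3$-connected while $\si(M'/f_1)$ and $\co(M'\del f_4)$ are not --- and Bixby's Lemma (Lemma~\ref{bixby}) pins down whether $f_1$ or $f_4$ supplies the deletable end and which supplies the contractible end, so one of the two displayed orderings applies. The sub-case $z\in F$, where $z$ is one of the $B$-elements of the fan, is treated separately by using that $z$ is $N$-contractible with $\si(M'/z)$ not $3$-connected to locate $z$ among $f_1,f_3,f_4$ and running the analogous argument.

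The main obstacle is this second part, and within it the delicate point is ruling out the scenario in which one of the two interior fan elements $f_2,f_3$ fails to be $N$-deletable: this can occur only if the spoke element $f_1$ lies in $\cl_{M'}(X)$, and excluding it --- or re-choosing the vertical $3$-separation (or performing an allowable pivot) so that it does not arise --- requires combining the orthogonality constraints coming from the triangle and triad of $F$ with Lemma~\ref{fanends} and the fact that $M'$ has an $N$-minor. Keeping this argument uniform across the type-I/type-II distinction and the sub-case $z\in F$ is where the care lies.
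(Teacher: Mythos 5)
Your first paragraph, reducing to outcome (iv) of Lemma~\ref{BS48} via $z$-closedness, is correct and is essentially the argument the paper uses. The trouble is in the second half, where you have invented difficulties that are not there and then, as you concede, cannot resolve them.

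The key observation you are missing is that once $\beta$ and $\gamma$ are known to be $N$-contractible, the triangle $\{\alpha,\beta,\gamma\}$ immediately hands you the $N$-deletability of all three: in $M'/\beta$ the elements $\alpha,\gamma$ are parallel, and since $|E(N)|\geq 4$ any $N$-minor survives removal of an element from a parallel pair, so $M'\del \alpha$ and $M'\del\gamma$ have $N$-minors; symmetrically contracting $\gamma$ makes $\alpha,\beta$ parallel, giving $M'\del\beta$ an $N$-minor. This is a one-line step, and it renders your entire discussion of ``the exceptional element of $Y$,'' a second pass through Lemma~\ref{CPL2}, and the $f_1\in\cl_{M'}(X)$ scenario unnecessary. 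Your own first step (orthogonality between the triad $\{\beta,\gamma,\delta\}\subseteq Y$ and circuits in $X\cup z$ gives $\beta,\gamma,\delta\notin\cl_{M'}(X)$, hence $N$-contractible by Lemma~\ref{CPL2}) already provides $\beta,\gamma$ contractible, so the triangle finishes it.

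You have also introduced a spurious branching: you contemplate reversing the ordering to $(f_4,f_3,f_2,f_1)$ and ``pinning down which end is the deletable end'' via Lemma~\ref{fanends} and Bixby's Lemma. This is not needed and is not even available: a type-I or type-II fan relative to $B$ is defined with $\{f_1,f_2,f_3\}$ as the triangle, and the reversed tuple would not in general satisfy either definition of $B\cap F$. The ordering BS48 hands you is already the one the lemma's conclusion demands, and Lemma~\ref{fanends} plays no role in the proof. Finally, while you are right to flag that $z$ could a priori lie in $F$, the resolution is immediate and does not need a ``separate sub-case'': $z\in B$ forces $z\in\{\alpha,\gamma,\delta\}$, and if $z\in\{\gamma,\delta\}$ then $z$ would be in the coclosure of the two remaining triad elements (both in $Y$), contradicting $z\in\cl(X)$; so $z$ is at worst $\alpha$, which affects nothing. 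In short, the first paragraph of your proposal is the paper's argument; the remainder replaces a two-step finish (triad orthogonality plus the triangle) with a plan that you do not, and need not, carry out.
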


\begin{proof}
  Since $X$ is $z$-closed, it follows from \cref{BS48} that $M$ has a type-I or type-II fan $(\alpha,\beta,\gamma,\delta)$ relative to $B$ such that $\{\alpha,\beta,\gamma,\delta\} \subseteq X\cup z$. 
  Let $T^*$ be the triad $\{\beta,\gamma,\delta\}$.
  Note that $z \notin T^*$, since $z \in \cl(Y)$.
  Since $T^* \subseteq X$, it follows from orthogonality that $\beta,\gamma,\delta\notin \cl_{M}(Y)$. Hence $\beta,\gamma,\delta$  are $N$-contractible by \cref{CPL2}.
  It follows, since $\{\alpha,\beta,\gamma\}$ is a triangle of $M$ and $|E(N)| \ge 4$, that $\alpha,\beta,\gamma$ are also $N$-deletable in $M$.
\end{proof}

We will also require the following lemma, which can be proved by making routine modifications to \cite[Lemma~5.4]{whittle2013fixed} or \cite[Lemma~6.3]{brettell2014splitter}.

\begin{lemma}
\label{pathgenerator}
 Let $M$ be a $3$-connected matroid and let $(A,Z,B)$ a partition of $E(M)$ with $|A|,|B|\geq 2$. If, for all $z\in Z$, there is a path of $3$-separations $(A_z,z,B_z)$ such that $A\subseteq A_z$ and $B\subseteq B_z$, then there is an ordering $(z_1,\ldots,z_n)$ of the elements of $Z$ such that $(A,z_1,\ldots,z_n,B)$ is a path of $3$-separations of $M$.
\end{lemma}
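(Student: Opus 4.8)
The plan is to build the ordering $(z_1,\dots,z_n)$ one element at a time, maintaining the invariant that after placing $z_1,\dots,z_i$ we have a path of $3$-separations of $Q$ whose first block contains $A\cup\{z_1,\dots,z_i\}$ and whose last block contains $B$, with the not-yet-placed elements distributed among the intermediate blocks in a way consistent with each element's own path hypothesis. The engine driving the induction is the \emph{uncrossing} lemma for $3$-separating sets, together with Lemma \ref{calc1} and Lemma \ref{calc2} for manipulating which side a single element lies on.

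First I would set up the induction. Pick any $z\in Z$; by hypothesis there is a path $(A',\{z\},B')$ with $A\subseteq A'$, $B\subseteq B'$, so in particular $A'$ and $A'\cup\{z\}$ are $3$-separating and $z\in\cl^{(*)}(A')$ by Lemma \ref{calc1}. This gives the first step: set $z_1$ to be an element of $Z$ that can legitimately sit ``next to $A$'', i.e.\ for which $A\cup\{z_1\}$ is $3$-separating. Such a choice exists because for \emph{any} $z\in Z$ the set $A'$ in its path contains $A$, and we can work down the path from $A'$ towards $A$: the second block of that path, together with repeated use of Lemma \ref{calc2}(ii), lets us peel $A'$ down to $A$ one element at a time and recognise which element of $Z$ is ``closest'' to $A$ in the $3$-separation order. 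More carefully, I would phrase the inductive step as: given that $(A\cup\{z_1,\dots,z_i\}, \text{rest})$ is $3$-separating, consider the matroid $Q$ together with the partition $(A\cup\{z_1,\dots,z_i\}, Z\setminus\{z_1,\dots,z_i\}, B)$; this again satisfies the hypotheses of the lemma (the hypothesis ``for all $z\in Z$ there is a path'' is inherited because we can uncross each $z$'s path with the $3$-separating set $A\cup\{z_1,\dots,z_i\}$ using $|{\cdot}\cap{\cdot}|\ge 2$ to conclude the union is $3$-separating, and symmetrically on the $B$ side), so by the induction hypothesis on $|Z|$ we finish.

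The key steps, in order, are: (1) reduce to showing a single element $z_1\in Z$ can be placed so that $A\cup\{z_1\}$ is $3$-separating; (2) prove this by taking an arbitrary $z\in Z$, using its path $(A',\{z\},B')$ and peeling $A'$ down to $A$ via Lemma \ref{calc2}, which at the last peel exhibits some $w\in A'\setminus A \subseteq Z$ with $A\cup\{w\}$ three-separating — take $z_1=w$ (taking care of the degenerate case $A'=A$, where $z$ itself works); (3) verify that $(A\cup\{z_1\}, Z\setminus\{z_1\}, B)$ still satisfies the hypothesis, using uncrossing: for each $z\in Z\setminus\{z_1\}$ its path block $A'_z$ contains $A$, and if $|A'_z\cap(A\cup\{z_1\})|\ge 2$ then $A'_z\cup\{z_1\}$ is $3$-separating and contains $A\cup\{z_1\}$, while if not then $z_1\notin A'_z$ forces $z_1\in\{z\}\cup B'_z$, handled symmetrically; (4) apply induction on $|Z|$ and concatenate.

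**The hard part will be** the bookkeeping in step (3): when we uncross a given $z$'s path with the growing block $A\cup\{z_1,\dots,z_i\}$, we must check both that the new first block is $3$-separating \emph{and} that it does not accidentally swallow $B$ or an element that should remain intermediate — i.e.\ that $|E(Q)-(\cdot\cup\cdot)|\ge 2$ so that the intersection on the $B$-side is also $3$-separating, which is where $|A|,|B|\ge 2$ and the fact that $z$'s path is nontrivial get used. The potential pitfall is a small-case breakdown (a side of some path having fewer than two elements, or an element lying in the guts/coguts on both sides); these are exactly the situations Lemma \ref{calc1}, Lemma \ref{calc2} and Lemma \ref{gutspluscoguts1} are designed to control, so I would dispatch them by a short case analysis on where $z_1$ lands relative to each $z$'s path rather than by any computation. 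Once step (3) is clean, steps (1), (2) and (4) are essentially formal.
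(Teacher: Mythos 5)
Your plan---induct on $|Z|$ by locating one element $z_1\in Z$ with $A\cup\{z_1\}$ $3$-separating, verifying that the reduced triple $(A\cup\{z_1\},Z\setminus\{z_1\},B)$ inherits the hypothesis via uncrossing, and recursing---is a genuinely different recursion from the paper's. The paper instead picks an \emph{arbitrary} $z\in Z$ and applies the induction hypothesis simultaneously to the two halves $(A,A_z\setminus A,B_z\cup\{z\})$ and $(A_z\cup\{z\},B_z\setminus B,B)$ of $z$'s own path, verifies the hypothesis in each sub-problem by exactly the uncrossing you describe, and then concatenates the two orderings around $z$. This divide-and-conquer avoids ever having to locate a ``first'' element next to $A$. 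Your handling of the inheritance step (your step (3)) is essentially correct and is the same uncrossing as the paper's, modulo a slip: $|A'_z\cap(A\cup\{z_1\})|\ge|A|\ge2$ holds automatically, so the relevant case split is whether $z_1\in A'_z$ or $z_1\in B'_z$ (and in the latter case one replaces $z$'s path by $((A\cup\{z_1\})\cup A'_z,\,\{z\},\,B'_z\setminus\{z_1\})$, each block $3$-separating by uncrossing).

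The genuine gap is step (2). You cannot conclude from Lemma~\ref{calc2} that an arbitrary $3$-separating superset $A'$ of $A$ can be ``peeled'' down to $A$ one element at a time through $3$-separating sets. Lemma~\ref{calc2}(ii) gives a criterion for when a \emph{given} element can be removed from a $3$-separating set while staying $3$-separating; it does not assert that a suitable removal order exists, and in general it need not. Indeed, the assertion that $Z=A'\setminus A$ can be ordered so that each initial segment added to $A$ is $3$-separating is essentially an instance of the lemma you are trying to prove, so the peeling step is circular as written. A correct repair: choose $z\in Z$ together with a path $(A_z,\{z\},B_z)$ so as to \emph{minimise} $|A_z|$ over all such choices. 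If $A_z\ne A$, pick $w\in A_z\setminus A\subseteq Z$ with its own path $(A_w,\{w\},B_w)$; by uncrossing (exactly as in the paper's subclaim), $(A_z\cap A_w,\{w\},B_z\cup B_w\cup\{z\})$ is a valid path for $w$ whose first block contains $A$ and has $|A_z\cap A_w|<|A_z|$ since $w\in A_z\setminus A_w$, contradicting minimality. Hence $A_z=A$ and one may take $z_1:=z$. With that substitution your argument goes through; as written, the existence of $z_1$ is not established.
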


For the remainder of this section, we work under the following assumptions.
Let $\mathbb{P}$ be a partial field, let $N$ be a non-binary $3$-connected strong $\mathbb{P}$-stabilizer for the class of $\mathbb{P}$-representable matroids, and let $M$ be an excluded minor for the class of $\mathbb{P}$-representable matroids.
Suppose that $M$ has a pair of elements $\{a,b\}$ such that $M\del a,b$ is $3$-connected with an $N$-minor, and let $M' = M \del a,b$.
Let $A$ be a $B\times B^{*}$ companion $\mathbb{P}$-matrix of $M$ such that $\{x,y,a,b\}$ incriminates $(M,A)$, where $\{x,y\}\subseteq B$ and $\{a,b\}\subseteq B^{*}$.
We assume that $M'$ has no confining set. 
We also assume that $B$ is chosen such that either there is one $(N,B)$-strong element $u$ of $M'$ outside of $\{x,y\}$, and $\{u,x,y\}$ is a triad; or there are no $(N,B)$-strong elements outside of $\{x,y\}$, and for any $B_1\times B_1^{*}$ companion $\mathbb{P}$-matrix $A_1$ where $\{x_1,y_1,a,b\}$ incriminates $(M,A_1)$, with $\{x_1,y_1\}\subseteq B_1$ and $\{a,b\}\subseteq B_1^{*}$, the matroid $M \ba a,b$ has no $(N,B_1)$-strong elements outside of $\{x_1,y_1\}$.
Note that such a $B$ exists by \cref{nogadgetsetup}.
Recall that we say that such a basis $B$ is \emph{strengthened}.

Let $S \subseteq E(M')$ be a set containing $\{x,y\}$ and any $(N,B)$-strong elements of $M'$, where either $|S|=2$ or $S$ is a triad.
In particular, observe that $S \subseteq \cocl_{M'}(\{x,y\})$.

For the remainder of the section, all ranks, coranks, closure operators, and coclosure operators are with respect to $M'$.

If $z$ is an element that is $(N,B)$-robust but not $(N,B)$-strong in $M'$, then there is a vertical (or cyclic) $3$-separation $(X,z,Y)$ of $M'$.
We now prove that if the non-$N$-side of this vertical $3$-separation is $z$-closed (or $z$-coclosed, respectively), then it contains $S$.
We first handle the case where $z \in B-\{x,y\}$.

\begin{lemma}
\label{SinY}
Let $z\in B-\{x,y\}$ be an element that is $(N,B)$-robust but not $(N,B)$-strong in $M'$, and let $(X,z,Y)$ be a vertical $3$-separation of $M'$ such that $X$ is $z$-closed and $|X\cap E(N)|\leq 1$. Then $S\subseteq X$. 
\end{lemma}

\begin{proof}
Suppose that there are at least two distinct $(N,B)$-strong elements in $X$.
By definition, the $(N,B)$-strong elements of $M$ contained in $X$ belong to $S$. If $|S|=2$, then it follows immediately that $S\subseteq X$. If $|S|=3$, then $S$ is a triad, so $S\subseteq X$ because $X$ is coclosed.
 
We may therefore assume that there is at most one $(N,B)$-strong element of $M'$ contained in $X$. Then it follows from \cref{BS} that there is a type-I or type-II fan $(\alpha,\beta,\gamma,\delta)$ relative to $B$ contained in $X\cup z$ where $\beta,\gamma,\delta$ are $N$-contractible and $\alpha,\beta,\gamma$ are $N$-deletable.
Let $F=\{\alpha,\beta,\gamma,\delta\}$.


\begin{claim}
$\{x,y\}\cap \{\alpha,\gamma\}\neq \emptyset$. 
\end{claim}

\begin{subproof}
Assume that $\{x,y\}\cap \{\alpha,\gamma\}=\emptyset$. Suppose $\beta$ is an $(N,B)$-strong element of $M'$. Then, since $\beta\notin B$, it follows that $S=\{\beta,x,y\}$ is a triad of $M'$. Since $\{\alpha,\beta,\gamma\}$ is a triangle that meets $\{\beta,x,y\}$, it follows from orthogonality that $x$ or $y$ is in $\{\alpha, \gamma\}$; a contradiction because $\{x,y\}\cap \{\alpha,\gamma\}=\emptyset$. Thus $\beta$ is not an $(N,B)$-strong element of $M'$. Since $\beta$ is $N$-flexible, $\co(M'\del \beta)$ is not $3$-connected. Thus, by Bixby's Lemma, $\si(M'/\beta)$ is $3$-connected. Since $\{\alpha,\beta,\gamma\}$ is a triangle of $M'$ the cobasis element $\beta$ is spanned by the basis elements $\alpha$ and $\gamma$, so $A_{i\beta}\neq 0$ if and only if $i\in \{\alpha,\gamma\}$.
In particular, since $\{x,y\}\cap \{\alpha,\gamma\} = \emptyset$, this means that $A_{\alpha\beta}\neq 0$ and $A_{x\beta}=A_{y\beta}=0$.
Thus a pivot on $A_{\alpha\beta}$ is an allowable pivot. But then $\beta$ is an $(N,B\triangle \{\alpha,\beta\})$-strong element outside of $\{x,y\}$ such that $\beta \in B\triangle \{\alpha,\beta\}$; a contradiction of \cref{nostrongbasis}.
\end{subproof}

Now $\alpha$ or $\gamma$ is a member of $\{x,y\}$.
Suppose $\delta \in B$, in which case $(\alpha,\beta,\gamma,\delta)$ is a type-II fan.
Since $\delta$ is $N$-contractible and $\si(M/\delta)$ is $3$-connected by \cref{fanends}, it follows from \cref{nostrongbasis} that $\delta \in \{x,y\}$.
Hence $\{x,y\}\subseteq F-z$, and $S\subseteq \cl^{*}(F-z)\subseteq \cocl(X)=X$ as required.
Thus we may assume that $\delta\in B^{*}$, in which case $F$ is a type-I fan.

We first handle the case when $\alpha\in \{x,y\}$.

\begin{claim}
If $\alpha\in \{x,y\}$, then $S\subseteq X$. 
\end{claim}

\begin{subproof}
  Assume that $\alpha=x$.
  If $\beta$ is an $(N,B)$-strong element of $M'$, then $\{\beta,x,y\}$ is a triad of $M'$, so $S\subseteq \cl^{*}(\{\beta,x\}) \subseteq \cl^{*}(F-z)\subseteq X$, as required.
  So suppose that $\beta$ is not an $(N,B)$-strong element of $M'$.
  Consider the entry $A_{\alpha\beta}$.
  Since $\{\alpha,\beta,\gamma\}$ is a triangle of $M'$ it follows that $A_{\alpha\beta}\neq 0$, so a pivot on $A_{\alpha\beta}$ is an allowable pivot.
  Then $B'=B\triangle \{\alpha,\beta\}$ is a basis of $M'$, the set $\{\beta,y,a,b\}$ incriminates $(M,A^{\alpha\beta})$, and $\alpha$ is an $(N,B')$-strong element outside of $\{\beta,y\}$.
  Since $B$ is strengthened,
  there is some element $u\in B^{*}$ such that $u$ is $(N,B)$-strong and $\{u,x,y\}$ is a triad.
  Since $\beta$ and $\delta$ are not $(N,B)$-strong, it follows that $u\in E(M')-F$.
  But then, by orthogonality between the triad $\{u,\alpha,y\}$ and the triangle $\{\alpha, \beta, \gamma\}$, we have $y\in \{\beta,\gamma\}$, so $y=\gamma$.
  Therefore $S\subseteq \cl^{*}(\{x,y\}) \subseteq \cl^{*}(F-z)\subseteq X$.
\end{subproof}

We may now assume that $\alpha\notin \{x,y\}$, so $\gamma \in \{x,y\}$.
Suppose that $\gamma=x$. If $\beta$ is $(N,B)$-strong, then $\{\beta,x,y\}$ is a triad, and $\{\beta, \delta, x,y\}$ is a $4$-element cosegment, contradicting that $M'$ has no confining set.
We deduce that $\beta$ is not $(N,B)$-strong.

Suppose that $\co(M'\del x)$ is $3$-connected.
Since $\{\alpha,\beta,x\}$ is a triangle of $M'$, we have $A_{x\beta}\neq 0$, so a pivot on $A_{x\beta}$ is allowable.
Then $B'=B\triangle \{x,\beta\}$ is a basis such that $x$ is an $(N,B)$-strong element outside of $\{\beta,y\}$, where $\{\beta,y,a,b\}$ incriminates $(M,A^{x\beta})$.
Since $B$ is strengthened, there is some $(N,B)$-strong element $u\in B^{*}$ such that $\{u,x,y\}$ is a triad. Since $\beta$ is not $(N,B)$-strong, $u$ is not in the triangle $\{\alpha,\beta,x\}$.  It then follows from orthogonality that $\alpha=y$; a contradiction. 
So $\co(M'\del x)$ is not $3$-connected, and thus $\si(M'/x)$ is $3$-connected by Bixby's Lemma.

Since $\beta$ is not $(N,B)$-strong, there is a cyclic $3$-separation $(P,\beta,Q)$ of $M'$.
By orthogonality, we may assume that $x\in P$ and $\alpha\in Q$.
Consider $(P-x,x,Q\cup \beta)$.
Observe that $Q\cup \beta$ and $Q\cup \{\beta,x\}$ are exactly $3$-separating, since $x \in \cl(Q\cup \beta)$.
But $(P-x,x,Q\cup \beta)$ is not a vertical $3$-separation of $M'$, since $\si(M'/x)$ is $3$-connected.
Thus $r(P-x)\leq 2$, so $P$ contains a triangle.
By orthogonality, $P$ is a triangle and $P=\{x,\delta,\mu\}$ for some $\mu\in E(M')$.
Thus $M'$ has a $5$-element fan with ordering $(\alpha,\beta,x,\delta,\mu)$.
Moreover, $\mu\in \cl(Q)$ or else $\{\beta,x,\delta,\mu\}$ is a $4$-element cosegment; a contradiction to orthogonality.
Now $\co(M'\del \mu)$ is $3$-connected by \cref{fanends}, and $\mu$ is $N$-deletable since $\mu$ is in a non-trivial parallel class in $M'/\gamma$.
Suppose $\mu\in B^{*}-\{a,b\}$. Then $\mu$ is $(N,B)$-strong and outside of $\{x,y\}$, so $\{\mu,x,y\}$ is a triad.
By orthogonality, it follows that $\alpha=y$, contradicting the assumption that $\alpha\notin \{x,y\}$.
We deduce that $\mu\in B$. 

We now repeat this argument, interchanging the roles of $x$ and $\beta$.
Since $\co(M'\del x)$ is not $3$-connected, there is a cyclic $3$-separation $(P',x,Q')$ of $M'$.
By orthogonality, we may assume that $\beta\in P'$ and $\alpha\in Q'$.
Consider $(P'-\beta, \beta, Q'\cup x)$.
Observe that $Q'\cup x$ and $Q'\cup \{x,\beta\}$ are exactly $3$-separating, since $\beta \in \cl(Q'\cup x)$.
But $(P'-\beta, \beta, Q'\cup x)$ is not a vertical $3$-separation of $M'$, since $\si(M'/\beta)$ is $3$-connected, by Bixby's Lemma.
Thus $r(P'-\beta)=2$, and it follows by orthogonality that $P'$ is a triangle of $M'$.
By orthogonality between $P'$ and $\{\beta,x,\delta\}$, we have $\delta\in P'$.
Since $\beta\notin \cl(P)$, it follows that $\mu\in Q'$.
Let $P' = \{\beta,\delta,\varepsilon\}$ for some $\varepsilon\in E(M')$.
Now, $\varepsilon \in \cl(Q')$ or else $\{\varepsilon,x,\beta,\delta\}$ is a $4$-element cosegment; a contradiction to orthogonality.

Now $\alpha,\mu,\varepsilon$ are in the closure of the triad $\{\beta,x,\delta\}$, so $\{\alpha,\mu,\varepsilon\}$ is a triangle.
But $\alpha,\mu \in B$, so $\varepsilon \in B^{*}$.
We claim that $\varepsilon$ is an $(N,B)$-strong element of $M'$.
That $\varepsilon$ is $(N,B)$-robust follows from the fact that $\beta$ is $N$-contractible and $\{\delta,\varepsilon\}$ is a parallel pair in $M'/\beta$.
Since $(F,\varepsilon,E(M')-F)$ is a vertical $3$-separation of $M'$, \cref{existsv3sep} and Bixby's Lemma imply that $\co(M'\del \varepsilon)$ is $3$-connected.
As $\varepsilon$ is an $(N,B)$-strong element of $M'$ outside of $\{x,y\}$,
we have that $\{\varepsilon,x,y\}$ is a triad of $M'$.
But $\{\varepsilon,x,y\}$ intersects the triangle $\{\beta,\delta,\varepsilon\}$ in a single element; a contradiction to orthogonality.
\end{proof}

Next we handle the case where the element $z$, which is $(N,B)$-robust but not $(N,B)$-strong, is in $B^*$.  

\begin{lemma}
\label{dualSinY}
Let $z\in B^{*}$ be an element of $M'$ that is $(N,B)$-robust but not $(N,B)$-strong, and let $(X,z,Y)$ be a cyclic $3$-separation of $M'$ such that $X$ is $z$-coclosed and $|X\cap E(N)|\leq 1$. Then $S\subseteq X$. 
\end{lemma}

\begin{proof}
Suppose that there are at least two distinct $(N,B)$-strong elements in $X$. The $(N,B)$-strong elements of $M'$ contained in $X$ must belong to $S$ by the definition of $S$. If $|S|=2$, then it follows immediately that $S\subseteq X$. If $|S|=3$, then $S$ is a triad, so $S\subseteq \cocl(X) = X \cup z$, as $X$ is $z$-coclosed, but $z \notin S$, so $S \subseteq X$ as required.
 
We may therefore assume that there is at most one $(N,B)$-strong element of $M'$ contained in $X$.  Then it follows from the dual of \cref{BS} that there is a type-I or type-II fan $(\alpha,\beta,\gamma,\delta)$ relative to $B^{*}$ in $(M')^*$ that is contained in $X\cup z$ where $\beta,\gamma,\delta$ are $N$-deletable and $\alpha,\beta,\gamma$ are $N$-contractible in $M'$.

Suppose that $F$ is a type-II fan relative to $B^{*}$ in $(M')^*$.
Then $\delta$ is an $(N, B)$-strong element of $M'$ by \cref{fanends}. Hence $M'$ has a triad $\{\delta, x,y\}$.
By orthogonality with the triangle $\{\beta,\gamma,\delta\}$, we have $\{\beta,\gamma\} \cap \{x,y\} \neq \emptyset$; but $\gamma \notin B$, so $\beta\in \{x,y\}$.
Since $\{\beta,\delta\} \subseteq F-z \subseteq X$, we have
$S\subseteq \cocl(\{\beta,\delta\}) \subseteq \cocl(X) = X \cup z$, as $X$ is $z$-coclosed.  But $z \notin S$, so $S \subseteq X$ as required.

We may now assume that $F$ is a type-I fan relative to $B^{*}$ in $(M')^*$.
If $\gamma$ is an $(N, B)$-strong element of $M'$, then $M'$ has a triad $\{\gamma, x,y\}$.
By orthogonality with $\{\beta,\gamma,\delta\}$, either $\beta\in \{x,y\}$ or $\delta\in \{x,y\}$.  As $z \notin \{\beta,\delta\}$, we have $S\subseteq \cocl(F-z) \subseteq X$ because $X$ is $z$-coclosed and $z \notin S$.
Therefore we may also assume that $\co(M'\ba \gamma)$ is not $3$-connected, so $\si(M'/ \gamma)$ is $3$-connected, by Bixby's Lemma.

\begin{claim}
$\{x,y\}\cap \{\beta,\delta\}\neq \emptyset$. 
\end{claim}

\begin{subproof}
Suppose that $\{x,y\}\cap \{\beta,\delta\}=\emptyset$. Then, since $\{\beta,\gamma,\delta\}$ is a triangle of $M'$, it follows that $A_{x\gamma}=A_{y\gamma}=0$ and $A_{\beta\gamma}\neq 0$.
Hence a pivot on $A_{\beta\gamma}$ is allowable, and $\gamma$ is in the basis $B'=B\triangle \{\beta,\gamma\}$ of $M'$, where $\{x,y,a,b\}$ incriminates $(M,A^{\beta\gamma})$.
But then $\gamma$ is an $(N,B')$-strong element of $M'$ in $B'-\{x,y\}$; a contradiction of \cref{nostrongbasis}.
\end{subproof}

Suppose $\delta\in \{x,y\}$.
Then, since $\{\beta,\gamma,\delta\}$ is a triangle of $M'$, $A_{\delta\gamma}\neq 0$, and a pivot on $A_{\delta\gamma}$ is allowable.
Hence $M'$ has a basis $B'=B\triangle \{\delta,\gamma\}$ with an $(N,B')$-strong element $\delta$ in $(B')^{*}$.
Since $B$ is a strengthened basis, there is an $(N,B)$-strong element $u\in B^{*}$ such that $S=\{u,x,y\}$ is a triad of $M'$.
By orthogonality, either $\beta\in S$ or $\gamma\in S$.
Hence $S\subseteq \cocl(F-z) \subseteq X$ because $X$ is $z$-coclosed and $z \notin S$.
A similar argument holds if $\beta\in \{x,y\}$ and $\co(M'\ba \beta)$ is $3$-connected.     

We may now assume that $\beta\in \{x,y\}$ and that $\co(M'\ba \beta)$ is not $3$-connected. Let $(P,\beta,Q)$ be a cyclic $3$-separation of $M'$. Since $\beta$ is in a triangle of $M'$, we may assume that $\gamma\in P$ and $\delta\in Q$.
Consider $(P-\gamma,\gamma,Q\cup \beta)$.
Observe that $Q\cup \beta$ and $Q\cup \{\beta,\gamma\}$ are exactly $3$-separating, the latter since $\gamma \in \cl(Q \cup \beta)$.
But $(P-\gamma,\gamma,Q\cup \beta)$ is not a vertical $3$-separation of $M'$, since $\si(M'/\gamma)$ is $3$-connected, so it follows that $r(P)=2$.
By orthogonality with the triad $\{\alpha,\beta,\gamma\}$, it follows that $\alpha\in P$ and $P$ is a triangle of $M'$.
Thus $P=\{\alpha,\gamma,p\}$ for some $p \in E(M')-F$.

Let $(P',\gamma,Q')$ be a cyclic $3$-separation of $M'$. Since $\{\beta,\gamma,\delta\}$ is a triangle of $M'$, we may assume that $\beta\in P'$ and $\delta\in Q'$.
Now $Q' \cup \gamma$ and $Q' \cup \{\beta,\gamma\}$ are exactly $3$-separating, but $(P'-\beta,\beta,Q'\cup \gamma)$ is not a vertical $3$-separation of $M'$, since $\si(M'/\beta)$ is $3$-connected, by Bixby's Lemma.
It follows, by orthogonality, that $\alpha\in P'$ and $P'$ is a triangle of $M'$.
Therefore $P'=\{\alpha,\beta,p'\}$ for some $p' \in E(M')-F$.  Note also that $p \neq p'$, since the triad $\{\alpha,\beta,\gamma\}$ is independent.

Now $\{\alpha,\beta,\gamma,\delta,p,p'\}$ is a rank-$3$ subset of $M'$, with $\{\beta,\delta\} \subseteq B$.
Hence, at least one of $p$ and $p'$ is in $B^{*}$.
Suppose $p'\in B^{*}$. It follows, by \cref{fanends}, that $p'$ is an $(N, B)$-strong element of $M'$.
But then $S=\{p',x,y\}$ is a triad of $M'$ that meets the triangle $\{\beta,\gamma,\delta\}$, since $\beta \in \{x,y\}$.
By orthogonality, and since $p' \notin F$, we have $\{x,y\} \subseteq F-z$.
It follows by $z$-coclosure that $S\subseteq X$.
A similar argument applies if $p \in B^*$.
\end{proof}

In the next lemma, we show that elements on the non-$N$-side of a vertical $3$-separation that are not $N$-flexible are not $(N,B)$-robust.

\begin{lemma}
 \label{notrobust} 
Let $z\in B-\{x,y\}$ be an element that is $(N,B)$-robust but not $(N,B)$-strong in $M'$, and let $(X,z,Y)$ be a vertical $3$-separation of $M'$ such that $|X \cap E(N)|\leq 1$ and $S \subseteq X$.
Then for $e\in X-S$, the element~$e$ is $N$-flexible if and only if $e$ is $(N,B)$-robust.
Moreover, at most one element in $X-S$ is not $N$-flexible in $M'/z$, and if such an element~$\mu$ exists, then $(X- \mu, z,Y\cup \mu)$ is a vertical $3$-separation of $M'$. 
\end{lemma}

\begin{proof}
  Clearly, if $e \in X-S$ is $N$-flexible, then $e$ is $(N,B)$-robust.
  Suppose $e \in X-S$ is not $N$-flexible.
  By \cref{CPL2}, either $e$ is $N$-deletable but not $N$-contractible, or $e$ is $N$-contractible but not $N$-deletable.

  First, suppose that $e$ is $N$-deletable but not $N$-contractible.
  Then $e \in \cl(Y)$, by \cref{CPL2}(i).
  It follows that $((X-e) \cup z, e, Y)$ is a vertical $3$-separation of $M'$, so $\co(M' \ba e)$ is $3$-connected by Bixby's Lemma.
  Since $e \notin S$, it follows that $e \in B-\{x,y\}$, so $e$ is not $(N,B)$-robust.
  Moreover, if $e$ and $e' \in X-S$ are $N$-deletable but not $N$-contractible, then $\{z,e,e'\} \subseteq \cl(Y)-Y$, so $r(\{z,e,e'\}) = 2$.  But $\{z,e,e'\} \subseteq B$, so $e=e'$.

  Now suppose that $e$ is $N$-contractible but not $N$-deletable.
  Let $Y' = \cl(Y)-z$.
  By \cref{CPL2}(ii), $e \in \cocl(Y')-Y'$ and $z \in \cl(X-(Y' \cup e))$, and there is only one such element $e$.
  Observe that $Y'$ and $Y' \cup e$ are exactly $3$-separating.
  Moreover, $(X\cup z)-(Y' \cup e)$ contains a circuit, implying $r^*((X \cup z)-(Y'\cup e)) \ge 3$.
  Now $((X\cup z)-(Y' \cup e), e, Y')$ is a cyclic $3$-separation, so $\si(M' / e)$ is $3$-connected by Bixby's Lemma.
  As $e \notin S$, it follows that $e \in B^*$, so $e$ is not $(N,B)$-robust.

  Suppose $\mu$ and $\mu'$ are distinct elements of $X-S$ that are not $N$-flexible.  Then, by the foregoing, we may assume that $\mu$ is not $N$-deletable, and $\mu'$ is not $N$-contractible.
  Note that $M/z$ is the two sum of $M_X$ and $M_Y$ with basepoint $z'$ say, where $M_X\del z'=(M/z)|X$ and $M_Y\del z'=(M/z)|Y$.
  Since $\mu$ is not $N$-deletable and $\mu'$ is not $N$-contractible,
  $\{z',\mu\}$ is a cocircuit in $M_X$, and $\{z',\mu'\}$ is a circuit in $M_X$; a contradiction to orthogonality.
  So at most one element in $X-S$ is not $N$-flexible.

  Now let $\mu$ be the unique element in $X-S$ that is not $N$-flexible, and consider $(X- \mu, z,Y\cup \mu)$.
  If $\mu$ is $N$-deletable but not $N$-contractible, then, as $\mu \in \cl(Y)$, clearly $(X-\mu,z,Y \cup \mu)$ is a vertical $3$-separation of $M'$.
  Suppose that $\mu$ is $N$-contractible but not $N$-deletable.
  Since $\mu$ is the only element in $X-S$ that is not $N$-flexible, $\cl(Y) = Y \cup z$, so $\mu \in \cocl(Y)$.
  Thus, if $(X- \mu, z,Y\cup \mu)$ is not a vertical $3$-separation of $M'$, then $r(X-\mu)\leq 2$. But $X-\mu$ spans $z$, and $\{x,y\}\subseteq S \subseteq X-\mu$, so $\{x,y,z\}$ is a triangle of $M'$ contained in $B$; a contradiction.
\end{proof}

Note that a similar argument applies when $z \in B^{*}$ is an element of $M'$ that is $(N,B)$-robust but not $(N,B)$-strong; we omit the proof.

\begin{lemma}
 \label{notrobustdual} 
Let $z\in B^{*}$ be an element of $M'$ that is $(N,B)$-robust but not $(N,B)$-strong, and let $(X,z,Y)$ be a cyclic $3$-separation of $M'$ such that $|X \cap E(N)|\leq 1$ and $S\subseteq X$.
Then for $e\in X-S$, the element~$e$ is $N$-flexible if and only if $e$ is $(N,B)$-robust.
Moreover, at most one element in $X-S$ is not $N$-flexible in $M' \ba z$, and if such an element~$\mu$ exists, then $(X- \mu, z,Y\cup \mu)$ is a cyclic $3$-separation of $M'$.
\end{lemma}



We now come to the main result of the section.

\begin{prop}
\label{thepathexist}
Let $z \in E(M')-\{x,y\}$ be an element that is $(N,B)$-robust but not $(N,B)$-strong in $M'$.
Then $M'$ has a path of $3$-separations $(S,z_1,z_2,\dotsc,z_n,z,Y)$ where the elements in $\{z_{1},\ldots, z_n\}$ are $N$-flexible, $|(S \cup \{z_1,\ldots, z_n,z\}) \cap E(N)|\leq 1$, and $|S \cup \{z_1,\dots,z_n\}| \ge 3$.
\end{prop}

\begin{proof}
Let $z$ be an element of $M'$ that is $(N,B)$-robust but not $(N,B)$-strong.
First, suppose $z \in B-\{x,y\}$.
By \cref{existszclosed}, there exists a vertical $3$-separation $(X',z,Y')$ such that $X'$ is $z$-closed and $|X' \cap E(N)| \le 1$.
By \cref{SinY}, $S \subseteq X'$.
By \cref{notrobust}, $X'-S$ contains at most one element that is not $(N,B)$-robust. If such an element~$\mu$ exists, let $(X,Y) = (X'-\mu,Y' \cup \mu)$; otherwise, let $(X,Y)=(X',Y')$.
Now, by \cref{notrobust} again, $(X,z,Y)$ is a vertical $3$-separation of $M'$ where $S \subseteq X$, $|X \cap E(N)| \le 1$, and every element in $X-S$ is $N$-flexible. 
We say that $(X,z,Y)$ is a \textit{good separation for $z$} in $M'$.
Similarly, if $z \in B^*-\{a,b\}$, then, by the dual of \cref{existszclosed}, and \cref{dualSinY,notrobustdual}, there is a cyclic $3$-separation $(X,z,Y)$ that is a good separation for $z$ in $(M')^*$.
Thus, for each $(N,B)$-robust element $z$ of $M'$ outside of $\{x,y\}$, there is a good separation for $z$.

We now show that a good separation induces a path of $3$-separations in $M'$.
Let $(X,z,Y)$ be a good separation for $z$, in either $M'$ or $(M')^*$, and let $Z=X-S$.
Consider the partition $(S, Z, z\cup Y)$ of $E(M')$, and
note that each $z_i \in Z$ is $N$-flexible, $|(S \cup Z \cup z) \cap E(N)| \le 1$, and $|S \cup Z| \ge 3$.

We claim that, for each $z_i\in Z$, there is a path of $3$-separations $(X_i,z_i,Y_i)$ of $M'$ such that $S\subseteq X_i$ and $z \cup Y\subseteq Y_i$.
In what follows, we assume that $z,z_i \in B$, but the argument is similar if one or both of $z,z_i$ are in $B^*$.
Since $z_i$ is $N$-flexible in $M'/z$, we can fix an $N$-minor of $M'/z/z_i$ on ground set $E_N$.
We may assume that $|X \cap E_N| \le 1$.
As $z_i$ is $N$-flexible, and hence $(N,B)$-robust, in $M'$, but $z_i$ is not $(N,B)$-strong, there is a vertical $3$-separation $(X'_i,z_i,Y'_i)$ of $M'$ where $|X'_i \cap E_N|\leq 1$.
By \cref{existszclosed}, we may assume that $X'_i$ is $z_i$-closed. 
Hence $S\subseteq X'_i$ by \cref{SinY} (in the case that $z_i \in B^*$, we can use \cref{dualSinY}).
Since $|E_N|\geq 4$, it now follows that $|Y\cap Y'_i|\geq |E_N|-2\geq 2$.
Therefore, by uncrossing $Y\cup z$ and $Y'_i$, the set $Y\cup Y'_i\cup z$ is $3$-separating.
Similarly, by uncrossing $Y\cup z$ and $Y'_i\cup z_i$, the set $Y\cup Y'_i\cup \{z, z_i\}$ is $3$-separating.
Now $(X_i,z_i,Y_i) = (X\cap X'_i, z_i, Y\cup Y'_i\cup z)$ is a path of $3$-separations of $M'$ that satisfies the claim.

It now follows from \cref{pathgenerator} that there is an ordering $(z_1,\ldots, z_n)$ of $Z$ such that $(S,z_1,\ldots,z_n,z,Y)$ is a path of $3$-separations of $M'$, satisfying the proposition.
\end{proof}

\section{Proof of \cref{mainthm1}}
\label{robustpath}

Let $\mathbb{P}$ be a partial field,
let $N$ be a non-binary $3$-connected strong $\mathbb{P}$-stabilizer for the class of $\mathbb{P}$-representable matroids, and let $M$ be an excluded minor for the class of $\mathbb{P}$-representable matroids with a pair of elements $\{a,b\}$ such that $M\ba a,b$ is $3$-connected with an $N$-minor.
Let $A$ be a $B\times B^{*}$ companion $\mathbb{P}$-matrix of $M$ such that $\{x,y,a,b\}$ incriminates $(M,A)$, where $\{x,y\}\subseteq B$ and $\{a,b\}\subseteq B^{*}$,
for some basis $B$ of $M$. 
Let $M'=M\ba a,b$.
We assume that $M'$ has no confining set, and that $B$ is a strengthened basis.

Let $S \subseteq E(M')$ be a set containing $\{x,y\}$ and any $(N,B)$-strong elements of $M'$, where either $|S|=2$ or $S$ is a triad.
If $M'$ has an element~$z$ that is $(N,B)$-robust but not $(N,B)$-strong, 
then, by \cref{thepathexist}, $M'$ has a path of $3$-separations of the form $(S,z'_1,\ldots,z'_{n'},z,Y)$ where each element in $\{z'_1,\ldots, z'_{n'}\}$ is $N$-flexible.
In this section, we study such paths of $3$-separations, in order to prove \cref{mainthm1}.

It is convenient to write such a path of $3$-separations as $(\{x,y\},z_1,\dotsc,z_n,z,Y)$.
(In the case where $|S|=3$, $z_1$ labels the $(N,B)$-strong element outside of $\{x,y\}$.)
We say that $(\{x,y\},z_1,\dotsc,z_n,z,Y)$ is a \emph{good path of $3$-separations} for $z$.
Note that $n \ge 1$, since $|S \cup \{z'_1,\dotsc,z'_{n'}\}| \ge 3$, 
and observe that $z_i$ is $(N,B)$-robust for each $i \in \{1,2,\dotsc,n\}$. 

\begin{lemma}
\label{nottrianglejoin}
Suppose $M'$ has an element~$z$ that is $(N,B)$-robust but not $(N,B)$-strong, and let $(\{x,y\},z_1,\dotsc,z_n,z,Y)$ be a good path of $3$-separations for $z$.
Then 
\begin{itemize}
  \item[(i)] $\{x,y,z_1\}$ is a triad of $M'$, and
  \item[(ii)] $z_1\in B^{*}$. 
\end{itemize}
\end{lemma}

\begin{proof}
  First we prove (i).
  Clearly (i) holds when $z_1 \in S$, so we may assume that $z_1 \notin S$.
  It suffices to show that $\{x,y,z_1\}$ is not a triangle of $M'$.
  Towards a contradiction, suppose $\{x,y,z_1\}$ is a triangle.
  Then $z_1 \in B^*$, since $\{x,y\} \subseteq B$.
  Thus $\co(M'\del z_1)$ is not $3$-connected, as $z_1$ is $(N,B)$-robust but not $(N,B)$-strong. 

  Let $(P,z_1,Q)$ be a cyclic $3$-separation of $M'$.
  Since $\{x,y,z_1\}$ is a triangle, it follows from orthogonality that $|P\cap \{x,y\}|=|Q\cap \{x,y\}|=1$.
  We shall therefore assume that $x\in P$ and $y\in Q$.
  Since $z_1$ is $N$-flexible, it follows that $x$ and $y$ are $N$-deletable in $M'$.
  Suppose $\co(M'\del x)$ is $3$-connected.
  Due to the triangle $\{x,y,z_1\}$, $A_{xz_1} \neq 0$, so a pivot on $A_{xz_1}$ is allowable.
  Thus $B' = B \triangle \{x,z_1\}$ is a basis, $\{z_1,y,a,b\}$ incriminates $(M,A^{xz_1})$, and $x$ is an $(N,B')$-strong element outside of $\{z_1,y\}$, contradicting that $B$ is a strengthened basis.
  Hence $\co(M'\del x)$ and, by symmetry, $\co(M'\del y)$ are not $3$-connected. 

  Now $P \cup z_1$ is exactly $3$-separating, and $y \in \cl(P \cup z_1)$, so $(P\cup z_1, y,Q-y)$ is a path of $3$-separations of $M'$, and $y \in \cl(Q-y)$.
  Similarly, $(P-x,x,Q\cup z_1)$ is a path of $3$-separations of $M'$.
  If $r(P\cup z_1)\geq 3$ and $r(Q-y)\geq 3$, then $(P\cup z_1, y,Q-y)$ is a vertical $3$-separation of $M'$, in which case $\si(M'/y)$ is not $3$-connected, contradicting Bixby's Lemma.
  Therefore $r(P\cup z_1)\leq 2$ or $r(Q-y)\leq 2$.
  But if $r(P\cup z_1)\leq 2$, then $M'\del z_1$ is $3$-connected by \cref{longline3conn}; a contradiction. 
  Thus $r(Q-y)\leq 2$, and hence $r(Q)\leq 2$. Similarly, it follows that $r(P-x)\leq 2$, and hence $r(P)\leq 2$. Since $x\in P$, $y\in Q$, and $\co(M'\del x)$ and $\co(M'\del y)$ are not $3$-connected, it follows from \cref{longline3conn} that $|P| = 3$ and $|Q| = 3$.
  But now $|E(M')| = 7$, so $n=1$, and it is readily checked that the $(N,B)$-robust element~$z_2$ is $(N,B)$-strong; a contradiction.

  We now prove (ii).
  When $z_1$ is an $(N,B)$-strong element of $M'$, (ii) holds by \cref{nostrongbasis}.
  So we may assume that $M'$ has no $(N,B)$-strong elements outside of $\{x,y\}$.
  Towards a contradiction, suppose that $z_1\in B$. Then $\si(M'/z_1)$ is not $3$-connected. 
  Now $M'$ has an $(N,B)$-robust element $z'\in \{z_2,z_3,\dotsc,z_n,z\}$ that is either in the closure or coclosure of the triad $\{x,y,z_1\}$. 
  If $z'$ is in the coclosure of $\{x,y,z_1\}$, then $\{x,y,z_1,z'\}$ is a $4$-element cosegment of $M'$, so $\si(M'/z_1)$ is $3$-connected by the dual of \cref{longline3conn}; a contradiction.
  Thus $z'$ is in the closure of $\{x,y,z_1\}$.
  Then $(\{x,y,z_1\},z',E(M')-\{x,y,z_1,z'\})$ is a vertical $3$-separation of $M'$, so $\si(M'/z')$ is not $3$-connected.
  Hence $\co(M'\del z')$ is $3$-connected by Bixby's Lemma.
  But, since $\{x,y,z_1,z'\}$ contains a circuit of $M'$, it follows that $z'\in B^{*}$.
  Thus $z'$ is an $(N,B)$-strong element outside of $\{x,y\}$; a contradiction. 
\end{proof}

Let $(\{x,y\},z_1,\dotsc,z_n,z,Y)$ be a good path of $3$-separations for some element $z \in E(M')$ that is $(N,B)$-robust but not $(N,B)$-strong.
Recall that an element~$z_i \in \{z_1,z_2,\dotsc,z_n\}$ is a guts or coguts element according to whether $z_i$ is in the guts or coguts of the $3$-separation $(\{x,y,z_1,\dotsc,z_{i-1}\}, \{z_i,\dotsc,z_n,z\} \cup Y)$. 
Similarly, $z$ is a guts or coguts element depending on whether $z$ is in the guts or coguts of the $3$-separation $(\{x,y,z_1,\dotsc,z_n\}, z \cup Y)$. 

\begin{lemma}
  \label{gorcog}
  Suppose $M'$ has an element~$z$ that is $(N,B)$-robust but not $(N,B)$-strong, and let $(\{x,y\},z_1,\dotsc,z_n,z,Y)$ be a good path of $3$-separations for $z$.
  Let $z'\in \{z_1,\dotsc,z_n,z\}$. Then $z'$ is a guts element if and only if $z'\in B$. 
\end{lemma}

\begin{proof}
Suppose that $z'$ is a guts element.
Then, by \cref{nottrianglejoin}(i), $z' \neq z_1$.
If $z'$ is not $N$-deletable, then $z'\in B$ because $z'$ is an $(N,B)$-robust element.
Thus we may assume that $z'$ is $N$-deletable.
Since $z'$ is in the guts of a vertical $3$-separation, $\co(M'\del z')$ is $3$-connected by Bixby's Lemma, so $z' \in B$ because no element in $\{z_2,\dotsc,z_n,z\}$ is $(N,B)$-strong in $M'$.

Conversely, suppose that $z'$ is a coguts element.
If $z'=z_1$, then $z' \in B^*$ by \cref{nottrianglejoin}(ii).
So we may assume that $z'\in \{z_2,\dotsc,z_n,z\}$.
If $z'$ is not $N$-contractible, then $z'\in B^{*}$ because $z'$ is an $(N,B)$-robust element. Thus we may assume that $z'$ is $N$-contractible. Now $z'$ is in the coguts of a cyclic $3$-separation of $M'$, so Bixby's Lemma implies that $\si(M'/z')$ is $3$-connected.
Thus $z'\in B^{*}$ because no element in $\{z_2,\dotsc,z_n,z\}$ is $(N,B)$-strong in $M'$. 
\end{proof}

\begin{lemma}
\label{4closed}
Suppose $M'$ has elements $z$ and $z'$ that are $(N,B)$-robust but not $(N,B)$-strong, and let $(\{x,y\},z_1,\dotsc,z_n,z,Y)$ and $(\{x,y\},z_1',\dotsc,z'_{n'},z',Y')$ be good paths of $3$-separations for $z$ and $z'$ respectively.
Let $z_{n+1}=z$ and $z'_{n'+1}=z'$.
Then 
\begin{itemize}
  \item[(i)] $z_1 = z_1'$, and
  \item[(ii)] $z_2 = z_2'$, where $z_2 \in B$.
\end{itemize}
Moreover, $\{x,y,z_1,z_2\}$ is closed in $M'$.
\end{lemma}
\begin{proof}
  By \cref{nottrianglejoin}, $\{x,y,z_1\}$ and $\{x,y,z_1'\}$ are triads of $M'$, and $z_1,z_1'\in B^{*}$.  Since $M'$ has no confining sets, $z_1 = z_1'$.

  Consider the element $z_2$. 
  Suppose $z_2\notin B$. Then $z_2$ is a coguts element by \cref{gorcog}, so $\{x,y,z_1,z_2\}$ is a $4$-element cosegment, contradicting that $M'$ has no confining set. Thus $z_2\in B$. 
  Similarly, $z_2' \in B$.

  By \cref{gorcog}, $z_2$ and $z_2'$ are spanned by the triad $\{x,y,z_1\}$.
  Now it suffices to show that $\cl(\{x,y,z_1\}) = \{x,y,z_1,z_2\}$.
  Towards a contradiction,
  suppose there is some $z''\in \cl_{M'}(\{x,y,z_1\}) - \{x,y,z_1,z_2\}$.
  Then, as $\{x,y,z_1\}$ and $\{x,y,z_1,z''\}$ are exactly $3$-separating, it follows that $z'' \notin \cocl_{M'}(\{x,y,z_1\})$, by \cref{calc2,orthogpartition}.
  Since $z_2 \in B$ is $(N,B)$-robust but not $(N,B)$-strong, $r(E(M')-\{x,y,z_1,z''\}) \ge 3$.
  Thus $(\{x,y,z_1\},z'',E(M')-\{x,y,z_1,z''\})$ is a vertical $3$-separation of $M'$, so $\si(M'/z'')$ is not $3$-connected.
  Thus $\co(M'\del z'')$ is $3$-connected by Bixby's Lemma.
  Moreover, $(\{x,y,z_1,z''\},z_2,E(M')-\{x,y,z_1,z_2,z''\})$ is a vertical $3$-separation of $M'$, so \cref{CPL2}(ii) implies that $z''$ is $N$-deletable.
  Now, if $z''\in B^{*}$, then $z''$ is an $(N,B)$-strong element; a contradiction.
  Thus $z''\in B$.
  But then the rank-$3$ set $\cl_{M'}(\{x,y,z_1\})$ contains $\{x,y,z_2,z''\}$, and $\{x,y,z_2,z''\} \subseteq B$; a contradiction.
\end{proof}

We need the following result of Whittle and Williams.

\begin{lemma}[{\cite[Lemma 2.13]{whittle2013fixed}}]
\label{triadin4circuit}
Let $M_0$ be a $3$-connected matroid with a triad $\{c,d,e\}$ and circuit $\{c,d,e,f\}$. Then at least one of the following holds:
\begin{itemize}
 \item[(i)] either $\co(M_0\del c)$ or $\co(M_0\del e)$ is $3$-connected,
 \item[(ii)] there exist $c',e'\in E(M_0)$ such that both $\{c,c',d\}$ and $\{d,e,e'\}$ are triangles, or
 \item[(iii)] there exists $g\in E(M_0)$ such that $\{c,d,e,g\}$ is a $4$-element cosegment.
\end{itemize}
\end{lemma}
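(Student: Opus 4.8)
The plan is to prove the contrapositive: assuming that neither $\co(M\del a)$ nor $\co(M\del c)$ is $3$-connected, show that (ii) or (iii) holds. First, some reductions. A $3$-connected matroid with at least four elements is simple and cosimple, and $M\del e$ is connected for every $e$; so I would take $|E(M)|$ large and dispose of the (very constrained) matroids on at most six elements directly. Since $\{a,b,c\}$ is a triad, $\{b,c\}$ is a series pair of $M\del a$ and $\{a,b\}$ is a series pair of $M\del c$. If some triad of $M$ meets $\{a,b,c\}$ in exactly two elements, then two applications of cocircuit elimination, using that $M$ has no $2$-element cocircuit, show that every $3$-subset of the union of the two triads is a triad, so $\{a,b,c\}$ lies in a $4$-element cosegment and (iii) holds. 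Hence I may assume no triad meets $\{a,b,c\}$ in two elements; in particular $\{b,c\}$ is \emph{exactly} a series class of $M\del a$ and $\{a,b\}$ is exactly a series class of $M\del c$. Finally, the circuit $\{a,b,c,d\}$ records that $d\notin\{a,b,c\}$, that $d\in\cl_M(\{a,b,c\})$, and that every proper subset of $\{a,b,c,d\}$ is independent; equivalently, $\{a,b,c\}$ is simultaneously a triangle and a triad of $M/d$.

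The engine is the dual of Lemma~\ref{existsv3sep}: since $\co(M\del a)$ is not $3$-connected, $M$ has a cyclic $3$-separation $(X_a,\{a\},Y_a)$, meaning $a\in\cl^{*}(X_a)\cap\cl^{*}(Y_a)$ with $r^{*}(X_a),r^{*}(Y_a)\ge 3$, and similarly $M$ has a cyclic $3$-separation $(X_c,\{c\},Y_c)$. I would then locate $b$ and $d$ relative to these separations by orthogonality: a cocircuit meeting one side of a $3$-separation in a single element has that element in the coguts, and a circuit meeting one side in a single element has that element in the guts. Applied to the triad $\{a,b,c\}$ and the circuit $\{a,b,c,d\}$ --- together with the fact that $\{b,c\}$ and $\{a,b\}$ are exact series classes, which heavily constrains how the triad $\{a,b,c\}$ sits in each of the two separations --- this should reduce matters to a short list of relative positions of $b$ and $d$. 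In any such position in which none of $X_a,Y_a,X_c,Y_c$ has corank exactly two, I would uncross $(X_a,\{a\},Y_a)$ with $(X_c,\{c\},Y_c)$ and use the circuit $\{a,b,c,d\}$ to place $a$ or $c$ in the guts of the resulting $3$-separating set, producing a \emph{vertical} $3$-separation of $M$ through $a$ or through $c$; then $\si(M/a)$ or $\si(M/c)$ is not $3$-connected, so $\co(M\del a)$ or $\co(M\del c)$ is $3$-connected by Bixby's Lemma (Lemma~\ref{bixby}), a contradiction.

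This leaves the degenerate configurations, in which some side of $(X_a,\{a\},Y_a)$ or $(X_c,\{c\},Y_c)$ has corank exactly two. If that side has at least four elements it is a cosegment; together with its coguts element one gets a $4$-element cosegment, and a short argument (again using that no triad meets $\{a,b,c\}$ in two elements, together with the circuit $\{a,b,c,d\}$) should force this cosegment, or one obtained from it by uncrossing against the other separation, to contain $\{a,b,c\}$, so (iii) holds. If instead the side is a triad $T'$, then $T'$ together with its coguts element is a $4$-element cosegment meeting $\{a,b,c\}$ in at most one element, and I would combine $T'$, the triad $\{a,b,c\}$, and the circuit $\{a,b,c,d\}$ --- invoking Lemma~\ref{fanends} and, where convenient, Lemma~\ref{f2f3} to recognise fan ends, and possibly uncrossing once more against the other cyclic $3$-separation --- to build a $5$-element fan of $M$ whose central triad is $\{a,b,c\}$, that is, a fan with an ordering $(a',a,b,c,c')$ in which $\{a',a,b\}$ and $\{b,c,c'\}$ are triangles; then (ii) holds. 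The main obstacle is precisely this case analysis: the two cyclic $3$-separations and the distinguished series pairs can be arranged in several ways, and in each case one must either uncross to a contradiction or extract the cosegment or the fan, all the while controlling the low-corank degeneracies, the possibility of further series classes of $M\del a$ or $M\del c$, and the small ground sets set aside at the start.
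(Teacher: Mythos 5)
The paper does not prove this lemma; it is imported verbatim from Whittle and Williams (\cite[Lemma 2.13]{whittle2013fixed}), so there is no in-paper argument to compare against. Your opening reductions are sound: if some triad meets $\{a,b,c\}$ in exactly two elements, two applications of cocircuit elimination (using cosimplicity) do yield a $4$-element cosegment containing $\{a,b,c\}$, giving (iii); and once that is excluded, $\{b,c\}$ and $\{a,b\}$ are indeed the full series classes of $b$ in $M\backslash a$ and $M\backslash c$, respectively. The use of the dual of Lemma~\ref{existsv3sep} to obtain cyclic $3$-separations $(X_a,a,Y_a)$ and $(X_c,c,Y_c)$, and the plan to close via orthogonality, uncrossing and Bixby's Lemma, is the standard machinery for this kind of statement.

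However, what you have written is a plan, not a proof, and you say so yourself: the case analysis that ``is precisely'' the content of the lemma is left undone. Two specific gaps. First, in the generic branch you assert that uncrossing the two cyclic separations and appealing to the circuit $\{a,b,c,d\}$ ``places $a$ or $c$ in the guts'' of a $3$-separating set and so yields a \emph{vertical} $3$-separation through $a$ or $c$; but a circuit putting $a\in\cl(P)$ for one side $P$ of a $3$-separation does not by itself give a vertical $3$-separation through $a$ --- one also needs $a\in\cl(Q-a)$ and rank bounds on both sides, and you have not shown how the positions of $b,c,d$ relative to $X_a,Y_a,X_c,Y_c$ force this (nor, alternatively, how to reach the more direct guts-meets-coguts contradiction $\lambda(P\cup a)\le 1$). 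Second, in the degenerate branch where a side is a triad $T'$, you claim one can ``build a $5$-element fan of $M$ whose central triad is $\{a,b,c\}$''; but $T'$ has corank two, not rank two, so it does not hand you the triangles $\{a',a,b\}$ and $\{b,c,c'\}$ required for (ii), and you do not say where those triangles come from. Until the enumeration of the relative positions of $b$, $c$, $d$, $X_a$, $Y_a$, $X_c$, $Y_c$ is actually carried out and each configuration is shown to land in (ii), (iii), or a contradiction, this cannot be accepted as a proof of the lemma.
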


We require one more definition.
Let $B$ be a strengthened basis, and let $A$ be a $B\times B^{*}$ companion $\mathbb{P}$-matrix of $M$ such that $\{x,y,a,b\}$ incriminates $(M,A)$, where $\{x,y\}\subseteq B$ and $\{a,b\}\subseteq B^{*}$.
Suppose that $M'$ has no $(N,B)$-strong elements outside of $\{x,y\}$.
We say that $B$ is \emph{bolstered} if for any $B_1\times B_1^{*}$ companion $\mathbb{P}$-matrix~$A_1$ where $\{x_1,y_1,a,b\}$ incriminates $(M,A_1)$, with $\{x_1,y_1\}\subseteq B_1$ and $\{a,b\}\subseteq B_1^{*}$, the number of $(N,B)$-robust elements of $M'$ outside of $\{x,y\}$ is at least the number of $(N,B_1)$-robust elements of $M'$ outside of $\{x_1,y_1\}$.
When $M'$ has an $(N,B)$-strong element $u$ of $M'$ outside of $\{x,y\}$, where $u \in B^*$ by \cref{nostrongbasis}, then we say that $B$ is \emph{bolstered} if for any $B_1\times B_1^{*}$ companion $\mathbb{P}$-matrix~$A_1$ where $\{x,y,a,b\}$ incriminates $(M,A_1)$, with $\{x,y\}\subseteq B_1$ and $\{u,a,b\}\subseteq B_1^{*}$, the number of $(N,B)$-robust elements of $M'$ 
is at least the number of $(N,B_1)$-robust elements of $M'$. 
(Loosely speaking, a basis $B$ is bolstered if no allowable pivot increases the number of $(N,B)$-robust elements.) 
Note that for any strengthened basis~$B$, we can perform allowable pivots in order to obtain a bolstered basis~$B'$, where $B'$ is also strengthened.

We can now show that either $M$ is bounded relative to $N$, or $M'$ has at most two elements outside of $\{x,y\}$ that are $(N,B)$-robust but not $(N,B)$-strong. 
Recall that when $F$ is a $4$-element fan with ordering $(f_1,f_2,f_3,f_4)$ such that $\{f_1,f_2,f_3\}$ is a triangle, and $B'$ is a basis,
we say that $(f_1,f_2,f_3,f_4)$ is a \textit{type-II fan relative to $B'$} if $F\cap B' = \{f_1,f_3,f_4\}$.

\begin{lemma}
  \label{atleasttwobound}
  Suppose $B$ is a bolstered basis.
  Then there are at most two elements outside of $\{x,y\}$ that are $(N,B)$-robust but not $(N,B)$-strong in $M'$.
  Moreover, either
  \begin{itemize}
    \item[(i)] $M'$ has a maximal type-II fan $(z,u,x,y)$ relative to $B$, where $u$ is $(N,B)$-strong, and $z$ is the only element outside of $\{x,y\}$ that is $(N,B)$-robust but not $(N,B)$-strong;
    \item[(ii)] $M'$ has a maximal fan $(w,z,x,u,y)$, where $u$ is $(N,B)$-strong, $z \in B$ is $N$-flexible, 
      $w \in B^*$, and the elements outside of $\{x,y\}$ that are $(N,B)$-robust but not $(N,B)$-strong are contained in $\{z,w\}$; or
    \item[(iii)] $|E(M)|\leq |E(N)|+7$.
  \end{itemize}
\end{lemma}

\begin{proof}
  Suppose that $z$ is $(N,B)$-robust but not $(N,B)$-strong, let $(\{x,y\},z_1,z_2,\dotsc,z_n,z,Y)$ be a good path of $3$-separations for $z$, where $n \ge 1$, and let $z_{n+1}=z$.
  By \cref{4closed,gorcog,nottrianglejoin}, $\{x,y,z_1\}$ is a triad with $z_1 \in B^*$, and $z_2 \in B$ is a guts element.
  Observe that $\{x,y,z_1,z_2\}$ is either a $4$-element fan or a circuit of $M'$. 
  In the case that $\{x,y,z_1,z_2\}$ is a $4$-element fan, $\{z_2,x,y\}$ is not a triangle, since $z_2 \in B$, so we may assume, up to swapping $x$ and $y$, that this fan has ordering $(z_2,z_1,x,y)$, where $\{z_2,z_1,x\}$ is a triangle.

  \begin{claim}
    \label{firstsl}
    Suppose $(z_2,z_1,x,y)$ is a maximal fan and $\co(M'\ba x)$ is not $3$-connected. Then case (i) of the lemma holds, with $z=z_2$ and $u=z_1$.
  \end{claim}
  \begin{subproof}
By \cref{f2f3}, $\si(M'/z_1)\cong \co(M'\del x)$, so $\si(M'/z_1)$ is not $3$-connected.
As $z_1$ is $(N,B)$-robust, Bixby's Lemma implies that $z_1$ is $(N,B)$-strong.
We claim that $z_2$ is the only element outside of $\{x,y\}$ that is $(N,B)$-robust but not $(N,B)$-strong.
Suppose $z' \in E(M')-\{x,y,z_1,z_2\}$ is $(N,B)$-robust but not $(N,B)$-strong, and let $(\{x,y\},z'_1,\dotsc,z'_{n'},z',Y')$ be a good path of $3$-separations for $z'$.
Let $z' = z'_{n'+1}$.
By \cref{4closed}, $z_1' = z_1$, $z_2' = z_2$, $n' \ge 2$, and $z'_3$ is an $(N,B)$-robust coguts element. 
By \cref{gorcog}, $z'_3 \in B^*$.
Now $z'_3$ is in a cocircuit~$C^*$ contained in $\{x,y,z_1,z_2,z'_3\}$.
Since $\{x,y,z_1\}$ is a triad, $\{x,y,z_1\} \nsubseteq C^*$.
Moreover, if $y \in C^*$, then by cocircuit elimination with $\{x,y,z_1\}$, there is a cocircuit contained in $\{x,z_1,z_2,z'_3\}$, and this cocircuit also contains $z'_3$.
So we may assume that $y \notin C^*$.
Since $M'$ has no confining sets, $z_2 \in C^*$.
Since $(z_2,z_1,x,y)$ is maximal, neither $\{x,z_2,z'_3\}$ nor $\{z_1,z_2,z'_3\}$ is a triad.
So $\{x,z_1,z_2,z'_3\}$ is a cocircuit of $M'$.
Now $\{z_2,z_1,x\}$ is not contained in a $4$-element segment by orthogonality, and $\{x,z_2\}$ is not contained in a triad because $(z_2,z_1,x,y)$ is maximal.
Therefore, by the dual of \cref{triadin4circuit}, either $\si(M'/z_2)$ or $\si(M'/z_1)$ is $3$-connected.
But $\si(M'/z_2)$ is not $3$-connected because $z_2$ is not $(N,B)$-strong, and $\si(M'/z_1)\cong \co(M'\del x)$ is not $3$-connected; a contradiction.
We deduce that $z_2$ is the only element outside of $\{x,y\}$ that is $(N,B)$-robust but not $(N,B)$-strong.
  \end{subproof}

\begin{claim}
  \label{secondsl}
  Suppose $\{z_2,z_1,x,y\}$ is contained in a fan $(w,z_2,x,z_1,y)$, for some $w \in E(M')-\{x,y,z_1,z_2\}$, and $\co(M'\ba x)$ is not $3$-connected. Then case (ii) of the lemma holds, with $z=z_2$ and $u=z_1$.
\end{claim}

\begin{subproof}
  As $z_1 \in B^*$ is $(N,B)$-robust, and $\co(M' \ba z_1)$ is $3$-connected by \cref{fanends}, $z_1$ is $(N,B)$-strong.
  Since $z_1$ is $N$-deletable, and $\{x,y\}$ is a series pair in $M'\ba z_1$, it follows that $x$ is $N$-contractible.
  Similarly, since $x$ is $N$-contractible, $z_2$ is $N$-deletable.  As $z_2 \in B$ is $(N,B)$-robust, $z_2$ is $N$-flexible.
  Moreover, as $z_2$ is $N$-deletable, $w$ is $N$-contractible.
  Now, if $w \in B$, then $w$ is $(N,B)$-strong by \cref{fanends}; so $w \in B^*$.

  Next we show that the fan $(w,z_2,x,z_1,y)$ is maximal.
  First, observe that $\{z_1,y\}$ is not contained in a triangle by \cref{4closed}.
  Suppose $\{z_2,w\}$ is contained in a triangle $\{z_2,w,z'\}$ say.
  Since $z_2$ is $N$-contractible, it follows that $z'$ is $N$-deletable.
  By \cref{fanends}, $\co(M\ba z')$ is $3$-connected, so, as the $(N,B)$-strong elements are contained in $\{x,y,z_1\}$, 
  we have $z' \in B$.
  Now, as $\{z_2,w,z'\}$ is a triangle, $A_{xw}=A_{yw}=0$ and $A_{z_2w} \neq 0$, so a pivot on $A_{z_2w}$ is allowable.
  But then $B'= B \triangle \{z_2,w\}$ is a basis,
  and $z_2$ is an $(N,B')$-strong element in $B'-\{x,y\}$, contradicting \cref{nostrongbasis}.

  Now, if the elements outside of $\{x,y\}$ that are $(N,B)$-robust but not $(N,B)$-strong are contained in $\{z_2,w\}$, then \cref{secondsl} holds.

  Suppose there is some $N$-contractible element $w' \in \cocl(\{x,y,z_1,z_2\})-\{x,y,z_1,z_2,w\}$.
  Then $\{y,w,w'\}$ is in the coclosure of the $3$-separating triangle $\{x,z_1,z_2\}$, so, as $M'$ is $3$-connected, $\{y,w,w'\}$ is a triad.
  Recall that $w \in B^*$.
  By Bixby's Lemma, $\si(M'/w')$ is $3$-connected.
  Since $w'$ is $N$-contractible, and the $(N,B)$-strong elements of $M'$ are contained in $\{x,y,z_1\}$, it follows that $w' \in B^*$.
  Now $\{x,y,z_1,w,w'\}$ is a confining set; a contradiction.

  Suppose there is some $w' \in E(M')-\{x,y,z_1,z_2,w\}$ that is $(N,B)$-robust.
  Let $(\{x,y\},z_1',z_2',\dotsc,z'_{n'},w',Y')$ be a good path of $3$-separations for $w'$, and let $z'_{n'+1}=w'$.
  It follows from \cref{4closed,gorcog} and the preceding paragraph that $z_1'=z_1$, $z_2'=z_2$, $z_3'=w$, and $z_4'$ is a guts element, so $z_4' \in B$.
  We work towards a contradiction.

  We first claim that $\{y,z_1,z_2,w,z_4'\}$ is a circuit of $M'$.
  Certainly, $z_4'$ is in a circuit contained in $\{x,y,z_1,z_2,w,z_4'\}$.
  If this circuit contains $x$, then, by circuit elimination with the triangle $\{z_2,x,z_1\}$, there is a circuit contained in $\{y,z_1,z_2,w,z_4'\}$.
  So we may assume that there is a circuit~$C$ contained in $\{y,z_1,z_2,w,z_4'\}$, which may or may not contain $z_4'$.
  By orthogonality with the triad $\{w,z_2,x\}$, either $C$ contains $\{w,z_2\}$ or $C \cap \{w,z_2\} = \emptyset$.  But in the latter case, $\{z_1,y,z_4'\}$ is a triangle of $M'$, contradicting the maximality of the fan $(w,z_2,x,z_1,y)$.
  So $C$ contains $\{w,z_2\}$ and, similarly, $\{z_1,y\}$.
  Finally, if $z_4' \notin C$, then $\{w,z_2,x,z_1,y\}$ is $2$-separating; a contradiction.  This proves the claim.

  By orthogonality, the only triads containing $x$ are $\{w,z_2,x\}$ and $\{x,z_1,y\}$, so $\co(M'\ba x) \cong M'\ba x / z_1,z_2$.
  Let $M'' = M'\ba x / z_1,z_2$.
  As $\{w,z_2,z_1,y,z_4'\}$ is a circuit of $M'$, the set $T=\{w,z_4',y\}$ is a triangle of $M''$.
  Let $(P,Q)$ be a $2$-separation of $M''$, where $|P \cap T| \ge 2$.
  Now $(\fcl_{M''}(P),Q-\fcl_{M''}(P))$ is also a $2$-separation of $M''$, so we may also assume, without loss of generality, that $P$ is fully closed.
  In particular, $T \subseteq P$.
  Since $\{z_1,z_2\} \subseteq \cocl_{M' \ba x}(P)$, we have that $(P \cup \{z_1,z_2\},Q)$ is a $2$-separation in $M' \ba x$.
  As $x \in \cl(P \cup \{z_1,z_2\})$, it follows that $(P \cup \{z_1,z_2,x\}, Q)$ is a $2$-separation of $M'$; a contradiction.

  We deduce that no $w' \in E(M')-\{x,y,z_1,z_2,w\}$ is $(N,B)$-robust, so the elements of $M'$ that are $(N,B)$-robust but not $(N,B)$-strong are contained in $\{z_2,w\}$, as required.
\end{subproof}

Suppose that $(z_2,z_1,x,y)$ is a $4$-element fan, and $\co(M'\del x)$ is not $3$-connected.
If $\{z_2,z_1\}$ is contained in a triad, then $x$ is a spoke end of a $4$-element fan, contradicting \cref{fanends}; whereas if $y$ is in a triangle, then, by orthogonality, this contradicts \cref{4closed}.
Thus either the fan $(z_2,z_1,x,y)$ is maximal, in which case (i) holds by \cref{firstsl}; or it is contained in a fan $(w,z_2,x,z_1,y)$ for some $w \in E(M')-\{x,y,z_1,z_2\}$, in which case (ii) holds by \cref{secondsl}.
So we may assume that when $(z_2,z_1,x,y)$ is a $4$-element fan, $\co(M'\del x)$ is $3$-connected.

Now consider the case where $\{x,y,z_1,z_2\}$ is a circuit.
Suppose $\{x,y,z_1\}$ is contained in a $4$-element cosegment $\{x,y,z_1,f\}$.
Then, as $M'$ has no confining sets, $f \in B$.  Since $z_1 \in B^*$ is $(N,B)$-robust, it follows that $f$ is $N$-contractible.  But $\si(M'/f)$ is $3$-connected by the dual of \cref{longline3conn}, so $f$ is $(N,B)$-strong; a contradiction.
Now, since $\{x,y,z_1,z_2\}$ is closed by \cref{4closed}, it follows from \cref{triadin4circuit} that either $\co(M'\del x)$ or $\co(M'\del y)$ is $3$-connected.
Thus, when $\{x,y,z_1,z_2\}$ is a circuit, we may assume without loss of generality that $\co(M'\del x)$ is $3$-connected.

Now, in either case, we may assume that $\co(M' \ba x)$ is $3$-connected.

\begin{claim}
  \label{boundingab}
  $A_{pa}=A_{pb}=0$ for all $p\in B-\{x,y,z_2\}$, and $z_1$ is $(N,B)$-strong.
\end{claim}
\begin{subproof}
  Either $\{x,y,z_1,z_2\}$ is a circuit, or this set is a $4$-element fan containing the triad $\{x,z_1,z_2\}$.
  If $x$ is in a $4$-element cosegment of $M'$, then, by orthogonality, this cosegment intersects the circuit $\{x,y,z_1,z_2\}$ or $\{x,z_1,z_2\}$ in three elements.
  But this implies that $\{x,y,z_1\}$ is contained in a $4$-element cosegment; a contradiction.
  So $M' \del x$ is $3$-connected up to series pairs.
  Similarly, $z_1$ is not in a $4$-element cosegment of $M'$.

  Next we claim that $x$ is $N$-deletable.
  Observe that $(\{x,y,z_1\},z_2,E(M')-\{x,y,z_1,z_2\})$ is a vertical $3$-separation, where $M'/z_2$ has an $N$-minor since $z_2 \in B$ is $(N,B)$-robust.
  Since $\{x,y,z_1\}$ is a triad, $E(M')-\{x,y,z_1\}$ is closed.
  Moreover, $x \in \cl(\{y,z_1,z_2\})$, so $x \notin \cocl_{M'}(E(M') - \{x,y,z_1,z_2\})$.
  Thus, by \cref{CPL2}(ii), the element $x$ is $N$-deletable.

  We work towards showing that $a,b \in \cl_M(\{x,y,z_2\})$.
  Observe that $A_{xz_1}\neq 0$ because $\{x,y,z_1,z_2\}$ is a circuit with $\{x,y,z_2\} \subseteq B$. So a pivot on $A_{xz_1}$ is allowable.
  Now $\{z_1,y,a,b\}$ incriminates $(M,A^{xz_1})$.
  Let $B'=B\triangle \{x,z_1\}$.
  Then $x$ is an $(N,B')$-strong element outside of $\{z_1,y\}$.
  By \cref{unstablemeetsxy}, $M' \del x$ has a series pair that meets $\{z_1,y\}$ and is contained in an unstable triple of $M\ba a,x$ or $M\ba b,x$.
  Since $\{z_1,y\}$ is a series class of $M' \ba x$, the series pair $\{z_1,y\}$ is contained in an unstable triple $\{z_1,y,b\}$ of $M \ba a,x$, up to swapping labels on $a$ and $b$.
  So $b\in \cl_M(\{z_1,y\})$.
  Since $z_1\in \cl(\{x,y,z_2\})$, it follows that $b\in \cl_M(\{x,y,z_2\})$.

  As $B$ is a strengthened basis and $x$ is an $(N,B')$-strong element outside of $\{z_1,y\}$, it follows that $z_1$ is an $(N,B)$-strong element outside of $\{x,y\}$.
  Since $z_1$ is not in a $4$-element cosegment of $M'$, the matroid $M' \ba z_1$ is $3$-connected up to series pairs.
  Thus, by \cref{unstablemeetsxy} again, $M' \del z_1$ has a series pair that meets $\{x,y\}$ and is contained in an unstable triple of $M\ba a,z_1$ or $M\ba b,z_1$.
  It now follows that this series pair is $\{x,y\}$, so either $a \in \cl_M(\{x,y\})$ or $b \in \cl_M(\{x,y\})$.
  But in the latter case, $\{x,y,z_1\}$ is a triangle, contradicting \cref{nottrianglejoin}(i).
  So $a\in \cl_M(\{x,y\})$.
  Now $a,b\in \cl_M(\{x,y,z_2\})$, so $A_{pa}=A_{pb}=0$ for all $p\in B-\{x,y,z_2\}$.
  Thus \cref{boundingab} holds.
\end{subproof}

\begin{claim}
  \label{anotherclaim}
  There are no $N$-contractible elements of $M'$ outside of $\{x,y,z_1,z_2\}$.
\end{claim}
\begin{subproof}
  Suppose that $M'$ has an $N$-flexible element $q\in B^{*}-z_1$.
  Then, since $q$ is not $(N,B)$-strong in $M'$, the matroid $\co(M'\del q)$ is not $3$-connected.
  Hence $\si(M'/q)$ is $3$-connected by Bixby's Lemma.
  By \cref{4closed}, $q\notin \cl(\{x,y,z_1\})=\cl(\{x,y,z_2\})$, so $A_{pq}\neq 0$ for some $p\in B-\{x,y,z_2\}$.
  Since $A_{pa}=A_{pb}=0$, by \cref{boundingab}, a pivot on $A_{pq}$ is allowable.
  But then $B'=B\triangle \{p,q\}$ has an $(N,B')$-strong element~$q$ in $B'-\{x,y\}$, contradicting \cref{nostrongbasis}.
  Thus $M'$ has no $N$-flexible elements in $B^{*}-z_1$. 

  Suppose that $M'$ has an $(N,B)$-robust element $p\in B-\{x,y,z_2\}$.
  Let $(\{x,y\},z_1',\dotsc,z'_{n'},p,Y')$ be a good path of $3$-separations for $p$.
  By \cref{4closed}, $z_1'=z_1$, $z_2'=z_2$, and $\{x,y,z_1,z_2\}$ is closed.
  Hence, as $p$ is a guts element by \cref{gorcog}, $n' \ge 3$.
  But then $z_3' \in B^*-z_1$ is $N$-flexible; a contradiction.
  Therefore no element in $B-\{x,y,z_2\}$ is $N$-contractible.

  For each element $q\in B^{*}-z_1$, there is some $p\in B-\{x,y,z_2\}$ such that $A_{pq}$ is non-zero, by \cref{4closed}, so a pivot on $A_{pq}$ is allowable.
  Since $B$ is bolstered, there are at least as many $(N,B)$-robust elements as $(N,B\triangle \{p,q\})$-robust elements, so $M'$ has no $N$-contractible elements in $B^{*}-z_1$. 
\end{subproof}

\begin{claim}
  \label{yetanotherclaim}
  There is at most one element outside of $\{x,y\}$ that is $(N,B)$-robust but not $(N,B)$-strong. 
\end{claim}
\begin{subproof}
  By \cref{anotherclaim}, each $(N,B)$-robust element of $M'$ outside of $\{x,y,z_1,z_2\}$ is in $B^*$, so any such element is a coguts element by \cref{gorcog}.
  Suppose $q$ and $q'$ are distinct $(N,B)$-robust elements of $M'$ in $B^*-z_1$.
  Then $(\{x,y\},z_1,z_2,q,Y)$ and $(\{x,y\},z_1,z_2,q',Y')$ are the good paths of $3$-separations for $q$ and $q'$ respectively, otherwise there is an $N$-contractible element in $B^*-z_1$.
  Now $(\{x,y,z_1,z_2,q\},q',Y-q')$ is a cyclic $3$-separation, and $q \in \cocl_{M'}(Y-q')$, so $q \in B^*-z_1$ is $N$-contractible in $M'$ by the dual of \cref{CPL2}(ii); a contradiction.
  Hence there is at most one $(N,B)$-robust element of $M'$ outside of $\{x,y,z_1,z_2\}$. 
\end{subproof}

By \cref{yetanotherclaim}, it now suffices to show that $|E(M')|\leq |E(N)|+5$.
Towards a contradiction, suppose that $|E(M')|\geq |E(N)|+6$.
Let $R$ be the set consisting of $\{x,y,z_1,z_2\}$ and the $(N,B)$-robust element of $M'$ outside of $\{x,y,z_1,z_2\}$, if such an element exists.
So the set of $(N,B)$-robust elements of $M'$ is contained in $R$, where $|R| \le 5$.
Since $|E(M')|\geq |E(N)|+6$, there is an element $p$ outside of $R$ that is either $N$-deletable or $N$-contractible in $M'$, but is not $(N,B)$-robust in $M'$.
By \cref{anotherclaim}, $p\in B-R$ and $p$ is $N$-deletable.
Since $z_1$ is in a circuit of $M'$ contained in $\{x,y,z_1,z_2\}$, it follows from orthogonality that $p\notin \cl_{M'}^{*}(\{z_1,z_3\}) = \cl_{M'}^{*}(R\cap B^{*})$.
Thus there is some $q\in B^{*}-R$ such that $A_{pq}\neq 0$.
Since $A_{pa}=A_{pb}=0$, by \cref{boundingab}, a pivot on $A_{pq}$ is allowable.
Again letting $B'=B\triangle \{p,q\}$, we see there are more $(N,B')$-robust elements than $(N,B)$-robust elements, so $B$ is not bolstered; a contradiction. 
We deduce that $|E(M')| \le |E(N)|+5$, as required.
\end{proof}

\begin{lemma}
  \label{norobustfragile}
  Suppose $B$ is a bolstered basis.
  If $M'$ has no $(N,B)$-robust elements outside of $\{x,y\}$, then $M\del a,b$ is $N$-fragile. 
\end{lemma}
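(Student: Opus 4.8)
The plan is to argue by contradiction, using the fact that $B$ is a robust basis. Suppose that $M'=M\del a,b$ is not $N$-fragile; then $M'$ has an $N$-flexible element $c$. Every $N$-flexible element is $(N,B)$-robust, and by hypothesis $M'$ has no $(N,B)$-robust elements outside of $\{x,y\}$, so $c\in\{x,y\}$. Since $x$ and $y$ enter the incriminating set $\{a,b,x,y\}$ symmetrically (negating $\det(A[\{a,b,x,y\}])$ does not affect incrimination, as $\mathbb{P}$ is closed under negation), we may assume $c=x$. In particular $x$ is $(N,B)$-robust, so both $M'\del x$ and $M'/x$ have $N$-minors. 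The aim is to perform an allowable pivot that moves $x$ out of the distinguished pair, producing a basis $B'$ subject to the setup for which $x$ is an $(N,B')$-robust element outside of the new distinguished pair; this contradicts $B$ being robust, since then every basis subject to the setup has no robust elements outside its distinguished pair.

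First I would locate the pivot entry. Because $M\del a=M[I\,|\,A-a]$ and $M\del b=M[I\,|\,A-b]$ with $A-a$, $A-b$ $\mathbb{P}$-matrices, deleting column $b$ from $A-a$ shows that $A[B,B^{*}-\{a,b\}]$ is a $\mathbb{P}$-representation of $M'$ with respect to the basis $B$. Hence the fundamental cocircuit of $x$ in $M'$ with respect to $B$ is $\{x\}\cup\{q\in B^{*}-\{a,b\}:A_{xq}\neq 0\}$. If this were $\{x\}$, then $x$ would be a coloop of $M'$; but $M'$ is $3$-connected with $|E(M')|\geq|E(N)|\geq 4$, so it has no coloops. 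Therefore $A_{xq}\neq 0$ for some $q\in B^{*}-\{a,b\}$. (Note that Lemma \ref{nobadsubdetzeros} only forces nonzero entries of $A$ in the columns $a$ and $b$, which are exactly the columns removed in passing from $M$ to $M'$, so this coloop argument is genuinely needed here.)

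Now I would pivot. By an allowable pivot on $A_{xq}$, the set $\{a,b,x,y\}\triangle\{x,q\}=\{a,b,y,q\}$ incriminates $(M,A^{xq})$. Put $B'=B\triangle\{x,q\}$, which is a basis of $M'$ with $\{y,q\}\subseteq B'$ and $\{a,b\}\subseteq (B')^{*}$, so $B'$ is subject to the setup with distinguished pair $\{y,q\}$. Since $x\in(B')^{*}-\{a,b\}$ and $M'\del x$ has an $N$-minor, $x$ is $(N,B')$-robust, and $x\notin\{y,q\}$. Thus $B'$ has at least one $(N,B')$-robust element of $M'$ outside of $\{y,q\}$. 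But $B$ is a robust basis for $M$ and has no $(N,B)$-robust elements outside of $\{x,y\}$, so every basis subject to the setup — in particular $B'$ — has none; this contradiction shows that $M\del a,b$ is $N$-fragile.

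The only delicate bookkeeping is in the pivot step: one should verify, via the standing conventions on allowable pivots, that $A^{xq}$ is again a companion matrix (so that the minor realizing $N$ can be transported through the pivot and $B'$ really is subject to the setup), and one should confirm that the robust-basis hypothesis applies to $B'$. For the latter, recall that in the present situation no basis has an $(N,B)$-strong element outside its distinguished pair (this is outcome (ii) of Lemma \ref{nogadgetsetup}, as (RB1) is excluded), so the extra triad constraint in the definition of a robust basis is vacuous and the comparison of robust-element counts is unconditional.
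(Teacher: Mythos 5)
Your proof is correct and follows essentially the same path as the paper's: both reduce to showing $x$ (and $y$) cannot be $N$-deletable, locate a nonzero entry $A_{xq}$ for some $q\in B^{*}-\{a,b\}$ via the no-coloop argument, perform the allowable pivot, and contradict the robust-basis choice because $x$ becomes an $(N,B')$-robust element outside the new distinguished pair. The only cosmetic difference is that you set up a contradiction with $N$-flexibility while the paper argues directly that $x$ is not $N$-deletable (which is what the pivot argument actually uses), and your closing remarks about (RB2) and the pivot conventions are correct housekeeping that the paper leaves implicit.
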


\begin{proof}
  Suppose $M'$ has no $(N,B)$-robust elements outside of $\{x,y\}$.
  Since the elements outside of $\{x,y\}$ are not $(N,B)$-robust, it suffices, by symmetry, to show that $M'\del x$ has no $N$-minor.
  Towards a contradiction, suppose that $x$ is $N$-deletable. There is some $x'\in B^{*}-\{a,b\}$ such that $A_{xx'}\neq 0$ because $x$ is not a coloop of $M'$,
  so a pivot on $A_{xx'}$ is allowable.
  Let $B'=B\triangle \{x,x'\}$.
  Now $x$ is an $(N,B')$-robust element of $M'$ outside of $\{x',y\}$, contradicting the fact that $B$ is bolstered.
  We deduce that $x$ is not $N$-deletable, as required.
\end{proof}

We now prove \cref{mainthm1}, which we restate here for ease of reference.

\begin{thm}
  \label{mainthm1proof}
  Let $M$ be an excluded minor for the class of $\mathbb{P}$-representable matroids, and let $N$ be a non-binary $3$-connected strong $\mathbb{P}$-stabilizer for the class of $\mathbb{P}$-representable matroids.
  Suppose $M$ has a pair of elements $\{a,b\}$ such that $M\ba a,b$ is $3$-connected with an $N$-minor.
  Then either
  \begin{itemize}
    \item[(i)] $|E(M)|\leq |E(N)|+9$, or 
    \item[(ii)] $M$ has a $B\times B^{*}$ companion $\mathbb{P}$-matrix $A$ for which $\{x,y,a,b\}$ incriminates $(M,A)$, where $\{x,y\}\subseteq B$ and $\{a,b\}\subseteq B^{*}$, and either
      \begin{itemize} 
        \item[(a)] $M\del a,b$ is $N$-fragile, and $M\del a,b$ has at most one $(N,B)$-robust element outside of $\{x,y\}$, where if such an element $u$ exists, then $u\in B^{*}-\{a,b\}$ is an $(N,B)$-strong element of $M\del a,b$, and $\{u,x,y\}$ is a coclosed triad of $M\del a,b$, or
        \item[(b)] $M\del a,b$ is not $N$-fragile, but there is an element $u \in B^*-\{a,b\}$ that is $(N,B)$-strong in $M \ba a,b$; either
          \begin{itemize}
            \item[(I)] the $N$-flexible, and $(N,B)$-robust, elements of $M\del a,b$ are contained in $\{u,x,y\}$, or
            \item[(II)] the $N$-flexible, and $(N,B)$-robust, elements of $M\del a,b$ are contained in $\{u,x,y,z\}$, where $z \in B$, and $(z,u,x,y)$ is a maximal fan of $M \del a,b$, or
            \item[(III)]the $N$-flexible, and $(N,B)$-robust, elements of $M\del a,b$ are contained in $\{u,x,y,z,w\}$, where $z \in B$, $w \in B^*$, and $(w,z,x,u,y)$ is a maximal fan of $M \del a,b$;
          \end{itemize}
          the unique triad in $M \ba a,b$ containing $u$ is $\{u,x,y\}$; and $M$ has a cocircuit $\{x,y,u,a,b\}$ and a triangle $\{d,x,y\}$ for some $d\in \{a,b\}$.
      \end{itemize}
      Moreover, 
      $B$ is a bolstered basis.
  \end{itemize}
\end{thm}

\begin{proof}
It follows from \cref{nogadgetsetup} that $M'$ has either a confining set or a strengthened basis $B$.
If $M'$ has a confining set, then (i) holds by \cref{gadgetfinish}.
Assume that $M'$ has a strengthened basis $B$ and that (i) does not hold, so $|E(M)| \ge |E(N)| + 10$ and $M'$ has no confining sets.
We may assume that the strengthened basis~$B$ is chosen to be bolstered.
If $M'$ has no $(N,B)$-robust elements outside of $\{x,y\}$, then (ii)(a) holds by \cref{norobustfragile}.
We shall therefore assume $M'$ has an $(N,B)$-robust element outside of $\{x,y\}$. 

We distinguish two cases.
First, suppose that all $(N,B)$-robust elements of $M'$ outside of $\{x,y\}$ are $(N,B)$-strong. Then $M'$ has exactly one $(N,B)$-strong element $u$, and $\{u,x,y\}$ is a triad of $M'$ by \cref{nogadgetsetup}.
Since $M'$ has no confining sets, \cref{nostronglongline} implies that $M' \ba u$ is $3$-connected up to series pairs; in particular, the triad $\{u,x,y\}$ is coclosed.
If $M'$ is $N$-fragile, then (ii)(a) holds. Suppose then that $M'$ is not $N$-fragile. Since $N$-flexible elements are $(N,B)$-robust, it follows that the $N$-flexible elements of $M'$ are contained in $\{u,x,y\}$.
To show that (ii)(b)(I) holds, it remains to prove that $\{d,x,y\}$ is a triangle of $M$ for some $d\in \{a,b\}$, the unique triad in $M \ba a,b$ containing $u$ is $\{u,x,y\}$, and $\{x,y,u,a,b\}$ is a cocircuit of $M'$.
Since $M' \ba u$ is $3$-connected up to series pairs, the former follows from \cref{unstablemeetsxy}.
We return to the latter two claims momentarily.

Second, suppose that some $(N,B)$-robust element of $M'$ outside of $\{x,y\}$ is not $(N,B)$-strong.
Since $|E(M)| \ge |E(N)| + 10$, \cref{atleasttwobound} implies that $u$ is $(N,B)$-strong, and either $M'$ has a maximal type-II fan $(z,u,x,y)$ relative to $B$, or $M'$ has a maximal fan $(w,z,x,u,y)$ such that $z \in B$ and $w \in B^*$
, where $\{u,x,y,z\}$ or $\{u,x,y,z,w\}$, respectively, contains all of the $(N,B)$-robust elements of $M'$.
Hence $M'$ is not $N$-fragile, and $\{u,x,y,z\}$, or $\{u,x,y,z,w\}$, contains all of the $N$-flexible elements of $M'$.
\Cref{unstablemeetsxy} implies that $\{d,x,y\}$ is a triangle of $M$ for some $d\in \{a,b\}$.

Now, in either of the two cases, $M'$ has an $(N,B)$-strong element $u$.

\begin{claim}
  \label{below2}
  $\{x,y,u,a,b\}$ is a cocircuit of $M$.
\end{claim}

\begin{subproof}
By orthogonality, $\{x,y\}$ is not contained in a $4$-element segment of $M'$, so there is at most one element in $B^*$ that is in a triangle of $M'$ with $\{x,y\}$.
Thus, as $|E(M)| \ge |E(N)| + 10$, there is either some $p \in B-\{x,y\}$ that is $N$-deletable but not $(N,B)$-robust, or some $q \in B^*-\{a,b,u\}$ that is $N$-contractible but not $(N,B)$-robust such that $\{q,x,y\}$ is not a triangle.
In the former case, as $\{p,a,b\}$ is not a triad of $M$ since $M \ba a,b$ is $3$-connected, we can choose $q \in B^*-\{a,b,u\}$ such that the entry $A_{pq}$ is non-zero.
In the latter case, we can choose $p \in B-\{x,y\}$ so that the entry $A_{pq}$ is non-zero, since $\{q,x,y\}$ is not a triangle.
Now, if $A_{xq}=0$ and $A_{yq}=0$, then the pivot on $A_{pq}$ is allowable, in which case $B \triangle \{p,q\}$ is a basis, and there are more $(N,B \triangle \{p,q\})$-robust elements than $(N,B)$-robust elements in $M'$, contradicting the fact that $B$ is bolstered.
So we may assume that $A_{yq}\neq 0$. 
Now a pivot on $A_{yq}$ is allowable, so $A^{yq}$ is a companion $\mathbb{P}$-matrix where $\{x,q,a,b\}$ incriminates $(M,A^{yq})$.
If $\{b,u,x,y\}$ is a cocircuit of $M$, then 
$(A^{yq})_{xa}=0$ because $\{b,y,u\}$ cospans $x$, contradicting that the bad submatrix $A^{yq}[\{x,q,a,b\}]$ has no zero entries.
So $\{b,u,x,y\}$, and similarly $\{a,u,x,y\}$, are not cocircuits of $M$.
Therefore $\{x,y,u,a,b\}$ is a cocircuit of $M$. 
\end{subproof}


\begin{claim}
\label{newboy}
$\{x,y\}$ is the only series pair of $M'\del u$.
\end{claim}

\begin{subproof}
  Suppose $\{p,q\}$ is a series pair of $M' \ba u$ that is distinct from $\{x,y\}$.
  Since $\{u,x,y\}$ is a coclosed triad of $M'$, the pairs $\{x,y\}$ and $\{p,q\}$ are not contained in the same series class of $M' \ba u$; in particular, they are disjoint.
  As $\{u,p,q\}$ is a triad of $M'$ and $u$ is an $N$-deletable element in $B^*$, both $p$ and $q$ are $N$-contractible in $M'$, and at least one of $p$ and $q$ is in $B-\{x,y\}$.
  So $p$, say, is an $(N,B)$-robust element in $B-\{x,y\}$.
  Since $\{u,p,q\}$ is a triad of $M'$, for some $q \in E(M')-\{u,p,x,y\}$, it now follows that we are in the case where (ii)(b)(III) holds.
  Now $M'$ has a $5$-element fan~$F$ with ordering $(w,p,x,u,y)$, where $q \notin F$ and $q$ is $N$-contractible.
  Since $q$ is not $(N,B)$-robust, $q \in B^*$.
  Moreover, as $\{y,w,q\} \subseteq \cocl_{M'}(\{u,x,p\})-\{u,x,p\}$, where $\{u,x,p\}$ is $3$-separating, it follows that $\{y,w,q\}$ is a triad of $M'$.
  Now $\{x,y,u,p,q\}$ is a confining set; 
  a contradiction.
\end{subproof}

Finally, either (I), (II), or (III) of (ii)(b) holds, by \cref{below2,newboy}.
\end{proof}

\section{Spike-like $3$-separators}
\label{secdetachable}

Suppose that $M$ is an excluded minor for the class of $\mathbb{P}$-representable matroids, for some partial field $\mathbb{P}$, with a minor $N$ where $N$ is a $3$-connected strong $\mathbb{P}$-stabilizer. 
By \cref{detachthm}, if $M$ has no spike-like $3$-separator, then, after replacing $M$ by a \dY-equivalent matroid, and possibly dualising, we obtain a matroid with a deletion pair with respect to $N$ or $N^*$.
In this section, we show that in the case that $M$ has a spike-like $3$-separator, $|E(M)|$ is bounded relative to $|E(N)|$.

We require the following lemma which shows, in particular, that an element that is in a quad but not in a triangle (or, dually, a triad) can be contracted (or deleted, respectively) without destroying $3$-connectivity.

\begin{lemma}[{\cite[Lemma 3.8]{stabilizers}}]
  \label{r3cocircsi3}
  Let $C^*$ be a rank-$3$ cocircuit of a $3$-connected matroid $M$.
If $x \in C^*$ has the property that $\cl_M(C^*)-x$ contains a triangle of $M/x$, then $\si(M/x)$ is $3$-connected.
\end{lemma}

\begin{lemma}
  \label{spikelikes}
  Let $\mathbb{P}$ be a partial field, let $N$ be a non-binary $3$-connected strong stabilizer for the class of $\mathbb{P}$-representable matroids, and let $M$ be an excluded minor for the class of $\mathbb{P}$-representable matroids, where $M$ has an $N$-minor.
  If $M$ has a spike-like $3$-separator $P$ such that at most one element of $E(M)-E(N)$ is not in $P$, then $|E(M)| \le |E(N)| + 5$.
\end{lemma}
\begin{proof}
  Towards a contradiction, suppose that $|E(M)| \ge |E(N)| + 6$.
  By the definition of a spike-like $3$-separator, there is a partition $\{L_1,\dotsc,L_t\}$ of $P$ such that $|L_i|=2$ for each $i\in\{1,\dotsc,t\}$, and $L_i\cup L_j$ is a quad for all distinct $i,j\in\{1,\dotsc,t\}$, where $t \ge 3$.
  Since at most one element of $E(M)-E(N)$ is not in $P$, we have $|P-E(N)| \ge 5$.  

  Up to possibly replacing $(M,N)$ with $(M^*,N^*)$, there are distinct elements $a,b \in P$ such that $\{a,b\}$ is $N$-deletable,
  $a \in L_i$, and $b \in L_j$, with $i \neq j$.
  It follows from orthogonality, and the fact that $i \neq j$ and $t \ge 3$, that if $\{a,b\}$ is contained in a triad, then this triad meets $L_{i'}$ for each $i' \in \{1,\dotsc,t\}$.
  But then $t=3$ and $r^*(P) = 3$, implying $\lambda(P)=1$; a contradiction.
  Thus, by the dual of \cref{r3cocircsi3}, $M \ba a$ and $M \ba b$ are $3$-connected, and $M \ba a,b$ is $3$-connected up to series classes.
  Thus $\{a,b\}$ is a weak deletion pair.
  By \cref{companion,incrim4}, there exists a $B \times B^*$ companion $\mathbb{P}$-matrix~$A$ with $\{x,y\} \subseteq B$ and $\{a,b\} \subseteq B^*$ such that $\{x,y,a,b\}$ incriminates $(M,A)$.

  Since $L_i \cup L_j$ is a cocircuit, there is some $u \in (L_i \cup L_j) \cap B$.  As $u$ is in a series pair of $M \ba a,b$, the element $u$ is $N$-contractible in $M \ba a,b$, and $M \ba a,b /u$ is $3$-connected up to series classes.
  Without loss of generality, we may assume that $u \in L_i$.
  By the definition of a spike-like $3$-separator, $L_i = \{a,u\}$ is not contained in a triangle.
  Thus, if $u$ is in a triangle, then,
  by orthogonality with the cocircuits $L_{i} \cup L_{j'}$ for $j' \in \{1,\dotsc,t\}-i$, this triangle meets each $L_{j'}$.
  But then $t=3$ and $r(P) = 3$, implying $\lambda(P)=1$; a contradiction.
  So $M/u$ is $3$-connected by \cref{r3cocircsi3}.
  It now follows that $\co(M \ba a /u)$ and $\co(M \ba b /u)$ are $3$-connected.
  In particular, $M \ba a /u$ and $M \ba b /u$ are $N$-stable, and $M \ba a,b /u$ is connected.
  Thus, by \cref{notrepcert2}, $M/u$ is not strongly $\mathbb{P}$-stabilized by $N$.
  But, as $M/u$ is $3$-connected, and hence $N$-stable, this contradicts \cref{stableisstable}.
\end{proof}

The following is a consequence of \cref{spikelikes,detachthm}.

\begin{cor}
  \label{detachsetup}
Let $\mathbb{P}$ be a partial field, let $M$ be an excluded minor for the class of $\mathbb{P}$-representable matroids, and let $N$ be a non-binary $3$-connected strong stabilizer for the class of $\mathbb{P}$-representable matroids, where $M$ has an $N$-minor.
Suppose that $|E(M)| \ge |E(N)| + 10$.
Then, there exists a matroid~$M_0$, where $M_0$ is obtained from $M$ by at most one \dY\ or \Yd\ exchange, and $(M_1,N_1) \in \{(M_0,N), (M_0^*,N^*)\}$ such that $M_1$ has a pair of elements $\{a,b\}$ for which $M_1 \ba a,b$ is $3$-connected and has an $N_1$-minor.
\end{cor}

\section{Proof of \cref{mainthm2}}
\label{mainthmsec}

Let $M$ be an excluded minor for the class of $\mathbb{P}$-representable matroids, for some partial field~$\mathbb{P}$, and let $N$ be a non-binary $3$-connected strong $\mathbb{P}$-stabilizer for the class of $\mathbb{P}$-representable matroids. 

In this section we prove \cref{mainthm2}.
We first address a few more cases where we can bound $|E(M)|$ relative to $|E(N)|$.

\begin{lemma}
\label{no4line}
Suppose $M$ has a pair of elements $\{a,b\}$ such that $M \ba a,b$ is $3$-connected with an $N$-minor.
If (ii)(b) of \cref{mainthm1proof} holds, and $\{a,b\}\subseteq \cl_M(\{x,y\})$, then $|E(M)|\leq |E(N)|+7$.
\end{lemma}

\begin{proof}
Suppose that (ii)(b) of \cref{mainthm1proof} holds, and $\{a,b\}\subseteq \cl_M(\{x,y\})$, but $|E(M)|\geq |E(N)|+8$.
Then there is at least one element in $E(M')-\{x,y\}$ that is $N$-deletable or $N$-contractible in $M'$ but not $(N,B)$-robust, where $B$ is a bolstered basis.

Suppose that $p$ is $N$-deletable but not $(N,B)$-robust. Then $p\in B-\{x,y\}$.
Now $A_{pa}=A_{pb}=0$ because $\{a,b\}\subseteq \cl_M(\{x,y\})$.
We claim that there is some element $q\in B^{*}-\{a,b\}$ that is not $(N,B)$-robust and $A_{pq}\neq 0$.
By \cref{atleasttwobound}, there is a single element $u \in B^*$ that is $(N,B)$-strong in $M'$, and at most one element in $B^*-\{u,a,b\}$ that is $(N,B)$-robust. 
First consider the case where no element in $B^*-\{u,a,b\}$ is $(N,B)$-robust. 
Then there is some $q\in B^{*}-\{u,a,b\}$ such that $A_{pq}\neq 0$, because $M'$ has no coloops or series pairs, and $q$ is not $(N,B)$-robust.
Now consider the case where there is an element $w \in B^*-\{u,a,b\}$ that is $(N,B)$-robust. 
Then $(w,z,x,u,y)$ is a $5$-element fan by \cref{atleasttwobound}, and it follows that $\{u,w\}$ is not contained in a triad.
Hence, there is some $q\in B^{*}-\{u,w,a,b\}$ such that $A_{pq}\neq 0$ and $q$ is not $(N,B)$-robust.
Now, in either case, a pivot on $A_{pq}$ is allowable, and $B'=B\triangle \{p,q\}$ is a basis of $M'$ for which there are more $(N,B')$-robust elements than $(N,B)$-robust elements, contradicting that $B$ is a bolstered basis. 

We may now assume that there is an element $q$ that is $N$-contractible but not $(N,B)$-robust in $M'$, so $q \in B^*$.
Since $x$ is in a triad with the $(N,B)$-strong element $u$, it follows that $x$ is $N$-contractible in $M'$.
If $q\in \cl(\{x,y\})$, then, since $\{q,y\}$ is a parallel pair in $M'/x$, it follows that $q$ is $N$-deletable, and hence $(N,B)$-robust, in $M'$; a contradiction.
Thus $q\notin \cl(\{x,y\})$.
Moreover, in the case that there is an $(N,B)$-robust element $z \in B$, as $z$ is not $N$-deletable, it follows that $q \notin \cl(\{x,y,z\})$.
So $A_{pq}\neq 0$ for some element $p\in B-\{x,y\}$ that is not $(N,B)$-robust.
Now a pivot on $A_{pq}$ is allowable, and $B'=B\triangle \{p,q\}$ is a basis for $M'$ such that there are more $(N,B')$-robust elements than $(N,B)$-robust elements, contradicting that $B$ is a bolstered basis.
\end{proof}

\begin{lemma}
\label{ab3spanned}
Suppose $M$ has a pair of elements $\{a,b\}$ such that $M \ba a,b$ is $3$-connected with an $N$-minor.
If (ii)(b) of \cref{mainthm1proof} holds, and there is some $p \in (B-\{x,y\}) \cap \cl(\{u,x,y\})$ such that $\{a,b\}\subseteq \cl_M(\{p,x,y\})$, then $|E(M)|\leq |E(N)|+7$.  
\end{lemma}

\begin{proof}
  Let $R$ be the set consisting of $\{p,x,y,a,b\}$ and the $(N,B)$-robust elements of $M'$ outside of $\{x,y\}$.
  Consider the case where (ii)(b)(II) or (ii)(b)(III) of \cref{mainthm1proof} holds.
  Then $\{u,x,y\}$ is contained in a (not necessarily maximal) $4$-element fan $(z,u,x,y)$, where $z \in B$.
  Since $\{p,x,y,z\} \subseteq B$, but $r(\cl(\{u,x,y\}))=3$, we deduce that $p = z$.
  Thus $|R| \le 7$.
  Towards a contradiction, suppose that $|E(M)|\geq |E(N)|+8$.
  Then $M$ has at least one element outside of $R$ that is either $N$-deletable or $N$-contractible, but not $(N,B)$-robust.

  Suppose first that there is some $p'\in B-\{x,y,p\}$ that is $N$-deletable.
  Then there is an element $q\in B^{*}-R$ such that $A_{p'q}\neq 0$, because $M'$ is $3$-connected and, in the case that \cref{mainthm1proof}(ii)(b)(III) holds, $\{z,w,u\}$ is not a triad.
  Since $\{a,b\}\subseteq \cl_M(\{p,x,y\})$, it follows that $A_{p'a}=A_{p'b}=0$, so a pivot on $A_{p'q}$ is allowable.
  But, with $B'=B\triangle \{p',q\}$, there are more $(N,B')$-robust elements than there are $(N,B)$-robust elements, contradicting that $B$ is bolstered. 

  So $M'$ has an $N$-contractible element $q \in B^{*}-R$.
  Suppose that $q \in \cl(\{x,y,p\})$.
  Then, as $\{u,x,y\}$ is a triad of $M'$, it follows that $(\{q,u,x,y\},p,E(M')-\{p,q,u,x,y\})$ is a vertical $3$-separation of $M'$.
  But then $q$ is $N$-deletable by \cref{CPL2}(ii), contradicting that $q$ is not $(N,B)$-robust.

  Thus we may assume that $q\notin \cl(\{x,y,p\})$, so there is some $p'\in B-\{x,y,p\}$ such that $A_{p'q}\neq 0$.
  Then $A_{p'a}=A_{p'b}=0$, so a pivot on $A_{p'q}$ is allowable.
  But with $B'=B\triangle \{p',q\}$, there are more $(N,B')$-robust elements than there are $(N,B)$-robust elements, contradicting that $B$ is bolstered. 
\end{proof}

We also use the following, which is proved in \cite{brettell2014splitter}.
\begin{lemma}[{\cite[Lemma~3.1]{brettell2014splitter}}]
  \label{r3cocircco}
  Let $M_0$ be a $3$-connected matroid with $r(M_0) \ge 4$.
  Suppose that $C^*$ is a rank-$3$ cocircuit of $M_0$.
  If there exists some $x \in C^*$ such that $x \in \cl(C^*-x)$, then $\co(M_0 \ba x)$ is $3$-connected.
\end{lemma}

We now prove our second main result, \cref{mainthm2}, first restating it.

\begin{thm}
  \label{mainthm2proof}
  Let $M$ be an excluded minor for the class of $\mathbb{P}$-representable matroids, and let $N$ be a non-binary $3$-connected strong $\mathbb{P}$-stabilizer, where $M$ has an $N$-minor.
  For some $M_1$ that is \dY-equivalent to $M$, and some $(M_0,N_0)$ in $\{(M_1,N),(M_1^{*}, N^{*})\}$, the matroid $M_0$ is an excluded minor with an $N_0$-minor, and at least one of the following holds:
  \begin{itemize}
    \item[(i)] $|E(M_0)|\leq |E(N_0)|+9$;
    \item[(ii)] $r(M_0)\leq r(N_0)+7$; or 
    \item[(iii)] there is a pair $\{a,b\} \subseteq E(M)$ such that $M_0\del a,b$ is $3$-connected with an $N_0$-minor, and $M_0\del a,b$ is $N_0$-fragile.
      Moreover, there is some bolstered basis $B$ for $M_0$ and a $B\times B^{*}$ companion $\mathbb{P}$-matrix~$A$ for which $\{x,y,a,b\}$ incriminates $(M,A)$, where $\{x,y\}\subseteq B$, $\{a,b\}\subseteq B^{*}$, and both of the following hold:
      \begin{itemize}
        \item[(a)] $M_0\del a,b$ has at most one $(N_0,B)$-robust element outside of $\{x,y\}$, and
        \item[(b)] if $u$ is an $(N_0,B)$-robust element of $M_0\ba a,b$, then $u\in B^{*}-\{a,b\}$, the element $u$ is $(N_0,B)$-strong in $M_0\del a,b$, and $\{u,x,y\}$ is a triad of $M_0\del a,b$.
      \end{itemize}
  \end{itemize}
\end{thm}

\begin{proof}
  Suppose that neither (i) nor (ii) holds; in particular, $|E(M)| \ge |E(N)| + 10$ and $r^*(M) \ge r^*(N) + 8$.
  By \cref{detachsetup}, there exists a matroid~$M_0$, where $M_0$ is obtained from $M$ by at most one \dY\ or \Yd\ exchange, and $(M_1,N_1) \in \{(M_0,N), (M_0^*,N^*)\}$ such that $M_1$ has a pair of elements $\{a,b\}$ for which $M_1 \ba a,b$ is $3$-connected and has an $N_1$-minor.
  By \cref{osvdelta}, $M_1$ is an excluded minor for the class of $\mathbb{P}$-representable matroids.
  We relabel $(M_1,N_1)$ as $(M,N)$ and apply \cref{mainthm1proof}.
  If (ii)(a) of \cref{mainthm1proof} holds, then (iii) holds.
  We may therefore assume that (ii)(b) of \cref{mainthm1proof} holds.
  Without loss of generality, we may assume that $\{b,x,y\}$ is a triangle of $M$.

Note that $M' = M \ba a,b$ has an element $u \in B^*$ that is $(N,B)$-strong, where $\{u,x,y\}$ is a triad.

\begin{claim}
  \label{ucon}
  The element $u$ is $N$-contractible in $M'$.
\end{claim}
\begin{subproof}
  As $M'$ is not $N$-fragile, $M'$ has at least one $N$-flexible element.
  If $x$ is $N$-deletable, then, as $u$ is in a series pair of $M' \ba x$, the element $u$ is $N$-contractible.
  Similarly, if $y$ is $N$-deletable, then $u$ is $N$-contractible.
  Thus, if the $N$-flexible elements of $M'$ are contained in the triad $\{u,x,y\}$, then, since $M'$ has at least one $N$-flexible element, it follows that $u$ is $N$-contractible in $M'$.
  Next, suppose that $(z,u,x,y)$ is a fan of $M'$, and $z$ is $N$-flexible.  
  As $x$ is in a parallel pair of $M'/z$, the element $x$ is $N$-deletable, so $u$ is $N$-contractible.
  Finally, we may assume that $(w,z,x,u,y)$ is a fan of $M'$, and $w$ is $N$-flexible.
  As $z$ is in a series pair of $M' \ba w$, the element $z$ is $N$-contractible, and it follows that $x$ is $N$-deletable, so $u$ is $N$-contractible.
\end{subproof}

Next, we show that, up to duality and replacing $M$ by a \dY-equivalent matroid, there is some deletion pair that is contained in a triangle.
This triangle will provide additional leverage in later orthogonality arguments.

\begin{claim}
\label{no4cct}
For some $M_2 \in \{M,\nabla_T(M^*)\}$, where $T=\{b,x,y\}$, there is a pair $\{a',b'\} \subseteq E(M_2)$ such that $M_2\del a',b'$ is $3$-connected with an $N$-minor, and $\{a',b'\}$ is contained in a triangle of $M_2$.
\end{claim}

\begin{subproof}
  We first consider the case where $\{a,u\}$ is contained in a triangle with either $x$ or $y$.
  If (ii)(b)(II) or (ii)(b)(III) of \cref{mainthm1proof} holds, then $z \in (B - \{x,y\}) \cap \cl(\{u,x,y\})$, and $\{a,b\} \subseteq \cl_M(\{x,y,z\})$, so $|E(M)| \le |E(N)| + 7$ by \cref{ab3spanned}; a contradiction.
  So we may assume that (ii)(b)(I) of \cref{mainthm1proof} holds.
  Now we have symmetry between $x$ and $y$, so we may assume that $\{a,u,x\}$ is a triangle.

We claim that $\{b,x\}$ is a deletion pair with the desired properties.
Clearly $M\del b$ is $3$-connected and has an $N$-minor.
By \cref{ucon}, $u$ is $N$-contractible in $M\del b$.
But $\{a,x\}$ is a parallel pair in $M\del b/u$, so $M\del b,x/u$, and hence $M \ba b,x$, has an $N$-minor.
As $\{a,u,x,y\}$ is a rank-$3$ cocircuit of $M \ba b$, the matroid $\co(M \ba b,x)$ is $3$-connected by \cref{r3cocircco}.
Thus, if $M\del b,x$ is not $3$-connected, then there is a triad $T^*$ of $M\del b$ that contains $x$.
By orthogonality with the triangle $\{a,u,x\}$, the triad $T^*$ meets $\{a,u\}$.
But $a\notin T^*$ because $M\del a,b$ is $3$-connected.
Thus $T^*$ contains $\{x,u\}$.
But since $M\ba a,b$ is $3$-connected, $T^*$ is also a triad of $M\del a,b$, so $T^*\cup y$ is a $4$-element cosegment of $M \ba a,b$.
Let $T^*-\{x,u\}=\{q\}$.
Now $q \in B^*$, since $q$ is $N$-contractible but not $(N,B)$-robust.
But then $T^* \cup y$ is a confining set, so \cref{gadgetfinish} implies that $|E(M)| \le |E(N)| + 9$; a contradiction.
Thus $M\del b,x$ is $3$-connected with an $N$-minor, and $\{b,x\}$ is contained in a triangle of $M$.

We may now assume that neither $\{a,u,x\}$ nor $\{a,u,y\}$ is a triangle of $M$.
Suppose that (ii)(b)(II) or (ii)(b)(III) of \cref{mainthm1proof} holds.
Consider the matroid $\Delta_T(M)$ obtained by a \dY\ exchange on $T=\{b,x,y\}$.
Observe that $\Delta_T(M)/b\cong M\del b$, where the labels on $x$ and $y$ are swapped.
Thus, if $\Delta_T(M)/b,x \cong M \del b/y$ is $3$-connected with an $N$-minor, then $\{b,x\}$ is a deletion pair of $\nabla_T(M^{*})$ with the desired properties.
Since $y$ is a rim end of a maximal fan in $M \ba a,b$, the matroid $M \ba a,b / y$ is $3$-connected by \cite[Lemma~1.5]{ow2000}.
Moreover, as $M \ba a,b,u$ has an $N$-minor, and $y$ is in a series pair in this matroid, 
$M\ba b/y$, has an $N$-minor.
If $M \ba b/y$ is $3$-connected, then $\{b,x\}$ is a deletion pair of $\nabla_T(M^{*})$ as desired.

So we may assume that $M \ba b/y$ is not $3$-connected; then $a$ is in a parallel pair of $M \ba b/y$.
Since $M \ba b$ is $3$-connected, $\{a,y,q'\}$ is a triangle of $M \ba b$ for some $q' \in E(M)-\{a,y,u\}$.  Note also that $q' \neq x$, by \cref{no4line}.
If $q' \in B$, then $\{a,b\}\subseteq \cl_M(\{q',x,y\})$, so (i) holds by \cref{ab3spanned}; a contradiction.
So $q' \in B^*$.
Moreover, $q'$ is $N$-deletable because $x$ is $N$-contractible in $M\del b$ and $q'$ is in a parallel pair of $M\del b/x$.
So $q'$ is $(N,B)$-robust, implying that (ii)(b)(III) holds and $(q',z,x,u,y)$ is a maximal fan in $M\ba a,b$.
We will show that $M \ba a,y$ is $3$-connected with an $N$-minor.
Since $\{x,y,b\}$ is a triangle and $\{x,y,u,b\}$ is a rank-$3$ cocircuit of $M \ba a$, the matroid $\co(M \ba a,y)$ is $3$-connected by \cref{r3cocircco}.
Suppose $y$ is in a triad~$T^*$ of $M \ba a$.
By orthogonality, $T^*$ meets $\{x,b\}$.
But $b \notin T^*$, since $M \ba a,b$ is $3$-connected, so $x \in T^*$.
Now, by orthogonality with the triangle $\{u,x,z\}$, either $T^* = \{y,x,u\}$ or $T^*=\{y,x,z\}$.
Since $\{x,y,u,b\}$ is a cocircuit of $M \ba a$, we deduce $T^* = \{y,x,z\}$.
But then $\{q',z,x,y\}$ is a cosegment of $M \ba a,b$, contradicting orthogonality with the triangle $\{z,x,u\}$.
Hence $M \ba a,y$ is $3$-connected.
Since $M \ba a/x$ has an $N$-minor, and $\{b,y\}$ is a parallel pair in this matroid, $M \ba a,y$ has an $N$-minor.
So $\{a,y\}$ is a deletion pair of $M$ that meets the requirements.

We may now assume that (ii)(b)(I) of \cref{mainthm1proof} holds.
Again, consider the matroid $\Delta_T(M)$, where $T=\{b,x,y\}$.
We claim that either $\Delta_T(M)/b,x$ or $\Delta_T(M)/b,y$ is $3$-connected with an $N$-minor, so either $\{b,x\}$ or $\{b,y\}$ is a deletion pair of $\nabla_T(M^{*})$ with the desired properties.
Observe that $\Delta_T(M)/b\cong M\del b$, so $\Delta_T(M)/b$ is $3$-connected and has an $N$-minor.
Now $\Delta_T(M)/b,x\cong M\del b/y$ and $\Delta_T(M)/b,y\cong M\del b/x$.
Since $u$ is $N$-deletable in $M'$, the elements $x$ and $y$ are $N$-contractible, so $M\del b/x$ and $M\del b/y$ have $N$-minors. Thus $\Delta_T(M)/b,x$ and $\Delta_T(M)/b,y$ have $N$-minors. 

Suppose that $\si(M\del b/x)$ is not $3$-connected.
Then there is a vertical $3$-separation $(P,x,Q)$ of $M\del b$.
Recall that $\{x,y\}$ is the only series pair of $M' \ba u$. 
Now, as $\co(M' \ba u) = M \ba b / x \ba a,u$ is $3$-connected, it follows that $Q=\{a,u,q\}$ for some $q \in E(M')-\{u,x\}$, up to swapping $P$ and $Q$.
Since $Q$ is $3$-separating and $r(Q) \ge 3$, the set $Q$ is a triad of $M\del b$.
But then $\{u,q\}$ is a series pair in $M\del a,b$; a contradiction.
Thus $M\del b/x$, and hence $\Delta_T(M)/b,x$, is $3$-connected up to parallel pairs.
The same argument shows that $\Delta_T(M)/b,y$ is $3$-connected up to parallel pairs.

Now, if $\Delta_T(M)/b,x$ or $\Delta_T(M)/b,y$ is $3$-connected, then \cref{no4cct} holds.
Thus we may assume that $x$ and $y$ are in triangles $T_x$ and $T_y$ of $M \ba b$.
If $\{a,x,y\}$ is a triangle, then $|E(M)| \le |E(N)|+7$ by \cref{no4line}; a contradiction.
Suppose that $\{p,x,y\}$ is a triangle of $M'$ for some $p \in E(M')-\{x,y\}$.
Then $p$ is not $(N,B)$-robust.
Since $u$ is $N$-deletable in $M'$, it follows that $x$ is $N$-contractible in $M'$.
Since $\{p,y\}$ is a parallel pair of $M'/x$, the element $p$ is $N$-deletable in $M'$.
Moreover, $p\in B^{*}$, since $\{x,y\}\subseteq B$ and $\{p,x,y\}$ is a triangle of $M'$.
Therefore $p$ is an $(N,B)$-robust element of $M'$; a contradiction.
We deduce that $\{x,y\}$ is not contained in a triangle of $M\ba b$.

By orthogonality, $T_x$ meets $\{a,y,u\}$, and $T_y$ meets $\{a,x,u\}$.
So either $T_x=\{x,a,q\}$ or $T_x=\{x,u,q\}$ for some $q\in E(M')-\{u,x,y\}$.
Now $q$ is $N$-deletable because $x$ is $N$-contractible in $M\del b$ and $q$ is in a parallel pair of $M\del b/x$.
But $q$ is not $(N,B)$-robust, since $q \notin \{u,x,y\}$, so $q \in B$.
If $T_x = \{x,a,q\}$, then, as $q\in B-\{x,y\}$, we have $\{a,b\}\subseteq \cl_M(\{q,x,y\})$, and so (i) holds by \cref{ab3spanned}; a contradiction.
So $T_x = \{x,u,q\}$.
Likewise, arguing with $y$ in the place of $x$,
we deduce that $T_y = \{y,u,q'\}$
for some $q'\in E(M')-\{u,x,y\}$ where $q'$ is $N$-deletable.

Now $T_x = \{x,u,q\}$ and $T_y = \{y,u,q'\}$ for some $N$-deletable elements $q,q'\in E(M')-\{u,x,y\}$.
Moreover, $q \neq q'$, since $\{x,y,u\}$ is not a triangle.
Since $\{q,q',u,x,y\}$ is a rank-$3$ set, and $\{x,y\} \subseteq B$, at most one of $q$ and $q'$ is in $B$.
Without loss of generality, say $q \in B^*$.
Then $q$ is $(N,B)$-robust; a contradiction.
\end{subproof}

Let $M_2$ and $\{a',b'\}$ be as given in \cref{no4cct}.
We again apply \cref{mainthm1proof}, this time on the matroid $M_2$ with minor $N$ and deletion pair $\{a',b'\}$; we may assume that (ii)(b) holds.
We relabel $M_2$ as $M$ and $\{a',b'\}$ as $\{a,b\}$.
Now $M'=M\del a,b$ has an $(N,B)$-strong element $u\in B^{*}$, there is a $5$-element cocircuit $\{x,y,u,a,b\}$ of $M$, and the only $(N,B)$-robust elements of $M'$ are contained in a set $R$ where $\{u,x,y\} \subseteq R$, 
and $R$ is either a triad, a maximal type-II fan $(z,u,x,y)$ relative to $B$, or a maximal $5$-element fan $(w,z,x,u,y)$. 
Up to switching the labels on $a$ and $b$, we may assume that $\{b,x,y\}$ is a triangle of $M$.

Additionally, now $\{a,b\}$ is contained in a triangle of $M$; let $\{a,b,p\}$ be this triangle.
Note that if $p \in \{x,y\}$, then $\{a,b\} \subseteq \cl_M(\{x,y\})$, contradicting \cref{no4line}.  So $p \notin \{x,y\}$. 

\begin{claim}
\label{secondpairsetup}
Let $q$ be an $N$-deletable element of $M'$ such that $q \notin \cl^{*}_{M'}(R\cup p)$.
Either
\begin{itemize}
  \item[(I)] $M \ba b,q$ is $3$-connected with an $N$-minor, or
  \item[(II)] there exists $t \in E(M')-q$ such that $t \notin \cl^{*}_{M'}(R \cup p)$, 
    the matroid $M \ba b,t$ is $3$-connected with an $N$-minor, $t \in B^*$, and, for some $s\in \{x,y\}$, the matroid $M\del b$ has a triangle $T=\{s,t,a\}$ and a $4$-element cocircuit $T \cup q$.
\end{itemize}
\end{claim}
\begin{subproof}
  Note that $q \in B$, since $q \notin R$ and $q$ is $N$-deletable in $M'$.

Suppose that $\co(M\del b,q)$ is $3$-connected, but $M\del b,q$ is not $3$-connected.
Then $M\del b$ has a triad $\{q,s,t\}$.
Hence either $\{q,s,t\}$ or $\{b,q,s,t\}$ is a cocircuit of $M$.
Since $M'$ is $3$-connected, it follows that $a\notin \{q,s,t\}$ and that $\{q,s,t\}$ is a triad of $M'$.
As $q$ is $N$-deletable in $M'$, the elements $s$ and $t$ are $N$-contractible in $M'$.
We claim that $(R-\{x,y\})\cap \{s,t\}=\emptyset$.
To begin with, $u \notin \{s,t\}$ since $\{x,y\}$ is the only series pair of $M'\ba u$.
If $R$ is a maximal type-II fan $(z,u,x,y)$ of $M'$, then $z \notin \{s,t\}$ since the fan is maximal.
Finally, if $R$ is a $5$-element fan $(w,z,x,u,y)$ of $M'$ with $w \in \{s,t\}$, then $w$ is $N$-deletable, implying $q$ is $N$-contractible and hence $N$-flexible in $M'$; a contradiction.
Thus, as $s$ and $t$ are $N$-contractible but not in $R-\{x,y\}$, either $\{s,t\}\subseteq B^{*}-\{a,b,u\}$, or $\{s,t\}$ meets $\{x,y\}$.

Suppose that $\{s,t\}\subseteq B^{*}-\{a,b,u\}$.
If $\{q,s,t\}$ is a triad of $M$, then $A_{qa}=A_{qb}=0$, so there is an allowable pivot on $A_{qs}$ or $A_{qt}$ that gives a basis for $M'$ with more robust elements, contradicting that $B$ is a bolstered basis.
On the other hand, if $\{b,q,s,t\}$ is a cocircuit of $M$, then it intersects the triangle $\{b,x,y\}$ in a single element; a contradiction to orthogonality.

Therefore $\{s,t\}$ meets $\{x,y\}$.
If $\{s,t\}=\{x,y\}$, then 
$q \in \cocl_{M'}(\{x,y\})$; a contradiction.
So we may assume that $s \in \{x,y\}$ and $t \notin \{x,y\}$.
If $\{q,s,t\}$ is a triad of $M$, then this triad intersects $\{b,x,y\}$ in a single element; a contradiction.
On the other hand, if $\{q,s,t,b\}$ is a cocircuit of $M$, then by orthogonality with the triangle $\{a,b,p\}$, we have $t=p$, in which case $q \in \cocl_{M'}(\{x,y,p\})$; a contradiction.

We may now assume that $\co(M\del b,q)$ is not $3$-connected.
We first show $\co(M\ba a,b,q)$ is $3$-connected.
Suppose not.
Then there is a cyclic $3$-separation $(X,q,Y)$ of $M'$ such that $|X\cap E(N)|\leq 1$ and $Y\cup q$ is coclosed in $M'$.
By the dual of \cref{CPL2}, at most one element of $X$ is not $N$-flexible in $M'$, and if such an element $v$ exists, then $q \in \cocl_{M'}(X-v)$.
But $X-v \subseteq R$, so $q \in \cocl_{M'}(R)$; a contradiction.
So $\co(M\ba a,b,q)$ is $3$-connected.

Since $\co(M \ba b,q)$ is not $3$-connected, there is a cyclic $3$-separation $(P,q,Q)$ of $M\del b$ with $a \in Q$.
Since $\co(M\ba a,b,q)$ is $3$-connected, $(P,Q-a)$ is not a cyclic $2$-separation of $M\ba a,b,q$, so $Q-a$ is a series class of $M\ba a,b,q$.
Hence $(Q-a)\cup q$ is a cosegment of $M'$.
Suppose that $|Q-a|\geq 3$.
Then $Q-a$ meets $B$, and, since $q$ is $N$-deletable in $M'$, the elements of $Q-a$ are $N$-contractible.
By the dual of \cref{longline3conn}, the elements of $(Q-a) \cap B$ are $(N,B)$-strong, so \cref{nostrongbasis} implies that $(Q-a)\cap B\subseteq \{x,y\}$.
Since $q$ is not cospanned by $\{x,y\}$ in $M'$, we have $|(Q-a)\cap B|=1$, and thus $|Q-a|=3$.
But then $\{x,y,u\}\cup (Q-a)$ is a corank-$3$ confining set of $M\del a,b$, contradicting \cref{gadgetfinish}.
Therefore $|Q-a|=2$.
Since $Q$ is a $3$-separating set of $M \ba b$ that contains a circuit, $Q=\{s,t,a\}$ is a triangle. 

Since $M'$ is $3$-connected, either $\{q,s,t\}$ or $\{q,s,t,a\}$ is a cocircuit of $M \ba b$.
By orthogonality between the triangle $\{s,t,a\}$ and the cocircuit $\{x,y,u,a\}$ of $M\del b$, we have that $\{x,y,u\}$ meets $\{s,t\}$.
Moreover, $u\notin \{s,t\}$ because the only triad containing $u$ in $M'$ is $\{u,x,y\}$. 
Thus $\{x,y\}$ meets $\{s,t\}$.
However, $\{x,y\} \neq \{s,t\}$, otherwise $\{x,y\}$ spans $\{a,b\}$, contradicting \cref{no4line}.
Without loss of generality, let $s \in \{x,y\}$ and $t \notin \{x,y\}$.

Suppose $\{q,s,t\}$ is a triad of $M \ba b$.
If $\{q,s,t\}$ is a triad of $M$, then, by orthogonality with the triangle $\{b,x,y\}$, we have $t \in \{x,y\}$; a contradiction.
On the other hand, if $\{q,s,t,b\}$ is a triad of $M$, then, by orthogonality, the triangle $\{a,b,p\}$ meets $\{s,t\}$.
But then $p=t$, so 
$q \in \cocl_{M'}(\{x,y,p\})$; a contradiction.
So $\{q,s,t,a\}$ is a cocircuit of $M \ba b$.

We claim that $t$ satisfies (II).
Recall $s \in \{x,y\}$, and pick $s'$ such that $\{s,s'\} = \{x,y\}$.
Since $s$ is $N$-contractible in $M\del b$ and $\{a,t\}$ is a parallel pair of $M\del b/s$, it follows that $M\del b,t$ has an $N$-minor.
If $t\in \cocl_{M'}(R \cup p)$, then $q \in \cocl_{M'}(R \cup p)$, since $s \in R$.
Thus $t\notin \cocl_{M'}(R \cup p)$.
Now, $t$ is $N$-contractible in $M'$, since $t$ is in a series pair in $M' \ba q$.
Thus, if $t \in B$, then $t$ is $(N,B)$-robust, and $q \in \cocl_{M'}(R)$; a contradiction.
So $t \in B^*$.

It remains to prove that $M\del b,t$ is $3$-connected.
Since $\{q,s,t,a\}$ is a rank\nobreakdash-$3$ cocircuit in $M \ba b$, the matroid $\co(M \ba b,t)$ is $3$-connected by \cref{r3cocircco}.
Suppose $M\del b,t$ is not $3$-connected. Then $t$ is in a triad $T^*$ of $M\del b$.
By orthogonality with the triangle $\{s,t,a\}$, the triad $T^*$ meets $\{s,a\}$.
But $a\notin T^*$ because $M\del a,b$ is $3$-connected.
Thus $\{s,t\} \subseteq T^*$.
If $p \in T^*$, then $t \in \cocl_{M \ba b}(\{s,p\})$, so $t \in \cocl_{M'}(\{x,y,p\})$; a contradiction.  So $p \notin T^*$.
Now $T^*$ or $T^*\cup b$ is a cocircuit of $M$.
But $\{b,x,y\}$ is a triangle of $M$ that meets $T^*$ in a single element, so $T^*$ is not a cocircuit; and $\{a,b,p\}$ is a triangle of $M$ that meets $T^*\cup b$ in a single element, so $T^* \cup b$ is not a cocircuit.
We deduce that $M \ba b,t$ is $3$-connected, and \cref{secondpairsetup} follows.
\end{subproof}

\begin{claim}
  \label{secondpair}
  There are distinct elements $q',q'' \in E(M)$ such that, for $q\in \{q',q''\}$, both of the following hold:
  \begin{itemize}
    \item[(I)] $M\del b,q$ is $3$-connected with an $N$-minor, where $q\notin \cocl_{M'}(R \cup p)$; and 
    \item[(II)] either 
      \begin{itemize}
        \item[(a)] $q \in B$, the element $q$ is $N$-deletable in $M\del a,b$, and neither $\{x,u,q\}$ nor $\{y,u,q\}$ is a triangle; or
        \item[(b)] $q \in B^*$ and, for some $s\in \{x,y\}$, the set $T=\{s,q,a\}$ is a triangle that is contained in a $4$-element cocircuit of $M\del b$.
      \end{itemize}
  \end{itemize}
\end{claim}

\begin{subproof}
  Suppose $R$ is either a $4$- or $5$-element fan.
  Then $r^*_{M'}(R) = 3$, the set $\{x,u\}$ is contained in a triangle with an element $z \in R$, and $\{y,u\}$ is not in a triangle, since $R$ is a maximal fan.
  Now $r^*_M(R \cup \{a,b,p\}) \le 6$.
  Since $r^*(M) \ge r^*(N) + 8$, there are distinct $N$-deletable elements $q',q''$ outside of $\cocl_M(R \cup \{a,b,p\})$, neither of which is in a triangle with $\{x,u\}$ or $\{y,u\}$.

  Now suppose $R=\{u,x,y\}$.
  Then $r^*_{M'}(R) = 2$, so $r^*_M(R \cup \{a,b,p\}) \le 5$.
  Since $r^*(M) \ge r^*(N) + 8$, there are at least three $N$-deletable elements outside of $\cocl_M(R \cup \{a,b,p\})$.
  Since these $N$-deletable elements are not $(N,B)$-robust, they belong to $B-\{x,y\}$.
  As $r_{M'}(\{x,y,u\})=3$, and $\{x,y\} \subseteq B$, at most one of these elements is in a triangle with $\{x,u\}$ or $\{y,u\}$.
  Thus there exist distinct elements $q',q''$ outside of $\cocl_M(R \cup \{a,b,p\})$, neither of which is in a triangle with $\{x,u\}$ or $\{y,u\}$.

  Now $q',q'' \notin \cocl_M(R \cup \{a,b,p\}) = \cocl_{M'}(R \cup p)$.
  By \cref{secondpairsetup}, either $q'$ satisfies (I) and (II)(a), or there exists an element $t'$ that satisfies (I) and (II)(b).
  Likewise, either $q''$ satisfies (I) and (II)(a), or there exists an element $t''$ that satisfies (I) and (II)(b).
  Suppose that neither $q'$ nor $q''$ satisfies (II)(a), and $t' =t''$.
  Then $\{s',t',a\}$ and $\{s'',t',a\}$ are triangles of $M \ba b$ where $s',s'' \in \{x,y\}$.
  If $\{s',s''\}=\{x,y\}$, then $\{x,y,t'\}$ is a triangle of $M'$, but then $\co(M' \ba u) \cong M' \ba u/x$ is not $3$-connected; a contradiction.  So $s'=s''$.
  Now $\{q',s',t',a\}$ and $\{q'',s',t',a\}$ are distinct cocircuits of $M \ba b$, so $\{q',q'',s',t'\}$ is a cosegment of $M'$. 
  But $q'$ is $N$-deletable in $M'$, implying $q''$ is $N$-contractible and hence $(N,B)$-robust; a contradiction.
\end{subproof}

Let $q'$ and $q''$ be elements as in \cref{secondpair}.
Suppose that $\{q',q''\} \subseteq B^*$.
Then, by \cref{secondpair}(II)(b), $M \ba b$ has a triangle $T'=\{s',q',a\}$ that is contained in a $4$-element cocircuit $C^*$, and a triangle $T''=\{s'',q'',a\}$, for some $s',s''\in \{x,y\}$.
Observe that $\{x,y\} \nsubseteq C^*$, since $q' \notin \cocl_{M'}(\{x,y\})$ by \cref{secondpair}(I).
Thus $C^*\cup b$ is a cocircuit of $M$, by orthogonality with the triangle $\{b,x,y\}$.
If $s'=s''$, then $\{q',q'',a\}$ is a triangle that intersects the cocircuit $\{x,y,u,a,b\}$ in a single element; a contradiction.
Thus we may assume that $T' = \{x,q',a\}$ and $T'' = \{y,q'',a\}$.
By orthogonality between $C^*\cup b$ and $T''$, we deduce that $q''\in C^*\cup b$, since $y\notin C^*$.
Now $\{x,q',q''\}$ is a triad of $M'$ with $\{q',q''\}\subseteq B^*$, so $\{u,x,y,q',q''\}$ is a corank-$3$ confining set, contradicting \cref{gadgetfinish}.

Without loss of generality, we may now assume that $q' \in B$ and $q'$ is $N$-deletable in $M'$.
Towards a contradiction, assume that (iii) does not hold for $M$ and the deletion pair $\{b,q'\}$.
Then, after applying \cref{mainthm1proof}, (ii)(b) holds.
Let $A'$ be the $B'\times (B')^{*}$ companion $\mathbb{P}$-matrix where $\{x',y',b,q'\}$ incriminates $(M,A')$ for $\{x',y'\}\subseteq B'$ and $\{b,q'\}\subseteq (B')^{*}$.
Then $M$ has a $5$-element cocircuit $D'=\{x',y',u',b,q'\}$, where $M\del b,q'$ has an $(N,B')$-strong element~$u'$ outside of $\{x',y'\}$, and either $\{b,x',y'\}$ or $\{q',x',y'\}$ is a triangle. 

Suppose that $\{b,x',y'\}$ is a triangle of $M$.
By orthogonality between the cocircuit $D'$ of $M$ and the triangles $\{b,x,y\}$ and $\{a,b,p\}$, and using the fact that $q' \notin \{x,y,a,p\}$, we deduce that $\{x,y\}$ and $\{a,p\}$ meet $\{x',y',u'\}$.
If $\{x,y\}$ or $\{a,p\}$ intersects $\{x',y'\}$ in a single element, then
$\{b,x,y\}$ or $\{a,b,p\}$ is in the span of $\{x',y'\}$, so
$\{x',y'\}$ spans a $4$-element segment in $M$.
Thus $\{x',y'\}$ spans a triangle in $M\del b,q'$.
But then $\co(M\del b,q',u')$ is not $3$-connected by \cref{fanends}, contradicting that $u'$ is $(N,B')$-strong in $M\del b,q'$.
We deduce that $\{x',y',u'\}\subseteq \{x,y,a,p\}$. 
But $q' \in \cocl_{M'}(\{x',y',u'\}) \subseteq \cocl_{M'}(\{x,y,p\})$,
contradicting \cref{secondpair}(I).

We may now assume that $\{q',x',y'\}$ is a triangle of $M$.
The triangles $\{b,x,y\}$ and $\{a,b,p\}$ meet the cocircuit $D'$ in the element $b$.
Thus, by orthogonality, $\{x,y\}$ and $\{a,p\}$ meet $\{x',y',u'\}$. 
Let $D=\{x,y,u,a,b\}$, and recall that $D$ is a cocircuit of $M$.

First suppose that $D\cap \{x',y'\}=\emptyset$.
Then $u'\in \{x,y\}$ and $p \notin D$, so $p\in\{x',y'\}$.
Since $q'$ is $N$-deletable in $M'$, the element $a$ is $N$-deletable in $M\del b,q'$.
If $a \in B'$, then $\{a,b,p\}$ is a triangle of $M$ with $a,p \in B'$, so $A'_{vb}=0$ for $v\in \{x',y'\}-p$, contradicting that the bad submatrix $A'[\{x',y',b,q'\}]$ has no zero entries.
So $a \in (B')^*$, hence $a$ is $(N,B')$-robust in $M \ba b,q'$.
It follows that $M \ba b,q'$ has a $5$-element fan $(a,z',x',u',y')$ for some $z'$, where the $(N,B')$-robust elements of $M \ba b,q'$ are contained in $\{x',y',u',z',a\}$.
Since $M \ba b,q',a$ has an $N$-minor, and $x'$ is in a series pair in this matroid, the element $x'$ is $N$-contractible in $M \ba a,b$.
Moreover, $\{u',z'\}$ and $\{q',y'\}$ are parallel pairs in $M \ba a,b /x'$, so $M \ba a,b,u',y'$ has an $N$-minor.
But $\{x',q'\}$ is a series pair in this matroid, so $q'$ is also $N$-contractible in $M\ba a,b$.
Now $q'$ is $(N,B)$-robust; a contradiction.

Now we may assume that $D\cap \{x',y'\}\neq \emptyset$.
By orthogonality with the triangle $\{q',x',y'\}$, we have $\{x',y'\}\subseteq D$.
If $u' \in D$, then $q' \in \cocl_M(D) \subseteq \cocl_{M'}(R)$; a contradiction.
By orthogonality between the cocircuit $D'$ and triangles $\{b,x,y\}$ and $\{a,b,p\}$, one of $\{x',y'\}$ is in $\{x,y\}$ and the other in $\{a,p\}\cap D$.
By \cref{secondpair}(II)(a), neither $\{x,u,q'\}$ nor $\{y,u,q'\}$ is a triangle, so $\{s,a,q'\}$ is a triangle for some $s \in \{x,y\}$.
But $\{s,q'\} \subseteq B$, so either $A_{xa}=0$ or $A_{ya}=0$, contradicting that the bad submatrix has no zero entries. 
We deduce that (iii) holds for $M$ and the pair $\{b,q'\}$.
\end{proof}

\section*{Acknowledgements}

We thank the anonymous referees of an earlier version of this manuscript, whose valuable suggestions lead to vast improvements in this version.

\bibliographystyle{acm}
\bibliography{fr}

\begin{thebibliography}{10}

\bibitem{bixby1979reid}
{\sc Bixby, R.~E.}
\newblock On {Reid}'s characterization of the ternary matroids.
\newblock {\em Journal of Combinatorial Theory, Series B 26}, 2 (1979),
  174--204.

\bibitem{bixby1982simple}
{\sc Bixby, R.~E.}
\newblock A simple theorem on $3$-connectivity.
\newblock {\em Linear Algebra and its Applications 45\/} (1982), 123--126.

\bibitem{brettell2014splitter}
{\sc Brettell, N., and Semple, C.}
\newblock A splitter theorem relative to a fixed basis.
\newblock {\em Annals of Combinatorics 18}, 1 (2014), 1--20.

\bibitem{bww1}
{\sc Brettell, N., Whittle, G., and Williams, A.}
\newblock {$N$}-detachable pairs in $3$-connected matroids~{I}: unveiling
  {$X$}.
\newblock Preprint, arXiv:1804.05637, 2018.

\bibitem{bww2}
{\sc Brettell, N., Whittle, G., and Williams, A.}
\newblock {$N$}-detachable pairs in $3$-connected matroids~{II}: life in {$X$}.
\newblock Preprint, arXiv:1804.06029, 2018.

\bibitem{bww3}
{\sc Brettell, N., Whittle, G., and Williams, A.}
\newblock {$N$}-detachable pairs in $3$-connected matroids~{III}: the theorem.
\newblock Preprint, arXiv:1804.06588, 2018.

\bibitem{clark2015structure}
{\sc Clark, B., Mayhew, D., van Zwam, S. H.~M., and Whittle, G.}
\newblock The structure of $\{U_{2,5}, U_{3,5}\}$-fragile matroids.
\newblock {\em SIAM Journal on Discrete Mathematics 30}, 3 (2016), 1480--1508.

\bibitem{Geelen2000excluded}
{\sc Geelen, J.~F., Gerards, A. M.~H., and Kapoor, A.}
\newblock {The excluded minors for $GF(4)$-representable matroids}.
\newblock {\em Journal of Combinatorial Theory, Series B 79}, 2 (2000),
  247--299.

\bibitem{hall2011excluded}
{\sc Hall, R., Mayhew, D., and van Zwam, S. H.~M.}
\newblock The excluded minors for near-regular matroids.
\newblock {\em European Journal of Combinatorics\/} (2011).

\bibitem{mayhew2008matroids}
{\sc Mayhew, D., and Royle, G.~F.}
\newblock Matroids with nine elements.
\newblock {\em Journal of Combinatorial Theory. Series B 98}, 2 (2008),
  415--431.

\bibitem{mayhew2010stability}
{\sc Mayhew, D., van Zwam, S. H.~M., and Whittle, G.}
\newblock {Stability, fragility, and Rota's Conjecture}.
\newblock {\em Journal of Combinatorial Theory, Series B 102}, 3 (2012),
  760--783.

\bibitem{oxley2011matroid}
{\sc Oxley, J.}
\newblock {\em {Matroid theory, Second Edition}}.
\newblock Oxford University Press, New York, 2011.

\bibitem{oxley2000generalized}
{\sc Oxley, J., Semple, C., and Vertigan, D.}
\newblock {Generalized Delta-Y Exchange and k-Regular Matroids}.
\newblock {\em Journal of Combinatorial Theory, Series B 79}, 1 (2000), 1--65.

\bibitem{oxley2008maintaining}
{\sc Oxley, J., Semple, C., and Whittle, G.}
\newblock Maintaining 3-connectivity relative to a fixed basis.
\newblock {\em Advances in Applied Mathematics 41}, 1 (2008), 1--9.

\bibitem{ow2000}
{\sc Oxley, J., and Wu, H.}
\newblock {On the structure of $3$-connected matroids and graphs}.
\newblock {\em European Journal of Combinatorics 21}, 5 (2000), 667--688.

\bibitem{pendavingh2010confinement}
{\sc Pendavingh, R.~A., and van Zwam, S. H.~M.}
\newblock Confinement of matroid representations to subsets of partial fields.
\newblock {\em Journal of Combinatorial Theory, Series B 100}, 6 (2010),
  510--545.

\bibitem{pendavingh2010lifts}
{\sc Pendavingh, R.~A., and van Zwam, S. H.~M.}
\newblock Lifts of matroid representations over partial fields.
\newblock {\em Journal of Combinatorial Theory, Series B 100}, 1 (2010),
  36--67.

\bibitem{semple1996partial}
{\sc Semple, C., and Whittle, G.}
\newblock Partial fields and matroid representation.
\newblock {\em Advances in Applied Mathematics 17}, 2 (1996), 184--208.

\bibitem{Semple}
{\sc Semple, C.~A.}
\newblock {\em k-regular matroids}.
\newblock PhD thesis, Victoria University of Wellington, 1998.

\bibitem{seymour1979matroid}
{\sc Seymour, P.~D.}
\newblock Matroid representation over {$GF(3)$}.
\newblock {\em Journal of Combinatorial Theory, Series B 26}, 2 (1979),
  159--173.

\bibitem{tutte1958homotopy}
{\sc Tutte, W.~T.}
\newblock A homotopy theorem for matroids. {I, II}.
\newblock {\em Transactions of the American Mathematical Society 88}, 1 (1958),
  144--174.

\bibitem{whittle1997matroids}
{\sc Whittle, G.}
\newblock {On matroids representable over $GF(3)$ and other fields}.
\newblock {\em Transactions of the American Mathematical Society 349}, 2
  (1997), 579--603.

\bibitem{stabilizers}
{\sc Whittle, G.}
\newblock {Stabilizers of classes of representable matroids}.
\newblock {\em Journal of Combinatorial Theory, Series B 77}, 1 (1999), 39--72.

\bibitem{whittle2013fixed}
{\sc Whittle, G., and Williams, A.}
\newblock On preserving matroid 3-connectivity relative to a fixed basis.
\newblock {\em European Journal of Combinatorics 34}, 6 (2013), 957--967.

\bibitem{yu2006more}
{\sc Yu, Y.}
\newblock More forbidden minors for wye-delta-wye reducibility.
\newblock {\em The Electronic Journal of Combinatorics 13}, 1 (2006), 7.

\end{thebibliography}
\end{document}